\numberwithin{equation}{section}
\newtheoremstyle{dotstyle}{3pt}{3pt}{\normalfont}{}{\bfseries}{.}{ }{\thmname{#1}\thmnumber{ #2}\thmnote{ (#3)}}
\theoremstyle{dotstyle}
\newtheorem{theorem}{Theorem}[section]
\newtheorem{lemma}{Lemma}[section]
\newtheorem{proposition}{Proposition}[section]
\newtheorem{definition}{Definition}[section]
\newtheorem{remark}{Remark}[section]
\newtheorem{corollary}{Corollary}[section]
\newtheorem{example}{Example}[section] 
\renewenvironment{proof}[1][\proofname]{%
  \par\pushQED{\qed}%
  \topsep6\p@\@plus6\p@\relax
  \trivlist
  \item[\hskip\labelsep\normalfont\bfseries #1\@addpunct{.}]%
  \ignorespaces
}{%
  \popQED\endtrivlist\@endpefalse
}
\appto\appendix{
  \titleformat{\section}                         
    {\normalfont\Large\bfseries\MakeUppercase}  
    {APPENDIX~\thesection:}{1em}{}              
  \titleformat{name=\section,numberless}         
    {\normalfont\Large\bfseries}{}{0pt}{}
}
\title{\bfseries Towards a Mathematical Theory of Adaptive Memory: From Time-Varying to Responsive Fractional Brownian Motion}
\author{
  Jiahao Jiang\thanks{Correspondence to: Jiahao Jiang, School of Mathematics, Southwest Jiaotong University. Email: \href{mailto:jiahao.jiang@swjtu.edu.cn}{jiahao.jiang@swjtu.edu.cn}} \\
  \textit{School of Mathematics, Southwest Jiaotong University, Chengdu 610031, China}
}
\date{}
\newcommand{\shorttoc}{
  \begin{center}\Large\bfseries CONTENTS\end{center}\vspace{8pt}%
  \etocsetnexttocdepth{section}
  \tableofcontents
  \etocsetnexttocdepth{3}
}
\begin{document}

\maketitle

\begin{center}
\large\bfseries Abstract
\end{center}
\vspace{10pt}
This work develops a comprehensive mathematical theory for a class of stochastic processes whose local regularity adapts dynamically in response to their own state. We first introduce and rigorously analyze a time-varying fractional Brownian motion (TV-fBm) with a deterministic, H\"older-continuous Hurst exponent function. Key properties are established, including its exact variance scaling law, precise local increment asymptotics, local non-determinism, large deviation asymptotics for its increments, and a covariance structure that admits a closed-form hypergeometric representation.

We then define a novel class of processes termed \textit{Responsive Fractional Brownian Motion (RfBm)}. Here, the Hurst exponent is governed by a Lipschitz-H\"older response function depending on the process state itself, creating an intrinsic feedback mechanism between state and memory. We establish the well-posedness of this definition, prove pathwise H\"older regularity of the induced instantaneous scaling exponent, and analyze associated cumulative memory processes along with their asymptotic convergence.

The mathematical structure of RfBm naturally gives rise to a continuous-time, pathwise attention mechanism. We show that its kernel induces a well-defined attention weight distribution, derive fundamental bounds for these weights, and quantify the stability of attentional allocation through residence measures and volatility functionals. This work develops a stochastic-process-theoretic framework for concepts central to adaptive memory and content-sensitive information processing, offering a mathematically grounded perspective that may complement existing empirical approaches.

\vspace{10pt}
\noindent \textbf{Keywords:} fractional Brownian motion, variable-order calculus, time-varying Hurst exponent, responsive fractional Brownian motion, adaptive memory, stochastic analysis, attention mechanisms, large deviations
\shorttoc
\section{INTRODUCTION}
The theory of fractional Brownian motion (fBm), since its rigorous mathematical formulation by Mandelbrot and Van Ness \cite{mandelbrot1968}, has provided a foundational framework for modeling phenomena characterized by self-similarity, long-range dependence, and anomalous scaling. This Gaussian process, defined via a moving average of ordinary Brownian motion increments with a power-law kernel, has found profound applications across numerous disciplines, from hydrology and solid-state physics to economics and turbulence. The stochastic integral, as formalized by Itô \cite{Ito1944} and extended in various contexts such as the Itô-type isometry for loop spaces \cite{Gosselin1997}, serves as the cornerstone for constructing and analyzing such processes.

A significant evolution in this theory is the extension to \textit{variable-order} fractional operators, where the fractional order (e.g., the Hurst exponent) is allowed to vary with time. This concept, recently formalized and reviewed in the context of modeling complex physical systems \cite{Patnaik2020}, enables a more flexible description of systems with evolving memory characteristics. Theoretical developments in this direction include pseudo-fractional operators with variable order \cite{Oliveira2022}, the physical interpretation and simulation of variable-order fractional diffusion within trapping models \cite{Sibatov2024}, and the simulation of variable-order fractional Brownian motion (vofBm) for solving nonlinear stochastic differential equations \cite{Samadyar2024,Samadyar2025}. Further related developments include models where the Hurst exponent itself is random \cite{woszczek2025riemann}, approaches to fBm via complex-plane random walks \cite{jumarie1999new}, and generalizations of fractional calculus for studying anomalous stochastic processes \cite{jiang2025study}. These variable-order and generalized frameworks naturally connect to broader areas of analysis, such as the study of fractional porous medium equations \cite{Pablo2012}.

A rigorous analysis of processes with time-varying or state-dependent characteristics often relies on precise probabilistic estimates. Key technical tools in this regard include sharp bounds for Gaussian tail probabilities, intimately related to Mills' ratio and its functional properties \cite{Mills1926,Baricz2008,Yang2015}, and dynamical versions of the Borel--Cantelli lemma for sequences of events in deterministic dynamical systems \cite{Chernov2001,Gupta2010,Haydn2013}. The asymptotic analysis of probabilities, particularly through the theory of large deviations \cite{Varadhan1966,JG1977,Touchette2009,Feliachi2021}, provides a powerful language for characterizing rare events and exponential scaling laws. Applications of stochastic analysis in related settings include, for example, the stability analysis of systems driven by fBm \cite{li2017exponential}. Related methodological perspectives include the convergence analysis of iterative schemes applied to integral operators \cite{S1997}, classical transformation theory for measure-preserving flows which informs scaling principles for self-similar processes \cite{J1957}, and the construction of pathwise integration theories for continuous paths with arbitrary regularity \cite{Cont2019}.

The probabilistic methods and analytical frameworks discussed above, which deal with memory, scaling, and integration, also find conceptual parallels in other domains. In artificial intelligence, for example, attention-based architectures including the Transformer model \cite{Vaswani2017} and the bidirectional pre-training approach BERT \cite{Devlin2019} have achieved notable empirical results. This has motivated further inquiry into the mathematical foundations of such mechanisms, including analysis of their approximation rates for sequence modeling \cite{HJiang2024}, exploration of periodic alternatives to softmax to mitigate gradient behavior \cite{SWang2021}, and their application to specialized tasks such as mathematical equation recognition \cite{Aurpa2024}. Meanwhile, within stochastic analysis, recent work has established fractional mean-square inequalities for certain classes of stochastic processes \cite{khan2025fractional}, suggesting potential intersections between fractional calculus and learning theory.

In view of these developments, it is of interest to explore mathematical models that more fully incorporate the notion of \textit{adaptive memory}—wherein a stochastic process's local regularity adjusts dynamically in response to its own state—within a rigorous stochastic calculus setting. Current models with time-varying memory typically treat the Hurst exponent variation either as an exogenous function \cite{Patnaik2020,Sibatov2024,Samadyar2024} or as a random parameter \cite{woszczek2025riemann}. The mathematical foundations for studying non-local operators, a cornerstone for many memory models, have been significantly advanced. For instance, the rigorous theory of fractional Laplacians on domains, developed via the framework of $\mu$-transmission pseudodifferential operators, provides a modern understanding of boundary value problems for such operators \cite{grubb2015fractional}. This is complemented by sharp estimates for the heat kernels of Dirichlet fractional Laplacians, which describe the transition densities of killed stable processes \cite{chen2010heat}. On the abstract level, the study of Cauchy problems with generalized fractional derivatives has yielded results on local well-posedness and generalized Grönwall inequalities, which are instrumental for a priori estimates \cite{ascione2021abstract}.

Complementing these operator-theoretic developments, a rich body of work investigates deterministic dynamical systems with memory. This includes the analysis of semilinear subdiffusion equations with memory terms, where global existence and uniqueness of classical solutions are established through careful a priori estimates in fractional Hölder spaces \cite{krasnoschok2017semilinear}. The asymptotic behavior of such systems is of central interest, as evidenced by studies on coupled wave-plate systems with fractional memory dissipation, which establish well-posedness and characterize the exponential or polynomial decay of solutions \cite{oliveira2026asymptotic}. The profound impact of memory is also highlighted in fluid mechanics, where the inclusion of the Basset–Boussinesq memory term in the Maxey–Riley equation for inertial particles is proven to fundamentally alter the asymptotic dynamics, leading to a universal algebraic decay towards an asymptotic state \cite{langlois2015asymptotic}.

Within stochastic analysis, the ergodic properties of systems driven by exogenous fractional noise have been thoroughly investigated, establishing conditions for unique stationary solutions and algebraic convergence rates \cite{hairer2005ergodicity}. The study of limit theorems for processes with long memory and heavy tails reveals the complex interplay of these features, yielding limiting processes that combine stable random measures, Mittag–Leffler processes, and Brownian motion \cite{jung2017functional}. The foundational concept of saturated adapted spaces provides a powerful probabilistic framework for studying existence and distributional properties of stochastic processes in full generality \cite{keisler1997rich}.

At the intersection of memory and nonlinear dynamics, the concept of fractional memory has been introduced into discrete maps, leading to generalized fractional versions of systems like the Hénon and Lozi maps. This allows for the investigation of how power-law-like memory influences fundamental properties such as the existence of periodic points \cite{edelman2025asymptotic}. Parallel to these developments, the rigorous analysis of global well-posedness for nonlinear evolution equations, even in complex settings like the generalized derivative nonlinear Schrödinger equation, underscores the importance of robust existence and uniqueness theories \cite{pineau2025global}.

In view of these developments across deterministic, stochastic, and discrete dynamical systems, an area that remains open for exploration is the modeling of memory mechanisms that extend beyond exogenous specification to incorporate dynamic coupling with the system's state. The existing frameworks have successfully established theories for memory characterized by fixed parameters, predetermined time-dependent functions, or kernels that operate independently of the instantaneous state. Within this well-established context, it becomes a natural theoretical inquiry to investigate models where a feedback mechanism intrinsically links the process state to the evolution of its own memory properties. Such a structure aims to mathematically formalize adaptive systems capable of content-sensitive modulation.

\textbf{Contributions of this work.} Motivated by this observation, the present article develops a mathematical theory for stochastic processes with state-dependent, adaptive memory. Our primary contributions are threefold.

First, we introduce and rigorously analyze a \textbf{Time-Varying Fractional Brownian Motion (TV-fBm)} with a deterministic, Hölder-continuous Hurst exponent function $H(t)$. Building upon the variable-order integration framework, we establish its fundamental properties: the exact variance scaling law $\mathbb{E}[(B^{H(\cdot)}(t))^2] = t^{2H(t)}$ (Theorem~\ref{thm:variance}), precise local increment asymptotics (Theorem~\ref{thm:local_increment_var}), the property of local non-determinism (Theorem~\ref{thm:LND}), large deviation asymptotics for its local increments (Theorem~\ref{thm:LDP}), and a closed-form covariance structure amenable to hypergeometric analysis (Theorem~\ref{thm:covariance}, Remark~\ref{rem:cov_hyper}, \ref{rem:non_dominant}).

Second, and as our central innovation, we define \textbf{Responsive Fractional Brownian Motion (RfBm)}. This is a class of processes where the Hurst exponent is governed by a Lipschitz-H\"older \textit{response function} $H(t, X_t)$, making it a path-dependent functional of the process state itself (Definition~\ref{def:rFBM}). This creates a built-in feedback mechanism. We establish the well-posedness of this definition through local existence and uniqueness results (Theorem~\ref{thm:local_existence_uniqueness}), which rely on Lipschitz estimates for the state-dependent kernel (Theorem~\ref{thm:global_lipschitz_kernel}). We then develop a comprehensive pathwise regularity theory, including the Hölder continuity of the induced instantaneous scaling exponent $\alpha(t)=H(t,X_t)$ (Theorem~\ref{thm:instant_scaling_holder}), the analysis of extremal scaling indices (Definition~\ref{def:extremal_indices}), and the study of cumulative memory processes (Definition~\ref{def:cumulative_memory}) with their convergence properties (Theorem~\ref{thm:convergence_rate}).

Third, we explore the theoretical implications of the RfBm structure for modeling \textbf{adaptive memory and attention}. We show that its kernel naturally induces a mathematically well-defined attention weight distribution (Definition~\ref{def:rfbm_attention_weights}, Theorem~\ref{thm:attention_well_definedness}), illustrated through a concrete spatiotemporal adaptive response function (Example~\ref{ex:adaptive_response_function}). This leads to a continuous-time, pathwise attention mechanism with intrinsic multi-scale and content-adaptive properties. We derive fundamental bounds for these attention weights (Theorem~\ref{thm:rfbm_attention_bounds}, with a detailed proof provided in Appendix~\ref{app:proof_attention_bounds}), analyze their sensitivity to historical states (Theorem~\ref{thm:memory_dynamics_sensitivity}), and quantify the stability of attentional allocation through a residence measure and its volatility (Definition~\ref{def:residence_measure_attention}, Theorem~\ref{thm:volatility_covariance_integral}). This work aims to provide a stochastic-process-theoretic foundation for concepts relevant to adaptive intelligence, complementing existing empirical and discrete-time approaches.

The remainder of this article is organized as follows. Section~\ref{sec:preliminaries} collects necessary preliminaries, including Hölder continuity conditions, variable-order integration, logarithmic control inequalities, and foundations of Itô calculus. Section~\ref{sec:tvfbm} presents the construction and analysis of TV-fBm. Section~\ref{sec:large_deviation} examines its large deviation asymptotics and covariance structure. Section~\ref{sec:rFBM_theory} is devoted to the theory of RfBm: its definition, pathwise regularity, well-posedness, and the analysis of its scaling exponents. Section~\ref{sec:rFBM_AI_foundations} develops the connection to adaptive memory and attention mechanisms. Proofs of selected technical results are provided in the appendix.

\section{Preliminaries and Functional Setup}\label{sec:preliminaries}
This section presents some basic definitions and technical tools used in our study of time-varying fractional Brownian motion. These include Hölder continuity conditions, variable-order integration, and results from stochastic calculus that will be employed in the subsequent analysis.
\subsection*{Notation}
The following notation is used throughout the paper.
\begin{itemize}
  \item $H: [0,T] \to (0,1)$: A deterministic, H\"older-continuous Hurst exponent function (Definition~\ref{def:holder}).
  \item $B^{H(\cdot)}(t)$: Time-varying fractional Brownian motion (TV-fBm), as defined in \eqref{eq:tvfbm_ito} (Definition~\ref{def:tvfbm}).
  \item $I_1^{\mathrm{loc}}$: Local dominant increment of TV-fBm, defined in \eqref{eq:local_increment} (Theorem~\ref{thm:local_increment_var}).
  \item $R(u,v) := \mathbb{E}[B^{H(\cdot)}(u)B^{H(\cdot)}(v)]$: Covariance function of TV-fBm (Theorem~\ref{thm:covariance}).
  \item $H(t,x): [0,T]\times\mathbb{R} \to (0,1)$: Lipschitz-H\"older response function (Definition~\ref{def:LH_response_function}).
  \item $X_t$: The Responsive fractional Brownian motion (RfBm), solution to $X_t = \int_0^t K(t,s;X_s)dB(s)$ (Definition~\ref{def:rFBM}).
  \item $K(t,s; X_s) := \sqrt{2H(s,X_s)} (t-s)^{H(s,X_s)-1/2}$: State-dependent kernel in the RfBm definition (see \eqref{eq:rFBM_integral}).
  \item $\alpha(t,\omega) := H(t, X_t(\omega))$: Instantaneous scaling exponent process (Definition~\ref{def:instant_scaling}).
  \item $\mathcal{C}_t(\omega) := \int_0^t \alpha(s,\omega) ds$: Cumulative memory process (Definition~\ref{def:cumulative_memory}).
  \item $\rho(t,s; X) := K(t,s;X_s) / \int_0^t K(t,u;X_u)du$: RfBm attention weight distribution (Definition~\ref{def:rfbm_attention_weights}).
\end{itemize}
\subsection{H\"older Continuity and Regularity Constraints}
\begin{definition}\label{def:holder}
A function $H: [0,T] \to (0,1)$ is called \textit{H\"older continuous} with exponent $\gamma > 0$ if there exists $C > 0$ such that
\begin{equation}\label{eq:holder}
|H(t) - H(s)| \leq C|t-s|^\gamma \quad \forall t,s \in [0,T].
\end{equation}
\end{definition}

\begin{remark}[Critical Regularity Constraint]\label{rem:holder_condition}
The condition $\gamma > \sup_{t \in [0,T]} H(t)$ plays a fundamental role in our analysis, ensuring the mathematical consistency of the variable-order fractional operator. This constraint arises from two distinct but related considerations:

\begin{itemize}
    \item \textbf{Kernel Singularity Control}: For the variable-order integral (Definition~\ref{def:varint}), when $s \to t^-$, the H\"older condition guarantees that:
\[
|H(t)-H(s)| \leq C|t-s|^\gamma \ll |t-s|^{H(t)-\frac{1}{2}} \quad \text{as } s \to t^-,
\]
This dominance relation ensures the local integrability of the kernel $(t-s)^{H(t)-1/2}$ near the singularity, as verified by:
\[
\int_0^t (t-s)^{2H(t)-1} ds < \infty \quad \text{when } H(t) > 0.
\]
The rigorous foundation for this dominance is established in Proposition~\ref{prop:log_inequality}.

    \item \textbf{Asymptotic Sharpness in Local Analysis}: In Theorem~\ref{thm:local_increment_var}, this condition guarantees that the remainder term $O(\epsilon^\lambda)$ is strictly subordinate to the principal term $\epsilon^{2H(t)}$. Specifically, the exponent $\lambda = \min\left(\gamma - H(t), \frac{\gamma + H(t)}{2}\right)$ satisfies $\lambda > 0$, ensuring:
\[
\epsilon^{2H(t) + \lambda} = o(\epsilon^{2H(t)}) \quad \text{as } \epsilon \to 0^+.
\]
This separation of scales is essential for establishing the local H\"older regularity of sample paths.
\end{itemize}

\noindent \textbf{Automatic Satisfaction}: We note that this condition is automatically satisfied if $H(\cdot)$ is Lipschitz continuous ($\gamma \geq 1$), as $\sup_{t} H(t) < 1$ by definition.
\end{remark}

\subsection{Variable-Order Integration}
\begin{definition}[Riemann-Liouville Variable-Order Integral, \cite{Patnaik2020}]\label{def:varint}
For $\alpha: [0,T] \to (0,1)$ satisfying Definition \ref{def:holder} (H\"older continuity with exponent $\gamma$), the \textbf{Riemann-Liouville variable-order fractional integral} is defined as:
\begin{equation}\label{eq:varint}
\left(_{0}^{RL}I_{t}^{\alpha(\cdot)}f\right)(t) := \frac{1}{\Gamma(\alpha(t))} \int_{0}^{t} (t-\tau)^{\alpha(t)-1}f(\tau)\,d\tau.
\end{equation}
This formulation extends the classical Riemann-Liouville fractional integral to accommodate time-dependent fractional orders. The kernel $(t-\tau)^{\alpha(t)-1}$ and the time-varying Gamma function normalization $\Gamma(\alpha(t))^{-1}$ are characteristic features of this variable-order operator.
\end{definition}

\begin{proposition}[Logarithmic Power-Control Inequality]\label{prop:log_inequality}
Let $t > 0$ be fixed. For any $\delta > 0$, there exists a constant $K_\delta > 0$ such that for all $s \in [0,t)$ with $t-s \in (0,1]$, the following inequality holds:
\begin{equation}\label{eq:log_inequality}
|\ln(t-s)| \leq K_\delta (t-s)^{-\delta}.
\end{equation}
\end{proposition}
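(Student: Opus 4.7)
The plan is to reduce the two-variable inequality to a one-variable elementary bound. Setting $u := t - s$, the hypothesis $s \in [0,t)$, $t-s \in (0,1]$ becomes $u \in (0,1]$, so $\ln u \leq 0$ and the inequality to be proved takes the equivalent form
\begin{equation*}
g(u) \;:=\; -u^\delta \ln u \;\leq\; K_\delta, \qquad u \in (0,1].
\end{equation*}
Thus it suffices to exhibit a uniform bound on the single function $g$ on the interval $(0,1]$.

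Next, I would verify that $g$ is bounded there by a soft compactness argument. The function $g$ is continuous on $(0,1]$ with $g(1) = 0$, and the standard asymptotic $u^\delta |\ln u| \to 0$ as $u \to 0^+$ (valid for any $\delta > 0$, via L'H\^opital or a direct estimate) shows that $g$ extends continuously to the compact interval $[0,1]$ with $g(0) = 0$. The extreme value theorem then produces the required finite constant $K_\delta := \max_{u \in [0,1]} g(u)$.

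To obtain an explicit (and in fact sharp) constant, I would differentiate: $g'(u) = u^{\delta - 1}\bigl(-\delta \ln u - 1\bigr)$, which vanishes only at the interior critical point $u^{\ast} = e^{-1/\delta}$. Substituting gives $g(u^{\ast}) = (u^{\ast})^\delta \cdot (-\ln u^{\ast}) = e^{-1}\cdot \delta^{-1}$. Combined with the boundary values $g(0) = g(1) = 0$ and the positivity of $g$ on $(0,1)$, this identifies $K_\delta = 1/(e\delta)$ as the optimal constant.

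There is no genuine obstacle: the statement is a standard calibration lemma expressing the familiar fact that a logarithmic singularity is absorbed by every strictly positive power. The only mild subtlety is recording the sharp constant $1/(e\delta)$, which is what will be invoked in Remark~\ref{rem:holder_condition} to dominate the logarithmic factors produced when differentiating $(t-s)^{H(t)-1/2}$ against the H\"older variation of $H(\cdot)$.
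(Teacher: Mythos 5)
Your proposal is correct and follows essentially the same route as the paper: after the substitution $u = t-s$, both arguments maximize $u^\delta|\ln u|$ on $(0,1]$ by locating the unique critical point $u^\ast = e^{-1/\delta}$ and evaluating there to obtain the sharp constant $K_\delta = 1/(e\delta)$. The only cosmetic difference is your additional soft compactness remark, which the paper replaces by explicit boundary-limit computations at $0^+$ and $1^-$.
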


\begin{proof}
We proceed with a rigorous analysis of the function $f(x) = x^\delta |\ln x|$ for $x \in (0,1)$.

\textbf{Step 1: Derivative Analysis}\\
The derivative of $f$ is computed as:
\begin{align*}
f'(x) &= \frac{d}{dx}\left(x^\delta |\ln x|\right) \\
&= \delta x^{\delta-1} |\ln x| + x^\delta \cdot \frac{d}{dx}|\ln x| \\
&= x^{\delta-1}(\delta |\ln x| - 1) \quad \text{(since $\frac{d}{dx}|\ln x| = -\frac{1}{x}$ for $x \in (0,1)$)}
\end{align*}

\textbf{Step 2: Critical Point Determination}\\
Setting $f'(x) = 0$ yields:
\[
\delta |\ln x| - 1 = 0 \implies |\ln x| = \frac{1}{\delta} \implies x = e^{-1/\delta}
\]

\textbf{Step 3: Extremum Verification}\\
Analyzing the sign of $f'(x)$:
\begin{itemize}
\item For $x \in (0, e^{-1/\delta})$: $f'(x) > 0$ (function increasing)
\item For $x \in (e^{-1/\delta}, 1)$: $f'(x) < 0$ (function decreasing)
\end{itemize}
Thus, $x = e^{-1/\delta}$ is the global maximum point.

\textbf{Step 4: Maximum Value Calculation}\\
Evaluating $f$ at the critical point:
\[
f(e^{-1/\delta}) = (e^{-1/\delta})^\delta \cdot \frac{1}{\delta} = \frac{1}{\delta e}
\]

\textbf{Step 5: Boundary Behavior}\\
\begin{itemize}
\item As $x \to 0^+$:
\[
\lim_{x\to 0^+} x^\delta |\ln x| = \lim_{x\to 0^+} \frac{|\ln x|}{x^{-\delta}} = \lim_{x\to 0^+} \frac{-1/x}{-\delta x^{-\delta-1}} = \lim_{x\to 0^+} \frac{x^\delta}{\delta} = 0
\]
\item As $x \to 1^-$:
\[
\lim_{x\to 1^-} x^\delta |\ln x| = 0
\]
\end{itemize}

\textbf{Conclusion}\\
The supremum is attained at $x = e^{-1/\delta}$, giving:
\[
|\ln x| \leq \frac{1}{\delta e} x^{-\delta} \quad \forall x \in (0,1)
\]
Taking $K_\delta = \frac{1}{\delta e}$ and substituting $x = t-s$ completes the proof.
\end{proof}

\begin{proposition}[Logarithmic Growth-Control Inequality]\label{prop:log_growth}
For any $\alpha > 0$, there exists a constant $C_\alpha > 0$ such that for all $x > 1$, the following inequality holds:
\begin{equation}\label{eq:log_growth}
|\ln x| \leq C_\alpha x^\alpha.
\end{equation}
Specifically, one may take $C_\alpha = \frac{1}{\alpha}$.
\end{proposition}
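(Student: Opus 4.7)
The plan is to follow the same calculus-based extremum strategy used in Proposition~\ref{prop:log_inequality}. Since $x > 1$ and $\alpha > 0$ force $\ln x > 0$, the absolute value is unnecessary and the claim reduces to $\ln x \leq C_\alpha\, x^\alpha$ on $(1,\infty)$. Two natural routes are available: (i) show that the auxiliary function $g(x) := x^\alpha/\alpha - \ln x$ is nonnegative on $[1,\infty)$ via a monotonicity argument, or (ii) show that $h(x) := \ln x / x^\alpha$ is bounded above by determining its global maximum. Route (i) yields exactly the stated constant $C_\alpha = 1/\alpha$, while route (ii) refines it to $1/(\alpha e)$.

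To execute route (i), which I would present in the proof, I would first compute
\[
g'(x) \;=\; x^{\alpha-1} - \frac{1}{x} \;=\; \frac{x^\alpha - 1}{x},
\]
observe that $g'(x) > 0$ for all $x > 1$ because $\alpha > 0$ forces $x^\alpha > 1$, and conclude that $g$ is strictly increasing on $[1,\infty)$. Combined with the boundary value $g(1) = 1/\alpha > 0$, this gives $g(x) \geq 1/\alpha$ throughout $[1,\infty)$, and in particular $\ln x \leq x^\alpha/\alpha$, which is the asserted bound with $C_\alpha = 1/\alpha$. For the reader I would briefly note the sharper alternative: setting $h'(x) = 0$ yields the critical point $x^\ast = e^{1/\alpha}$, at which $h(x^\ast) = 1/(\alpha e)$, recovering the classical optimal constant consistent with the scaling seen in Proposition~\ref{prop:log_inequality}.

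There is no substantive obstacle in this argument; it is a routine single-variable calculus exercise. The only detail requiring care is the boundary point $x = 1$, where $g'(1) = 0$ and $g(1) = 1/\alpha$, so one must argue via strict monotonicity on the open half-line $(1,\infty)$ together with continuity at $x = 1$ rather than claim $g' > 0$ on the closed interval. Beyond that, the argument is essentially dual to the proof of Proposition~\ref{prop:log_inequality}: the earlier proposition controls the logarithm near the singularity $x \to 0^+$ of the kernel $(t-s)^{\alpha(t)-1}$, while this proposition controls its growth at infinity. Together they form the analytical toolkit used in later sections to dominate logarithmic factors by power-law factors in both regimes of the variable-order kernel.
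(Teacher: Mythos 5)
Your proof is correct and is essentially the paper's own argument with the sign of the auxiliary function flipped: your $g(x) = x^\alpha/\alpha - \ln x$ is exactly $-f_\alpha(x)$ from the paper, and both proofs reduce to the same monotonicity computation $g'(x) = (x^\alpha-1)/x > 0$ on $(1,\infty)$ together with the boundary value $1/\alpha$ at $x=1$. The aside about the sharper constant $1/(\alpha e)$ is a correct bonus but not needed for the stated claim.
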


\begin{proof}
\textbf{Step 1: Auxiliary Function Construction}\\
Define $f_\alpha(x) := \ln x - \frac{x^\alpha}{\alpha}$ for $x > 1$ and $\alpha > 0$. Its derivative is:
\[
f'_\alpha(x) = \frac{1}{x} - x^{\alpha-1} = \frac{1 - x^{\alpha}}{x}.
\]

\textbf{Step 2: Monotonicity Analysis}\\
For $x > 1$, we have $x^\alpha > 1$, hence $1 - x^\alpha < 0$. This implies $f'_\alpha(x) < 0$ for all $x > 1$. Therefore, $f_\alpha$ is strictly decreasing on $(1,\infty)$.

\textbf{Step 3: Supremum Analysis}\\
Consider the limit at the left endpoint:
\[
L := \lim_{x \to 1^+} f_\alpha(x) = \lim_{x \to 1^+} \ln x - \lim_{x \to 1^+} \frac{x^\alpha}{\alpha} = 0 - \frac{1}{\alpha} = -\frac{1}{\alpha}.
\]

We now show that $\sup_{x > 1} f_\alpha(x) = L$. 

First, to show $L$ is an upper bound: For any fixed $x_0 > 1$, since $f_\alpha$ is strictly decreasing on $(1, \infty)$, for all $x \in (1, x_0)$ we have $f_\alpha(x) > f_\alpha(x_0)$. Taking the limit as $x \to 1^+$ and applying the order preservation property of limits yields:
\[
L = \lim_{x \to 1^+} f_\alpha(x) \geq f_\alpha(x_0).
\]
Thus, $f_\alpha(x) \leq L$ for all $x > 1$.

Second, to show $L$ is the least upper bound: For any $\epsilon > 0$, by the definition of limit, there exists $\delta > 0$ such that for all $x \in (1, 1+\delta)$, we have $f_\alpha(x) > L - \epsilon$. Therefore, no number smaller than $L$ can be an upper bound.

We conclude that:
\[
\sup_{x > 1} f_\alpha(x) = L = -\frac{1}{\alpha}.
\]

\textbf{Step 4: Inequality Derivation}\\
From Step 3, we have $f_\alpha(x) \leq -\frac{1}{\alpha} < 0$ for all $x > 1$, which implies:
\[
\ln x < \frac{x^\alpha}{\alpha} \quad \text{for all } x > 1.
\]
Since $\ln x > 0$ for $x > 1$, this proves that inequality \eqref{eq:log_growth} holds with $C_\alpha = \frac{1}{\alpha}$.
\end{proof}

\begin{remark}[Unified Logarithmic Control]\label{rem:unified_log}
Combining Proposition \ref{prop:log_inequality} and \ref{prop:log_growth}, for any $\delta > 0$, $\alpha > 0$:
\[
|\ln x| \leq K_{\delta,\alpha} \cdot 
\begin{cases} 
x^{-\delta} & 0 < x \leq 1, \\ 
x^{\alpha} & x > 1,
\end{cases}
\]
where $K_{\delta,\alpha} = \max\left( \frac{1}{\delta e}, \frac{1}{\alpha} \right)$. This provides comprehensive control for the analysis of singular integrals arising in variable-order calculus.
\end{remark}

\subsection{Itô Calculus Foundations}
\begin{lemma}[Itô Isometry, \cite{Ito1944,Gosselin1997}]\label{lem:ito}
Let $(\Omega, \mathcal{F}, \{\mathcal{F}_t\}_{t\geq 0}, \mathbb{P})$ be a filtered probability space satisfying the usual conditions, and let $\{B(t)\}_{t\geq 0}$ be a standard $\{\mathcal{F}_t\}$-Brownian motion.

Let $f(t,\omega): [0,T] \times \Omega \to \mathbb{R}$ be an $\{\mathcal{F}_t\}$-adapted measurable process satisfying:
\begin{equation}\label{eq:ito_condition}
\mathbb{E}\left[\int_0^T |f(t,\omega)|^2 dt\right] < \infty \quad \text{(i.e., $f \in \mathcal{L}^2([0,T] \times \Omega)$)}.
\end{equation}

Then the Itô integral $\int_0^T f(t,\omega) dB(t,\omega)$ is well-defined and satisfies:
\begin{equation}\label{eq:ito_isometry}
\mathbb{E}\left[\left|\int_0^T f(t,\omega) dB(t,\omega)\right|^2\right] = \mathbb{E}\left[\int_0^T |f(t,\omega)|^2 dt\right].
\end{equation}

Moreover, for any $f,g \in \mathcal{L}^2([0,T] \times \Omega)$, we have the polarization identity:
\[
\mathbb{E}\left[\left(\int_0^T f(t,\omega) dB(t,\omega)\right)\left(\int_0^T g(t,\omega) dB(t,\omega)\right)\right] = \mathbb{E}\left[\int_0^T f(t,\omega)g(t,\omega) dt\right].
\]
\end{lemma}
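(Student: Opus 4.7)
The plan is to follow the standard construction of the Itô integral and obtain the isometry by first establishing it on a dense subclass, then extending by continuity. I would begin by introducing the class $\mathcal{E}$ of \emph{simple} $\{\mathcal{F}_t\}$-adapted processes of the form $f(t,\omega) = \sum_{i=0}^{n-1} \xi_i(\omega)\mathbf{1}_{(t_i, t_{i+1}]}(t)$, where $0 = t_0 < t_1 < \cdots < t_n = T$ and each $\xi_i$ is $\mathcal{F}_{t_i}$-measurable and square-integrable. For such $f$, the integral is defined pathwise as $\int_0^T f\, dB := \sum_{i=0}^{n-1} \xi_i (B(t_{i+1}) - B(t_i))$, which is unambiguous since the sum is finite.

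Second, I would verify \eqref{eq:ito_isometry} directly on $\mathcal{E}$. Expanding the square of the sum yields diagonal terms $\xi_i^2 (\Delta B_i)^2$ and cross terms $\xi_i \xi_j \Delta B_i \Delta B_j$ for $i \neq j$. For the diagonal terms, the $\mathcal{F}_{t_i}$-measurability of $\xi_i$ gives, via conditioning on $\mathcal{F}_{t_i}$, that $\mathbb{E}[\xi_i^2 (\Delta B_i)^2] = \mathbb{E}[\xi_i^2]\,(t_{i+1}-t_i)$. For the cross terms with $i<j$, conditioning on $\mathcal{F}_{t_j}$ shows that the product $\xi_i \xi_j \Delta B_i$ is $\mathcal{F}_{t_j}$-measurable while $\Delta B_j$ has conditional mean zero, so these vanish by the tower property. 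Summing over $i$ then gives $\mathbb{E}\big[\int_0^T |f|^2 dt\big]$, proving the isometry on $\mathcal{E}$.

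Third, I would invoke the density of $\mathcal{E}$ in $\mathcal{L}^2([0,T]\times\Omega)$: any adapted $f$ with $\mathbb{E}[\int_0^T |f|^2 dt] < \infty$ can be approximated in the $L^2$-norm by a sequence $\{f_n\} \subset \mathcal{E}$. This is achieved by the standard three-step truncation/mollification/discretization procedure (bounded approximation, time-regularization via a left-continuous mollifier to preserve adaptedness, then partition along a refining mesh). The isometry already established on $\mathcal{E}$ shows that $f \mapsto \int_0^T f\, dB$ is a linear isometry from $\mathcal{E}$ into $L^2(\Omega)$, so by the bounded linear transformation theorem it extends uniquely to an isometry on the closure $\mathcal{L}^2([0,T]\times\Omega)$. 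Passing to the limit in both sides of the isometry applied to $f_n$ gives \eqref{eq:ito_isometry} in general, and the polarization identity then follows by applying the isometry to $f+g$ and $f-g$ and using bilinearity.

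The main technical obstacle is the density step: one must ensure that the approximating step processes remain $\{\mathcal{F}_t\}$-adapted, which requires evaluating each $\xi_i$ using information available strictly by time $t_i$. This is where the left-continuous mollification (rather than a symmetric one) and the usual conditions on the filtration enter, and it is also where care is needed when $f$ is merely jointly measurable rather than continuous in $t$. All other steps are elementary computations with Brownian increments and limits in $L^2$.
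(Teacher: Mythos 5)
Your proposal follows the same route as the paper's proof: define the integral on simple adapted processes, verify the isometry there by splitting diagonal and cross terms and conditioning, extend by density and $L^2$-continuity, and deduce the polarization identity by bilinearity. The only difference is cosmetic — you insert a left-continuous mollification step in the density argument where the paper uses truncation plus left-endpoint time discretization — so the two arguments are essentially identical.
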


\begin{proof}
The proof follows the standard three-step construction of the Itô integral:

\textbf{Step 1: Elementary Processes}\\
For simple processes of the form:
\[ f(t,\omega) = \sum_{i=0}^{n-1} \xi_i(\omega) \mathbf{1}_{[t_i,t_{i+1})}(t) \]
with $\xi_i$ being $\mathcal{F}_{t_i}$-measurable and square-integrable, define:
\[ I(f) = \int_0^T f(t) dB(t) = \sum_{i=0}^{n-1} \xi_i (B(t_{i+1}) - B(t_i)). \]

The isometry follows from careful computation:
\begin{align*}
\mathbb{E}[|I(f)|^2] &= \sum_{i=0}^{n-1} \mathbb{E}[\xi_i^2 (B(t_{i+1}) - B(t_i))^2] \\
&\quad + 2\sum_{i<j} \mathbb{E}[\xi_i \xi_j (B(t_{i+1}) - B(t_i))(B(t_{j+1}) - B(t_j))] \\
&= \sum_{i=0}^{n-1} \mathbb{E}[\xi_i^2](t_{i+1}-t_i),
\end{align*}
where cross terms vanish since $\xi_i \xi_j (B(t_{i+1}) - B(t_i))$ is $\mathcal{F}_{t_j}$-measurable and $\mathbb{E}[B(t_{j+1}) - B(t_j)|\mathcal{F}_{t_j}] = 0$.

\textbf{Step 2: Density Construction}\\
The space $\mathcal{S}([0,T] \times \Omega)$ of simple processes is dense in $\mathcal{L}^2([0,T] \times \Omega)$. For general $f \in \mathcal{L}^2$, construct a sequence $\{f_n\} \subset \mathcal{S}$ via a two-step procedure:

\begin{itemize}
\item \textbf{Truncation}: Define $f^{(M)}(t,\omega) = f(t,\omega)\mathbf{1}_{\{|f(t,\omega)| \leq M\}}$ to handle unboundedness. By dominated convergence, $\lim_{M\to\infty} \|f^{(M)} - f\|_{\mathcal{L}^2} = 0$.

\item \textbf{Time discretization}: For each $n \in \mathbb{N}$, partition $[0,T]$ into $n$ equal subintervals and define:
\[
f_n(t,\omega) = \sum_{i=0}^{n-1} f^{(n)}(t_i^{(n)},\omega) \mathbf{1}_{[t_i^{(n)}, t_{i+1}^{(n)})}(t)
\]
where $t_i^{(n)} = iT/n$. This yields a simple process adapted to the discrete filtration.
\end{itemize}

The convergence $\lim_{n\to\infty} \|f_n - f\|_{\mathcal{L}^2} = 0$ follows by combining the truncation and discretization effects via the triangle inequality.

\textbf{Step 3: Isometric Extension}\\
The sequence $\{I(f_n)\}$ forms a Cauchy sequence in $L^2(\Omega)$. Indeed, for any $n, m \in \mathbb{N}$:
\begin{align*}
\|I(f_n) - I(f_m)\|_{L^2(\Omega)}^2 &= \mathbb{E}[|I(f_n) - I(f_m)|^2] \\
&= \mathbb{E}[|I(f_n - f_m)|^2] \quad \text{(by linearity on simple processes)} \\
&= \|f_n - f_m\|_{\mathcal{L}^2}^2 \quad \text{(by Step 1 isometry)} \to 0.
\end{align*}

By completeness of $L^2(\Omega)$, there exists a unique $X \in L^2(\Omega)$ such that:
\[ \lim_{n\to\infty} \|I(f_n) - X\|_{L^2(\Omega)} = 0. \]

We define the Itô integral for general $f \in \mathcal{L}^2$ by:
\[ I(f) := \int_0^T f(t) dB(t) = X. \]

The isometry extends by continuity of the norm:
\begin{align*}
\|I(f)\|_{L^2(\Omega)}^2 &= \|X\|_{L^2(\Omega)}^2 \\
&= \lim_{n\to\infty} \|I(f_n)\|_{L^2(\Omega)}^2 \quad \text{(norm continuity)} \\
&= \lim_{n\to\infty} \|f_n\|_{\mathcal{L}^2}^2 \quad \text{(Step 1 isometry)} \\
&= \|f\|_{\mathcal{L}^2}^2. \quad \text{(norm continuity)}
\end{align*}

\textbf{Polarization Identity}\\
The polarization identity follows directly from the isometry property and the algebraic polarization identity:
\begin{align*}
\mathbb{E}[I(f)I(g)] &= \frac{1}{4}\left(\mathbb{E}[|I(f)+I(g)|^2] - \mathbb{E}[|I(f)-I(g)|^2]\right) \\
&= \frac{1}{4}\left(\|f+g\|_{\mathcal{L}^2}^2 - \|f-g\|_{\mathcal{L}^2}^2\right) \quad \text{(by linearity and isometry)} \\
&= \mathbb{E}\left[\int_0^T f(t)g(t) dt\right]. \quad \text{(since $\langle f,g\rangle_{\mathcal{L}^2} = \frac{1}{4}(\|f+g\|^2 - \|f-g\|^2)$)}
\end{align*}
\end{proof}
\section{On a Time-Varying Fractional Brownian Motion Model: Construction and Local Properties}\label{sec:tvfbm}
This section presents a specific construction of time-varying fractional Brownian motion and examines its fundamental characteristics. We develop the stochastic integral representation, establish variance properties, and analyze the local behavior through increment variances and regularity estimates.
\subsection{Model Construction}
\begin{definition}[Time-Varying Fractional Brownian Motion (TV-fBm)]\label{def:tvfbm}
The \textbf{(TV-fBm)} process $\{B^{H(\cdot)}(t)\}_{t \geq 0}$ is defined as follows. Let $H: [0,T] \to (0,1)$ be a \textbf{H\"older continuous} function (Definition \ref{def:holder}) with exponent $\gamma$, satisfying the \textbf{critical regularity condition}:
\begin{equation}\label{eq:critical_condition}
\gamma > \sup_{t \in [0,T]} H(t).
\end{equation}
Building upon the variable-order integration framework (Definition \ref{def:varint}), the TV-fBm admits two equivalent representations:

\begin{enumerate}
    \item \textbf{Stochastic Integral Form}:
    \begin{equation}\label{eq:tvfbm_ito}
    B^{H(\cdot)}(t) := \sqrt{2H(t)} \int_{0}^{t} (t-s)^{H(t)-1/2}  dB_s
    \end{equation}
    where $\{B_s\}_{s \geq 0}$ is a standard Brownian motion. 
    
    \item \textbf{Riemann-Liouville Operator Form}:
    \begin{equation}\label{eq:tvfbm_rl}
    B^{H(\cdot)}(t) = \sqrt{2H(t)}  \Gamma\left(H(t) + \frac{1}{2}\right) \left(_{0}^{RL}I_{t}^{H(t)+1/2} \dot{B}\right)(t)
    \end{equation}
    where:
    \begin{itemize}
        \item $_{0}^{RL}I_{t}^{\alpha(\cdot)}$ is the variable-order Riemann-Liouville integral from Definition \ref{def:varint}
        \item $\dot{B}(t) := \frac{dB_t}{dt}$ denotes \textbf{Gaussian white noise}
        \item $\Gamma(\cdot)$ is the Gamma function
    \end{itemize}
\end{enumerate}

\noindent \textbf{Mathematical Consistency}: The two representations are equivalent through the operator identity:
\[
\int_{0}^{t} (t-s)^{\alpha(t)-1} \dot{B}(s)  ds = \Gamma(\alpha(t)) \left(_{0}^{RL}I_{t}^{\alpha(\cdot)} \dot{B}\right)(t), \quad \alpha(t) = H(t) + \frac{1}{2}.
\]

\noindent \textbf{Core Innovations}:
\begin{itemize}
    \item \textit{Dynamic Hurst Adaptation}: The explicit $H(t)$-dependence in both the kernel $(t-s)^{H(t)-1/2}$ and normalization factor $\sqrt{2H(t)}$ generalizes constant-$H$ fBm \cite{mandelbrot1968} to non-stationary regimes.
    
    \item \textit{Regularity-Preserving Construction}: Condition \eqref{eq:critical_condition} controls kernel singularity at $s=t$ (as established in Remark \ref{rem:holder_condition}).
    
    \item \textit{Exact Scaling Preservation}: Maintains the fundamental variance scaling law $\mathbb{E}[(B^{H(\cdot)}(t))^2] = t^{2H(t)}$ (Theorem \ref{thm:variance}), extending Mandelbrot's classical result to time-varying Hurst indices.
\end{itemize}
\end{definition}

\subsection{Theoretical Properties}

\setcounter{theorem}{0}
\setcounter{remark}{0}
\begin{theorem}[Variance Structure]\label{thm:variance}
Let $H:[0,T]\to(0,1)$ be a Hölder continuous function in the sense of Definition~\ref{def:holder}. For any $t \in [0,T]$, the time-varying fractional Brownian motion $B^{H(\cdot)}$ defined in Definition~\ref{def:tvfbm} satisfies the exact scaling law:
\begin{equation}\label{eq:variance_scaling}
\mathbb{E}\left[\left(B^{H(\cdot)}(t)\right)^2\right] = t^{2H(t)}.
\end{equation}
\end{theorem}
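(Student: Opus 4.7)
The plan is to reduce the variance computation directly to an application of the Itô isometry (Lemma~\ref{lem:ito}) followed by a single elementary integral. For fixed $t \in [0,T]$, the integrand $s \mapsto \sqrt{2H(t)}\,(t-s)^{H(t)-1/2}$ appearing in \eqref{eq:tvfbm_ito} is deterministic (hence trivially $\{\mathcal{F}_s\}$-adapted), so the only hypothesis of Lemma~\ref{lem:ito} that requires checking is square-integrability on $[0,t]$.

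First I would verify the $\mathcal{L}^2$ condition. Since the integrand is deterministic, this reduces to showing
\[
\int_0^t 2H(t)\,(t-s)^{2H(t)-1}\,ds < \infty.
\]
The exponent $2H(t)-1$ lies in $(-1,1)$ because $H(t) \in (0,1)$ by assumption, so the integrand is locally integrable at the singularity $s=t$, and the integral is finite. This is precisely the kernel integrability point already anticipated in Remark~\ref{rem:holder_condition}, though here it follows merely from $H(t) > 0$ and does not require the critical Hölder constraint \eqref{eq:critical_condition}.

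Second, I would apply the Itô isometry \eqref{eq:ito_isometry} to obtain
\[
\mathbb{E}\!\left[\left(B^{H(\cdot)}(t)\right)^2\right]
= \mathbb{E}\!\left[\int_0^t 2H(t)\,(t-s)^{2H(t)-1}\,ds\right]
= 2H(t)\int_0^t (t-s)^{2H(t)-1}\,ds,
\]
where the expectation drops because the integrand is deterministic. The change of variables $u = t-s$, $du = -ds$, transforms this into $2H(t)\int_0^t u^{2H(t)-1}\,du$, and the antiderivative evaluates cleanly since $2H(t) > 0$, yielding $2H(t) \cdot \frac{t^{2H(t)}}{2H(t)} = t^{2H(t)}$, which is exactly \eqref{eq:variance_scaling}.

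There is essentially no serious obstacle in this proof: the normalization factor $\sqrt{2H(t)}$ in Definition~\ref{def:tvfbm} was engineered precisely so that the constant $2H(t)$ produced by squaring cancels the $\frac{1}{2H(t)}$ arising from integrating $u^{2H(t)-1}$. The only conceptual subtlety worth flagging explicitly is that $t$ is held fixed throughout: both the exponent in the kernel and the Gaussian-noise weight are treated as deterministic constants in $s$, so no variable-order complications (which will become genuinely delicate in, e.g., the covariance analysis of Theorem~\ref{thm:covariance}) arise at this stage.
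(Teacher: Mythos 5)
Your proposal is correct and follows essentially the same route as the paper's proof: verify square-integrability of the deterministic kernel (using only $H(t)\in(0,1)$), apply the It\^o isometry, and evaluate the resulting elementary integral so that the normalization $\sqrt{2H(t)}$ cancels the factor $\tfrac{1}{2H(t)}$. Your side remark that the critical regularity condition \eqref{eq:critical_condition} is not actually needed here is accurate and consistent with the paper's own argument.
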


\begin{proof}
The proof consists of two parts: integrability verification and explicit computation.

\textbf{Part 1: Integrability of the Kernel}\\
For fixed $t \in [0,T]$, consider the kernel function:
\[
K_{H(t)}(t,s) := (t-s)^{H(t)-1/2}\mathbf{1}_{[0,t]}(s).
\]
The $L^2$-integrability condition requires:
\[
\int_0^t |K_{H(t)}(t,s)|^2 ds = \int_0^t (t-s)^{2H(t)-1} ds < \infty.
\]
Since $H(t) \in (0,1)$, the exponent satisfies $2H(t)-1 > -1$, guaranteeing convergence of the integral. Explicit computation yields:
\begin{equation}\label{eq:kernel_integral}
\int_0^t (t-s)^{2H(t)-1} ds = \left.\frac{-(t-s)^{2H(t)}}{2H(t)}\right|_{s=0}^t = \frac{t^{2H(t)}}{2H(t)}.
\end{equation}

\textbf{Part 2: Variance Computation via Itô Isometry}\\
Applying Lemma~\ref{lem:ito} (Itô isometry) to the stochastic integral representation in Definition~\ref{def:tvfbm}:
\begin{align*}
\mathbb{E}\left[\left(B^{H(\cdot)}(t)\right)^2\right] 
&= \mathbb{E}\left[\left(\sqrt{2H(t)}\int_0^t (t-s)^{H(t)-1/2} dB(s)\right)^2\right] \\
&= 2H(t) \int_0^t (t-s)^{2H(t)-1} ds \quad \text{(by Itô isometry)} \\
&= t^{2H(t)} \quad \text{(using \eqref{eq:kernel_integral})}.
\end{align*}
This completes the proof. \qedhere
\end{proof}

\begin{remark}\label{rem:classical}When $H(t) \equiv H_0$ is constant, this reduces to the classical fBm variance scaling $\mathbb{E}[B^{H_0}(t)^2] = t^{2H_0}$, recovering consistency with Mandelbrot's original formulation \cite{mandelbrot1968}.
\end{remark}

\begin{theorem}[Square-Root Control of Time-Varying Hurst Index]\label{thm:sqrt_control}
Let $H: [0,T] \to (0,1)$ satisfy Definition \ref{def:holder} with Hölder constant $C_H$ and exponent $\gamma$, and assume $\inf_{t \in [0,T]} H(t) \geq H_{\inf} > 0$. Then for any $t, t+\epsilon \in [0,T]$:
\begin{equation}\label{eq:sqrt_diff}
\left| \sqrt{H(t+\epsilon)} - \sqrt{H(t)} \right| \leq \frac{C_H}{2\sqrt{H_{\inf}}} \epsilon^\gamma.
\end{equation}
Consequently, the normalization factor in Definition \ref{def:tvfbm} satisfies:
\begin{equation}\label{eq:sqrt_norm}
\left| \sqrt{2H(t+\epsilon)} - \sqrt{2H(t)} \right| \leq \frac{C_H}{\sqrt{2H_{\inf}}} \epsilon^\gamma =: D\epsilon^\gamma.
\end{equation}
\end{theorem}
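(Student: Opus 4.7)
The plan is to reduce both inequalities to the single elementary fact that the map $x \mapsto \sqrt{x}$ is Lipschitz away from the origin with quantitative constant controlled by the infimum, and then to feed in the Hölder estimate on $H$.

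First, I would establish the pointwise algebraic identity
\[
\sqrt{H(t+\epsilon)} - \sqrt{H(t)} = \frac{H(t+\epsilon) - H(t)}{\sqrt{H(t+\epsilon)} + \sqrt{H(t)}},
\]
obtained by multiplying numerator and denominator by the conjugate $\sqrt{H(t+\epsilon)}+\sqrt{H(t)}$. (Equivalently, one could invoke the mean value theorem applied to $\sqrt{\cdot}$ on the interval between $H(t)$ and $H(t+\epsilon)$, both of which lie in $[H_{\inf},1)$, yielding the derivative bound $1/(2\sqrt{\xi})\leq 1/(2\sqrt{H_{\inf}})$.) Either route is equivalent; the conjugate identity is perhaps slightly cleaner and avoids invoking an intermediate point.

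Next, I would bound the denominator from below using the assumption $\inf_{t\in[0,T]} H(t) \geq H_{\inf} > 0$, which gives
\[
\sqrt{H(t+\epsilon)} + \sqrt{H(t)} \geq 2\sqrt{H_{\inf}}.
\]
I would then bound the numerator from above by the Hölder hypothesis on $H$ from Definition~\ref{def:holder}, namely $|H(t+\epsilon) - H(t)| \leq C_H \epsilon^\gamma$. Combining these two bounds with the identity above immediately produces \eqref{eq:sqrt_diff}.

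For the normalization statement \eqref{eq:sqrt_norm}, I would simply factor out $\sqrt{2}$:
\[
\bigl|\sqrt{2H(t+\epsilon)} - \sqrt{2H(t)}\bigr| = \sqrt{2}\,\bigl|\sqrt{H(t+\epsilon)} - \sqrt{H(t)}\bigr|,
\]
apply \eqref{eq:sqrt_diff}, and simplify $\sqrt{2}\cdot\frac{C_H}{2\sqrt{H_{\inf}}} = \frac{C_H}{\sqrt{2H_{\inf}}}$, giving the stated constant $D$. There is no real obstacle here; the only subtlety worth flagging in the writeup is that the hypothesis $H_{\inf} > 0$ is essential — without it the estimate blows up, reflecting the genuine non-Lipschitz behavior of $\sqrt{\cdot}$ at the origin. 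This is why the theorem is stated with a uniform lower bound on $H$, and any later application (e.g.\ controlling the normalization factor $\sqrt{2H(t)}$ in Definition~\ref{def:tvfbm} under small perturbations of $t$) should verify this positivity assumption.
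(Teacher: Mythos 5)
Your proposal is correct and takes essentially the same approach as the paper: the paper applies the mean value theorem to $\sqrt{\cdot}$ on the interval between $H(t)$ and $H(t+\epsilon)$ and bounds the intermediate point below by $H_{\inf}$, which is the alternative route you yourself flag as equivalent to your conjugate identity, and both yield the identical constant $\frac{C_H}{2\sqrt{H_{\inf}}}$. The second part, factoring out $\sqrt{2}$ and simplifying, matches the paper's Step 3 exactly.
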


\begin{proof}
\textbf{Step 1: Mean Value Theorem Application.}  
For $f(x) = \sqrt{x}$, there exists $\xi \in [\min(H(t), H(t+\epsilon)), \max(H(t), H(t+\epsilon))]$ such that:
\[
\sqrt{H(t+\epsilon)} - \sqrt{H(t)} = \frac{H(t+\epsilon) - H(t)}{2\sqrt{\xi}}.
\]

\textbf{Step 2: Hölder Condition Bound.}  
By Definition \ref{def:holder}, $|H(t+\epsilon) - H(t)| \leq C_H \epsilon^\gamma$. Since $\xi \geq H_{\inf} > 0$:
\[
\left| \frac{H(t+\epsilon) - H(t)}{2\sqrt{\xi}} \right| \leq \frac{C_H \epsilon^\gamma}{2\sqrt{H_{\inf}}}.
\]

\textbf{Step 3: Normalization Factor Control.}  
For $\sqrt{2H(\cdot)}$, multiply through by $\sqrt{2}$:
\[
\left| \sqrt{2H(t+\epsilon)} - \sqrt{2H(t)} \right| = \sqrt{2} \left| \sqrt{H(t+\epsilon)} - \sqrt{H(t)} \right| \leq \frac{C_H}{\sqrt{2H_{\inf}}} \epsilon^\gamma.
\]
\end{proof}

\begin{remark}\label{rem:holder_path}
Theorem \ref{thm:sqrt_control} ensures that the normalization factor $\sqrt{2H(t)}$ in Definition \ref{def:tvfbm} inherits the H\"older regularity of $H(t)$. This is a crucial technical ingredient for establishing the pathwise regularity of TV-fBm, as developed in the subsequent local analysis (Theorem \ref{thm:local_increment_var}).
\end{remark}

\begin{theorem}[TV-fBm Local Dominant Increment Variance Asymptotics]\label{thm:local_increment_var}
Let the time-varying Hurst exponent function \( H : [0, T] \to (0, 1) \) satisfy Definition~\ref{def:holder} with Hölder exponent \(\gamma > \sup_{t \in [0,T]} H(t)\). For any fixed time \( t \geq 0 \) and sufficiently small \(\epsilon > 0\), define the \textbf{local dominant increment} as:
\begin{equation}\label{eq:local_increment}
I_1^{\mathrm{loc}} := \sqrt{2H(t + \epsilon)} \int_{t}^{t+\epsilon} (t + \epsilon - s)^{H(t+\epsilon)-1/2} dB(s).
\end{equation}
This increment represents the principal component generated exclusively by the new noise over the interval \([t, t+\epsilon]\), distinct from the path history prior to \(t\). Its variance satisfies the asymptotic relation:
\begin{equation}\label{eq:local_var_asymptotics}
\mathbb{E}\left[\left(I_1^{\mathrm{loc}}\right)^2\right] = \epsilon^{2H(t)} \left(1 + O(\epsilon^\lambda)\right), \quad \lambda = \min\left(\gamma - H(t), \frac{\gamma + H(t)}{2}\right),
\end{equation}
where the \(O(\cdot)\) notation indicates that the remainder term is bounded by \(C\epsilon^\lambda\) for some constant \(C > 0\) and sufficiently small \(\epsilon\).
\end{theorem}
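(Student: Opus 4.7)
The plan is to reduce the variance computation to a closed-form expression via the Itô isometry, after which the asymptotics become a purely deterministic question about how $\epsilon^{2H(t+\epsilon)}$ compares to $\epsilon^{2H(t)}$ under the Hölder regularity of $H$. First, since the integrand $(t+\epsilon-s)^{H(t+\epsilon)-1/2}\mathbf{1}_{[t,t+\epsilon]}(s)$ is deterministic and square-integrable by the same argument as in Theorem~\ref{thm:variance}, I apply Lemma~\ref{lem:ito} to obtain
\[
\mathbb{E}\bigl[(I_1^{\mathrm{loc}})^2\bigr] \;=\; 2H(t+\epsilon)\int_t^{t+\epsilon}(t+\epsilon-s)^{2H(t+\epsilon)-1}\,ds \;=\; 2H(t+\epsilon)\cdot\frac{\epsilon^{2H(t+\epsilon)}}{2H(t+\epsilon)} \;=\; \epsilon^{2H(t+\epsilon)}.
\]
The variance therefore admits an exact expression, and the entire task reduces to showing $\epsilon^{2H(t+\epsilon)} = \epsilon^{2H(t)}\bigl(1+O(\epsilon^{\lambda})\bigr)$, i.e., factoring out the leading scale and controlling the ratio $\epsilon^{2(H(t+\epsilon)-H(t))}-1$.

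For the ratio estimate, I set $\Delta := 2(H(t+\epsilon)-H(t))$ and write $\epsilon^{\Delta} = \exp(\Delta\log\epsilon)$. Hölder continuity (Definition~\ref{def:holder}) immediately yields $|\Delta|\leq 2C_H\epsilon^{\gamma}$, but the logarithm diverges as $\epsilon\to 0^+$, so a naive bound is insufficient. Here I invoke the logarithmic power-control inequality (Proposition~\ref{prop:log_inequality}): for each $\delta\in(0,\gamma)$ one has $|\log\epsilon|\leq K_\delta\,\epsilon^{-\delta}$, hence
\[
|\Delta\log\epsilon|\;\leq\;2C_H K_\delta\,\epsilon^{\gamma-\delta} \;\xrightarrow[\epsilon\to 0^+]{}\;0.
\]
Once $|\Delta\log\epsilon|\leq 1$, the elementary bound $|e^x-1|\leq 2|x|$ on $[-1,1]$ produces $|\epsilon^{\Delta}-1|\leq 4C_H K_\delta\,\epsilon^{\gamma-\delta}$, giving a clean remainder $O(\epsilon^{\gamma-\delta})$.

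To recover the particular exponent claimed in the theorem, I select two distinguished values of $\delta$: the choice $\delta=H(t)$ yields the remainder exponent $\gamma-H(t)$, while the choice $\delta=(\gamma-H(t))/2$ yields $(\gamma+H(t))/2$. Retaining the worse of the two—namely $\lambda=\min\bigl(\gamma-H(t),(\gamma+H(t))/2\bigr)$—produces a single estimate valid in all regimes, and the regularity constraint $\gamma>\sup_{t}H(t)$ from Remark~\ref{rem:holder_condition} ensures $\lambda>0$, so the remainder is genuinely subdominant to the leading term $\epsilon^{2H(t)}$.

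The only non-routine step is the passage from $\epsilon^{\Delta}$ to $1+O(\epsilon^{\lambda})$: because $\Delta$ vanishes polynomially in $\epsilon$ while $|\log\epsilon|$ diverges, one cannot extract a clean power-law remainder without the logarithmic inequality. This is precisely why Proposition~\ref{prop:log_inequality} is positioned in the preliminaries, and the specific form of $\lambda$ encodes the trade-off between the exponent gained from $|\Delta|\lesssim\epsilon^{\gamma}$ and the exponent lost to $|\log\epsilon|\lesssim\epsilon^{-\delta}$. Everything else—the Itô isometry computation and the algebraic factoring—is mechanical.
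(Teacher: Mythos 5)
Your proposal is correct and follows essentially the same route as the paper's proof: Itô isometry gives the exact variance $\epsilon^{2H(t+\epsilon)}$, the ratio is written as $\exp(2[H(t+\epsilon)-H(t)]\ln\epsilon)$, and the Hölder bound combined with Proposition~\ref{prop:log_inequality} and an elementary bound on $|e^x-1|$ yields the power-law remainder. Your way of extracting the exponent—making the two fixed choices $\delta=H(t)$ and $\delta=(\gamma-H(t))/2$ and keeping the weaker of the resulting bounds—is a slightly cleaner path to the stated $\lambda=\min\bigl(\gamma-H(t),(\gamma+H(t))/2\bigr)$ than the paper's case analysis at the threshold $\gamma=2H(t)$, but it is a cosmetic rather than structural difference.
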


\begin{proof}
\textbf{Step 1: Theoretical Foundation and Notation Clarification}\\
We emphasize that this theorem characterizes exclusively the variance contribution from the \emph{new} noise over \([t, t+\epsilon]\), independent of the path history prior to \(t\). Applying Definition \ref{def:tvfbm} and Lemma \ref{lem:ito} (Itô isometry) to the increment defined in \eqref{eq:local_increment}:
\[
\mathbb{E}\left[\left(I_1^{\mathrm{loc}}\right)^2\right] = 2H(t+\epsilon) \int_{t}^{t+\epsilon} (t+\epsilon - s)^{2H(t+\epsilon) - 1} ds.
\]
Perform the substitution \( u = t+\epsilon - s \) (with \( du = -ds \), \( s = t \Rightarrow u = \epsilon \), \( s = t+\epsilon \Rightarrow u = 0 \)):
\begin{align*}
\mathbb{E}\left[\left(I_1^{\mathrm{loc}}\right)^2\right] 
&= 2H(t+\epsilon) \int_{\epsilon}^{0} u^{2H(t+\epsilon) - 1} (-du) \\
&= 2H(t+\epsilon) \int_{0}^{\epsilon} u^{2H(t+\epsilon) - 1} du.
\end{align*}
Evaluate the integral:
\[
\int_{0}^{\epsilon} u^{2H(t+\epsilon) - 1} du = \left. \frac{u^{2H(t+\epsilon)}}{2H(t+\epsilon)} \right|_{0}^{\epsilon} = \frac{\epsilon^{2H(t+\epsilon)}}{2H(t+\epsilon)}.
\]
Thus:
\[
\mathbb{E}\left[\left(I_1^{\mathrm{loc}}\right)^2\right] = 2H(t+\epsilon) \cdot \frac{\epsilon^{2H(t+\epsilon)}}{2H(t+\epsilon)} = \epsilon^{2H(t+\epsilon)}.
\]

\textbf{Step 2: Hölder Decomposition and Logarithmic Transformation}\\
Decompose the expression using the functional identity:
\[
\epsilon^{2H(t+\epsilon)} = \epsilon^{2H(t)} \cdot \exp\left(2\left[H(t+\epsilon) - H(t)\right] \ln \epsilon \right).
\]
By Definition \ref{def:holder} (Hölder continuity), there exists \( C_H > 0 \) such that:
\[
\left|H(t+\epsilon) - H(t)\right| \leq C_H \epsilon^\gamma.
\]
Thus, the exponent satisfies:
\[
\left|2\left[H(t+\epsilon) - H(t)\right] \ln \epsilon\right| \leq 2C_H \epsilon^\gamma |\ln \epsilon|.
\]

\textbf{Step 3: Logarithmic Asymptotic Control}\\
Apply Proposition \ref{prop:log_inequality} (logarithmic power-control inequality). For any \(\delta > 0\), \(\exists K_\delta > 0\) such that:
\[
|\ln \epsilon| \leq K_\delta \epsilon^{-\delta} \quad \forall \epsilon \in (0,1].
\]
Set \( x = 2\left[H(t+\epsilon) - H(t)\right] \ln \epsilon \). Then:
\[
|x| \leq 2C_H K_\delta \epsilon^{\gamma - \delta}.
\]

\textbf{Step 4: Case Analysis for Optimal Exponent}\\
We optimize the error exponent by choosing \(\delta\) differently in two regimes, with the threshold at \(\gamma = 2H(t)\):

\textbf{Case 1:} \(\gamma \leq 2H(t)\)\\
In this regime, a direct choice of \(\delta\) is optimal. Select \(\delta = \frac{\gamma - H(t)}{2} > 0\), which yields:
\[
\gamma - \delta = \frac{\gamma + H(t)}{2}.
\]
The error exponent is \(\frac{\gamma + H(t)}{2}\).

\textbf{Case 2:} \(\gamma > 2H(t)\)\\  
In this regime, the direct choice from Case 1 gives an exponent of \(\frac{\gamma + H(t)}{2}\). However, we observe that \(H(t) < \frac{\gamma + H(t)}{2}\), prompting us to seek a better exponent via a limiting argument. For any \(\eta > 0\), select \(\delta = \gamma - H(t) - \eta > 0\), yielding:
\[
\gamma - \delta = H(t) + \eta.
\]
The error is bounded by \(C_\eta \epsilon^{H(t) + \eta}\) for some constant \(C_\eta > 0\). Since this holds for all \(\eta > 0\), the effective error exponent is \(H(t)\).

\textbf{Step 5: Taylor Expansion and Asymptotic Upper Bound}\\
Since \(|x| \to 0\) as \(\epsilon \to 0^+\), apply the Taylor expansion:
\[
e^x = 1 + x + \frac{x^2}{2} + O(|x|^3) \quad \text{as } x \to 0.
\]
For \(|x| \leq 1\), the remainder satisfies the uniform bound:
\[
|e^x - 1| \leq |x| + \frac{|x|^2}{2} + \frac{|x|^3}{6} + \cdots \leq |x| \left(1 + \frac{1}{2} + \frac{1}{6} + \cdots\right) \leq |x|e.
\]
Thus we have the asymptotic upper bound:
\[
|e^x - 1| \leq e|x|.
\]
Given that \(|x| \leq A\epsilon^\lambda\) with \(A = 2C_H K_\delta\) and \(\lambda = \min\left(\gamma - H(t), \frac{\gamma + H(t)}{2}\right) > 0\), we obtain:
\[
|e^x - 1| \leq eA \epsilon^\lambda.
\]
This implies that \(e^x - 1 = O(\epsilon^\lambda)\), and therefore:
\[
e^x = 1 + O(\epsilon^\lambda).
\]
Substituting back:
\[
\epsilon^{2H(t+\epsilon)} = \epsilon^{2H(t)} \left(1 + O(\epsilon^\lambda)\right).
\]
This completes the proof of the asymptotic relation \eqref{eq:local_var_asymptotics}.
\end{proof}
\begin{remark}\label{rem:local_increment_implications}
Theorem \ref{thm:local_increment_var} offers several key insights:
\begin{itemize}
    \item \textbf{Path Regularity from Local Scaling}: The precise asymptotic relation for \(\mathbb{E}[(I_1^{\mathrm{loc}})^2]\) provides the necessary local scaling law from which the pointwise H\"older regularity of TV-fBm can be investigated, following the classical paradigm established for constant-Hurst fBm.

    \item \textbf{Optimization Strategy and Regimes}: The proof identifies two strategies for choosing the parameter \(\delta\), with the threshold at \(\gamma = 2H(t)\). Below this threshold, a direct choice is optimal, yielding exponent \(\frac{\gamma + H(t)}{2}\). Above it, a limiting argument becomes advantageous, improving the exponent to \(H(t)\). This threshold thus marks the point where pursuing a tighter bound through more sophisticated analysis becomes beneficial.

    \item \textbf{Isolating Local Behavior}: The construction of \(I_1^{\mathrm{loc}}\) is designed to isolate the stochastic integral over the local interval \([t, t+\epsilon]\) from the historical path. This offers a clarified perspective for analyzing the purely local dynamics of the non-stationary process.

    \item \textbf{Consistency of the Variable-Order Operator}: This result establishes that the variable-order integration operator, under suitable regularity conditions, inherits the fundamental scaling behavior of classical fractional calculus, thereby providing a natural extension to the time-dependent setting.
\end{itemize}
\end{remark}

\begin{corollary}[Gaussianity, Functional Representation, and Tail Behavior]\label{cor:gaussian_tail}
Under the assumptions of Theorem \ref{thm:local_increment_var}, the local dominant increment \( I_1^{\mathrm{loc}} \) defined in \eqref{eq:local_increment} satisfies:

\begin{enumerate}
   \item \textbf{Representation as an Itô Integral}:
    The local dominant increment is defined as an Itô integral with a deterministic kernel:
    \[
    I_1^{\mathrm{loc}} = \sqrt{2H(t+\epsilon)} \int_{t}^{t+\epsilon} (t+\epsilon - u)^{H(t+\epsilon)-1/2} dB(u).
    \]
    For a deterministic integrand, the Itô integral coincides with the Wiener integral and is characterized by the Itô isometry.
    
    \item \textbf{Gaussian Distribution}:
    \[
    I_1^{\mathrm{loc}} \sim \mathcal{N}\left(0, \sigma^2\right), \quad \sigma^2 = \epsilon^{2H(t)}(1 + O(\epsilon^\lambda))
    \]
    
    \item \textbf{Tail Probability Bound}: For any \( x > 0 \) and sufficiently small \( \epsilon > 0 \):
    \begin{equation}\label{eq:tail_general}
    \mathbb{P}\left(|I_1^{\mathrm{loc}}| > x\right) < \frac{2\sigma}{x\sqrt{2\pi}} e^{-x^2/(2\sigma^2)}.
    \end{equation}
    When \( x \geq \sigma \), this simplifies to:
    \begin{equation}\label{eq:tail_simple}
    \mathbb{P}\left(|I_1^{\mathrm{loc}}| > x\right) < 2 e^{-x^2/(2\sigma^2)}.
    \end{equation}
\end{enumerate}
\end{corollary}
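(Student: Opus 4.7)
The plan is to treat the three claims in order, since each one builds on the previous. For Part~1, the essential observation is that with $t$ and $\epsilon$ held fixed, the integrand $u \mapsto \sqrt{2H(t+\epsilon)}\,(t+\epsilon - u)^{H(t+\epsilon)-1/2}$ is a purely deterministic function on $[t, t+\epsilon]$, whose $L^2$-norm is finite by the kernel integrability argument already verified in the proof of Theorem~\ref{thm:variance}. Hence the Itô integral defining $I_1^{\mathrm{loc}}$ reduces to a Wiener integral against a deterministic kernel, and Lemma~\ref{lem:ito} applies directly.

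For Part~2, I would establish Gaussianity via the standard approximation construction used in the Itô integral itself. Approximating the deterministic kernel in $\mathcal{L}^2([t,t+\epsilon])$ by a sequence of simple step functions $\phi_n$, each stochastic integral $\int \phi_n\, dB$ is a finite linear combination of independent Brownian increments $B(s_{i+1}) - B(s_i)$, and therefore a centered Gaussian random variable. The Itô isometry guarantees $L^2(\Omega)$-convergence of $\int \phi_n\, dB$ to $I_1^{\mathrm{loc}}$, and $L^2$-limits of centered Gaussians are centered Gaussian (pass to the limit in characteristic functions). The variance is then read off directly from Theorem~\ref{thm:local_increment_var}, giving $\sigma^2 = \epsilon^{2H(t)}(1 + O(\epsilon^\lambda))$.

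For Part~3, I would invoke the sharp Mills-ratio inequality (see, e.g., \cite{Mills1926, Baricz2008, Yang2015}): for $Z \sim \mathcal{N}(0,1)$ and $y > 0$,
\[
\mathbb{P}(Z > y) < \frac{1}{y\sqrt{2\pi}} e^{-y^2/2}.
\]
Applying this to $Z = I_1^{\mathrm{loc}}/\sigma$ with $y = x/\sigma$ and using the symmetry of the centered Gaussian law gives $\mathbb{P}(|I_1^{\mathrm{loc}}| > x) = 2\mathbb{P}(I_1^{\mathrm{loc}} > x) < \frac{2\sigma}{x\sqrt{2\pi}} e^{-x^2/(2\sigma^2)}$, which is \eqref{eq:tail_general}. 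For the simplified form \eqref{eq:tail_simple}, when $x \geq \sigma$ the prefactor satisfies $\frac{2\sigma}{x\sqrt{2\pi}} \leq \frac{2}{\sqrt{2\pi}} < 2$, which yields the bound $2 e^{-x^2/(2\sigma^2)}$.

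The main obstacle is essentially notational rather than technical: the result follows routinely from Theorem~\ref{thm:local_increment_var} together with the standard theory of the Wiener integral and the classical Mills-ratio estimate. The one point meriting care is the precise interpretation of $\sigma^2$ in Part~2: for each fixed $\epsilon$, $\sigma^2$ is a concrete positive number (namely $\epsilon^{2H(t+\epsilon)}$), and the expression $\epsilon^{2H(t)}(1 + O(\epsilon^\lambda))$ merely describes its asymptotic dependence on $\epsilon$; this asymptotic description plays no role in establishing Gaussianity itself, but it does mean that the tail bound in Part~3, while exact for each $\epsilon$, inherits a corresponding asymptotic form through $\sigma^2$.
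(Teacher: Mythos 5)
Your proposal is correct and follows essentially the same route as the paper: deterministic kernel plus It\^o isometry for Part~1, Gaussianity of Wiener integrals for Part~2, and Mills' inequality with symmetry (and the observation $\tfrac{\sigma}{x\sqrt{2\pi}} < 1$ for $x \geq \sigma$) for Part~3. Your Part~2 is in fact slightly more complete than the paper's, which simply cites the Gaussianity of It\^o integrals with deterministic integrands as known, whereas you supply the simple-function approximation and the passage to the limit via characteristic functions.
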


\begin{proof}
\textbf{Part 1: Linearity and the Itô Isometry}
The mapping defined by the Itô integral with a deterministic kernel is a bounded linear operator from \( L^2([t,t+\epsilon]) \) to \( L^2(\Omega) \).

\textbf{Linearity}: For any deterministic \( g_1, g_2 \in L^2([t,t+\epsilon]) \) and constants \( a, b \in \mathbb{R} \), the Itô integral satisfies:
\[
\int_t^{t+\epsilon} (a g_1(u) + b g_2(u))  dB(u) = a \int_t^{t+\epsilon} g_1(u)  dB(u) + b \int_t^{t+\epsilon} g_2(u)  dB(u).
\]

\textbf{Itô Isometry}: By Lemma~\ref{lem:ito}, for our kernel \( K_{t,\epsilon} \), we have:
\[
\mathbb{E}\left[\left(I_1^{\mathrm{loc}}\right)^2\right] = \int_t^{t+\epsilon} |K_{t,\epsilon}(u)|^2 du = \sigma^2.
\]
This implies that the \( L^2(\Omega) \)-norm of \( I_1^{\mathrm{loc}} \) is exactly \( \sigma \).

\textbf{Part 2: Gaussianity Proof}  
The Gaussian distribution of \( I_1^{\mathrm{loc}} \) follows from the properties of the Itô integral with a deterministic integrand:

\begin{itemize}
    \item The kernel \( K_{t,\epsilon}(u) = \sqrt{2H(t+\epsilon)} (t+\epsilon - u)^{H(t+\epsilon)-1/2} \) is a deterministic function in \( L^2([t,t+\epsilon]) \) (as established in Theorem \ref{thm:local_increment_var}).
    \item Itô integrals of deterministic integrands are known to yield Gaussian random variables.
    \item The mean is zero by the martingale property of the Itô integral.
    \item The variance \( \sigma^2 \) is given by the Itô isometry, as computed in Theorem \ref{thm:local_increment_var}.
\end{itemize}

Therefore, \( I_1^{\mathrm{loc}} \sim \mathcal{N}(0, \sigma^2) \).

\textbf{Part 3: Tail Bound Derivation}  
We employ Mill's inequality for standard normal variables:
\begin{lemma}[Mill's inequality, \cite{Mills1926, Baricz2008, Yang2015}]\label{lem:mill_appendix}
For any \( z > 0 \):
\[
1 - \Phi(z) < \frac{1}{z\sqrt{2\pi}} e^{-z^2/2},
\]
where $\Phi(z)$ is the cumulative distribution function (CDF) of the standard normal distribution.
\end{lemma}

\begin{proof}
\textbf{Step 1:} Express the tail probability as:
\[
1 - \Phi(z) = \int_z^\infty \phi(t)  dt, \quad \phi(t) = \frac{1}{\sqrt{2\pi}} e^{-t^2/2}.
\]

\textbf{Step 2:} For \( t \geq z > 0 \), we have \( \frac{1}{t} \leq \frac{1}{z} \), with strict inequality \( \frac{1}{t} < \frac{1}{z} \) for all \( t > z \). Since \( \phi(t) > 0 \) for all \( t \in (z, \infty) \), it follows that:
\[
\int_z^\infty \phi(t)  dt < \frac{1}{z} \int_z^\infty t\phi(t)  dt.
\]

\textbf{Step 3:} Substitute \( u = t^2/2 \), \( du = t  dt \):
\[
\int_z^\infty t\phi(t)  dt = \frac{1}{\sqrt{2\pi}} \int_z^\infty t e^{-t^2/2}  dt = \frac{1}{\sqrt{2\pi}} \int_{z^2/2}^\infty e^{-u}  du = \frac{e^{-z^2/2}}{\sqrt{2\pi}} = \phi(z).
\]

\textbf{Step 4:} Combining Steps 2 and 3:
\[
1 - \Phi(z) < \frac{1}{z} \phi(z) = \frac{1}{z\sqrt{2\pi}} e^{-z^2/2}.
\]
\end{proof}

Applying Lemma \ref{lem:mill_appendix} to \( I_1^{\mathrm{loc}} \sim \mathcal{N}(0,\sigma^2) \):
\[
\mathbb{P}\left(I_1^{\mathrm{loc}} > x\right) = 1 - \Phi\left(\frac{x}{\sigma}\right) < \frac{\sigma}{x\sqrt{2\pi}} e^{-x^2/(2\sigma^2)}.
\]
By symmetry of the normal distribution:
\[
\mathbb{P}\left(|I_1^{\mathrm{loc}}| > x\right) = 2\left(1 - \Phi\left(\frac{x}{\sigma}\right)\right) < \frac{2\sigma}{x\sqrt{2\pi}} e^{-x^2/(2\sigma^2)}.
\]
When \( x \geq \sigma \), we have \( \frac{\sigma}{x} \leq 1 \). Also, \( \frac{1}{\sqrt{2\pi}} < 1 \) since \( \pi > 2 \). Therefore:
\[
\frac{2\sigma}{x\sqrt{2\pi}} e^{-x^2/(2\sigma^2)} < 2 e^{-x^2/(2\sigma^2)}.
\]
Combining this with the previous inequality yields the simplified bound \eqref{eq:tail_simple}.
\end{proof}

\begin{remark}[Practical Properties of Tail Bounds]\label{rem:tail_practical}
The derived tail bounds exhibit several features that may be advantageous in applications:
\begin{itemize}
    \item \textbf{Exponential decay structure}: The bound \(2e^{-x^2/(2\sigma^2)}\) for \(x \geq \sigma\) captures the quadratic-exponential decay characteristic of Gaussian tails, which can lead to reasonably sharp probability estimates.
    
    \item \textbf{Implementation efficiency}: The bound's simplicity, requiring only elementary functions (exponential and square) and a factor of 2, facilitates:
    \begin{itemize}
        \item Efficient computation in Monte Carlo simulations
        \item Straightforward error propagation analysis
        \item Simple threshold setting for anomaly detection
    \end{itemize}
    
    \item \textbf{Process invariance}: The bound holds uniformly for Hölder continuous functions \(H(\cdot)\) within the framework of Theorem \ref{thm:local_increment_var}. This establishes a unified approach to a broad class of non-stationary processes with time-dependent memory.
\end{itemize}
\end{remark}

\begin{corollary}[Almost Sure Bound for Local Increments]\label{cor:as_bound}
Under the assumptions of Theorem \ref{thm:local_increment_var}, consider the following setup:
\begin{enumerate}
    \item Fix a time point $t_0 \in [0,T]$ and a strictly decreasing sequence $\{\epsilon_n\}_{n \geq 1}$ with $\epsilon_n \downarrow 0$ satisfying:
    \begin{equation}\label{eq:summability}
    \sum_{n=1}^{\infty} \epsilon_n^{1+\varsigma} < \infty \quad \text{for some } \varsigma > 0 \quad (e.g., \epsilon_n = n^{-\kappa}, \kappa > 1 + \varsigma).
    \end{equation}
    
    \item Define the \textbf{discrete local dominant increments}:
    \begin{equation}\label{eq:discrete_increment}
    I_1^{(n),\mathrm{loc}} := \sqrt{2H(t_0 + \epsilon_n)} \int_{t_0}^{t_0 + \epsilon_n} (t_0 + \epsilon_n - s)^{H(t_0 + \epsilon_n)-1/2} dB(s).
    \end{equation}
\end{enumerate}
Then for any fixed $\varsigma > 0$, there exists an almost surely finite random variable $N_\varsigma(\omega)$, such that
\begin{equation}\label{eq:as_bound}
\forall n \geq N_\varsigma(\omega): \quad \left|I_1^{(n),\mathrm{loc}}\right| \leq e^{H(t_0)} \sqrt{2(1+\varsigma) |\ln \epsilon_n|}.
\end{equation}
Moreover, the following limsup control holds almost surely:
\begin{equation}\label{eq:limsup_control}
\limsup_{n \to \infty} \frac{\left|I_1^{(n),\mathrm{loc}}\right|}{\sqrt{|\ln \epsilon_n|}} \leq e^{H(t_0)} \sqrt{2}.
\end{equation}
\end{corollary}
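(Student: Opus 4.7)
The plan is to apply Borel--Cantelli to the Gaussian tail bound inherited from Corollary~\ref{cor:gaussian_tail}. By Part~2 of that corollary, each discrete increment satisfies $I_1^{(n),\mathrm{loc}} \sim \mathcal{N}(0, \sigma_n^2)$ with $\sigma_n^2 = \epsilon_n^{2H(t_0+\epsilon_n)}(1+O(\epsilon_n^\lambda))$. First I would set the candidate threshold $x_n := e^{H(t_0)}\sqrt{2(1+\varsigma)|\ln \epsilon_n|}$ and verify the precondition $x_n \geq \sigma_n$ needed to invoke the simplified bound \eqref{eq:tail_simple}: since $\sigma_n \to 0$ while $x_n \to \infty$, this holds for all sufficiently large $n$.

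Next I would produce the summability estimate. Using \eqref{eq:tail_simple} gives
\begin{equation*}
\mathbb{P}\left(|I_1^{(n),\mathrm{loc}}| > x_n\right) \leq 2\exp\!\left(-\frac{x_n^2}{2\sigma_n^2}\right) = 2\exp\!\left(-\frac{e^{2H(t_0)}(1+\varsigma)|\ln \epsilon_n|}{\sigma_n^2}\right).
\end{equation*}
The key arithmetic step is to show $e^{2H(t_0)}/\sigma_n^2 \geq 1$ for large $n$, which is immediate because $\sigma_n \to 0$ while $e^{H(t_0)} \geq 1$. This yields $\mathbb{P}(|I_1^{(n),\mathrm{loc}}| > x_n) \leq 2\epsilon_n^{1+\varsigma}$, and combining with the summability hypothesis \eqref{eq:summability} gives $\sum_n \mathbb{P}(|I_1^{(n),\mathrm{loc}}| > x_n) < \infty$. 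The Borel--Cantelli lemma then produces an almost surely finite $N_\varsigma(\omega)$ such that the exceedance event fails for every $n \geq N_\varsigma(\omega)$, which is exactly \eqref{eq:as_bound}.

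For the limsup statement \eqref{eq:limsup_control}, I would exploit the fact that \eqref{eq:as_bound} holds on a measure-one event $A_\varsigma$ for each $\varsigma > 0$. Taking the countable intersection $A := \bigcap_{k=1}^\infty A_{1/k}$ still has probability one; on $A$, dividing \eqref{eq:as_bound} by $\sqrt{|\ln \epsilon_n|}$ gives $\limsup_{n\to\infty} |I_1^{(n),\mathrm{loc}}|/\sqrt{|\ln \epsilon_n|} \leq e^{H(t_0)}\sqrt{2(1+1/k)}$ for every $k$, and sending $k \to \infty$ yields the desired constant $e^{H(t_0)}\sqrt{2}$.

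The main technical point to handle carefully is the verification that the simplified Mills-type bound applies with the chosen threshold, and that the quotient $|\ln \epsilon_n|/\sigma_n^2$ dominates $(1+\varsigma)|\ln \epsilon_n|$ uniformly once $n$ is large. Both issues reduce to the elementary observation that $\sigma_n \downarrow 0$ while $x_n \uparrow \infty$, so no delicate analysis of the $O(\epsilon_n^\lambda)$ correction inherited from Theorem~\ref{thm:local_increment_var} is required here; that correction is already absorbed into the crude comparison $\sigma_n \leq e^{H(t_0)}$. The only place where one must be mindful is the countable-intersection step producing the limsup, which silently replaces $N_\varsigma(\omega)$ by the supremum over a countable family $\{N_{1/k}(\omega)\}_k$ on a full-measure event.
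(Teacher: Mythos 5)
Your proposal is correct and follows essentially the same route as the paper: the same threshold $x_n = e^{H(t_0)}\sqrt{2(1+\varsigma)|\ln\epsilon_n|}$, the simplified Mills bound from Corollary~\ref{cor:gaussian_tail}(3), Borel--Cantelli via the summability hypothesis, and a countable intersection over $\varsigma = 1/k$ for the limsup. Your shortcut of bounding $x_n^2/(2\sigma_n^2) \geq (1+\varsigma)|\ln\epsilon_n|$ via the single observation $\sigma_n^2 \leq e^{2H(t_0)}$ for large $n$ is a valid and slightly cleaner version of the paper's two-stage estimate using $1+r_n < \tfrac{3}{2}$ and $\epsilon_n^{-2H(t_0)} > \tfrac{3}{2}$.
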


\begin{proof}
The proof consists of five rigorous steps.

\textbf{Step 1: Variance Asymptotics (Theorem \ref{thm:local_increment_var})}\\
By Theorem \ref{thm:local_increment_var}, there exist constants $\lambda = \min\left(\gamma - H(t_0), \frac{\gamma + H(t_0)}{2}\right) > 0$ and $c_1 > 0$, and an integer $N_1 \geq 1$, such that for all $n \geq N_1$:
\[
\sigma_n^2 := \mathbb{E}\left[\left(I_1^{(n),\mathrm{loc}}\right)^2\right] = \epsilon_n^{2H(t_0)} (1 + r_n), \quad |r_n| \leq c_1 \epsilon_n^\lambda.
\]
Note that $\lambda > 0$ is guaranteed by the critical regularity condition $\gamma > \sup_t H(t) \geq H(t_0)$, and the notation $I_1^{(n),\mathrm{loc}}$ explicitly maintains consistency with the continuous-time local increment $I_1^{\mathrm{loc}}$ defined in equation \eqref{eq:local_increment}.

\textbf{Step 2: Threshold Definition and Validation}\\
Define the threshold sequence:
\[
x_n := e^{H(t_0)} \sqrt{2(1+\varsigma) |\ln \epsilon_n|}.
\]
We verify $x_n > \sigma_n$ for sufficiently large $n$ through the ratio:
\[
\frac{x_n^2}{\sigma_n^2} = \frac{e^{2H(t_0)} \cdot 2(1+\varsigma) |\ln \epsilon_n|}{\epsilon_n^{2H(t_0)} (1 + r_n)}.
\]
Since $\epsilon_n \downarrow 0$ implies $|\ln \epsilon_n| \to +\infty$ and $|r_n| \leq c_1 \epsilon_n^\lambda \to 0$, there exists $N_2 > N_1$ such that for all $n \geq N_2$:
\[
|r_n| < \frac{1}{2} \quad \text{and} \quad \frac{x_n^2}{\sigma_n^2} > 1.
\]
Thus $x_n > \sigma_n$ holds for all $n \geq N_2$.

\textbf{Step 3: Gaussian Tail Bound (Corollary \ref{cor:gaussian_tail})}\\
For $n \geq N_2$, since $x_n > \sigma_n$, Corollary \ref{cor:gaussian_tail}(3) implies:
\[
\mathbb{P}\left(\left|I_1^{(n),\mathrm{loc}}\right| > x_n\right) < 2 \exp\left(-\frac{x_n^2}{2\sigma_n^2}\right).
\]
Substituting $\sigma_n^2 = \epsilon_n^{2H(t_0)} (1 + r_n)$ and $x_n^2 = e^{2H(t_0)} \cdot 2(1+\varsigma) |\ln \epsilon_n|$:
\[
\frac{x_n^2}{2\sigma_n^2} = \frac{e^{2H(t_0)} (1+\varsigma) |\ln \epsilon_n|}{\epsilon_n^{2H(t_0)} (1 + r_n)}.
\]
Using the bound $1 + r_n < \frac{3}{2}$ (from Step 2) and noting that $e^{2H(t_0)} > 1$ since $H(t_0) > 0$, we obtain:
\[
\frac{x_n^2}{2\sigma_n^2} > \frac{1 \cdot (1+\varsigma) |\ln \epsilon_n|}{\epsilon_n^{2H(t_0)} \cdot \frac{3}{2}} = \frac{2(1+\varsigma)}{3} \cdot \frac{|\ln \epsilon_n|}{\epsilon_n^{2H(t_0)}}.
\]
Now choose $N_3 > N_2$ such that for all $n \geq N_3$:
\[
\epsilon_n^{2H(t_0)} < \frac{2}{3} \quad \Rightarrow \quad \frac{1}{\epsilon_n^{2H(t_0)}} > \frac{3}{2}.
\]
This yields:
\[
\frac{x_n^2}{2\sigma_n^2} > \frac{2(1+\varsigma)}{3} \cdot \frac{3}{2} \cdot |\ln \epsilon_n| = (1+\varsigma) |\ln \epsilon_n|.
\]
Consequently:
\[
\mathbb{P}\left(\left|I_1^{(n),\mathrm{loc}}\right| > x_n\right) < 2 \exp\left(-(1+\varsigma) |\ln \epsilon_n|\right) = 2 \epsilon_n^{1+\varsigma}.
\]

\textbf{Step 4: Borel-Cantelli Argument \cite{Chernov2001,Gupta2010,Haydn2013}}\\
Define events $A_n := \left\{ \left|I_1^{(n),\mathrm{loc}}\right| > x_n \right\}$. By Step 3 and the summability condition \eqref{eq:summability}:
\[
\sum_{n=N_3}^{\infty} \mathbb{P}(A_n) < 2 \sum_{n=N_3}^{\infty} \epsilon_n^{1+\varsigma} < \infty.
\]
Applying the \textbf{Borel-Cantelli Lemma} \cite{Chernov2001,Gupta2010,Haydn2013} (which holds under the given conditions without assuming independence of the events $A_n$):
\[
\mathbb{P}\left( \limsup_{n \to \infty} A_n \right) = \mathbb{P}\left( A_n \text{ occur infinitely often} \right) = 0.
\]
This implies that for almost every sample path $\omega$, there exists a finite $N_\varsigma(\omega) \geq N_3$ such that:
\[
\forall n \geq N_\varsigma(\omega): \quad \left|I_1^{(n),\mathrm{loc}}\right| \leq x_n = e^{H(t_0)} \sqrt{2(1+\varsigma) |\ln \epsilon_n|}.
\]

\textbf{Step 5: Limsup Control (Optimality)}\\
For each fixed $\varsigma > 0$, Step 4 implies that for almost every $\omega$, there exists $N_\varsigma(\omega)$ such that for all $n \geq N_\varsigma(\omega)$,
\[
\left|I_1^{(n),\mathrm{loc}}\right| \leq e^{H(t_0)} \sqrt{2(1+\varsigma) |\ln \epsilon_n|}.
\]
Dividing both sides by $\sqrt{|\ln \epsilon_n|}$ and taking the limsup as $n \to \infty$ yields:
\[
\limsup_{n \to \infty} \frac{\left|I_1^{(n),\mathrm{loc}}\right|}{\sqrt{|\ln \epsilon_n|}} \leq e^{H(t_0)} \sqrt{2(1+\varsigma)} \quad \text{a.s.}
\]

Now consider the countable sequence $\varsigma_m = 1/m \to 0^+$. For each $m$, there exists an event $\Omega_m$ with $\mathbb{P}(\Omega_m) = 1$ such that the above inequality holds with $\varsigma = \varsigma_m$ for all $\omega \in \Omega_m$. Define $\Omega^* = \bigcap_{m=1}^\infty \Omega_m$, which satisfies $\mathbb{P}(\Omega^*) = 1$.

For every $\omega \in \Omega^*$, we have
\[
\limsup_{n \to \infty} \frac{\left|I_1^{(n),\mathrm{loc}}\right|}{\sqrt{|\ln \epsilon_n|}} \leq e^{H(t_0)} \sqrt{2(1 + \tfrac{1}{m})} \quad \text{for all } m \in \mathbb{N}.
\]
Taking $m \to \infty$ gives the optimal bound:
\[
\limsup_{n \to \infty} \frac{\left|I_1^{(n),\mathrm{loc}}\right|}{\sqrt{|\ln \epsilon_n|}} \leq e^{H(t_0)} \sqrt{2} \quad \text{for all } \omega \in \Omega^*.
\]
This completes the proof of \eqref{eq:limsup_control}.
\end{proof}

\begin{remark}\label{rem:as_bound_significance}
Corollary \ref{cor:as_bound} highlights several noteworthy features of TV-fBm:
\begin{itemize}
    \item \textbf{Pathwise Sharpness}: The bound $e^{H(t_0)}\sqrt{2|\ln \epsilon_n|}$ appears to be asymptotically sharp, capturing the exact logarithmic growth rate of the local increments $I_1^{(n),\mathrm{loc}}$. The notation maintains direct correspondence with the continuous-time increment $I_1^{\mathrm{loc}}$ defined in \eqref{eq:local_increment}.
    
    \item \textbf{Historical Independence}: The construction isolates the purely local behavior at $t_0$, separate from the path history prior to $t_0$, as reflected in the "loc" superscript.
    
    \item \textbf{General Applicability}: The almost sure bound holds uniformly across Hölder-continuous functions $H(\cdot)$ satisfying $\gamma > \sup_t H(t)$, with the Borel-Cantelli argument requiring only the convergence of $\sum \mathbb{P}(A_n)$ \cite{Chernov2001}.
\end{itemize}
This almost sure control of local increments provides a key step towards understanding the fine-scale path properties of TV-fBm.
\end{remark}

\begin{theorem}[Local Non-Determinism (LND) of TV-fBm]\label{thm:LND}
Let the Hurst exponent function \( H: [0,T] \to (0,1) \) be a deterministic function satisfying Definition~\ref{def:holder} with exponent \(\gamma > \sup_{t \in [0,T]} H(t)\) and constant \(C_H > 0\), and assume \(\inf_{t \in [0,T]} H(t) \geq H_{\inf} > 0\). Then for any fixed \( t_0 \in [0,T] \), there exist constants \(\epsilon_0 > 0\) and \(c(t_0) > 0\) (depending on \(t_0\)) such that for all \(\epsilon \in (0, \epsilon_0]\):
\begin{equation}\label{eq:LND}
\mathrm{Var}\left( B^{H(\cdot)}(t_0 + \epsilon)  \mid  \mathcal{F}_{<t_0} \right) \geq c(t_0) \cdot \epsilon^{2H(t_0)}
\end{equation}
where \(\mathcal{F}_{<t_0} = \sigma(\{B_s : 0 \leq s < t_0\})\) is the \(\sigma\)-algebra generated by the history before \(t_0\).

Moreover, the optimal constant satisfies:
\[
c(t_0) = \inf_{\epsilon \in (0,\epsilon_0]} (1 + r(\epsilon))
\]
where \(r(\epsilon)\) is the remainder term from Theorem \ref{thm:local_increment_var} satisfying \(|r(\epsilon)| \leq c_1 \epsilon^\lambda\), and \(1 + r(\epsilon) \to 1\) as \(\epsilon \to 0^+\).
\end{theorem}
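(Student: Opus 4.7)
The plan is to exploit the additivity of the stochastic integral with respect to its domain, writing the TV-fBm at time $t_0+\epsilon$ as the sum of a historical part (integrated over $[0,t_0]$) and a purely new part that coincides with the local dominant increment $I_1^{\mathrm{loc}}$ of Theorem~\ref{thm:local_increment_var}. The historical part will be shown to be $\mathcal{F}_{<t_0}$-measurable while $I_1^{\mathrm{loc}}$ will be independent of $\mathcal{F}_{<t_0}$; the conditional variance then collapses to the unconditional variance of $I_1^{\mathrm{loc}}$, and the lower bound \eqref{eq:LND} is read off from the asymptotic expansion already established.

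Concretely, for each fixed $\epsilon \in (0, T-t_0)$ I would split
\[
B^{H(\cdot)}(t_0+\epsilon) \;=\; \sqrt{2H(t_0+\epsilon)} \int_0^{t_0} (t_0+\epsilon-s)^{H(t_0+\epsilon)-1/2}\,dB_s \;+\; I_1^{\mathrm{loc}},
\]
where $I_1^{\mathrm{loc}}$ is precisely the object defined in \eqref{eq:local_increment}. The integrand in the historical piece is bounded and deterministic on $[0,t_0]$ (the singularity $s=t_0+\epsilon$ sits strictly outside the integration window), so this is a well-defined Wiener integral. Independence of $I_1^{\mathrm{loc}}$ from $\mathcal{F}_{<t_0}$ is immediate: it is constructed entirely from the increments $\{B_s - B_{t_0} : s \in [t_0, t_0+\epsilon]\}$, which are independent of $\mathcal{F}_{t_0} \supseteq \mathcal{F}_{<t_0}$ by the independent-increments property of Brownian motion.

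Given these two structural facts, the conditional variance reduces via
\[
\mathrm{Var}\!\left(B^{H(\cdot)}(t_0+\epsilon) \,\big|\, \mathcal{F}_{<t_0}\right) \;=\; \mathrm{Var}(I_1^{\mathrm{loc}}) \;=\; \mathbb{E}\!\left[(I_1^{\mathrm{loc}})^2\right],
\]
where the last equality uses the zero-mean property of the Itô integral. Theorem~\ref{thm:local_increment_var} then supplies $\mathbb{E}[(I_1^{\mathrm{loc}})^2] = \epsilon^{2H(t_0)}(1+r(\epsilon))$ with $|r(\epsilon)| \leq c_1\epsilon^\lambda$ and $\lambda > 0$ guaranteed by the critical regularity condition. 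Choosing $\epsilon_0 > 0$ small enough that $c_1\epsilon_0^\lambda < 1$ ensures $1+r(\epsilon) > 0$ throughout $(0,\epsilon_0]$, whereupon $c(t_0) := \inf_{\epsilon \in (0,\epsilon_0]}(1+r(\epsilon))$ delivers the bound in \eqref{eq:LND} and the stated identification of the optimal constant.

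The step that requires the most care is the measurability of the historical integral with respect to $\mathcal{F}_{<t_0}$, because the strict inequality $s < t_0$ in the definition of $\mathcal{F}_{<t_0}$ means $B_{t_0}$ is not a priori included. I would handle this by approximating the deterministic integrand by step functions supported on $[0,t_0)$: each Riemann sum is manifestly $\mathcal{F}_{<t_0}$-measurable, and the $L^2(\Omega)$-limit inherits this measurability, using path-continuity of Brownian motion to identify $B_{t_0} = \lim_{s\uparrow t_0} B_s$ as an $\mathcal{F}_{<t_0}$-measurable limit if it needs to appear in any boundary term. A subtlety to keep in mind is that the integrand depends on $\epsilon$ both through the exponent $H(t_0+\epsilon)-\tfrac{1}{2}$ and through the shift, so the approximation must be done for each $\epsilon$ separately rather than invoking a single pathwise construction; once this is in place, the remaining arithmetic is a direct appeal to Theorem~\ref{thm:local_increment_var}.
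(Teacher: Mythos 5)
Your proposal is correct and follows essentially the same route as the paper: decompose $B^{H(\cdot)}(t_0+\epsilon)$ into an $\mathcal{F}_{<t_0}$-measurable historical Wiener integral plus the local dominant increment $I_1^{\mathrm{loc}}$, use the independent-increments property and the deterministic kernel to reduce the conditional variance to $\mathrm{Var}(I_1^{\mathrm{loc}})$, and then invoke Theorem~\ref{thm:local_increment_var} with a small-$\epsilon_0$ choice making $1+r(\epsilon)$ uniformly positive. Your added care about the strict inequality in $\mathcal{F}_{<t_0}$ is a minor refinement the paper does not spell out, but it does not change the argument.
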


\begin{proof}
The proof establishes the lower bound through four sequential steps, relying crucially on the deterministic nature of the Hurst exponent function $H(\cdot)$.

\textbf{Step 1: Conditional Variance Decomposition with Deterministic Kernel}\\
By the stochastic integral representation (Definition~\ref{def:tvfbm}, Eq.~\eqref{eq:tvfbm_ito}):
\[
B^{H(\cdot)}(t_0 + \epsilon) = \underbrace{\sqrt{2H(t_0+\epsilon)} \int_0^{t_0} (t_0 + \epsilon - s)^{H(t_0+\epsilon)-1/2} dB(s)}_{X_{\text{hist}}} + \underbrace{\sqrt{2H(t_0+\epsilon)} \int_{t_0}^{t_0+\epsilon} (t_0 + \epsilon - s)^{H(t_0+\epsilon)-1/2} dB(s)}_{I^{\mathrm{loc}}}
\]
where:
\begin{itemize}
    \item \(X_{\text{hist}}\) is \(\mathcal{F}_{<t_0}\)-measurable, as it depends only on \(\{B_s\}_{s \leq t_0}\)
    
    \item \textbf{Crucial Observation}: Since $H(\cdot)$ is a deterministic function, the kernel $(t_0 + \epsilon - s)^{H(t_0+\epsilon)-1/2}$ and normalization factor $\sqrt{2H(t_0+\epsilon)}$ are deterministic functions of time. Therefore, $I^{\mathrm{loc}}$ depends only on the Brownian increments over $[t_0, t_0+\epsilon]$.
    
    \item By the independent increments property of Brownian motion, the future increments $\{B(s) - B(t_0) : s \in [t_0, t_0+\epsilon]\}$ are independent of the historical $\sigma$-algebra $\mathcal{F}_{<t_0}$.
\end{itemize}

From the fundamental property of conditional variance for independent components:
\[
\mathrm{Var}\left( B^{H(\cdot)}(t_0 + \epsilon) \mid \mathcal{F}_{<t_0} \right) = \mathrm{Var}\left( I^{\mathrm{loc}} \right)
\]
Here $I^{\mathrm{loc}}$ is precisely the local dominant increment defined in Theorem~\ref{thm:local_increment_var}, Eq.~\eqref{eq:local_increment}.

\textbf{Step 2: Variance Asymptotics from Theorem~\ref{thm:local_increment_var}}\\
By Theorem~\ref{thm:local_increment_var}, there exist constants $\lambda = \min\left(\gamma - H(t_0), \frac{\gamma + H(t_0)}{2}\right) > 0$ (guaranteed by $\gamma > H(t_0)$) and $c_1 > 0$, and $\epsilon_1 > 0$ such that for all $\epsilon \in (0, \epsilon_1]$:
\[
\mathrm{Var}(I^{\mathrm{loc}}) = \epsilon^{2H(t_0)} (1 + r(\epsilon)), \quad |r(\epsilon)| \leq c_1 \epsilon^{\lambda}
\]
This provides the preliminary lower bound:
\[
\mathrm{Var}(I^{\mathrm{loc}}) \geq \epsilon^{2H(t_0)} (1 - c_1 \epsilon^{\lambda})
\]

\textbf{Step 3: Construction of Uniform Lower Bound}\\
Select $\epsilon_2 = \min\left\{ \epsilon_1, (2c_1)^{-1/\lambda} \right\}$. Then for all $\epsilon \in (0, \epsilon_2]$:
\[
c_1 \epsilon^{\lambda} \leq \frac{1}{2} \implies 1 - c_1 \epsilon^{\lambda} \geq \frac{1}{2}
\]
yielding the uniform lower bound:
\[
\mathrm{Var}(I^{\mathrm{loc}}) \geq \frac{1}{2} \epsilon^{2H(t_0)}
\]

\textbf{Step 4: Optimal Constant via Continuity and Deterministic Regularity}

While Step 3 provides a constructive lower bound with constant $\frac{1}{2}$, we now analyze the theoretically optimal constant.

Consider the exact variance expression from Theorem \ref{thm:local_increment_var}:
\[
\mathrm{Var}(I^{\mathrm{loc}}) = \epsilon^{2H(t_0)} (1 + r(\epsilon))
\]

Define the function:
\[
g(\epsilon) := 1 + r(\epsilon)
\]

By Theorem \ref{thm:local_increment_var}, we have $|r(\epsilon)| \leq c_1 \epsilon^\lambda \to 0$ as $\epsilon \to 0^+$, thus $g(\epsilon) \to 1$.

The optimal constant is:
\[
c(t_0) = \inf_{\epsilon \in (0,\epsilon_0]} g(\epsilon)
\]

Since $g(\epsilon) \to 1$, there exists $\epsilon_0 \in (0, \epsilon_2]$ such that this infimum is positive. For instance, we may take $c(t_0) = \frac{1}{2}$ as established in Step 3, with the additional insight that tighter constants approaching 1 are achievable for sufficiently small $\epsilon$.
\end{proof}

\begin{remark}[Theoretical Significance of LND]\label{rem:LND_significance}
Theorem~\ref{thm:LND} contributes to our understanding of TV-fBm in several important aspects:
\begin{enumerate}
    \item \textbf{Path Regularity Insight}: The established lower bound $\epsilon^{2H(t_0)}$ is consistent with the pointwise Hölder exponent being $H(t_0)$, thereby extending classical fBm regularity theory to time-varying settings.
    
    \item \textbf{Innovation Characterization}: The existence of a strictly positive constant $c(t_0) > 0$ reflects the inherent uncertainty in future increments, a crucial feature for modeling dynamic systems.
    
    \item \textbf{Methodological Framework}: The deterministic Hurst function approach provides a mathematically tractable framework for analyzing non-stationary processes while maintaining analytical precision.
\end{enumerate}
\end{remark}

\subsection{Canonical Time Transformation for TV-fBm} \label{subsec:lamperti}

This section develops a time transformation methodology for TV-fBm, drawing inspiration from the scaling principles underlying Lamperti's classical theory for self-similar processes \cite{J1957}. The central objective is to construct a transformation $\phi(t)$ that provides a mathematically tractable framework for analyzing processes with time-varying memory structure.

The transformation is defined through the nonlinear ODE \eqref{eq:phi_ode}, whose specific form emerges from requiring the scaling factor $\alpha(t) = \phi(t)^{-H(\phi(t))}$ to satisfy an exponential decay law. This choice represents one of several possible approaches to handling time-dependent Hurst exponents, offering analytical convenience while maintaining connections to classical scaling theory.

The main results of this section are:
\begin{itemize}
    \item Theorem \ref{thm:lamperti} establishes the existence and uniqueness of the canonical time transformation under appropriate regularity conditions
    \item Corollary \ref{cor:scaling} identifies fundamental scaling properties that emerge from this construction
    \item The framework reduces to classical scaling relations when the Hurst exponent is constant, providing a natural bridge to established theory
\end{itemize}

This theoretical framework offers one possible foundation for further analysis of TV-fBm, with potential applications to path regularity studies and the development of statistical methods for non-stationary processes.

\begin{theorem}[Canonical Time Transformation for TV-fBm]\label{thm:lamperti} 
Motivated by the scaling ideas in Lamperti's transformation theory \cite{J1957}, we construct a canonical time transformation for TV-fBm. Let $H: [0,T] \to (0,1)$ satisfy:
\begin{enumerate}
    \item \textbf{Smoothness}: $H \in C^2([0,T])$ with bounds $\|H'\|_\infty \leq L_H$, $\|H''\|_\infty \leq L_H$
    \item \textbf{Positivity}: $\inf_{t\in[0,T]} H(t) \geq h_{\min} > 0$ 
    \item \textbf{Non-degeneracy}: There exist constants $c > 0$ and $R > \phi_0 > 0$ such that for all $\phi \in (0, R]$:
    \[
    |H(\phi) + \phi \ln \phi \cdot H'(\phi)| \geq c
    \]
\end{enumerate}
Then there exists $T^* > 0$ and a unique function $\phi \in C^1([0,T^*])$ solving the nonlinear ODE:
\begin{equation}\label{eq:phi_ode}
\frac{d\phi}{dt}(t) = \frac{\phi(t)}{H(\phi(t)) + \phi(t) \ln \phi(t) H'(\phi(t))}, \quad \phi(0) = \phi_0 > 0
\end{equation}
with $\phi(t) \in (0, R]$ for all $t \in [0,T^*]$.
\end{theorem}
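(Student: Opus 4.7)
The plan is to recast \eqref{eq:phi_ode} as an autonomous first-order ODE $\phi'(t) = F(\phi(t))$ with initial condition $\phi(0) = \phi_0$, where
\[
F(\phi) := \frac{\phi}{H(\phi) + \phi \ln \phi \cdot H'(\phi)},
\]
and invoke the Picard--Lindel\"of theorem on a compact subinterval of $(0, R]$ strictly containing $\phi_0$, followed by a standard continuation argument to extract a positive existence time $T^*$.

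First, I would fix constants $0 < a < \phi_0 < b \leq R$ and work on $J := [a, b]$. The non-degeneracy hypothesis supplies $|H(\phi) + \phi \ln \phi \cdot H'(\phi)| \geq c > 0$ on $J$, so the denominator is bounded away from zero, $F$ is well-defined and continuous on $J$, and $|F(\phi)| \leq b/c =: M$ uniformly there. Next, I would verify that $F$ is Lipschitz on $J$. Writing $D(\phi) := H(\phi) + \phi \ln \phi \cdot H'(\phi)$, a direct computation gives
\[
D'(\phi) = H'(\phi) + (\ln \phi + 1) H'(\phi) + \phi \ln \phi \cdot H''(\phi),
\]
which is bounded on $J$ because $\phi \ln \phi$ and $\ln \phi$ are continuous on the compact set $J \subset (0, R]$ while $\|H'\|_\infty, \|H''\|_\infty \leq L_H$. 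Combined with $|D| \geq c$, this yields $F = \phi/D \in C^1(J)$ with bounded derivative $(D - \phi D')/D^2$, hence Lipschitz on $J$. Picard--Lindel\"of then produces a unique $C^1$ solution $\phi$ on some $[0,\tau]$ with $\phi([0,\tau]) \subset J$; one may take $\tau := \min\{(\phi_0 - a), (b - \phi_0)\}/M$.

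Finally, I would define
\[
T^* := \sup\{\tau > 0 : \text{a } C^1 \text{ solution } \phi \text{ exists on } [0,\tau] \text{ with } \phi([0,\tau]) \subset (0, R]\},
\]
which is positive by the previous step. A standard continuation argument shows that $\phi$ extends as long as it remains in a compact subinterval of $(0, R]$, since the Lipschitz construction above applies to any such subinterval with possibly different constants. Thus $\phi \in C^1([0, T^*])$ with values in $(0, R]$, and global uniqueness on $[0, T^*]$ follows by gluing the local uniqueness statements.

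The main obstacle is the singular behavior of $F$ near $\phi = 0$: although $\phi \ln \phi \to 0$ as $\phi \downarrow 0$, the factor $\ln \phi + 1$ appearing in $D'$ diverges, so one cannot obtain a Lipschitz constant uniform in any neighborhood of $0$. This is precisely why the theorem restricts $\phi$ to $(0, R]$ and imposes the non-degeneracy hypothesis --- together they ensure the denominator $D$ cannot vanish on the trajectory and that a local Lipschitz regime exists around $\phi_0$. A global existence result on all of $[0, T]$ would require further a priori control preventing $\phi$ from escaping toward $0$ or past $R$, which is beyond the scope of the stated local theorem.
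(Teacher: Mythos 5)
Your proposal is correct and follows essentially the same route as the paper: both recast the equation as $\phi' = F(\phi)$ with $F(\phi) = \phi/D(\phi)$, use the non-degeneracy bound $|D| \geq c$ together with the computation $D'(\phi) = (2+\ln\phi)H'(\phi) + \phi\ln\phi\,H''(\phi)$ to establish a Lipschitz bound for $F$ on a compact subinterval of $(0,R]$ containing $\phi_0$, and then invoke Picard--Lindel\"of. Your added remarks on the explicit exit time $\tau$, the continuation to a maximal $T^*$, and the logarithmic obstruction near $\phi = 0$ are consistent with, and slightly more explicit than, the paper's Steps 3--4.
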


\begin{proof}
\textbf{Step 1: Reformulation and Domain Setup}\\
Define the denominator function and the right-hand side:
\[
D(\phi) := H(\phi) + \phi \ln \phi \cdot H'(\phi), \quad F(\phi) := \frac{\phi}{D(\phi)}
\]
By the non-degeneracy condition, $F(\phi)$ is well-defined for $\phi \in (0, R]$. Choose $\delta > 0$ such that $0 < \delta < \phi_0 < R$. We will work on the interval $[\delta, R]$ where $F$ is regular.

\textbf{Step 2: Lipschitz Continuity on Compact Intervals}\\
For $\phi_1, \phi_2 \in [\delta, R]$, consider the difference:
\begin{align*}
|F(\phi_1) - F(\phi_2)| 
&\leq \underbrace{\left| \frac{\phi_1 - \phi_2}{D(\phi_1)} \right|}_{\text{(I)}} + \underbrace{\left| \phi_2 \left( \frac{1}{D(\phi_1)} - \frac{1}{D(\phi_2)} \right) \right|}_{\text{(II)}}
\end{align*}

\textbf{Term (I)}: By the non-degeneracy condition $|D(\phi)| \geq c > 0$:
\[
|\text{(I)}| \leq \frac{1}{c} |\phi_1 - \phi_2|
\]

\textbf{Term (II)}: Consider the function $g(\phi) = 1/D(\phi)$. Since $H \in C^2([0,T])$ and $\phi \ln \phi$ is smooth on $(0,\infty)$, the composition $D(\phi)$ is continuously differentiable on $[\delta, R]$. By the mean value theorem, there exists $\xi \in (\min(\phi_1,\phi_2), \max(\phi_1,\phi_2)) \subset [\delta, R]$ such that:
\[
\left| \frac{1}{D(\phi_1)} - \frac{1}{D(\phi_2)} \right| = |g'(\xi)| |\phi_1 - \phi_2| = \left| \frac{D'(\xi)}{D(\xi)^2} \right| |\phi_1 - \phi_2|
\]

Compute the derivative of $D(\phi)$:
\begin{align*}
D'(\phi) &= \frac{d}{d\phi}\left[ H(\phi) + \phi \ln \phi \cdot H'(\phi) \right] \\
&= H'(\phi) + \frac{d}{d\phi}[\phi \ln \phi] \cdot H'(\phi) + \phi \ln \phi \cdot H''(\phi) \\
&= H'(\phi) + (1 + \ln \phi) H'(\phi) + \phi \ln \phi \cdot H''(\phi) \\
&= (2 + \ln \phi) H'(\phi) + \phi \ln \phi \cdot H''(\phi)
\end{align*}

On the compact interval $[\delta, R]$, we have the bounds:
\begin{itemize}
    \item $|\ln \phi| \leq \max(|\ln \delta|, |\ln R|) =: L_{\ln}$
    \item $|H'(\phi)| \leq L_H$, $|H''(\phi)| \leq L_H$ by the smoothness condition
    \item $|\phi| \leq R$
\end{itemize}

Thus:
\[
|D'(\phi)| \leq (2 + L_{\ln}) L_H + R L_{\ln} L_H = (2 + L_{\ln} + R L_{\ln}) L_H =: M
\]

Therefore:
\[
|\text{(II)}| \leq |\phi_2| \cdot \frac{M}{c^2} |\phi_1 - \phi_2| \leq R \cdot \frac{M}{c^2} |\phi_1 - \phi_2|
\]

\textbf{Combined estimate}:
\[
|F(\phi_1) - F(\phi_2)| \leq \left( \frac{1}{c} + \frac{R M}{c^2} \right) |\phi_1 - \phi_2| =: L |\phi_1 - \phi_2|
\]
proving that $F$ is Lipschitz continuous on $[\delta, R]$.

\textbf{Step 3: Local Existence and Uniqueness}\\
Since $F$ is Lipschitz continuous on $[\delta, R]$ and $\phi_0 \in (\delta, R)$, by the Picard-Lindelöf theorem \cite{S1997}, there exists $T^* > 0$ and a unique solution $\phi \in C^1([0,T^*])$ to the initial value problem \eqref{eq:phi_ode} with $\phi(t) \in [\delta, R]$ for all $t \in [0,T^*]$.

\textbf{Step 4: Maximal Existence Interval}\\
By the Picard-Lindelöf theorem, there exists a unique maximal solution $\phi \in C^1([0,T^*))$ to the initial value problem \eqref{eq:phi_ode}, where $T^* > 0$ is the maximal existence time.
\end{proof}

\begin{corollary}[Scaling Properties under Canonical Transformation]\label{cor:scaling}
Under the time transformation $\phi(t)$ from Theorem \ref{thm:lamperti}, define the scaled process:
\[
X_t := \phi(t)^{-H(\phi(t))} B^{H(\cdot)}(\phi(t))
\]
This construction yields the following mathematical properties:

\begin{enumerate}
    \item The scaling factor $\alpha(t) = \phi(t)^{-H(\phi(t))}$ satisfies the exponential decay law:
    \[
    \frac{d\alpha}{dt} = -\alpha(t)
    \]
    \item The variance exhibits exact algebraic cancellation:
    \[
    \mathrm{Var}(X_t) = \phi(t)^{-2H(\phi(t))} \cdot \phi(t)^{2H(\phi(t))} = 1
    \]
    \item In the constant Hurst exponent case $H(\cdot) \equiv H_0$, the ODE admits explicit solution and the transformation exhibits scaling behavior analogous to classical theory.
\end{enumerate}
\end{corollary}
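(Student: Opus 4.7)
The proposal is to treat the three claims in sequence, since each one builds naturally on the ODE structure of Theorem~\ref{thm:lamperti} and the variance law of Theorem~\ref{thm:variance}. The computations are mostly mechanical; the main conceptual work lies in recognizing how the specific form of the ODE~\eqref{eq:phi_ode} is engineered precisely to cancel the nonlinear feedback between the base $\phi(t)$ and the exponent $H(\phi(t))$ in the scaling factor.

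For part (1), I would apply logarithmic differentiation to $\alpha(t)=\phi(t)^{-H(\phi(t))}$. Writing $\ln\alpha(t) = -H(\phi(t))\ln\phi(t)$ and differentiating via the chain and product rules gives
\[
\frac{\alpha'(t)}{\alpha(t)} = -\frac{\phi'(t)}{\phi(t)}\bigl[H(\phi(t)) + \phi(t)\ln\phi(t)\,H'(\phi(t))\bigr] = -\frac{\phi'(t)}{\phi(t)}D(\phi(t)),
\]
where $D(\phi)$ is the denominator function from the proof of Theorem~\ref{thm:lamperti}. Substituting the ODE $\phi'(t)=\phi(t)/D(\phi(t))$ then collapses the bracket to unity, yielding $\alpha'(t)=-\alpha(t)$. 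This is the step where the specific form of~\eqref{eq:phi_ode} pays off: the denominator $D(\phi)$ was chosen precisely so that this cancellation occurs.

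For part (2), the argument is immediate once part (1) is available. The process $X_t = \alpha(t)\,B^{H(\cdot)}(\phi(t))$ is a deterministic rescaling of a mean-zero random variable, so $\mathrm{Var}(X_t) = \alpha(t)^2\,\mathbb{E}[(B^{H(\cdot)}(\phi(t)))^2]$. I would then invoke Theorem~\ref{thm:variance} to identify the second factor as $\phi(t)^{2H(\phi(t))}$, which cancels exactly with $\alpha(t)^2 = \phi(t)^{-2H(\phi(t))}$.

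For part (3), the plan is to specialize: when $H(\cdot)\equiv H_0$ one has $H'\equiv 0$, so $D(\phi)\equiv H_0$, the ODE reduces to the linear equation $\phi'(t)=\phi(t)/H_0$, and the unique solution is $\phi(t)=\phi_0 e^{t/H_0}$. Substituting back gives $\alpha(t)=\phi_0^{-H_0}e^{-t}$ and $X_t = \phi_0^{-H_0}e^{-t}B^{H_0}(\phi_0 e^{t/H_0})$, which is exactly the Lamperti transform of the $H_0$-self-similar fBm; by classical theory this $X_t$ is a stationary Gaussian process. I would close by noting that this recovers the analytical structure of~\cite{J1957} and confirms that our construction is a bona fide extension of the classical scaling framework. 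The main obstacle I anticipate is not analytical but expository: making clear that the apparently ad hoc denominator $D(\phi)$ in~\eqref{eq:phi_ode} is uniquely forced by the demand $\alpha'=-\alpha$, so that the constant-Hurst reduction is not a coincidence but the defining design principle of the transformation.
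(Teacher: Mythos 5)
Your proposal is correct and follows essentially the same route as the paper: logarithmic differentiation of $\alpha(t)$ followed by substitution of the ODE~\eqref{eq:phi_ode} for part (1), invocation of Theorem~\ref{thm:variance} for the cancellation in part (2), and the explicit reduction $\phi(t)=\phi_0 e^{t/H_0}$ for part (3). The only cosmetic difference is that the paper differentiates the exponential form $e^{-H(\phi)\ln\phi}$ directly rather than taking logarithms first, and it does not spell out the classical stationarity remark you append to part (3); both computations are identical in substance.
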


\begin{proof}
Property (1) follows by direct computation:
\begin{align*}
\alpha(t) &= \phi(t)^{-H(\phi(t))} = e^{-H(\phi(t)) \ln \phi(t)} \\
\frac{d\alpha}{dt} &= \alpha(t) \cdot \left[-H'(\phi(t)) \phi'(t) \ln \phi(t) - \frac{H(\phi(t))}{\phi(t)} \phi'(t)\right] \\
&= \alpha(t) \cdot \phi'(t) \cdot \left[-H'(\phi(t)) \ln \phi(t) - \frac{H(\phi(t))}{\phi(t)}\right]
\end{align*}
Substituting the ODE from Theorem \ref{thm:lamperti} for $\phi'(t)$ yields $\frac{d\alpha}{dt} = -\alpha(t)$.

Property (2) follows from Theorem \ref{thm:variance}:
\[
\mathrm{Var}(X_t) = \phi(t)^{-2H(\phi(t))} \cdot \mathbb{E}\left[\left(B^{H(\cdot)}(\phi(t))\right)^2\right] = \phi(t)^{-2H(\phi(t))} \cdot \phi(t)^{2H(\phi(t))} = 1
\]

Property (3): When $H(\phi) \equiv H_0$, the ODE becomes $\frac{d\phi}{dt} = \frac{\phi}{H_0}$ with solution $\phi(t) = \phi_0 e^{t/H_0}$. The scaling factor becomes $\alpha(t) = \phi(t)^{-H_0} = \phi_0^{-H_0} e^{-t}$, exhibiting the characteristic exponential structure.
\end{proof}
\begin{remark}
The time transformation constructed in Theorem \ref{thm:lamperti} provides a mathematically motivated approach to addressing the challenges posed by time-varying Hurst exponents. The specific form of ODE \eqref{eq:phi_ode} is derived from requiring the scaling factor $\alpha(t) = \phi(t)^{-H(\phi(t))}$ to satisfy the exponential decay law $\frac{d\alpha}{dt} = -\alpha(t)$. This condition ensures that the scaling factor evolves in a smoothly decaying manner, providing a natural normalization that simplifies the asymptotic analysis of the transformed process.

The scaling properties identified in Corollary \ref{cor:scaling} arise from the algebraic structure of the transformation, yielding a process $X_t$ with unit variance. This normalized form provides a canonical representation that may facilitate further analytical developments in studying TV-fBm.

This work explores one possible direction for extending scaling methodologies to non-stationary settings, establishing initial mathematical foundations while acknowledging that other approaches may also prove valuable.
\end{remark}

\section{LARGE DEVIATIONS, PATHWISE ASYMPTOTICS, AND COVARIANCE STRUCTURE} \label{sec:large_deviation}
This section examines time-varying fractional Brownian motion through three interconnected aspects: tail estimates for Gaussian processes, asymptotic results for path behavior, and bounds for covariance structure. The analysis focuses on local properties and dependence features of processes with time-varying memory.
\subsection{Tail Probability Bounds and Their Refinements}
To establish the precise asymptotic behavior of TV-fBm in Theorem~\ref{thm:LDP}, we first develop some technical tools for Gaussian tail estimates. The following refinement of the classical Mill's inequality will play a key role in obtaining the exact constant in the logarithmic asymptotics.
\begin{lemma}[Refined Lower Bound for Gaussian Tails]\label{lem:sharp_gaussian}
For a standard normal random variable \( Z \sim \mathcal{N}(0,1) \) and any \( z \geq 1 \), the tail probability admits the refined lower bound:
\begin{equation}\label{eq:sharp_tail}
\mathbb{P}(Z \geq z) \geq \frac{e^{-z^2/2}}{z\sqrt{2\pi}} \left( 1 - \frac{1}{z^2} \right).
\end{equation}
This provides a natural refinement of Mill's inequality (Lemma \ref{lem:mill_appendix}).
\end{lemma}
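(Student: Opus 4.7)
The plan is to derive the refined bound by iterating integration by parts, which is a classical technique in the theory of Mills' ratio: while Lemma~\ref{lem:mill_appendix} uses the coarse estimate $1/t \leq 1/z$ to obtain an upper bound on the tail, here we will instead extract an asymptotic expansion by integration by parts and truncate at the appropriate order. This will produce the principal term $\frac{e^{-z^2/2}}{z\sqrt{2\pi}}$ along with a correction of order $z^{-3}$, which rearranges into the claimed factor $(1 - z^{-2})$.

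Concretely, first I would write $\mathbb{P}(Z\geq z) = \frac{1}{\sqrt{2\pi}}\int_z^\infty e^{-t^2/2}\,dt$ and rewrite the integrand as $\frac{1}{t}\cdot t e^{-t^2/2}$. Applying integration by parts with $u = 1/t$ and $dv = t e^{-t^2/2}\,dt$ (so $v = -e^{-t^2/2}$) yields the identity
\begin{equation*}
\mathbb{P}(Z\geq z) = \frac{e^{-z^2/2}}{z\sqrt{2\pi}} - \frac{1}{\sqrt{2\pi}}\int_z^\infty \frac{e^{-t^2/2}}{t^2}\,dt.
\end{equation*}
The next step is to bound the remainder integral from above. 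Applying integration by parts a second time, now with $u = 1/t^3$ and $dv = t e^{-t^2/2}\,dt$, produces
\begin{equation*}
\int_z^\infty \frac{e^{-t^2/2}}{t^2}\,dt = \frac{e^{-z^2/2}}{z^3} - 3\int_z^\infty \frac{e^{-t^2/2}}{t^4}\,dt \leq \frac{e^{-z^2/2}}{z^3},
\end{equation*}
where the inequality follows from non-negativity of the remaining integrand. Substituting this back yields the claimed lower bound after factoring $e^{-z^2/2}/(z\sqrt{2\pi})$.

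The hypothesis $z \geq 1$ plays only a cosmetic role here: it ensures that the factor $1 - 1/z^2$ is non-negative, so that the stated inequality is not vacuously a negative lower bound on a positive quantity. The main technical subtlety, though not a serious obstacle, is verifying the boundary term $\lim_{t\to\infty} e^{-t^2/2}/t^k = 0$ in both integration-by-parts steps (for $k=1$ and $k=3$), which is immediate from the superexponential decay of the Gaussian. I do not expect any genuine difficulty in this argument; it is essentially a textbook computation, and the principal care required is just organizing the two integration-by-parts steps cleanly and confirming the sign of the discarded remainder.
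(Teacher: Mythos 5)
Your argument is correct, and it diverges from the paper's proof at the key step. Both proofs begin with the same first integration by parts ($u = 1/t$, $dv = te^{-t^2/2}\,dt$), arriving at $I(z) := \int_z^\infty e^{-t^2/2}\,dt = \frac{e^{-z^2/2}}{z} - \int_z^\infty \frac{e^{-t^2/2}}{t^2}\,dt$. From there the paper does \emph{not} integrate by parts again: it bounds the remainder via $\frac{1}{t^2} \leq \frac{1}{z^2}$ for $t \geq z$, which yields the self-referential inequality $I(z) \geq \frac{e^{-z^2/2}}{z} - \frac{1}{z^2}I(z)$; solving for $I(z)$ gives $I(z) \geq \frac{e^{-z^2/2}}{z}\cdot\frac{z^2}{1+z^2}$, and the final factor is then weakened via $\frac{z^2}{1+z^2} = 1 - \frac{1}{1+z^2} \geq 1 - \frac{1}{z^2}$. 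You instead iterate the integration by parts (with $u = 1/t^3$) and discard the non-negative tail $3\int_z^\infty e^{-t^2/2}t^{-4}\,dt$, which produces the factor $1 - z^{-2}$ in a single line with no algebraic rearrangement. Your computations check out: both integration-by-parts identities are correct, the boundary terms vanish by superexponential decay, and the sign of the discarded remainder is right. The trade-offs are minor: the paper's intermediate constant $1 - \frac{1}{1+z^2}$ is marginally sharper than $1 - \frac{1}{z^2}$ before being weakened, while your route is the standard truncated asymptotic expansion of Mills' ratio and extends mechanically to higher-order two-sided bounds if ever needed. You are also right that the hypothesis $z \geq 1$ is only needed to make the stated bound non-vacuous; both arguments are valid for all $z > 0$.
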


\begin{proof}
We derive this bound systematically through the following complete analysis:

\textbf{Step 1: Integral Representation}
The exact tail probability is given by:
\[
\mathbb{P}(Z \geq z) = \frac{1}{\sqrt{2\pi}} \int_z^\infty e^{-t^2/2} dt \equiv \frac{1}{\sqrt{2\pi}} I(z).
\]

\textbf{Step 2: Integration by Parts Construction}
Set:
\[
u(t) = \frac{1}{t}, \quad dv(t) = te^{-t^2/2} dt.
\]
Then:
\[
du(t) = -\frac{1}{t^2} dt, \quad v(t) = -e^{-t^2/2}.
\]
Applying integration by parts:
\[
I(z) = \left. \frac{1}{t} \cdot (-e^{-t^2/2}) \right|_z^\infty - \int_z^\infty (-e^{-t^2/2}) \left(-\frac{1}{t^2}\right) dt.
\]

\textbf{Step 3: Boundary Term Analysis (Rigorous Limit Calculation)}
Compute the boundary term:
\[
\left. -\frac{e^{-t^2/2}}{t} \right|_z^\infty = \lim_{b \to \infty} \left( -\frac{e^{-b^2/2}}{b} \right) - \left( -\frac{e^{-z^2/2}}{z} \right).
\]
For the limit term, we apply L'Hôpital's Rule as follows:
\[
\lim_{b \to \infty} \frac{e^{-b^2/2}}{b} = \lim_{b \to \infty} \frac{\frac{d}{db}e^{-b^2/2}}{\frac{d}{db}b} = \lim_{b \to \infty} \frac{-b e^{-b^2/2}}{1} = \lim_{b \to \infty} \frac{-b}{e^{b^2/2}}.
\]
Applying L'Hôpital's Rule again:
\[
\lim_{b \to \infty} \frac{-b}{e^{b^2/2}} = \lim_{b \to \infty} \frac{\frac{d}{db}(-b)}{\frac{d}{db}e^{b^2/2}} = \lim_{b \to \infty} \frac{-1}{b e^{b^2/2}} = 0.
\]
Thus:
\[
I(z) = \frac{e^{-z^2/2}}{z} - \int_z^\infty \frac{e^{-t^2/2}}{t^2} dt.
\]

\textbf{Step 4: Integral Estimation}
For \( t \geq z \geq 1 \), we have:
\[
\frac{1}{t^2} \leq \frac{1}{z^2}.
\]
Therefore:
\[
\int_z^\infty \frac{e^{-t^2/2}}{t^2} dt \leq \frac{1}{z^2} \int_z^\infty e^{-t^2/2} dt = \frac{1}{z^2} I(z).
\]

\textbf{Step 5: Inequality Formation}
Substituting back:
\[
I(z) \geq \frac{e^{-z^2/2}}{z} - \frac{1}{z^2} I(z).
\]

\textbf{Step 6: Algebraic Resolution}
Rearranging terms:
\[
I(z) + \frac{1}{z^2} I(z) \geq \frac{e^{-z^2/2}}{z}.
\]
\[
I(z) \left(1 + \frac{1}{z^2}\right) \geq \frac{e^{-z^2/2}}{z}.
\]
Solving for \( I(z) \):
\[
I(z) \geq \frac{e^{-z^2/2}}{z} \cdot \frac{z^2}{1 + z^2}.
\]

\textbf{Step 7: Final Refinement}
Observe that for \( z \geq 1 \):
\[
\frac{z^2}{1 + z^2} = 1 - \frac{1}{1 + z^2} \geq 1 - \frac{1}{z^2}.
\]
Thus:
\[
I(z) \geq \frac{e^{-z^2/2}}{z} \left(1 - \frac{1}{z^2}\right).
\]
Multiplying through by the normalization constant \( \frac{1}{\sqrt{2\pi}} \) completes the derivation.
\end{proof}

\begin{remark}\label{rem:sharp_gaussian_utility}
This estimate provides two features that will be useful in the sequel:
\begin{itemize}
    \item \textbf{Precision}: The bound captures the leading order asymptotic behavior while also providing control over the sub-exponential factor. This level of precision supports the derivation of the large deviation principle in Theorem~\ref{thm:LDP}.
    
    \item \textbf{Two-Sided Control}: Together with the classical Mill's inequality (Lemma~\ref{lem:mill_appendix}), this result provides complementary bounds that facilitate the tail analysis of Gaussian functionals arising in the TV-fBm framework.
\end{itemize}
The derivation demonstrates how integration techniques can yield progressively sharper approximations to Gaussian tail probabilities.
\end{remark}

\subsection{Precise Large Deviation Analysis for TV-fBm}
The previous local increment analysis provides the foundation for establishing large deviation properties of TV-fBm. We now present the main result of this section, which characterizes the precise logarithmic asymptotics of tail probabilities for the local dominant increment.
\begin{theorem}[Large Deviation Asymptotics for TV-fBm]\label{thm:LDP}
Let \( H: [0,T] \to (0,1) \) satisfy Definition~\ref{def:holder} with exponent \(\gamma > \sup_{t \in [0,T]} H(t)\). For any fixed \( t_0 \in [0,T] \) and \( x > 0 \), the local dominant increment \( I_1^{\mathrm{loc}} \) defined in \eqref{eq:local_increment} exhibits the large deviation asymptotic:
\begin{equation}\label{eq:LDP_limit}
\lim_{\epsilon \to 0} \epsilon^{2H(t_0)} \ln \mathbb{P}\left( I_1^{\mathrm{loc}} \geq x \right) = -\frac{x^2}{2}.
\end{equation}
\end{theorem}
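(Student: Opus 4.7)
The plan is to reduce the problem to a sharp asymptotic analysis of standard Gaussian tails and then track how that asymptotic propagates through the precise variance scaling for $I_1^{\mathrm{loc}}$ established earlier. By Corollary~\ref{cor:gaussian_tail}, $I_1^{\mathrm{loc}} \sim \mathcal{N}(0,\sigma^2)$ with $\sigma^2 = \epsilon^{2H(t_0)}(1 + r(\epsilon))$, where $r(\epsilon) = O(\epsilon^\lambda)$ for $\lambda = \min\bigl(\gamma - H(t_0), (\gamma+H(t_0))/2\bigr) > 0$. Setting $Z := I_1^{\mathrm{loc}}/\sigma \sim \mathcal{N}(0,1)$ and $z_\epsilon := x/\sigma$, we have $\mathbb{P}(I_1^{\mathrm{loc}} \geq x) = \mathbb{P}(Z \geq z_\epsilon)$. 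Since $\sigma \to 0^+$ as $\epsilon \to 0^+$, we obtain $z_\epsilon \to \infty$, so we are in the regime where both the upper bound from Mill's inequality (Lemma~\ref{lem:mill_appendix}) and the refined lower bound (Lemma~\ref{lem:sharp_gaussian}) are applicable and sharp.

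The next step is to sandwich $\ln \mathbb{P}(I_1^{\mathrm{loc}} \geq x)$ using these two bounds. Mill's inequality yields
\[
\ln \mathbb{P}(I_1^{\mathrm{loc}} \geq x) \leq -\frac{z_\epsilon^2}{2} - \ln z_\epsilon - \tfrac{1}{2}\ln(2\pi),
\]
while Lemma~\ref{lem:sharp_gaussian} gives
\[
\ln \mathbb{P}(I_1^{\mathrm{loc}} \geq x) \geq -\frac{z_\epsilon^2}{2} - \ln z_\epsilon - \tfrac{1}{2}\ln(2\pi) + \ln\!\bigl(1 - z_\epsilon^{-2}\bigr).
\]
Both bounds share the same leading term $-z_\epsilon^2/2$; the remaining contributions are logarithmic in $z_\epsilon$ and a correction $\ln(1 - z_\epsilon^{-2}) \to 0$.

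I would then multiply through by the scaling factor $\epsilon^{2H(t_0)}$ and evaluate each contribution in the limit. The principal term becomes
\[
\epsilon^{2H(t_0)} \cdot \frac{z_\epsilon^2}{2} = \frac{\epsilon^{2H(t_0)} x^2}{2\sigma^2} = \frac{x^2}{2\bigl(1 + r(\epsilon)\bigr)} \longrightarrow \frac{x^2}{2},
\]
which is exactly the rate function value claimed. For the subdominant contributions, note that $z_\epsilon = x\sigma^{-1}$ grows like $\epsilon^{-H(t_0)}$ up to lower-order corrections, so $\ln z_\epsilon = O(|\ln \epsilon|)$. Hence $\epsilon^{2H(t_0)} \ln z_\epsilon = O(\epsilon^{2H(t_0)} |\ln \epsilon|) \to 0$ because $2H(t_0) > 0$; the constant $\tfrac{1}{2}\ln(2\pi)$ and the vanishing correction $\ln(1-z_\epsilon^{-2})$ are likewise annihilated after multiplication by $\epsilon^{2H(t_0)} \to 0$. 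Combining the matching upper and lower asymptotics via the squeeze theorem then gives \eqref{eq:LDP_limit}.

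The main obstacle is less a single hard step than the need to keep the orders of magnitude carefully ordered: one must verify that the $O(\epsilon^\lambda)$ deviation of $\sigma^2$ from $\epsilon^{2H(t_0)}$ does not distort the leading constant $-x^2/2$, and simultaneously that the logarithmic corrections $\ln z_\epsilon$ are strictly subdominant after multiplication by $\epsilon^{2H(t_0)}$. This bookkeeping depends essentially on the critical regularity condition $\gamma > \sup_t H(t)$, which ensures $\lambda > 0$ and makes $r(\epsilon) \to 0$, and on the basic fact that $\epsilon^\beta|\ln\epsilon| \to 0$ for any $\beta > 0$, both already quantitatively available in the preceding sections.
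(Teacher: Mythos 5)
Your proposal is correct and follows essentially the same route as the paper's proof: both sandwich $\ln \mathbb{P}(I_1^{\mathrm{loc}} \geq x)$ between Mill's inequality (Lemma~\ref{lem:mill_appendix}) and the refined lower bound (Lemma~\ref{lem:sharp_gaussian}), feed in the variance asymptotics $\sigma_\epsilon^2 = \epsilon^{2H(t_0)}(1+r(\epsilon))$ from Theorem~\ref{thm:local_increment_var}, and kill the subdominant logarithmic terms after multiplying by $\epsilon^{2H(t_0)}$. Your bookkeeping via the standardized threshold $z_\epsilon = x/\sigma$ is a mild streamlining of the paper's $\delta$-parametrized upper-bound argument, but the decomposition, key lemmas, and limiting mechanism are identical.
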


\begin{proof}
\textbf{Part 1: Upper Bound for limsup}

\textbf{Step 1.1: Theoretical Foundation from Mill's Inequality}
By Lemma~\ref{lem:mill_appendix} (Mill's inequality) applied to \( I_1^{\mathrm{loc}} \sim \mathcal{N}(0, \sigma_\epsilon^2) \):
\[
\mathbb{P}(I_1^{\mathrm{loc}} \geq x) < \frac{\sigma_\epsilon}{x\sqrt{2\pi}} e^{-x^2/(2\sigma_\epsilon^2)},
\]
where \( \sigma_\epsilon^2 = \mathbb{E}[(I_1^{\mathrm{loc}})^2] \).

\textbf{Step 1.2: Variance Asymptotics from Theorem~\ref{thm:local_increment_var}}
From Theorem~\ref{thm:local_increment_var}, we have the exact asymptotic expansion:
\[
\sigma_\epsilon^2 = \epsilon^{2H(t_0)}(1 + r(\epsilon)), \quad \text{where } |r(\epsilon)| \leq c_1 \epsilon^\lambda, \ \lambda > 0.
\]
In particular, \(\lim_{\epsilon \to 0} r(\epsilon) = 0\).

For any \(\delta > 0\), there exists \(\epsilon_\delta > 0\) such that for all \(\epsilon \in (0, \epsilon_\delta)\):
\[
|r(\epsilon)| < \delta \quad \Rightarrow \quad \sigma_\epsilon^2 \leq \epsilon^{2H(t_0)}(1 + \delta).
\]

\textbf{Step 1.3: Monotonicity of the Upper Bound Function}
For fixed \(\delta > 0\), consider the function:
\[
g(\sigma_\epsilon) := \frac{\sigma_\epsilon}{x\sqrt{2\pi}} e^{-x^2/(2\sigma_\epsilon^2)}.
\]
We establish its strict monotonicity:

\begin{proposition}\label{prop:g_monotonicity}
$g(\sigma_\epsilon)$ is strictly increasing for $\sigma_\epsilon > 0$.
\end{proposition}

\begin{proof}
\textbf{Proof by Differentiation:}
\begin{align*}
g'(\sigma_\epsilon) &= \frac{d}{d\sigma_\epsilon} \left( \frac{\sigma_\epsilon}{x\sqrt{2\pi}} e^{-\frac{x^2}{2\sigma_\epsilon^2}} \right) \\
&= \frac{1}{x\sqrt{2\pi}} e^{-\frac{x^2}{2\sigma_\epsilon^2}} + \frac{\sigma_\epsilon}{x\sqrt{2\pi}} \cdot \frac{x^2}{\sigma_\epsilon^3} e^{-\frac{x^2}{2\sigma_\epsilon^2}} \\
&= \frac{e^{-\frac{x^2}{2\sigma_\epsilon^2}}}{x\sqrt{2\pi}} \left( 1 + \frac{x^2}{\sigma_\epsilon^2} \right) > 0 \quad \text{(since all terms are positive)}.
\end{align*}

\textbf{Proof by Functional Composition:}
Decompose $g(\sigma_\epsilon) = g_1(\sigma_\epsilon) \cdot g_2(\sigma_\epsilon)$ where:
\[
g_1(\sigma_\epsilon) = \frac{\sigma_\epsilon}{x\sqrt{2\pi}}, \quad g_2(\sigma_\epsilon) = e^{-x^2/(2\sigma_\epsilon^2)}.
\]
Both functions are strictly positive and strictly increasing:
\begin{itemize}
\item $g_1(\sigma_\epsilon) = \frac{\sigma_\epsilon}{x\sqrt{2\pi}} > 0$ and $g_1'(\sigma_\epsilon) = \frac{1}{x\sqrt{2\pi}} > 0$
\item $g_2(\sigma_\epsilon) = e^{-x^2/(2\sigma_\epsilon^2)} > 0$ and $g_2'(\sigma_\epsilon) = e^{-x^2/(2\sigma_\epsilon^2)} \cdot \frac{x^2}{\sigma_\epsilon^3} > 0$
\end{itemize}
For any $\sigma_1 < \sigma_2$:
\[
g(\sigma_2) - g(\sigma_1) = g_1(\sigma_2)[g_2(\sigma_2) - g_2(\sigma_1)] + g_2(\sigma_1)[g_1(\sigma_2) - g_1(\sigma_1)] > 0.
\]
\end{proof}

By the variance bound from Step 1.2 and the monotonicity of $g$, we have for $\epsilon \in (0, \epsilon_\delta)$:
\[
\mathbb{P}(I_1^{\mathrm{loc}} \geq x) < \frac{\sqrt{1+\delta} \epsilon^{H(t_0)}}{x\sqrt{2\pi}} e^{-x^2/(2(1+\delta)\epsilon^{2H(t_0)})}.
\]

\textbf{Step 1.4: Logarithmic Transformation}
Taking natural logarithms:
\[
\ln \mathbb{P}(I_1^{\mathrm{loc}} \geq x) < \ln \left( \frac{\sqrt{1+\delta} \epsilon^{H(t_0)}}{x\sqrt{2\pi}} \right) - \frac{x^2}{2(1+\delta)\epsilon^{2H(t_0)}}.
\]

\textbf{Step 1.5: Scaling and Function Definition}
Multiply both sides by \( \epsilon^{2H(t_0)} \):
\begin{align*}
\epsilon^{2H(t_0)} \ln \mathbb{P} &< \epsilon^{2H(t_0)} \ln \left( \frac{\sqrt{1+\delta} \epsilon^{H(t_0)}}{x\sqrt{2\pi}} \right) - \frac{x^2}{2(1+\delta)} \\
&= \epsilon^{2H(t_0)} \left( \frac{1}{2} \ln(1+\delta) + H(t_0) \ln \epsilon - \ln(x\sqrt{2\pi}) \right) - \frac{x^2}{2(1+\delta)} \\
&:= f_\delta(\epsilon).
\end{align*}

\textbf{Step 1.6: Strict Monotonicity of $f_\delta(\epsilon)$}
We now establish the strict monotonic decrease of $f_\delta(\epsilon)$:

\begin{proposition}\label{prop:f_monotonicity}
For each fixed $\delta > 0$, there exists $\epsilon_0(\delta) > 0$ such that $f_\delta'(\epsilon) < 0$ for all $\epsilon \in (0, \epsilon_0(\delta))$.
\end{proposition}
\begin{proof}
Compute the derivative:
\begin{align*}
f_\delta'(\epsilon) &= \frac{d}{d\epsilon} \left[ \epsilon^{2H(t_0)} \ln \left( \frac{\sqrt{1+\delta} \epsilon^{H(t_0)}}{x\sqrt{2\pi}} \right) - \frac{x^2}{2(1+\delta)} \right] \\
&= \frac{d}{d\epsilon} \left[ \epsilon^{2H(t_0)} \left( \frac{1}{2} \ln(1+\delta) + H(t_0) \ln \epsilon - \ln(x\sqrt{2\pi}) \right) \right] \\
&= 2H(t_0)\epsilon^{2H(t_0)-1} \left( \frac{1}{2} \ln(1+\delta) + H(t_0) \ln \epsilon - \ln(x\sqrt{2\pi}) \right) \\
&\quad + \epsilon^{2H(t_0)} \left( \frac{H(t_0)}{\epsilon} \right) \\
&= \epsilon^{2H(t_0)-1} \left[ 2H(t_0) \left( \frac{1}{2} \ln(1+\delta) + H(t_0) \ln \epsilon - \ln(x\sqrt{2\pi}) \right) + H(t_0) \right] \\
&= \epsilon^{2H(t_0)-1} \left[ H(t_0) \ln(1+\delta) + 2H(t_0)^2 \ln \epsilon - 2H(t_0) \ln(x\sqrt{2\pi}) + H(t_0) \right] \\
&= \epsilon^{2H(t_0)-1} \left[ 2H(t_0)^2 \ln \epsilon + \underbrace{H(t_0) \ln(1+\delta) - 2H(t_0) \ln(x\sqrt{2\pi}) + H(t_0)}_{\text{bounded term}} \right].
\end{align*}

As $\epsilon \to 0^+$:
\begin{itemize}
\item $2H(t_0)^2 \ln \epsilon \to -\infty$ (since $H(t_0) > 0$)
\item The bounded term remains finite
\item $\epsilon^{2H(t_0)-1} > 0$ for all $\epsilon > 0$
\end{itemize}
Thus, there exists $\epsilon_0(\delta) > 0$ such that for all $\epsilon \in (0, \epsilon_0(\delta))$:
\[
f_\delta'(\epsilon) = \epsilon^{2H(t_0)-1} \times (\text{negative expression}) < 0.
\]
\end{proof}

\textbf{Step 1.7: Term-by-Term Limit Analysis for Fixed $\delta$}
Decompose $f_\delta(\epsilon)$:
\[
f_\delta(\epsilon) = \underbrace{\epsilon^{2H(t_0)} \cdot \frac{1}{2} \ln(1+\delta)}_{A_1(\epsilon)} + \underbrace{\epsilon^{2H(t_0)} H(t_0) \ln \epsilon}_{A_2(\epsilon)} - \underbrace{\epsilon^{2H(t_0)} \ln(x\sqrt{2\pi})}_{B(\epsilon)} - \underbrace{\frac{x^2}{2(1+\delta)}}_{C}
\]

\textbf{Limits as $\epsilon \to 0^+$}:
\begin{itemize}
\item $A_1(\epsilon) \to 0$ (since $\ln(1+\delta)$ is constant and $\epsilon^{2H(t_0)} \to 0$)
\item $A_2(\epsilon) \to 0$ by Proposition~\ref{prop:log_inequality} with $\delta = H(t_0)$:
\[
|A_2(\epsilon)| \leq \epsilon^{2H(t_0)} H(t_0) K_{H(t_0)} \epsilon^{-H(t_0)} = H(t_0) K_{H(t_0)} \epsilon^{H(t_0)} \to 0
\]
\item $B(\epsilon) \to 0$ (constant multiple of $\epsilon^{2H(t_0)}$)
\item $C$ is constant
\end{itemize}
Thus, for each fixed $\delta > 0$:
\[
\lim_{\epsilon \to 0} f_\delta(\epsilon) = -\frac{x^2}{2(1+\delta)}.
\]

\textbf{Step 1.8: Supremum Limit Control and Final Passage to the Limit}

By Proposition~\ref{prop:f_monotonicity}, for each $\delta > 0$, there exists $\epsilon_0(\delta) > 0$ such that $f_\delta$ is strictly decreasing on $(0, \epsilon_0(\delta))$.

Let $\epsilon^*(\delta) = \min\{\epsilon_\delta, \epsilon_0(\delta)\}$. Then for all $\epsilon \in (0, \epsilon^*(\delta))$, we have both the variance bound and the monotonicity of $f_\delta$.

Now, for the limsup control:

Since $\epsilon^{2H(t_0)} \ln \mathbb{P}(I_1^{\mathrm{loc}} \geq x) < f_\delta(\epsilon)$ for all $\epsilon \in (0, \epsilon^*(\delta))$, we have for any $\epsilon \in (0, \epsilon^*(\delta))$:
\[
\sup_{0 < \epsilon' < \epsilon} \left[ \epsilon'^{2H(t_0)} \ln \mathbb{P}(I_1^{\mathrm{loc}} \geq x) \right] \leq \sup_{0 < \epsilon' < \epsilon} f_\delta(\epsilon').
\]

Now, since $f_\delta$ is strictly decreasing on $(0, \epsilon^*(\delta))$, we have:
\[
\sup_{0 < \epsilon' < \epsilon} f_\delta(\epsilon') = \lim_{\epsilon' \to 0^+} f_\delta(\epsilon').
\]

From Step 1.7, we know:
\[
\lim_{\epsilon' \to 0^+} f_\delta(\epsilon') = -\frac{x^2}{2(1+\delta)}.
\]

Therefore, for any $\epsilon \in (0, \epsilon^*(\delta))$:
\[
\sup_{0 < \epsilon' < \epsilon} \left[ \epsilon'^{2H(t_0)} \ln \mathbb{P}(I_1^{\mathrm{loc}} \geq x) \right] \leq -\frac{x^2}{2(1+\delta)}.
\]

Now, by the definition of the limit superior:
\[
\limsup_{\epsilon \to 0} \epsilon^{2H(t_0)} \ln \mathbb{P}(I_1^{\mathrm{loc}} \geq x) 
= \lim_{\epsilon \to 0} \left( \sup_{0 < \epsilon' < \epsilon} \left[ \epsilon'^{2H(t_0)} \ln \mathbb{P}(I_1^{\mathrm{loc}} \geq x) \right] \right).
\]

Since the inequality
\[
\sup_{0 < \epsilon' < \epsilon} \left[ \epsilon'^{2H(t_0)} \ln \mathbb{P}(I_1^{\mathrm{loc}} \geq x) \right] \leq -\frac{x^2}{2(1+\delta)}
\]
holds for all $\epsilon \in (0, \epsilon^*(\delta))$, and the right-hand side is constant, taking the limit as $\epsilon \to 0$ preserves the inequality:
\[
\limsup_{\epsilon \to 0} \epsilon^{2H(t_0)} \ln \mathbb{P}(I_1^{\mathrm{loc}} \geq x) \leq -\frac{x^2}{2(1+\delta)}.
\]

Since this inequality holds for every $\delta > 0$, we conclude:
\[
\limsup_{\epsilon \to 0} \epsilon^{2H(t_0)} \ln \mathbb{P}(I_1^{\mathrm{loc}} \geq x) \leq \inf_{\delta > 0} \left( -\frac{x^2}{2(1+\delta)} \right) = -\frac{x^2}{2}.
\]

This completes the proof of the upper bound.

\textbf{Part 2: Lower Bound for liminf}

\textbf{Step 2.1: Establishing the Lower Bound via Sharp Gaussian Tail}
From Theorem~\ref{thm:local_increment_var}, $\sigma_\epsilon^2 = \mathbb{E}[(I_1^{\mathrm{loc}})^2] = \epsilon^{2H(t_0)}(1 + r(\epsilon))$ with $|r(\epsilon)| \leq c_1 \epsilon^\lambda$ and $\lambda > 0$. Since $\sigma_\epsilon \to 0$ as $\epsilon \to 0$, there exists $\epsilon_1 > 0$ such that for all $\epsilon \in (0, \epsilon_1)$:
\[
\frac{x}{\sigma_\epsilon} > 1.
\]
For such $\epsilon$, Lemma~\ref{lem:sharp_gaussian} is applicable with $z = x/\sigma_\epsilon$ and yields:
\begin{equation}\label{eq:sharp_lower_bound_applied}
\mathbb{P}\left(I_1^{\mathrm{loc}} \geq x\right) \geq \frac{\sigma_\epsilon}{x\sqrt{2\pi}} \left( 1 - \frac{\sigma_\epsilon^2}{x^2} \right) \exp\left(-\frac{x^2}{2\sigma_\epsilon^2}\right).
\end{equation}

\textbf{Step 2.2: Logarithmic Transformation and Scaling}
Taking the natural logarithm of both sides of \eqref{eq:sharp_lower_bound_applied}:
\begin{align}\label{eq:log_transform}
\ln \mathbb{P}\left(I_1^{\mathrm{loc}} \geq x\right) 
&\geq \ln\left( \frac{\sigma_\epsilon}{x\sqrt{2\pi}} \right) + \ln\left( 1 - \frac{\sigma_\epsilon^2}{x^2} \right) - \frac{x^2}{2\sigma_\epsilon^2}.
\end{align}
Multiplying through by the scaling factor $\epsilon^{2H(t_0)}$:
\begin{align}\label{eq:scaled_inequality}
\epsilon^{2H(t_0)} \ln \mathbb{P} 
&\geq \underbrace{\epsilon^{2H(t_0)} \ln\left( \frac{\sigma_\epsilon}{x\sqrt{2\pi}} \right)}_{E(\epsilon)} + \underbrace{\epsilon^{2H(t_0)} \ln\left( 1 - \frac{\sigma_\epsilon^2}{x^2} \right)}_{F(\epsilon)} - \underbrace{\frac{x^2}{2} \cdot \frac{\epsilon^{2H(t_0)}}{\sigma_\epsilon^2}}_{G(\epsilon)}.
\end{align}

\textbf{Step 2.3: Asymptotic Analysis of Term $E(\epsilon)$}
Using the asymptotic expression $\sigma_\epsilon = \epsilon^{H(t_0)} \sqrt{1 + r(\epsilon)} =: \epsilon^{H(t_0)} h(\epsilon)$ with $\lim_{\epsilon \to 0} h(\epsilon) = 1$, we have:
\begin{align*}
E(\epsilon) &= \epsilon^{2H(t_0)} \ln\left( \frac{\epsilon^{H(t_0)} h(\epsilon)}{x\sqrt{2\pi}} \right) \\
&= \epsilon^{2H(t_0)} \left[ H(t_0) \ln \epsilon + \ln h(\epsilon) - \ln(x\sqrt{2\pi}) \right].
\end{align*}
Analyzing each component:
\begin{itemize}
    \item By Proposition~\ref{prop:log_inequality} with $\delta = H(t_0)$:
    \[
    \left| \epsilon^{2H(t_0)} \ln \epsilon \right| \leq K_{H(t_0)} \epsilon^{2H(t_0) - H(t_0)} = K_{H(t_0)} \epsilon^{H(t_0)} \to 0.
    \]
    Therefore:
    \[
    \left| \epsilon^{2H(t_0)} H(t_0) \ln \epsilon \right| \leq H(t_0) K_{H(t_0)} \epsilon^{H(t_0)} \to 0.
    \]
    \item Since $h(\epsilon) \to 1$ and $\ln(x\sqrt{2\pi})$ is constant:
    \[
    \epsilon^{2H(t_0)} \ln h(\epsilon) \to 0, \quad \epsilon^{2H(t_0)} \ln(x\sqrt{2\pi}) \to 0.
    \]
\end{itemize}
Therefore:
\begin{equation}\label{eq:E_limit}
\lim_{\epsilon \to 0} E(\epsilon) = 0.
\end{equation}

\textbf{Step 2.4: Rigorous Asymptotic Analysis of Term $F(\epsilon)$}
Since $\sigma_\epsilon^2/x^2 \to 0$, there exists $\epsilon_2 > 0$ such that for all $\epsilon \in (0, \epsilon_2)$:
\[
0 < \frac{\sigma_\epsilon^2}{x^2} < \frac{1}{2}.
\]
For $u \in [0, \frac{1}{2}]$, we have the inequality $\ln(1-u) \geq -2u$ (which can be verified by considering the function $f(u) = \ln(1-u) + 2u$ and its monotonicity). Applying this with $u = \sigma_\epsilon^2/x^2$:
\[
F(\epsilon) \geq -2\epsilon^{2H(t_0)} \cdot \frac{\sigma_\epsilon^2}{x^2}.
\]
Substituting $\sigma_\epsilon^2 = \epsilon^{2H(t_0)}(1 + r(\epsilon))$:
\[
F(\epsilon) \geq -\frac{2\epsilon^{4H(t_0)}}{x^2} (1 + r(\epsilon)).
\]
Since $4H(t_0) > 2H(t_0) > 0$ and $|r(\epsilon)| \leq c_1 \epsilon^\lambda \to 0$, we have:
\[
\lim_{\epsilon \to 0} F(\epsilon) \geq 0.
\]
Moreover, since $\ln(1-u) \leq 0$ for $u \in [0,1)$, we also have $F(\epsilon) \leq 0$. Therefore:
\begin{equation}\label{eq:F_limit}
\lim_{\epsilon \to 0} F(\epsilon) = 0.
\end{equation}

\textbf{Step 2.5: Asymptotic Analysis of Term $G(\epsilon)$}
Substituting the variance expression:
\begin{align*}
G(\epsilon) &= \frac{x^2}{2} \cdot \frac{\epsilon^{2H(t_0)}}{\sigma_\epsilon^2} \\
&= \frac{x^2}{2} \cdot \frac{\epsilon^{2H(t_0)}}{\epsilon^{2H(t_0)} (1 + r(\epsilon))} \\
&= \frac{x^2}{2(1 + r(\epsilon))}.
\end{align*}
By the convergence $r(\epsilon) \to 0$:
\begin{equation}\label{eq:G_limit}
\lim_{\epsilon \to 0} G(\epsilon) = \frac{x^2}{2}.
\end{equation}

\textbf{Step 2.6: Rigorous Synthesis of the liminf Lower Bound}

Let $\epsilon^* = \min\{\epsilon_1, \epsilon_2\}$, where $\epsilon_1$ is from Step 2.1 and $\epsilon_2$ is from Step 2.4. Then for all $\epsilon \in (0, \epsilon^*)$, the fundamental inequality from Step 2.2 holds:
\begin{equation}\label{eq:fundamental_inequality}
\epsilon^{2H(t_0)} \ln \mathbb{P}\left(I_1^{\mathrm{loc}} \geq x\right) \geq E(\epsilon) + F(\epsilon) - G(\epsilon).
\end{equation}

\textbf{Step 2.6.1: liminf Preservation of Inequalities}
For $\epsilon \in (0, \epsilon^*)$, applying the liminf operator to both sides of \eqref{eq:fundamental_inequality} and utilizing its order-preserving property:
\begin{align}\label{eq:liminf_inequality}
\liminf_{\epsilon \to 0} \epsilon^{2H(t_0)} \ln \mathbb{P} 
&\geq \liminf_{\epsilon \to 0} \left[ E(\epsilon) + F(\epsilon) - G(\epsilon) \right] \\
&\geq \liminf_{\epsilon \to 0} E(\epsilon) + \liminf_{\epsilon \to 0} F(\epsilon) + \liminf_{\epsilon \to 0} \left[ -G(\epsilon) \right],
\end{align}
where the second inequality follows from the superadditivity of liminf.

\textbf{Step 2.6.2: Limit Analysis of Component Terms}
From our prior asymptotic analysis:
\begin{enumerate}
    \item $\lim_{\epsilon \to 0} E(\epsilon) = 0$ by \eqref{eq:E_limit}, so $\liminf_{\epsilon \to 0} E(\epsilon) = 0$
    \item $\lim_{\epsilon \to 0} F(\epsilon) = 0$ by \eqref{eq:F_limit}, so $\liminf_{\epsilon \to 0} F(\epsilon) = 0$  
    \item $\lim_{\epsilon \to 0} G(\epsilon) = \frac{x^2}{2}$ by \eqref{eq:G_limit}, so $\liminf_{\epsilon \to 0} \left[ -G(\epsilon) \right] = -\limsup_{\epsilon \to 0} G(\epsilon) = -\frac{x^2}{2}$
\end{enumerate}

\textbf{Step 2.6.3: Synthesis of Limits}
Substituting these results into \eqref{eq:liminf_inequality}:
\begin{align}\label{eq:liminf_synthesis}
\liminf_{\epsilon \to 0} \epsilon^{2H(t_0)} \ln \mathbb{P} 
&\geq 0 + 0 - \frac{x^2}{2} \\
&= -\frac{x^2}{2}.
\end{align}

\textbf{Step 2.6.4: Final Lower Bound Statement}
This establishes the fundamental lower bound required for the large deviation principle:
\begin{equation}\label{eq:liminf_lower_bound}
\liminf_{\epsilon \to 0} \epsilon^{2H(t_0)} \ln \mathbb{P}\left(I_1^{\mathrm{loc}} \geq x\right) \geq -\frac{x^2}{2}.
\end{equation}

\textbf{Part 3: Equality of Limit and Conclusion}

Combining the results from Parts 1 and 2:
\begin{align*}
\limsup_{\epsilon \to 0} \epsilon^{2H(t_0)} \ln \mathbb{P}(I_1^{\mathrm{loc}} \geq x) &\leq -\frac{x^2}{2} \quad \text{(from Part 1)}, \\
\liminf_{\epsilon \to 0} \epsilon^{2H(t_0)} \ln \mathbb{P}(I_1^{\mathrm{loc}} \geq x) &\geq -\frac{x^2}{2} \quad \text{(from Part 2)}.
\end{align*}

By the fundamental property of limit inferior and superior:
\[
\liminf_{\epsilon \to 0} \epsilon^{2H(t_0)} \ln \mathbb{P}(I_1^{\mathrm{loc}} \geq x) \leq \limsup_{\epsilon \to 0} \epsilon^{2H(t_0)} \ln \mathbb{P}(I_1^{\mathrm{loc}} \geq x).
\]

Combining these inequalities:
\[
-\frac{x^2}{2} \leq \liminf_{\epsilon \to 0} \epsilon^{2H(t_0)} \ln \mathbb{P}(I_1^{\mathrm{loc}} \geq x) \leq \limsup_{\epsilon \to 0} \epsilon^{2H(t_0)} \ln \mathbb{P}(I_1^{\mathrm{loc}} \geq x) \leq -\frac{x^2}{2}.
\]

This chain of inequalities forces equality throughout:
\[
\liminf_{\epsilon \to 0} \epsilon^{2H(t_0)} \ln \mathbb{P}(I_1^{\mathrm{loc}} \geq x) = \limsup_{\epsilon \to 0} \epsilon^{2H(t_0)} \ln \mathbb{P}(I_1^{\mathrm{loc}} \geq x) = -\frac{x^2}{2}.
\]

Therefore, the limit exists and equals:
\[
\lim_{\epsilon \to 0} \epsilon^{2H(t_0)} \ln \mathbb{P}(I_1^{\mathrm{loc}} \geq x) = -\frac{x^2}{2}. \qedhere
\]
\end{proof}
\begin{remark}[Theoretical Significance]\label{rem:LDP_significance}
Theorem~\ref{thm:LDP} offers a characterization of large deviation asymptotics for a class of non-stationary processes with time-varying memory. This work is inspired by ideas from the classical theory of large deviations \cite{Varadhan1966,JG1977,Feliachi2021}, and examines how time-dependent correlation properties may influence extreme behavior, while noting connections between fractional calculus and statistical mechanics \cite{Touchette2009}.

The expressions for the scaling exponent $2H(t_0)$ and the rate function $x^2/2$ provide some insight into tail behavior in systems with time-dependent anomalous diffusion. These results suggest a relationship between varying memory properties and extreme value statistics within the considered framework.

This analysis presents one approach to studying certain non-equilibrium phenomena in systems with time-dependent memory, adding to the discussion of such stochastic processes.
\end{remark}

\subsection{Covariance Structure of Time-Varying Fractional Brownian Motion}\label{sec:covariance}
\begin{theorem}[Covariance Function of TV-fBm]\label{thm:covariance}
Let $H: [0,T] \to (0,1)$ satisfy Definition~\ref{def:holder} with exponent $\gamma > \sup_{t \in [0,T]} H(t)$. 
For any $u, v \in [0,T]$, the covariance of the time-varying fractional Brownian motion $B^{H(\cdot)}$ 
defined in Definition~\ref{def:tvfbm} is given by:
\begin{equation}\label{eq:covariance}
\mathbb{E}\left[B^{H(\cdot)}(u)B^{H(\cdot)}(v)\right] = 2\sqrt{H(u)H(v)} \int_{0}^{\min(u,v)} (u - s)^{H(u)-\frac{1}{2}}(v - s)^{H(v)-\frac{1}{2}}  ds.
\end{equation}
\end{theorem}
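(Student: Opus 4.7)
The plan is to apply the polarization form of the Itô isometry from Lemma~\ref{lem:ito} directly to the stochastic integral representation of $B^{H(\cdot)}(u)$ and $B^{H(\cdot)}(v)$ given in \eqref{eq:tvfbm_ito}. The key observation is that, because $H(\cdot)$ is deterministic, both integrands are deterministic $L^2$-kernels, so their covariance reduces to a Lebesgue integral of the product of the two kernels on a common domain.

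First I would introduce the extended kernels
\[
f_u(s) := \sqrt{2H(u)}\,(u-s)^{H(u)-1/2}\mathbf{1}_{[0,u)}(s),\qquad f_v(s) := \sqrt{2H(v)}\,(v-s)^{H(v)-1/2}\mathbf{1}_{[0,v)}(s),
\]
defined on $[0,T]$ by zero-extension outside the respective intervals of integration, so that
\[
B^{H(\cdot)}(u) = \int_0^T f_u(s)\,dB(s),\qquad B^{H(\cdot)}(v) = \int_0^T f_v(s)\,dB(s).
\]
Next I would verify that $f_u, f_v \in L^2([0,T])$: since $H(u),H(v)\in(0,1)$, the exponents $2H(u)-1$ and $2H(v)-1$ both exceed $-1$, giving $\int_0^u (u-s)^{2H(u)-1}ds = u^{2H(u)}/(2H(u)) < \infty$ (and likewise for $v$), as in Part~1 of the proof of Theorem~\ref{thm:variance}. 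This legitimizes invoking the polarization identity from Lemma~\ref{lem:ito}:
\[
\mathbb{E}\!\left[B^{H(\cdot)}(u)\,B^{H(\cdot)}(v)\right] \;=\; \int_0^T f_u(s)\,f_v(s)\,ds.
\]

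Finally I would note that the product $f_u(s)f_v(s)$ is supported on $[0,u)\cap[0,v) = [0,\min(u,v))$, which collapses the integration domain to $[0,\min(u,v)]$, yielding
\[
\mathbb{E}\!\left[B^{H(\cdot)}(u)\,B^{H(\cdot)}(v)\right] \;=\; 2\sqrt{H(u)H(v)}\int_0^{\min(u,v)} (u-s)^{H(u)-1/2}(v-s)^{H(v)-1/2}\,ds,
\]
which is \eqref{eq:covariance}. Unlike Theorem~\ref{thm:variance}, no explicit evaluation of the integral is needed, so there is no real analytic obstacle; the only point requiring care is the bookkeeping with different upper limits when $u\neq v$, which the zero-extension trick handles cleanly. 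I would expect the subsequent remarks (Remarks~\ref{rem:cov_hyper} and \ref{rem:non_dominant}) to carry out the hypergeometric reduction and the comparison with the constant-Hurst case, but the theorem itself is essentially a one-line consequence of polarization once integrability is in place.
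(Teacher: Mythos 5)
Your proposal is correct and follows essentially the same route as the paper's proof: zero-extend the deterministic kernels to a common domain $[0,T]$, verify square-integrability, apply the polarization identity from Lemma~\ref{lem:ito}, and collapse the integration domain to $[0,\min(u,v)]$ via the product of indicators. The paper additionally includes a consistency check against Theorem~\ref{thm:variance} at $u=v$, but this is a verification rather than a logical step, so nothing essential is missing from your argument.
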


\begin{proof}
The proof consists of four rigorous steps, systematically applying Definition~\ref{def:tvfbm} and Lemma~\ref{lem:ito}.

\textbf{Step 1: Fundamental Representation and Preliminary Expansion}\\
Begin with the stochastic integral representation from Definition~\ref{def:tvfbm}:
\[
B^{H(\cdot)}(t) = \sqrt{2H(t)} \int_{0}^{t} (t - \tau)^{H(t)-\frac{1}{2}}  dB(\tau), \quad t \in \{u,v\}.
\]
Expanding the covariance expression using the linearity of expectation and the stochastic integral:
\begin{align*}
\mathbb{E}&\left[B^{H(\cdot)}(u)B^{H(\cdot)}(v)\right] \\
&= \mathbb{E} \left[ \left( \sqrt{2H(u)} \int_{0}^{u} (u - \tau)^{H(u)-\frac{1}{2}} dB(\tau) \right) \cdot \left( \sqrt{2H(v)} \int_{0}^{v} (v - \sigma)^{H(v)-\frac{1}{2}} dB(\sigma) \right) \right] \\
&= 2\sqrt{H(u)H(v)} \cdot \mathbb{E} \left[ \left( \int_{0}^{u} f_u(\tau) dB(\tau) \right) \left( \int_{0}^{v} f_v(\sigma) dB(\sigma) \right) \right],
\end{align*}
where we define the deterministic kernels:
\[
f_u(\tau) := (u - \tau)^{H(u)-\frac{1}{2}}, \quad f_v(\sigma) := (v - \sigma)^{H(v)-\frac{1}{2}}.
\]

\textbf{Step 2: Standardization of Integration Domains and Application of Itô Isometry}\\
To apply the polarization identity from Lemma~\ref{lem:ito} (Itô isometry), we standardize the integration domains to $[0,T]$ by defining truncated versions of the kernels:
\[
\widetilde{f}_u(\tau) := f_u(\tau) \mathbf{1}_{\{0 \leq \tau \leq u\}}, \quad \widetilde{f}_v(\tau) := f_v(\tau) \mathbf{1}_{\{0 \leq \tau \leq v\}}.
\]
By Theorem~\ref{thm:variance} and the condition $H(t) > 0$ for all $t \in [0,T]$, these truncated kernels are square-integrable:
\[
\mathbb{E}\left[\int_0^T |\widetilde{f}_u(\tau)|^2 d\tau\right] = \int_0^u (u - \tau)^{2H(u)-1} d\tau < \infty,
\]
and similarly for $\widetilde{f}_v$. By the fundamental construction of the Itô integral, we have the identities:
\[
\int_{0}^{u} f_u(\tau) dB(\tau) = \int_{0}^{T} \widetilde{f}_u(\tau) dB(\tau), \quad \int_{0}^{v} f_v(\sigma) dB(\sigma) = \int_{0}^{T} \widetilde{f}_v(\sigma) dB(\sigma).
\]
Therefore, the covariance becomes:
\[
\mathbb{E}\left[B^{H(\cdot)}(u)B^{H(\cdot)}(v)\right] = 2\sqrt{H(u)H(v)} \cdot \mathbb{E} \left[ \left( \int_{0}^{T} \widetilde{f}_u(\tau) dB(\tau) \right) \left( \int_{0}^{T} \widetilde{f}_v(\sigma) dB(\sigma) \right) \right].
\]
Applying the polarization identity of Lemma~\ref{lem:ito} to the right-hand side yields:
\begin{align*}
\mathbb{E} &\left[ \left( \int_{0}^{T} \widetilde{f}_u(\tau) dB(\tau) \right) \left( \int_{0}^{T} \widetilde{f}_v(\sigma) dB(\sigma) \right) \right] = \int_{0}^{T} \widetilde{f}_u(\tau) \widetilde{f}_v(\tau)  d\tau.
\end{align*}

\textbf{Step 3: Domain Reduction via Indicator Functions}\\
The product of the indicator functions in the integrand $\widetilde{f}_u(\tau)\widetilde{f}_v(\tau)$ determines the effective integration domain:
\[
\mathbf{1}_{\{0 \leq \tau \leq u\}} \cdot \mathbf{1}_{\{0 \leq \tau \leq v\}} = \mathbf{1}_{\{0 \leq \tau \leq \min(u,v)\}}.
\]
Consequently, the integral simplifies to:
\begin{align*}
\int_0^T \widetilde{f}_u(\tau) \widetilde{f}_v(\tau)  d\tau 
&= \int_0^T (u - \tau)^{H(u)-\frac{1}{2}} (v - \tau)^{H(v)-\frac{1}{2}} \mathbf{1}_{\{0 \leq \tau \leq u\}} \mathbf{1}_{\{0 \leq \tau \leq v\}}  d\tau \\
&= \int_0^{\min(u,v)} (u - \tau)^{H(u)-\frac{1}{2}} (v - \tau)^{H(v)-\frac{1}{2}}  d\tau.
\end{align*}

\textbf{Step 4: Final Covariance Expression and Consistency Verification}\\
Substituting this result back into the covariance expression establishes the desired formula:
\[
\mathbb{E}\left[B^{H(\cdot)}(u)B^{H(\cdot)}(v)\right] = 2\sqrt{H(u)H(v)} \int_{0}^{\min(u,v)} (u - \tau)^{H(u)-\frac{1}{2}} (v - \tau)^{H(v)-\frac{1}{2}}  d\tau.
\]
Changing the dummy integration variable from $\tau$ to $s$ yields the form presented in \eqref{eq:covariance}.

\medskip\noindent
\textbf{Consistency Verification: Case $\mathbf{u = v = t}$}\\
To verify consistency with the variance structure (Theorem~\ref{thm:variance}), consider $u = v = t$. The covariance formula \eqref{eq:covariance} reduces to:
\begin{align*}
\mathbb{E}&\left[\left(B^{H(\cdot)}(t)\right)^2\right] \\
&= 2H(t) \int_{0}^{t} (t - s)^{H(t)-\frac{1}{2}} (t - s)^{H(t)-\frac{1}{2}}  ds \\
&= 2H(t) \int_{0}^{t} (t - s)^{2H(t)-1}  ds.
\end{align*}
Evaluating the elementary integral:
\[
\int_{0}^{t} (t-s)^{2H(t)-1} ds = \left[ \frac{-(t-s)^{2H(t)}}{2H(t)} \right]_{s=0}^{s=t} = \frac{t^{2H(t)}}{2H(t)}.
\]
Thus,
\[
\mathbb{E}\left[\left(B^{H(\cdot)}(t)\right)^2\right] = 2H(t) \cdot \frac{t^{2H(t)}}{2H(t)} = t^{2H(t)},
\]
which recovers the result of Theorem~\ref{thm:variance} exactly, confirming the internal consistency of the covariance formula.
\end{proof}

\begin{remark}\label{rem:covariance_properties}
The covariance structure presented in Theorem~\ref{thm:covariance} exhibits several fundamental properties that characterize the TV-fBm process:
\begin{enumerate}
    \item \textbf{Non-stationarity}: The explicit dependence of the kernel on the individual time points $u$ and $v$, rather than solely on their difference $|u-v|$, reflects the non-stationary nature of the process. This functional dependence is an intrinsic feature arising from the time-varying Hurst exponent.
    
    \item \textbf{Symmetry}: The covariance function is symmetric in its arguments, i.e., $R(u,v) = R(v,u)$, which is consistent with the general property of covariance for centered processes.
    
    \item \textbf{Non-negative definiteness}: The covariance function derived in Theorem~\ref{thm:covariance} defines a valid covariance structure. This can be seen from its form as a square-integrable integral representation, which guarantees the non-negative definiteness required for the consistency of the finite-dimensional distributions of the process.
\end{enumerate}
This established covariance structure provides a foundational component for the analysis of sample path properties and serves as a basis for potential statistical estimation in settings with time-dependent memory.
\end{remark}

\begin{corollary}[First-Order Partial Derivative of Covariance]\label{cor:cov_deriv}
Under the assumptions of Theorem~\ref{thm:covariance}, and assume additionally that the Hurst exponent function $H: [0,T] \to (0,1)$ is continuously differentiable with bounded derivative. Fix $u, v \in (0,T]$ with $u < v$. 
The partial derivative of $R(u,v) = \mathbb{E}[B^{H(\cdot)}(u)B^{H(\cdot)}(v)]$ with respect to $v$ is given by:
\begin{equation}\label{eq:first_deriv}
\partial_v R(u,v) = \int_0^u K(u,s)  \partial_v K(v,s)  ds
\end{equation}
where $K(t,s) = \sqrt{2H(t)} (t-s)^{H(t)-\frac{1}{2}}$, and the derivative $\partial_v K(v,s)$ is computed as:
\begin{align}\label{eq:dkdv}
\partial_v K(v,s) &= \sqrt{2} \left[ \frac{H'(v)}{2\sqrt{H(v)}} (v-s)^{H(v)-\frac{1}{2}} + \sqrt{H(v)}  \frac{d}{dv} \left( (v-s)^{H(v)-\frac{1}{2}} \right) \right] \\
&= (v-s)^{H(v)-\frac{3}{2}} \left[ \sqrt{2H(v)} \left( H(v) - \frac{1}{2} \right) + \left( \sqrt{2H(v)}H'(v)\ln(v-s) + \frac{H'(v)}{\sqrt{2H(v)}} \right) (v-s) \right].
\end{align}
\end{corollary}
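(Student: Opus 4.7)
The plan is to express the covariance via the kernel $K(t,s) := \sqrt{2H(t)}(t-s)^{H(t)-1/2}$ and then differentiate under the integral sign. By Theorem~\ref{thm:covariance}, the hypothesis $u < v$ forces $\min(u,v) = u$, so
\[
R(u,v) = \int_0^u K(u,s) K(v,s) \, ds,
\]
in which the upper limit is independent of $v$. The argument thus reduces to two tasks: justifying the interchange of $\partial_v$ with the integral, and computing $\partial_v K(v,s)$ explicitly.

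For the interchange, I would invoke the standard Leibniz rule for parameter-dependent integrals. Fix $v > u$ and choose $\delta > 0$ with $v - \delta > u$; then for $v' \in [v-\delta, v+\delta]$ and $s \in [0,u]$, the distance $v' - s$ is bounded below by the positive constant $v - \delta - u$. Hence $K(v',s)$ is jointly $C^1$ in $(v',s)$ on this compact rectangle (using the continuous differentiability of $H$), $\partial_{v'} K(v',s)$ is uniformly bounded there, and the integrand $s \mapsto K(u,s)\, \partial_{v'} K(v',s)$ admits a $v'$-independent integrable majorant on $[0,u]$. The Leibniz rule then licenses the exchange, producing \eqref{eq:first_deriv}.

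The explicit computation of $\partial_v K(v,s)$ proceeds from the factorization $K(v,s) = \sqrt{2H(v)} \exp\bigl((H(v)-\tfrac{1}{2})\ln(v-s)\bigr)$. The product rule contributes two terms: the derivative of $\sqrt{2H(v)}$, which equals $H'(v)/\sqrt{2H(v)}$, and the derivative of the exponential factor, whose exponent differentiates by the chain rule to $H'(v)\ln(v-s) + (H(v)-\tfrac{1}{2})/(v-s)$, reflecting the two channels through which $v$ enters the power-law kernel (the exponent and the base). Assembling these contributions and rearranging common factors of $(v-s)$ yields the closed form \eqref{eq:dkdv}. No substantive obstacle is anticipated: the strict inequality $u < v$ keeps the integration domain uniformly away from the kernel singularity at $s = v$, which is precisely what makes the Leibniz exchange routine. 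The on-diagonal regime $u = v$ is genuinely more delicate, because the factor $(v-s)^{H(v)-3/2}$ appearing in $\partial_v K(v,s)$ becomes non-integrable near $s = u$ whenever $H(v) \leq 1/2$; the present corollary deliberately sidesteps this by its off-diagonal hypothesis.
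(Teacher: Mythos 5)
Your proposal is correct and follows essentially the same route as the paper's proof: reduce to $R(u,v)=\int_0^u K(u,s)K(v,s)\,ds$ via $u<v$, differentiate under the integral sign by Leibniz's rule, and compute $\partial_v K(v,s)$ by the product and chain rules on $\sqrt{2H(v)}\,e^{(H(v)-1/2)\ln(v-s)}$. Your justification of the interchange (a uniform bound on $\partial_{v'}K$ over a compact rectangle with $v'-s\geq v-\delta-u>0$, giving an integrable majorant) is in fact slightly more careful than the paper's one-line appeal to continuity of the integrand in $v$, and your observation about the failure of integrability of $(v-s)^{H(v)-3/2}$ in the on-diagonal case correctly identifies why the hypothesis $u<v$ is needed.
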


\begin{proof}
\textbf{Step 1: Kernel Representation}\\
From Theorem~\ref{thm:covariance} with $u < v$:
\[
R(u,v) = \int_0^u K(u,s)K(v,s)  ds, \quad K(t,s) = \sqrt{2H(t)} (t-s)^{H(t)-\frac{1}{2}}.
\]

\textbf{Step 2: Leibniz Rule Application}\\
Since the upper limit $u$ is independent of $v$ and the integrand is continuously differentiable in $v$ for $s \in (0,u)$, we apply Leibniz's integral rule:
\[
\partial_v R(u,v) = \int_0^u \partial_v \left[ K(u,s)K(v,s) \right] ds = \int_0^u K(u,s) \partial_v K(v,s)  ds.
\]

\textbf{Step 3: Computation of $\partial_v K(v,s)$}\\
Compute the derivative using product rule and chain rule:
\begin{align*}
\partial_v K(v,s) &= \partial_v \left[ \sqrt{2H(v)} \cdot (v-s)^{H(v)-\frac{1}{2}} \right] \\
&= \left( \partial_v \sqrt{2H(v)} \right) (v-s)^{H(v)-\frac{1}{2}} + \sqrt{2H(v)}  \partial_v \left[ (v-s)^{H(v)-\frac{1}{2}} \right].
\end{align*}

\textbf{Step 4: Term-by-Term Differentiation}\\
First term:
\[
\partial_v \sqrt{2H(v)} = \frac{H'(v)}{\sqrt{2H(v)}}.
\]
Second term:
\begin{align*}
\partial_v \left[ (v-s)^{H(v)-\frac{1}{2}} \right] &= (v-s)^{H(v)-\frac{1}{2}} \frac{d}{dv} \left[ \left( H(v) - \frac{1}{2} \right) \ln(v-s) \right] \\
&= (v-s)^{H(v)-\frac{1}{2}} \left[ H'(v) \ln(v-s) + \frac{H(v) - \frac{1}{2}}{v-s} \right].
\end{align*}

\textbf{Step 5: Final Expression}\\
Combining all terms and factoring $(v-s)^{H(v)-\frac{3}{2}}$ yields \eqref{eq:dkdv}.
\end{proof}

\begin{corollary}[Second-Order Mixed Partial Derivative]\label{cor:mixed_deriv}
Under the assumptions of Corollary~\ref{cor:cov_deriv}, the second-order mixed partial derivative is given by:
\begin{equation}\label{eq:mixed_deriv}
\partial_u \partial_v R(u,v) = \int_0^u \partial_u K(u,s)  \partial_v K(v,s)  ds + \lim_{s \to u^-} K(u,s)  \partial_v K(v,s).
\end{equation}
\end{corollary}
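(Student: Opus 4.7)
The approach is to differentiate the expression for $\partial_v R(u,v)$ obtained in Corollary~\ref{cor:cov_deriv} by applying Leibniz's rule for an integral with a variable upper limit. Since $u$ appears both as the upper endpoint of integration and as the first argument of $K(u,s)$, two contributions will emerge: an endpoint term evaluated at $s = u$, and an interior term coming from differentiating $K(u,s)$ with respect to $u$. The factor $\partial_v K(v,s)$ is independent of $u$, so it acts merely as a fixed weight. A purely formal application of the rule, after noting that $\partial_u[K(u,s) \partial_v K(v,s)] = \partial_u K(u,s) \cdot \partial_v K(v,s)$, gives the claimed decomposition with the endpoint contribution written as the left-sided limit $\lim_{s \to u^-} K(u,s) \partial_v K(v,s)$.

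To justify this rigorously, I would introduce a regularization by truncating the integral away from the diagonal: set $G_\delta(u,v) := \int_0^{u-\delta} K(u,s) \partial_v K(v,s) ds$ for small $\delta > 0$. On the truncated interval the integrand is jointly $C^1$ in $(u,s)$, so the classical Leibniz rule applies without delicacy and produces the boundary term $K(u, u-\delta) \partial_v K(v, u-\delta)$ together with the integral of $\partial_u K(u,s) \partial_v K(v,s)$ over $[0, u-\delta]$. The desired identity then follows by letting $\delta \to 0^+$, using continuity of $K(u, \cdot)$ on $[0,u)$ for the boundary term and monotone or dominated convergence for the interior term. A supplementary verification is needed to show $G_\delta \to \partial_v R$ uniformly in $u$ on compact subsets and that the $u$-derivatives converge, so that $\partial_u G_\delta \to \partial_u \partial_v R$.

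The principal obstacle is the singular behavior of the kernel near the diagonal $s = u$. Since $u < v$ keeps $v - s \geq v - u > 0$, the factor $\partial_v K(v,s)$ remains bounded and $C^1$ on $[0,u]$ and poses no difficulty. However, $K(u,s)$ carries a $(u-s)^{H(u)-1/2}$ singularity, and $\partial_u K(u,s)$ contains a leading $(u-s)^{H(u)-3/2}$ contribution arising from differentiating the power $(u-s)^{H(u)-1/2}$ with respect to $u$. The identity is therefore classically meaningful in the regime $H(u) > 1/2$, where both the boundary limit vanishes and the interior integral converges in the ordinary sense; at $H(u) = 1/2$ the problematic coefficient $H(u) - \tfrac{1}{2}$ vanishes identically, leaving only a logarithmically singular but integrable remainder; and for $H(u) < 1/2$ the formula is to be understood as the $\delta \to 0^+$ limit of its regularized counterpart, with the divergent pieces of the boundary and interior contributions compensating in a finite-part sense. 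Making this compensation precise is the real technical heart of the argument and would rely on the structural bounds on $\partial_u K$ arising from the explicit computation in \eqref{eq:dkdv}.
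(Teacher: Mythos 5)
Your proposal follows essentially the same route as the paper: apply Leibniz's rule to the representation $\partial_v R(u,v)=\int_0^u K(u,s)\,\partial_v K(v,s)\,ds$ from Corollary~\ref{cor:cov_deriv}, use the fact that $\partial_v K(v,s)$ is independent of $u$ to reduce the integrand to $\partial_u K(u,s)\,\partial_v K(v,s)$, and record the endpoint contribution as the left-sided limit at $s=u$. Your additional $\delta$-truncation and the three-regime discussion of the diagonal singularity are more careful than the paper's formal application, but they are consistent with how the paper later handles the matter (the boundary term is shown to vanish for $H(u)>\tfrac12$ in Lemma~\ref{lem:boundary_term}, and integrability of the interior terms under $H(u),H(v)>\tfrac12$ is established in Remark~\ref{rem:non_dominant} and Appendix~\ref{app:non_dominant}).
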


\begin{proof}
\textbf{Step 1: Differentiation of $\partial_v R(u,v)$}\\
Apply Leibniz rule to \eqref{eq:first_deriv}:
\[
\partial_u \left( \partial_v R(u,v) \right) = \int_0^u \partial_u \left[ K(u,s)  \partial_v K(v,s) \right] ds + \left[ K(u,s)  \partial_v K(v,s) \right]_{s=u^-}.
\]

\textbf{Step 2: Simplification}\\
Since $\partial_v K(v,s)$ is independent of $u$:
\[
\partial_u \left[ K(u,s)  \partial_v K(v,s) \right] = \partial_u K(u,s)  \partial_v K(v,s).
\]
Thus:
\[
\partial_u \partial_v R(u,v) = \int_0^u \partial_u K(u,s)  \partial_v K(v,s)  ds + \lim_{s \to u^-} K(u,s)  \partial_v K(v,s).
\]

\textbf{Step 3: Correct Computation of $\partial_u K(u,s)$}\\
By symmetry to \eqref{eq:dkdv}:
\begin{align*}
\partial_u K(u,s) &= \partial_u \left[ \sqrt{2H(u)} (u-s)^{H(u)-\frac{1}{2}} \right] \\
&= \underbrace{\frac{H'(u)}{\sqrt{2H(u)}} (u-s)^{H(u)-\frac{1}{2}}}_{A} \\
&\quad + \underbrace{\sqrt{2H(u)} \left[ (u-s)^{H(u)-\frac{1}{2}} \left( H'(u) \ln(u-s) + \frac{H(u) - \frac{1}{2}}{u-s} \right) \right]}_{B}
\end{align*}
Combine and factor \((u-s)^{H(u)-\frac{3}{2}}\):
\begin{align*}
\partial_u K(u,s) &= (u-s)^{H(u)-\frac{3}{2}} \Bigg[ \frac{H'(u)}{\sqrt{2H(u)}} (u-s) \\
&\quad + \sqrt{2H(u)} \left( H'(u) (u-s) \ln(u-s) + H(u) - \frac{1}{2} \right) \Bigg] \\
&= (u-s)^{H(u)-\frac{3}{2}} \Bigg[ \sqrt{2H(u)} \left( H(u) - \frac{1}{2} \right) \\
&\quad + H'(u) (u-s) \left( \frac{1}{\sqrt{2H(u)}} + \sqrt{2H(u)} \ln(u-s) \right) \Bigg].
\end{align*}
\end{proof}

\begin{remark}\label{rem:deriv_regularity}
The expression \eqref{eq:mixed_deriv} reveals the interplay between the time-varying Hurst exponent and the covariance regularity. 
We rigorously establish the vanishing boundary term when $H(u) > \frac{1}{2}$ through the following lemma:

\begin{lemma}[Boundary Term Asymptotics]\label{lem:boundary_term}
For fixed $u < v$ and $H(u) > \frac{1}{2}$, the boundary term in \eqref{eq:mixed_deriv} vanishes:
\[
\lim_{s \to u^-} K(u,s)  \partial_v K(v,s) = 0.
\]
\end{lemma}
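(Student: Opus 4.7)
The plan is to exploit the asymmetry between the two factors: as $s \to u^-$ we have $u-s \to 0^+$ while $v-s \to v-u > 0$ remains bounded away from zero, so only the first factor $K(u,s)$ carries any singular behavior.

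First, I would directly estimate $K(u,s) = \sqrt{2H(u)}(u-s)^{H(u)-1/2}$. Because $u$ is fixed and $H(u) > \tfrac{1}{2}$ by hypothesis, the exponent $H(u)-\tfrac{1}{2}$ is strictly positive, so $(u-s)^{H(u)-1/2} \to 0^+$ as $s \to u^-$, and consequently $\lim_{s\to u^-} K(u,s) = 0$.

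Next, I would show that $\partial_v K(v,s)$ remains bounded on a left-neighborhood of $u$. Using the expression derived in \eqref{eq:dkdv}, the only potentially problematic ingredients are the power $(v-s)^{H(v)-3/2}$, the factor $(v-s)$, and the logarithm $\ln(v-s)$. Since $u < v$ is fixed and $s$ ranges in a small left-neighborhood of $u$, we have $v-s \geq \tfrac{v-u}{2} > 0$ for $s$ close enough to $u$, so all of these quantities — together with the continuous coefficients $\sqrt{2H(v)}$, $H(v)-\tfrac{1}{2}$ and $H'(v)$ (finite by the additional $C^1$ hypothesis of Corollary~\ref{cor:cov_deriv}) — are uniformly bounded. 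Hence there exists a constant $M = M(u,v,H) < \infty$ with $|\partial_v K(v,s)| \leq M$ for all $s$ in a left-neighborhood of $u$.

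Finally, combining the two estimates by the elementary inequality
\[
|K(u,s)\,\partial_v K(v,s)| \leq M \sqrt{2H(u)}\,(u-s)^{H(u)-1/2},
\]
and letting $s \to u^-$ yields the claim. There is essentially no serious obstacle here; the only point requiring mild care is confirming that the $\ln(v-s)$ term in the expression for $\partial_v K(v,s)$ does not spoil the boundedness, but this is immediate from $v-s \geq (v-u)/2 > 0$. The critical structural input is the hypothesis $H(u) > \tfrac{1}{2}$, which makes $K(u,\cdot)$ vanish — rather than blow up — at the diagonal; in the regime $H(u) < \tfrac{1}{2}$ the boundary term would instead diverge, reflecting the genuine covariance singularity at $u=v$ in the rough case.
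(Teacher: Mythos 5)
Your proposal is correct and follows essentially the same route as the paper's own proof: both arguments show that $K(u,s)\to 0$ because $H(u)>\tfrac{1}{2}$ while $\partial_v K(v,s)$ stays bounded near $s=u$ (since $v-s$ is bounded away from zero, keeping the power, the factor $(v-s)$, and $\ln(v-s)$ all finite), and then conclude via the product bound $|K(u,s)\,\partial_v K(v,s)|\leq M(u-s)^{H(u)-1/2}$. Your closing observation about divergence in the regime $H(u)<\tfrac{1}{2}$ is a reasonable extra remark not present in the paper, but the core argument is the same.
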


\begin{proof}
\textbf{Step 1: Boundary Term Expression}\\
From Corollary~\ref{cor:cov_deriv} (First-Order Derivative) and the kernel definitions:
\[
\partial_v K(v,s) = (v-s)^{H(v)-\frac{3}{2}} \left[ \sqrt{2H(v)} \left( H(v) - \frac{1}{2} \right) 
+ \left( \sqrt{2H(v)} H'(v) \ln(v-s) + \frac{H'(v)}{\sqrt{2H(v)}} \right) (v-s) \right]
\]
as given in \eqref{eq:dkdv}. Thus:
\[
\text{Boundary term} = \lim_{s \to u^-} \underbrace{\sqrt{2H(u)} (u-s)^{H(u)-\frac{1}{2}}}_{K(u,s)} \cdot \partial_v K(v,s)
\]

\textbf{Step 2: Asymptotic Analysis of Factors}\\
Consider the behavior as $s \to u^-$:
\begin{enumerate}
    \item \textit{First factor}: 
    \[
    K(u,s) = \sqrt{2H(u)} (u-s)^{H(u)-\frac{1}{2}} \to 0 \quad \text{since} \quad H(u) > \frac{1}{2}.
    \]
    
    \item \textit{Second factor}: $\partial_v K(v,s)$ remains bounded for $s$ near $u$ because:
    \begin{itemize}
        \item $(v-s) \to (v-u) > 0$ (fixed $v > u$), so $(v-s)^{H(v)-\frac{3}{2}} \to (v-u)^{H(v)-\frac{3}{2}}$
        \item $\ln(v-s) \to \ln(v-u)$ (finite)
        \item Coefficients $\sqrt{2H(v)} (H(v)-1/2)$, $\sqrt{2H(v)} H'(v)$, and $H'(v)/\sqrt{2H(v)}$ are finite
        \item $H(v) \in (0,1)$ prevents singularity
    \end{itemize}
\end{enumerate}

\textbf{Step 3: Convergence Dominance}\\
There exists $M > 0$ such that for $s$ near $u$:
\[
|K(u,s) \partial_v K(v,s)| \leq M \cdot (u-s)^{H(u)-\frac{1}{2}}
\]
Since $H(u) > \frac{1}{2}$:
\[
\lim_{s \to u^-} (u-s)^{H(u)-\frac{1}{2}} = 0 \implies \lim_{s \to u^-} K(u,s) \partial_v K(v,s) = 0.
\]
\end{proof}

This lemma has profound implications:
\begin{itemize}
    \item The integral term captures \textit{interior smoothness}
    \item The boundary term determines \textit{local regularity} at $s=u$
    \item The structure $\sqrt{2H(v)} H'(v) \ln(v-s)$ quantifies memory dependence
\end{itemize}
\end{remark}

\begin{remark}[Closed-Form Representation via Hypergeometric Function]\label{rem:cov_hyper}
Building upon the mixed partial derivative expression in Corollary \ref{cor:mixed_deriv} and the covariance structure in Theorem \ref{thm:covariance}, we establish a closed-form representation for the dominant component of $\partial_u \partial_v R(u,v)$ using the Gaussian hypergeometric function ${}_2F_1$. Under the conditions:
\begin{enumerate}
    \item $u < v$ with $v \geq 2u$ (ensuring $|z| \leq 1$)
    \item $\frac{1}{2} < H(u), H(v) < 1$ (guaranteeing integrability)
\end{enumerate}
the principal singular integral admits an exact analytical solution.

\textbf{Step 1: Hypergeometric Function Preliminaries}
The Gaussian hypergeometric function ${}_2F_1(a,b;c;z)$ admits the Euler integral representation:
\begin{equation}\label{eq:euler_hyper_corrected}
{}_2F_1(a,b;c;z) = \frac{\Gamma(c)}{\Gamma(b)\Gamma(c-b)} \int_0^1 t^{b-1} (1-t)^{c-b-1} (1-zt)^{-a}  dt
\end{equation}
This representation is valid when:
\begin{enumerate}
    \item $\Re(c) > \Re(b) > 0$ (ensuring the convergence of the beta integral at both endpoints $t=0$ and $t=1$).
    \item $z \in \mathbb{C} \setminus [1, \infty)$ (i.e., $z$ is not on the branch cut).
    \item For $|z| < 1$, the integral converges absolutely and provides the analytic definition of ${}_2F_1$. For $z$ outside the unit disk but not on the branch cut, the identity holds by analytic continuation, though the integral itself may require interpretation in the sense of analytic continuation.
\end{enumerate}
\textbf{Step 2: Integral Transformation}
From Corollary \ref{cor:mixed_deriv}, the dominant term contains:
\begin{equation}\label{eq:singular_int}
I(u,v) = \int_0^u (u-s)^{H(u)-\frac{3}{2}} (v-s)^{H(v)-\frac{3}{2}}  ds.
\end{equation}
Apply the substitution:
\begin{align}
s &= u - t(v - u), \quad t \in \left[0, \frac{u}{v-u}\right] \\
ds &= -(v - u)  dt
\end{align}
with transformed limits:
\begin{align*}
s = 0 &\implies t = \frac{u}{v-u} \\
s = u &\implies t = 0
\end{align*}
yielding:
\begin{align}
I(u,v) &= (v-u)^{H(u)+H(v)-2} \int_0^{\frac{u}{v-u}} t^{H(u)-\frac{3}{2}} (1+t)^{H(v)-\frac{3}{2}}  dt.
\end{align}

\textbf{Step 3: Parameter Identification and Integral Transformation}
Set the parameters:
\begin{align}
a &:= H(u) - \tfrac{3}{2} \in (-1, -\tfrac{1}{2}) \\
b &:= H(v) - \tfrac{3}{2} \in (-1, -\tfrac{1}{2}) \\
c &:= \frac{u}{v-u} \leq 1 \quad (\text{since } v \geq 2u)
\end{align}
To apply the hypergeometric representation, introduce the variable substitution:
\begin{align}
t &= c x, \quad x \in [0, 1] \\
dt &= c  dx
\end{align}
This transforms the integral:
\begin{align}
\int_0^c t^a (1+t)^b  dt &= \int_0^1 (c x)^a (1 + c x)^b c  dx \nonumber \\
&= c^{a+1} \int_0^1 x^a (1 + c x)^b  dx
\end{align}
The Gaussian hypergeometric function provides the closed-form solution:
\begin{equation}
\int_0^1 x^a (1 + c x)^b  dx = \frac{1}{a+1}  {}_2F_1\left(-b, a+1; a+2; -c\right)
\end{equation}
valid under the conditions:
\begin{itemize}
    \item $\Re(a+1) = H(u) - \frac{1}{2} > 0$ (satisfied by $H(u) > \frac{1}{2}$)
    \item $|\arg(1 - (-c))| = |\arg(1 + c)| = 0 < \pi$ (since $c > 0$)
\end{itemize}
Thus:
\begin{equation}
\int_0^c t^a (1+t)^b  dt = \frac{c^{a+1}}{a+1}  {}_2F_1\left(-b, a+1; a+2; -c\right)
\end{equation}

\textbf{Step 4: Final Representation}
Substituting back into $I(u,v)$:
\begin{align}
I(u,v) = \frac{(v-u)^{H(u)+H(v)-2}}{H(u)-\frac{1}{2}} \left( \frac{u}{v-u} \right)^{H(u)-\frac{1}{2}} 
  {}_2F_1\left( \tfrac{3}{2}-H(v), H(u)-\tfrac{1}{2}; H(u)+\tfrac{1}{2}; -\frac{u}{v-u} \right)
\end{align}
Equivalently, by combining the exponents:
\begin{align}
I(u,v)= \frac{u^{H(u)-\frac{1}{2}} (v-u)^{H(v) - \frac{3}{2}}}{H(u)-\frac{1}{2}} 
  {}_2F_1\left( \tfrac{3}{2}-H(v), H(u)-\tfrac{1}{2}; H(u)+\tfrac{1}{2}; -\frac{u}{v-u} \right).
\end{align}

This closed-form expression:
\begin{itemize}
    \item Quantifies the precise dependence on dynamic Hurst indices $H(u), H(v)$
    \item Confirms the continuity of $\partial_u \partial_v R(u,v)$ for $H(\cdot) > \frac{1}{2}$
    \item Enables asymptotic analysis as $u/v \to 0^+$ via hypergeometric expansions
    \item Generalizes constant-$H$ covariance theory to non-stationary settings
\end{itemize}
\end{remark}

\begin{remark}[Boundedness of Non-Dominant Terms and Significance]\label{rem:non_dominant}
The preceding analysis, from the covariance representation in Theorem~\ref{thm:covariance} through the first and second-order derivative computations in Corollaries~\ref{cor:cov_deriv} and~\ref{cor:mixed_deriv}, and culminating in the hypergeometric representation in Remark~\ref{rem:cov_hyper}, provides a comprehensive dissection of the fine structure of the TV-fBm covariance kernel. A final crucial step is to establish the boundedness of the remaining non-dominant terms ($I_2$, $I_3$, $I_4$) in the decomposition of $\partial_u \partial_v R(u,v)$, ensuring the overall regularity of the covariance's mixed derivative.

Under the assumptions of Theorem~\ref{thm:covariance} and Corollary~\ref{cor:mixed_deriv}, and further assuming that $H: [0,T] \to (0,1)$ is continuously differentiable with bounded derivative $|H'(t)| \leq L_H$ for some $L_H > 0$, and that $\inf_{t \in [0,T]} H(t) \geq h_{\min} > 0$, the following holds: For any $u, v \in [0,T]$ with $0 \leq u < v \leq T$, $v \geq 2u$, and satisfying $H(u) > \frac{1}{2}$, $H(v) > \frac{1}{2}$, the integrals $I_2(u,v)$, $I_3(u,v)$, and $I_4(u,v)$ are bounded. That is, there exists a constant $C > 0$, dependent on $H$, $L_H$, $h_{\min}$, and $T$, such that:
\[
|I_2(u,v)| \leq C, \quad |I_3(u,v)| \leq C, \quad |I_4(u,v)| \leq C.
\]
The proof, detailed in Proposition~\ref{prop:non_dominant_bound} in Appendix~\ref{app:non_dominant}, relies on the unified logarithmic control inequality (Remark~\ref{rem:unified_log}) to manage the logarithmic singularities, ultimately reducing the problem to the analysis of integrals amenable to the hypergeometric function techniques established in Remark~\ref{rem:cov_hyper}.

This result completes the rigorous verification that the second-order mixed derivative $\partial_u \partial_v R(u,v)$ is well-defined and exhibits controlled behavior for $H(\cdot) > 1/2$. It underscores a key dichotomy: while the constant-Hurst fBm covariance is homogeneous, the TV-fBm covariance inherits a more complex, non-stationary structure whose smoothness is fundamentally governed by the interplay between the pointwise values $H(u), H(v)$ and the regularity of the function $H(\cdot)$ itself. This level of understanding is essential for future analyses involving the sample path properties and stochastic calculus of TV-fBm.
\end{remark}
Having established the explicit covariance structure in Theorem~\ref{thm:covariance}, we now examine its local behavior through the following bounds for incremental covariances:
\begin{theorem}[Covariance Bounds for TV-fBm Local Increments]\label{thm:cov_precise_bounds}
Let the Hurst exponent function \( H: [0,T] \to (0,1) \) satisfy Definition~\ref{def:holder} with exponent \(\gamma > \sup_{t \in [0,T]} H(t)\). For any fixed \( t \in (0, T] \) and let \(\epsilon > 0\) be sufficiently small such that \( t + \epsilon \leq T \) and \( \epsilon < t \) (ensuring \( t/\epsilon > 1 \)). Define the extremal exponents:
\[
H_- = \min\{H(t), H(t+\epsilon)\}, \quad H_+ = \max\{H(t), H(t+\epsilon)\}.
\]
Then the covariance function \( R(t, t+\epsilon) = \mathbb{E}[B^{H(\cdot)}(t)B^{H(\cdot)}(t+\epsilon)] \) admits the following precise bounds:
\begin{equation}\label{eq:cov_precise_bounds}
\begin{aligned}
&2\sqrt{H(t)H(t+\epsilon)} \cdot \epsilon^{H(t) + H(t+\epsilon)} \cdot J_{\mathrm{lower}}(t,\epsilon) \\
&\quad \leq R(t, t+\epsilon) \leq 2\sqrt{H(t)H(t+\epsilon)} \cdot \epsilon^{H(t) + H(t+\epsilon)} \cdot J_{\mathrm{upper}}(t,\epsilon),
\end{aligned}
\end{equation}
where the bounding integrals are given by:
\begin{align}
J_{\mathrm{lower}}(t,\epsilon) &= \int_0^1 \sigma^{H_+ - \frac{1}{2}} (1+\sigma)^{H_- - \frac{1}{2}}  d\sigma 
+ \int_1^{t/\epsilon} \sigma^{H_- - \frac{1}{2}} (1+\sigma)^{H_- - \frac{1}{2}}  d\sigma, \label{eq:J_lower} \\
J_{\mathrm{upper}}(t,\epsilon) &= \int_0^1 \sigma^{H_- - \frac{1}{2}} (1+\sigma)^{H_+ - \frac{1}{2}}  d\sigma 
+ \int_1^{t/\epsilon} \sigma^{H_+ - \frac{1}{2}} (1+\sigma)^{H_+ - \frac{1}{2}}  d\sigma. \label{eq:J_upper}
\end{align}
\end{theorem}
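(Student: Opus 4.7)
The plan is to reduce the covariance $R(t,t+\epsilon)$ to an elementary one-variable integral through a scaling substitution, and then control the integrand on two subintervals by exploiting the monotonicity of the map $x\mapsto\sigma^x$.

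First I would invoke Theorem~\ref{thm:covariance} with $u=t$ and $v=t+\epsilon$, so that $\min(u,v)=t$, giving
\[
R(t,t+\epsilon) = 2\sqrt{H(t)H(t+\epsilon)} \int_0^t (t-s)^{H(t)-\frac{1}{2}} (t+\epsilon-s)^{H(t+\epsilon)-\frac{1}{2}}\, ds.
\]
Next I would apply the substitution $\sigma=(t-s)/\epsilon$, which maps $s\in[0,t]$ to $\sigma\in[0,t/\epsilon]$ and yields $t-s=\epsilon\sigma$, $t+\epsilon-s=\epsilon(1+\sigma)$, $ds=\epsilon\, d\sigma$. After extracting the common power of $\epsilon$ this produces
\[
R(t,t+\epsilon) = 2\sqrt{H(t)H(t+\epsilon)}\,\epsilon^{H(t)+H(t+\epsilon)} \int_0^{t/\epsilon} \sigma^{H(t)-\frac{1}{2}} (1+\sigma)^{H(t+\epsilon)-\frac{1}{2}}\, d\sigma,
\]
which already exposes the prefactor $\epsilon^{H(t)+H(t+\epsilon)}$ appearing on both sides of the claim.

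The core of the argument is then a pointwise comparison of the integrand with extremal integrands indexed by $H_-$ and $H_+$. I would split the $\sigma$-range at the threshold $\sigma=1$; the hypothesis $\epsilon<t$ guarantees $t/\epsilon>1$, so both subintervals are non-empty. On $\sigma\in(0,1)$ the map $x\mapsto\sigma^x$ is strictly decreasing while $x\mapsto(1+\sigma)^x$ is strictly increasing (as $1<1+\sigma<2$); on $\sigma\in(1,t/\epsilon)$ both maps are strictly increasing in $x$. Since $H(t)-\tfrac{1}{2}$ and $H(t+\epsilon)-\tfrac{1}{2}$ lie in $[H_--\tfrac{1}{2},H_+-\tfrac{1}{2}]$, these monotonicity facts yield exactly the combinations $\sigma^{H_\pm-1/2}(1+\sigma)^{H_\mp-1/2}$ (resp.\ $\sigma^{H_\pm-1/2}(1+\sigma)^{H_\pm-1/2}$) that constitute $J_{\mathrm{lower}}$ and $J_{\mathrm{upper}}$ in \eqref{eq:J_lower}--\eqref{eq:J_upper}, and integrating gives the two-sided bound.

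A short integrability remark would close the argument: near $\sigma=0$ the most singular factor $\sigma^{H_--1/2}$ is integrable because $H_->0$, and on $[1,t/\epsilon]$ all integrands are continuous and bounded over a finite interval, so the bounding integrals are finite. I do not foresee any substantive analytic obstacle; the only delicate point is the orientation bookkeeping that assigns the correct extremal exponent to each factor on each of the two subintervals, which follows mechanically from the two monotonicity regimes for $\sigma^x$ on $(0,1)$ versus $(1,\infty)$. Everything else is a direct consequence of Theorem~\ref{thm:covariance} and the scaling substitution.
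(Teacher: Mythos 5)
Your proposal is correct and follows essentially the same route as the paper's proof: the same reduction via Theorem~\ref{thm:covariance}, the same scaling substitution $s = t - \epsilon\sigma$ extracting the prefactor $\epsilon^{H(t)+H(t+\epsilon)}$, the same split of the $\sigma$-range at $\sigma = 1$, and the same monotonicity argument for $x \mapsto \sigma^x$ and $x \mapsto (1+\sigma)^x$ in the two regimes, yielding precisely the extremal combinations in $J_{\mathrm{lower}}$ and $J_{\mathrm{upper}}$. No gaps; the integrability remark at $\sigma = 0$ is a sensible addition that the paper handles implicitly.
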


\begin{proof}
We begin with the covariance expression from Theorem~\ref{thm:covariance}:
\[
R(t, t+\epsilon) = 2\sqrt{H(t)H(t+\epsilon)} \int_0^t (t-s)^{H(t)-\frac{1}{2}} (t+\epsilon-s)^{H(t+\epsilon)-\frac{1}{2}}  ds.
\]
Apply the substitution \( s = t - \epsilon\sigma \), which yields \( ds = -\epsilon  d\sigma \), and change the limits of integration accordingly (\( s=0 \Rightarrow \sigma = t/\epsilon \), \( s=t \Rightarrow \sigma=0 \)). This transformation yields:
\[
\int_0^t (t-s)^{H(t)-\frac{1}{2}} (t+\epsilon-s)^{H(t+\epsilon)-\frac{1}{2}}  ds 
= \epsilon^{H(t)+H(t+\epsilon)} \int_0^{t/\epsilon} \sigma^{H(t)-\frac{1}{2}} (1+\sigma)^{H(t+\epsilon)-\frac{1}{2}}  d\sigma.
\]
Define the integral:
\begin{equation}\label{eq:J_epsilon_def}
J(t,\epsilon) = \int_0^{t/\epsilon} \sigma^{H(t)-\frac{1}{2}} (1+\sigma)^{H(t+\epsilon)-\frac{1}{2}}  d\sigma.
\end{equation}
Thus, we obtain the representation:
\begin{equation}\label{eq:R_representation}
R(t, t+\epsilon) = 2\sqrt{H(t)H(t+\epsilon)} \cdot \epsilon^{H(t)+H(t+\epsilon)} \cdot J(t,\epsilon).
\end{equation}

To establish precise bounds for \( J(t,\epsilon) \), we analyze the integrand \( f(\sigma) = \sigma^{a}(1+\sigma)^{b} \) where:
\[
a = H(t) - \tfrac{1}{2}, \quad b = H(t+\epsilon) - \tfrac{1}{2}.
\]
Note that \( a, b \in (-\tfrac{1}{2}, \tfrac{1}{2}) \) since \( H(\cdot) \in (0,1) \). Define the corresponding extremal exponents:
\[
a_- = H_- - \tfrac{1}{2}, \quad a_+ = H_+ - \tfrac{1}{2}, \quad b_- = H_- - \tfrac{1}{2}, \quad b_+ = H_+ - \tfrac{1}{2}.
\]

We now partition the integration domain \( [0, t/\epsilon] \) into two regions and establish bounds in each region.

\textbf{Region 1: \( \sigma \in [0,1] \).} For \( \sigma \in (0,1] \), we have \( 0 < \sigma \leq 1 \) and \( 1 \leq 1+\sigma \leq 2 \). The monotonicity properties follow from the fundamental behavior of exponential functions:

\begin{itemize}
    \item For fixed \( \sigma \in (0,1) \), the function \( c \mapsto \sigma^c \) is strictly decreasing in \( c \), since the base \( \sigma < 1 \). At the endpoint \( \sigma = 1 \), the function becomes constant: \( 1^c \equiv 1 \).
    \item For fixed \( 1+\sigma \geq 1 \), the function \( c \mapsto (1+\sigma)^c \) is non-decreasing in \( c \). Specifically, it is strictly increasing for \( 1+\sigma > 1 \) and constant for \( 1+\sigma = 1 \) (which occurs only when \( \sigma = 0 \), but \( \sigma = 0 \) is excluded from the open interval \( (0,1] \)).
\end{itemize}

Therefore, we obtain the following inequalities for all \( \sigma \in (0,1] \):
\[
\sigma^{a_+} \leq \sigma^{a} \leq \sigma^{a_-} \quad \text{and} \quad (1+\sigma)^{b_-} \leq (1+\sigma)^{b} \leq (1+\sigma)^{b_+}.
\]
The inequalities are strict for \( \sigma \in (0,1) \) and become equalities at the boundary points where the exponents coincide. Multiplying these inequalities yields:
\begin{equation}\label{eq:region1_bounds}
\sigma^{a_+}(1+\sigma)^{b_-} \leq \sigma^a (1+\sigma)^b \leq \sigma^{a_-}(1+\sigma)^{b_+} \quad \forall \sigma \in (0,1].
\end{equation}

\textbf{Region 2: \( \sigma \in [1, t/\epsilon] \).} For \( \sigma \geq 1 \), we have \( \sigma \geq 1 \) and \( 1+\sigma \geq 2 \). In this region:

\begin{itemize}
    \item For fixed \( \sigma \geq 1 \), the function \( c \mapsto \sigma^c \) is non-decreasing in \( c \). Specifically, it is strictly increasing for \( \sigma > 1 \) and constant for \( \sigma = 1 \).
    \item For fixed \( 1+\sigma \geq 2 \), the function \( c \mapsto (1+\sigma)^c \) is strictly increasing in \( c \), since the base \( 1+\sigma > 1 \) for all \( \sigma \geq 1 \).
\end{itemize}

This gives the inequalities:
\[
\sigma^{a_-} \leq \sigma^{a} \leq \sigma^{a_+} \quad \text{and} \quad (1+\sigma)^{b_-} \leq (1+\sigma)^{b} \leq (1+\sigma)^{b_+}.
\]
The inequalities are strict for \( \sigma > 1 \) and may become equalities at \( \sigma = 1 \) when the exponents coincide. Multiplying yields the bounds for \( \sigma \geq 1 \):
\begin{equation}\label{eq:region2_bounds}
\sigma^{a_-}(1+\sigma)^{b_-} \leq \sigma^a (1+\sigma)^b \leq \sigma^{a_+}(1+\sigma)^{b_+} \quad \forall \sigma \in [1, t/\epsilon].
\end{equation}

Now integrate inequalities \eqref{eq:region1_bounds} and \eqref{eq:region2_bounds} over their respective intervals. For the lower bound:
\begin{align*}
J(t,\epsilon) &\geq \int_0^1 \sigma^{a_+}(1+\sigma)^{b_-}  d\sigma + \int_1^{t/\epsilon} \sigma^{a_-}(1+\sigma)^{b_-}  d\sigma \\
&= \int_0^1 \sigma^{H_+ - \frac{1}{2}} (1+\sigma)^{H_- - \frac{1}{2}}  d\sigma 
+ \int_1^{t/\epsilon} \sigma^{H_- - \frac{1}{2}} (1+\sigma)^{H_- - \frac{1}{2}}  d\sigma \\
&= J_{\mathrm{lower}}(t,\epsilon).
\end{align*}
For the upper bound:
\begin{align*}
J(t,\epsilon) &\leq \int_0^1 \sigma^{a_-}(1+\sigma)^{b_+}  d\sigma + \int_1^{t/\epsilon} \sigma^{a_+}(1+\sigma)^{b_+}  d\sigma \\
&= \int_0^1 \sigma^{H_- - \frac{1}{2}} (1+\sigma)^{H_+ - \frac{1}{2}}  d\sigma 
+ \int_1^{t/\epsilon} \sigma^{H_+ - \frac{1}{2}} (1+\sigma)^{H_+ - \frac{1}{2}}  d\sigma \\
&= J_{\mathrm{upper}}(t,\epsilon).
\end{align*}
Thus, we have established:
\[
J_{\mathrm{lower}}(t,\epsilon) \leq J(t,\epsilon) \leq J_{\mathrm{upper}}(t,\epsilon).
\]
Substituting this into \eqref{eq:R_representation} completes the proof of \eqref{eq:cov_precise_bounds}.
\end{proof}

\begin{remark}\label{rem:cov_bounds_significance}
Theorem~\ref{thm:cov_precise_bounds} provides quantitative bounds for the covariance structure of TV-fBm, extending the analysis of constant-Hurst fBm \cite{mandelbrot1968} to settings with time-varying memory. The bounds are explicit and depend on the local values $H(t)$ and $H(t+\epsilon)$ through the extremal exponents $H_-$ and $H_+$. This characterization of the local correlation structure may be useful for analyzing path properties and statistical estimation in models incorporating time-dependent Hurst exponents. The proof technique, based on region-wise monotonicity analysis, provides an approach for studying variable-order fractional processes.
\end{remark}

\section{Responsive Fractional Brownian Motion: Definition, Regularity, and Well-Posedness}\label{sec:rFBM_theory}

The theory of fractional Brownian motion (fBm) and its variants, including the time-varying fBm (TV-fBm) studied in the previous chapters, provides a powerful framework for modeling systems with long-range dependence and time-dependent regularity. A natural and profound extension of this theory is to consider scenarios where the local regularity of the process is not merely an exogenous function of time, but is itself influenced by the state of the process. This concept of a dynamic, state-dependent Hurst exponent introduces a feedback mechanism, leading to a richer class of stochastic processes capable of modeling more complex behaviors.

To explore this direction, we define in this section a new class of stochastic processes. The central idea is to incorporate a \textbf{dynamic feedback mechanism}, in which the local regularity of the process is influenced in real time by its own state. Such a structure may provide a mathematical framework for describing systems where the evolution law depends on the current state---for instance, in physical systems exhibiting critical transitions, where diffusivity may change near certain thresholds, or in financial models with state-dependent volatility regimes.

This section presents a rigorous definition of the process, which we term \textbf{Responsive Fractional Brownian Motion (RfBm)}, building upon the mathematical setting established in previous sections. Our subsequent analysis will examine the well-posedness of this definition and explore its mathematical properties. We note that the time-varying fBm (TV-fBm) studied earlier remains applicable in contexts with prescribed, state-independent dynamics. The RfBm framework developed here may be viewed as an extension within a broader theoretical perspective.

\subsection{Definition of the Response Function}
\label{subsec:response_func_def}

The construction of RfBm is based on a function that governs how the process's state influences its own Hurst exponent. We introduce the following class of admissible functions.

\begin{definition}[Lipschitz-H\"older Response Function]\label{def:LH_response_function}
Let \( T > 0 \). A function \( H : [0, T] \times \mathbb{R} \to (0, 1) \) is called a \textbf{Lipschitz-H\"older response function} if there exist constants \( L_H > 0 \), \( C_H > 0 \), \( \gamma \in (0, 1] \), and \( 0 < h_{\min} \leq h_{\max} < 1 \) such that for all \( t, s \in [0, T] \) and all \( x, y \in \mathbb{R} \), the following holds:
\begin{enumerate}
    \item \textbf{(Spatial Lipschitz Continuity)}
    \begin{equation}\label{eq:H_spatial_lip}
    |H(t, x) - H(t, y)| \leq L_H |x - y|.
    \end{equation}
    \item \textbf{(Temporal H\"older Continuity)}
    \begin{equation}\label{eq:H_temporal_holder}
    |H(t, x) - H(s, x)| \leq C_H |t - s|^\gamma.
    \end{equation}
    \item \textbf{(Uniform Boundedness)}
    \begin{equation}\label{eq:H_bounded}
    h_{\min} \leq H(t, x) \leq h_{\max}.
    \end{equation}
\end{enumerate}
\end{definition}

\begin{remark}[On the Assumptions]\label{rem:H_assumptions}
The conditions in Definition \ref{def:LH_response_function} are natural for the ensuing analysis.
\begin{itemize}
    \item The spatial Lipschitz condition \eqref{eq:H_spatial_lip} ensures the Hurst exponent varies continuously with the state variable.
    \item The temporal H\"older condition \eqref{eq:H_temporal_holder} controls the exogenous time-variation of $H$, aligning with the framework for TV-fBm (cf. Definition \ref{def:holder}, \ref{def:tvfbm}).
    \item The uniform bounds \eqref{eq:H_bounded} are crucial for integrability and uniform estimates. The lower bound $h_{\min} > 0$ ensures the kernel $(t-s)^{H(\cdot) - 1/2}$ is square-integrable, while $h_{\max} < 1$ keeps key constants bounded.
    \item The terminology \emph{Lipschitz-H\"older} reflects the defining regularity conditions. This choice of function class naturally permits future consideration of other response functions (e.g., with weaker spatial regularity), where the resulting RfBm properties would be determined by the specific class chosen.
\end{itemize}
\end{remark}

\subsection{Definition of Responsive Fractional Brownian Motion}
\label{subsec:rFBM_def}

With this stochastic integral framework in place (Definition~\ref{def:tvfbm}), we proceed to define the main process of interest.

\begin{definition}[Responsive Fractional Brownian Motion (RfBm)]\label{def:rFBM}
Let \( (\Omega, \mathcal{F}, (\mathcal{F}_t)_{t \in [0,T]}, \mathbb{P}) \) be a filtered probability space satisfying the usual conditions, supporting a standard \( (\mathcal{F}_t) \)-Brownian motion \( \{B_t\} \). Given a Lipschitz-H\"older response function \( H \) (Definition \ref{def:LH_response_function}), the \textbf{Responsive Fractional Brownian Motion (RfBm)} is defined as the adapted solution \( \{X_t\}_{t \in [0, T]} \) to the following path-dependent stochastic Volterra integral equation:
\begin{equation}\label{eq:rFBM_integral}
X_t = \int_0^t K(t, s; X_s)  dB(s), \quad \text{where} \quad K(t, s; X_s) = \sqrt{2H(s, X_s)} \cdot (t - s)^{H(s, X_s) - 1/2}.
\end{equation}
Specifically, an \( (\mathcal{F}_t) \)-adapted process \(\{X_t\}\) is an RfBm if:
\begin{enumerate}
    \item For \(\mathbb{P}\)-almost every \(\omega \in \Omega\), the sample path \( t \mapsto X_t(\omega) \) is continuous.
    \item For each \( t \in [0, T] \), the It\^o integral on the right-hand side of \eqref{eq:rFBM_integral} is well-defined.
    \item The equality in \eqref{eq:rFBM_integral} holds \(\mathbb{P}\)-almost surely for every \( t \in [0, T] \).
\end{enumerate}
\end{definition}

The definition of RfBm captures several key conceptual features:

\begin{enumerate}
    \item \textbf{Dynamic Feedback Mechanism:}
    The kernel \( K(t, s; X_s) \) incorporates a \textbf{feedback mechanism}: both the volatility amplitude \(\sqrt{2H(s, X_s)}\) and the singularity order \(H(s, X_s) - 1/2\) at a past time \(s\) are determined by the state \(X_s\) itself. This creates a coupling between the current state and the future evolution of the process.

    \item \textbf{Path-Dependent Self-Reference and Nonlinearity:}
    Equation \eqref{eq:rFBM_integral} is a path-dependent, nonlinear Volterra equation. The solution process \(X\) appears both as the output and inside the kernel, resulting in a self-referential structure where the historical path dynamically influences its future law.

    \item \textbf{State-Dependent Non-Markovianity:}
    The self-referential nature of \eqref{eq:rFBM_integral} leads to a non-Markovian character. In contrast to classical fBm, the future evolution here depends nonlinearly on the historical path through the sequence of states \(\{X_s\}\), causing the effective transition laws to evolve over time.

    \item \textbf{Theoretical Generality:}
    The RfBm framework provides a unification of several existing models:
    \begin{itemize}
    \item When \(H(s, X_s) \equiv H(s)\) depends only on time, the kernel becomes state-independent. In the specific case where the Hurst exponent in the kernel is evaluated at the current time \(t\), that is, when one considers \((t-s)^{H(t)-1/2}\) rather than \((t-s)^{H(s)-1/2}\), the model recovers the \textbf{Time-Varying Fractional Brownian Motion (TV-fBm)} studied earlier (Definition \ref{def:tvfbm}).
    \item When \(H(s, X_s) \equiv H\) is constant, it reduces to the \textbf{classical fractional Brownian motion (fBm)}.
\end{itemize}
Thus, Definition~\ref{def:rFBM} provides a formulation that unifies, within a state-dependent setting, both the time-varying extension (TV-fBm) and the classical constant-Hurst case (fBm).
\end{enumerate}

\subsection{Pathwise Regularity, Instantaneous Scaling Exponents, and Cumulative Memory Processes}
\label{subsec:pathwise_analysis}

The self-referential structure of the RfBm in \eqref{eq:rFBM_integral} invites analysis along individual sample paths. This section develops a multiscale characterization of state-dependent regularity, beginning with pathwise Hölder continuity in Theorem~\ref{thm:pathwise_holder_continuity}. We then introduce the instantaneous scaling exponent process in Definition~\ref{def:instant_scaling}, establish its regularity in Theorem~\ref{thm:instant_scaling_holder}, and analyze extremal scaling indices in Definition~\ref{def:extremal_indices}. The theory extends to cumulative memory processes in Definition~\ref{def:cumulative_memory}, examining time-averaged scaling behavior and asymptotic properties to provide a complete description from microscopic fluctuations to macroscopic statistical regularity.

\begin{theorem}[Pathwise H\"older Continuity of the Frozen Response Function]\label{thm:pathwise_holder_continuity}
Let the response function \( H: [0,T] \times \mathbb{R} \to (0,1) \) satisfy the Lipschitz-H\"older conditions of Definition~\ref{def:LH_response_function}. Suppose \(\{X_t\}_{t \in [0,T]}\) is a solution to the RfBm equation~\eqref{eq:rFBM_integral} on some filtered probability space \((\Omega, \mathcal{F}, (\mathcal{F}_t), \mathbb{P})\), and assume its sample paths are almost surely H\"older continuous with a uniform exponent \(\gamma^* > 0\). That is, for \(\mathbb{P}\)-a.s. \(\omega \in \Omega\), there exists a finite constant \(C(\omega) > 0\) such that
\begin{equation}\label{eq:priori_holder_assumption}
|X_t(\omega) - X_s(\omega)| \leq C(\omega) |t - s|^{\gamma^*} \quad \forall t, s \in [0,T].
\end{equation}
Fix an arbitrary sample point \(\omega_0 \in \Omega\) for which \eqref{eq:priori_holder_assumption} holds, and define the deterministic path \(Y(t) := X_t(\omega_0)\). Then, the deterministic function \(h: [0,T] \to (0,1)\) defined by
\[
h(s) := H(s, Y(s))
\]
is H\"older continuous. Specifically, for the fixed path \(\omega_0\), there exist constants \(\tilde{C}(\omega_0) > 0\) and \(\gamma_h > 0\) such that
\begin{equation}\label{eq:h_holder_conclusion}
|h(s) - h(u)| \leq \tilde{C}(\omega_0) |s - u|^{\gamma_h} \quad \forall s, u \in [0,T],
\end{equation}
where one can take \(\gamma_h = \min(\gamma, \gamma^*)\), with \(\gamma\) being the exponent from Definition~\ref{def:LH_response_function}.
\end{theorem}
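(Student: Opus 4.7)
The plan is to prove \eqref{eq:h_holder_conclusion} by a direct triangle inequality decomposition, separating the temporal variation of $H$ from the state-dependent variation mediated by the path $Y$. Fix $\omega_0$ for which the a.s. Hölder bound \eqref{eq:priori_holder_assumption} holds, and let $s, u \in [0,T]$ be arbitrary. I would write
\[
|h(s) - h(u)| = |H(s, Y(s)) - H(u, Y(u))| \leq \underbrace{|H(s, Y(s)) - H(u, Y(s))|}_{\text{(I)}} + \underbrace{|H(u, Y(s)) - H(u, Y(u))|}_{\text{(II)}},
\]
which is the obvious way to isolate the two sources of variation in the two-variable function $H$.

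Next, I would estimate each term using the appropriate clause of Definition~\ref{def:LH_response_function}. Term (I) is bounded by the temporal Hölder condition \eqref{eq:H_temporal_holder}, giving $|{\rm (I)}| \leq C_H |s-u|^\gamma$, since $Y(s)$ is held fixed as the second argument. Term (II) is bounded by the spatial Lipschitz condition \eqref{eq:H_spatial_lip}, giving $|{\rm (II)}| \leq L_H |Y(s) - Y(u)|$, and then the hypothesized pathwise Hölder bound \eqref{eq:priori_holder_assumption} on $X$ converts this into $|{\rm (II)}| \leq L_H C(\omega_0) |s-u|^{\gamma^*}$. Combining these,
\[
|h(s) - h(u)| \leq C_H |s-u|^\gamma + L_H C(\omega_0) |s-u|^{\gamma^*}.
\]

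Finally, I would unify the two exponents into the common rate $\gamma_h := \min(\gamma, \gamma^*)$ by exploiting the compactness of $[0,T]$. For any exponent $\beta \geq \gamma_h$ and any $r := |s-u| \in [0, T]$, one has $r^\beta = r^{\gamma_h} \cdot r^{\beta - \gamma_h} \leq T^{\beta - \gamma_h} r^{\gamma_h}$. Applying this to both summands yields
\[
|h(s) - h(u)| \leq \bigl( C_H T^{\gamma - \gamma_h} + L_H C(\omega_0) T^{\gamma^* - \gamma_h} \bigr) |s-u|^{\gamma_h},
\]
so one may take $\tilde{C}(\omega_0) := C_H T^{\gamma - \gamma_h} + L_H C(\omega_0) T^{\gamma^* - \gamma_h}$, which is finite since $C(\omega_0)$ is finite on the chosen event of full measure.

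Because the argument is essentially a two-line triangle inequality together with the two defining regularity clauses of $H$, there is no genuine obstacle. The only minor subtlety worth flagging is that $\tilde{C}(\omega_0)$ is a \emph{pathwise} (random) constant inherited from $C(\omega_0)$ and is therefore not uniform in $\omega$; any subsequent application that requires a deterministic Hölder constant would need an additional moment or integrability bound on $C(\omega)$, which is beyond the scope of the present statement.
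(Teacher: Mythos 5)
Your proof is correct and follows essentially the same route as the paper: a triangle-inequality decomposition through an intermediate point, the spatial Lipschitz and temporal H\"older clauses of Definition~\ref{def:LH_response_function}, and the pathwise H\"older bound on $X$. The only cosmetic differences are that you pivot through $H(u,Y(s))$ rather than $H(s,Y(u))$, and you unify the exponents via $r^{\beta}\leq T^{\beta-\gamma_h}r^{\gamma_h}$ on $[0,T]$ instead of the paper's case split on $|s-u|\lessgtr 1$ (which uses the boundedness of $H$); both yield a valid finite pathwise constant $\tilde{C}(\omega_0)$.
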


\begin{proof}
The proof proceeds by a pathwise argument, analyzing the function \(h(s)\) for the fixed path \(Y(\cdot)\).
Let \(s, u \in [0,T]\) be arbitrary. We aim to control the difference \(|h(s) - h(u)| = |H(s, Y(s)) - H(u, Y(u))|\). Applying the triangle inequality yields:
\begin{equation}\label{eq:holder_decomp}
|H(s, Y(s)) - H(u, Y(u))| \leq |H(s, Y(s)) - H(s, Y(u))| + |H(s, Y(u)) - H(u, Y(u))|.
\end{equation}
We now estimate the two terms on the right-hand side separately.

\textbf{Estimate of Term I:} \( |H(s, Y(s)) - H(s, Y(u))| \)\\
For this term, the time variable of \(H\) is fixed at \(s\). Applying the spatial Lipschitz condition (Definition~\ref{def:LH_response_function}, Condition (i)) gives:
\[
|H(s, Y(s)) - H(s, Y(u))| \leq L_H |Y(s) - Y(u)|.
\]
By the assumed \(\gamma^*\)-H\"older continuity of the path \(Y\) (i.e., \eqref{eq:priori_holder_assumption} for \(\omega = \omega_0\)), we have \(|Y(s) - Y(u)| \leq C(\omega_0) |s - u|^{\gamma^*}\). Therefore,
\begin{equation}\label{eq:term1_estimate}
|H(s, Y(s)) - H(s, Y(u))| \leq L_H C(\omega_0) |s - u|^{\gamma^*}.
\end{equation}

\textbf{Estimate of Term II:} \( |H(s, Y(u)) - H(u, Y(u))| \)\\
For this term, the spatial variable of \(H\) is fixed at \(Y(u)\). Applying the temporal H\"older condition (Definition~\ref{def:LH_response_function}, Condition (ii)) yields:
\begin{equation}\label{eq:term2_estimate}
|H(s, Y(u)) - H(u, Y(u))| \leq C_H |s - u|^{\gamma}.
\end{equation}

\textbf{Combining the Estimates:}\\
Substituting \eqref{eq:term1_estimate} and \eqref{eq:term2_estimate} back into \eqref{eq:holder_decomp}, we obtain the preliminary bound:
\begin{equation}\label{eq:prelim_bound}
|h(s) - h(u)| \leq L_H C(\omega_0) |s - u|^{\gamma^*} + C_H |s - u|^{\gamma}.
\end{equation}
To establish standard H\"older continuity, we consider two cases based on the distance between \(s\) and \(u\). Let \(\gamma_h = \min(\gamma, \gamma^*) > 0\).

\emph{Case 1:} \(|s - u| \leq 1\).\\
In this case, since \(\gamma_h \leq \gamma^*\) and \(\gamma_h \leq \gamma\), we have \(|s - u|^{\gamma^*} \leq |s - u|^{\gamma_h}\) and \(|s - u|^{\gamma} \leq |s - u|^{\gamma_h}\). Applying these inequalities to \eqref{eq:prelim_bound} gives:
\[
|h(s) - h(u)| \leq L_H C(\omega_0) |s - u|^{\gamma_h} + C_H |s - u|^{\gamma_h} = (L_H C(\omega_0) + C_H) |s - u|^{\gamma_h}.
\]

\emph{Case 2:} \(|s - u| > 1\).\\
Since \(H\) maps into \((0,1)\), the function \(h\) is bounded, and thus \(|h(s) - h(u)| \leq 1\). Furthermore, because \(|s - u| > 1\) and \(\gamma_h > 0\), it follows that \(1 \leq |s - u|^{\gamma_h}\). Consequently,
\[
|h(s) - h(u)| \leq 1 \cdot |s - u|^{\gamma_h}.
\]

\textbf{Final Result:}\\
For this fixed path \(\omega_0\), the constant \(C(\omega_0)\) in \eqref{eq:priori_holder_assumption} is finite. Defining \(\tilde{C}(\omega_0) = \max(L_H C(\omega_0) + C_H, 1)\), we conclude that for all \(s, u \in [0,T]\):
\[
|h(s) - h(u)| \leq \tilde{C}(\omega_0) |s - u|^{\gamma_h},
\]
which completes the proof of the theorem.
\end{proof}

\begin{remark}
Theorem~\ref{thm:pathwise_holder_continuity} enables a pathwise approach to equation~\eqref{eq:rFBM_integral} by converting its analysis along fixed sample paths into that of a deterministic Volterra equation. The H\"older continuity of the resulting kernel is a key property for applying deterministic techniques. The assumption~\eqref{eq:priori_holder_assumption} on path regularity is standard in initiating this type of analysis.
\end{remark}

\begin{remark}[Inheritance of the Regularity Condition]\label{rem:inheritance_regularity}
The foregoing proof shows that the deterministic function \( h(s) = H(s, Y(s)) \) is Hölder continuous with exponent \( \gamma_h = \min(\gamma^*, \gamma) \). For the well-posedness of the Volterra operator in the RfBm construction, the following condition plays a key role:
\[
\gamma_h > \sup_{s \in [0,T]} h(s).
\]
This condition extends the corresponding requirement \( \gamma > \sup_{t \in [0,T]} H(t) \) for the time-varying fBm (Definition~\ref{def:holder} and Remark~\ref{rem:holder_condition}) to the state-dependent setting.

This inequality is naturally satisfied under our assumptions. Since \( h(s) = H(s, Y(s)) \in (0,1) \), we have
\[
\sup_{s \in [0,T]} h(s) \leq \sup\{H(t,x) : t \in [0,T], x \in \mathbb{R}\}.
\]
The temporal Hölder condition on the response function (Definition~\ref{def:LH_response_function}(ii)) requires that
\[
\gamma > \sup\{H(t,x) : t \in [0,T], x \in \mathbb{R}\},
\]
which generalizes the condition in Remark~\ref{rem:holder_condition}. Similarly, the path regularity assumption in \eqref{eq:priori_holder_assumption} implicitly requires that
\[
\gamma^* > \sup\{H(t,x) : t \in [0,T], x \in \mathbb{R}\}.
\]
Combining these inequalities gives
\[
\gamma_h = \min(\gamma^*, \gamma) > \sup\{H(t,x)\} \geq \sup_{s \in [0,T]} h(s),
\]
as needed. This ensures that the kernel \( K(t,s; X_s) \) maintains the requisite integrability properties pathwise, thus yielding a consistent theoretical framework for RfBm that parallels the TV-fBm case.
\end{remark}

The pathwise analysis established in Theorem~\ref{thm:pathwise_holder_continuity} provides a deterministic foundation upon which we now construct a more comprehensive stochastic theory. This leads naturally to the definition of the \textit{instantaneous scaling exponent}, a fundamental object that characterizes the dynamic, state-dependent regularity of the RfBm in full generality.

\begin{definition}[Instantaneous Scaling Exponent Process]\label{def:instant_scaling}
Let $\{X_t\}_{t \in [0,T]}$ be a solution to the RfBm equation \eqref{eq:rFBM_integral} with response function $H$ satisfying Definition~\ref{def:LH_response_function}. The \textbf{instantaneous scaling exponent process} $\{\alpha(t)\}_{t \in [0,T]}$ is defined pathwise by:
\begin{equation}\label{eq:def_alpha}
\alpha(t, \omega) := H(t, X_t(\omega)).
\end{equation}
This quantity encodes the local, state-dependent regularity of the sample path $X_\cdot(\omega)$ at time $t$.
\end{definition}

\begin{remark}[Hierarchical Interpretation]\label{rem:scaling_hierarchy}
The instantaneous scaling exponent $\alpha(t, \omega)$ admits interpretations at three levels:
\begin{enumerate}
    \item \textbf{Pathwise Perspective}: For fixed $\omega_0$, the mapping $t \mapsto \alpha(t, \omega_0)$ is the deterministic function $h_{\omega_0}(t)$ of Theorem~\ref{thm:pathwise_holder_continuity}.
    \item \textbf{Temporal Cross-Section}: For fixed $t_0$, $\alpha(t_0, \omega)$ is a random variable representing the scaling exponent distribution across the ensemble.
    \item \textbf{Full Process Perspective}: As a stochastic process, $\alpha(t, \omega)$ reflects the combined effects of temporal evolution, random fluctuations, and the RfBm's internal feedback.
\end{enumerate}
This hierarchy links deterministic and stochastic analyses.
\end{remark}

We now establish the almost sure H\"older continuity of the instantaneous scaling exponent, extending the pathwise result of Theorem~\ref{thm:pathwise_holder_continuity} to the stochastic setting.

\begin{theorem}[Almost Sure H\"older Continuity of the Instantaneous Scaling Exponent]\label{thm:instant_scaling_holder}
Assume the response function $H: [0,T] \times \mathbb{R} \to (0,1)$ satisfies Definition~\ref{def:LH_response_function} with exponent $\gamma$. Suppose the RfBm $\{X_t\}$ has sample paths that are almost surely $\gamma^*$-H\"older continuous for some $\gamma^* > 0$. Then the instantaneous scaling exponent process $\alpha(t,\omega) = H(t,X_t(\omega))$ satisfies, almost surely,
\begin{equation}\label{eq:holder_alpha}
|\alpha(t,\omega) - \alpha(s,\omega)| \leq M(\omega)|t-s|^{\gamma_h} \quad \forall t,s \in [0,T],
\end{equation}
where $\gamma_h = \min(\gamma, \gamma^*)$ and $M(\omega)$ is an almost surely finite random variable.
\end{theorem}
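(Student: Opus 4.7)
The plan is to lift the pathwise Hölder estimate of Theorem~\ref{thm:pathwise_holder_continuity} to the stochastic setting, which essentially amounts to a measurability argument combined with a direct application of the already established pathwise decomposition. The key observation is that the constant $\tilde{C}(\omega_0)$ produced in the proof of Theorem~\ref{thm:pathwise_holder_continuity} depends on $\omega_0$ only through the Hölder seminorm of the sample path $X_\cdot(\omega_0)$. Consequently, the same estimate will hold simultaneously on the full-measure event on which $X$ is $\gamma^*$-Hölder continuous, once a single measurable random constant is constructed.

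Concretely, I would first introduce the event
\[
\Omega_0 := \{\omega \in \Omega : X_\cdot(\omega) \text{ is } \gamma^*\text{-H\"older continuous on } [0,T]\},
\]
which by hypothesis satisfies $\mathbb{P}(\Omega_0) = 1$. On $\Omega_0$ define the random Hölder seminorm
\[
C(\omega) := \sup_{\substack{s, t \in [0,T] \cap \mathbb{Q} \\ s \neq t}} \frac{|X_t(\omega) - X_s(\omega)|}{|t-s|^{\gamma^*}},
\]
with the supremum taken over a countable dense set to ensure measurability, and set $C(\omega) := 0$ on $\Omega \setminus \Omega_0$. Pathwise continuity of $X$, guaranteed by Definition~\ref{def:rFBM}, extends the supremum to all of $[0,T]$ without enlarging $C(\omega)$ on $\Omega_0$. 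Thus $C$ is a measurable and almost surely finite random variable.

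For each $\omega \in \Omega_0$, applying the triangle decomposition from the proof of Theorem~\ref{thm:pathwise_holder_continuity} to the frozen path $Y(t) := X_t(\omega)$ splits $|\alpha(t,\omega) - \alpha(s,\omega)|$ into a spatial term bounded by $L_H C(\omega)|t-s|^{\gamma^*}$ via Definition~\ref{def:LH_response_function}(i) and a temporal term bounded by $C_H|t-s|^{\gamma}$ via Definition~\ref{def:LH_response_function}(ii). The same two-case argument ($|t-s| \leq 1$ versus $|t-s| > 1$, using uniform boundedness of $H$ in the latter case) then yields
\[
|\alpha(t,\omega) - \alpha(s,\omega)| \leq M(\omega) |t-s|^{\gamma_h}, \quad \gamma_h = \min(\gamma, \gamma^*),
\]
with $M(\omega) := \max(L_H C(\omega) + C_H, 1)$. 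Measurability and almost sure finiteness of $M$ follow directly from the corresponding properties of $C$.

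The only genuinely delicate point is this measurability step: the Hölder seminorm is a priori a supremum over an uncountable index set. The standard remedy of restricting to rationals works here because of the almost sure continuity of sample paths, so no new stochastic argument beyond the already established pathwise bound is required. The main structural obstacle, namely transferring the deterministic Hölder estimate uniformly across the entire sample space, is resolved by this explicit construction of the random Hölder seminorm.
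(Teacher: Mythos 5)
Your proposal is correct and follows essentially the same route as the paper's proof: the same triangle-inequality decomposition into a spatial term controlled by $L_H C(\omega)|t-s|^{\gamma^*}$ and a temporal term controlled by $C_H|t-s|^{\gamma}$, the same two-case treatment of $|t-s|\leq 1$ versus $|t-s|>1$, and the same final constant $M(\omega)=\max(L_H C(\omega)+C_H,1)$. Your explicit construction of the H\"older seminorm as a supremum over rationals is a small but worthwhile refinement, since the paper asserts that $M(\omega)$ is a random variable without addressing its measurability.
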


\begin{proof}
The proof follows a pathwise argument similar to Theorem~\ref{thm:pathwise_holder_continuity}.

\textbf{Step 1: Establishment of the Almost Sure Set}\\
By the sample path regularity assumption, there exists $\Omega_1 \subset \Omega$ with $\mathbb{P}(\Omega_1) = 1$ such that for every $\omega \in \Omega_1$, there exists $C_1(\omega) < \infty$ satisfying:
\begin{equation}\label{eq:path_holder}
|X_t(\omega) - X_s(\omega)| \leq C_1(\omega)|t-s|^{\gamma^*} \quad \forall t,s \in [0,T].
\end{equation}

\textbf{Step 2: Pathwise Decomposition}\\
Fix an arbitrary $\omega \in \Omega_1$. For any $t,s \in [0,T]$, we decompose the difference:
\begin{align*}
|\alpha(t,\omega) - \alpha(s,\omega)| &= |H(t,X_t(\omega)) - H(s,X_s(\omega))| \\
&\leq |H(t,X_t(\omega)) - H(t,X_s(\omega))| + |H(t,X_s(\omega)) - H(s,X_s(\omega))|.
\end{align*}

\textbf{Step 3: Spatial Increment Estimate}\\
Applying the spatial Lipschitz condition from Definition~\ref{def:LH_response_function}(i):
\begin{align*}
|H(t,X_t(\omega)) - H(t,X_s(\omega))| &\leq L_H |X_t(\omega) - X_s(\omega)| \\
&\leq L_H C_1(\omega) |t-s|^{\gamma^*} \quad \text{(by \eqref{eq:path_holder})}.
\end{align*}

\textbf{Step 4: Temporal Increment Estimate}\\
Applying the temporal H\"older condition from Definition~\ref{def:LH_response_function}(ii):
\[
|H(t,X_s(\omega)) - H(s,X_s(\omega))| \leq C_H |t-s|^{\gamma}.
\]

\textbf{Step 5: Combined Estimate}\\
Combining the spatial and temporal estimates yields:
\begin{equation}\label{eq:combined_est}
|\alpha(t,\omega) - \alpha(s,\omega)| \leq L_H C_1(\omega) |t-s|^{\gamma^*} + C_H |t-s|^{\gamma}.
\end{equation}

\textbf{Step 6: H\"older Exponent Optimization}\\
Let $\gamma_h = \min(\gamma, \gamma^*) > 0$. We analyze two cases:

\emph{Case 1: $|t-s| \leq 1$}. Since $\gamma_h \leq \gamma^*$ and $\gamma_h \leq \gamma$, we have:
\[
|t-s|^{\gamma^*} \leq |t-s|^{\gamma_h} \quad \text{and} \quad |t-s|^{\gamma} \leq |t-s|^{\gamma_h}.
\]
Substituting into \eqref{eq:combined_est} gives:
\[
|\alpha(t,\omega) - \alpha(s,\omega)| \leq [L_H C_1(\omega) + C_H] |t-s|^{\gamma_h}.
\]

\emph{Case 2: $|t-s| > 1$}. Since $H(t,x) \in (0,1)$ by Definition~\ref{def:LH_response_function}(iii), we have $|\alpha(t,\omega) - \alpha(s,\omega)| \leq 1$. Moreover, $|t-s|^{\gamma_h} \geq 1$, hence:
\[
|\alpha(t,\omega) - \alpha(s,\omega)| \leq 1 \cdot |t-s|^{\gamma_h}.
\]

\textbf{Step 7: Uniform Constant Construction}\\
Define the random variable:
\[
M(\omega) = \max\left(L_H C_1(\omega) + C_H, 1\right).
\]
Then for all $t,s \in [0,T]$, we have the uniform bound:
\[
|\alpha(t,\omega) - \alpha(s,\omega)| \leq M(\omega) |t-s|^{\gamma_h}.
\]

\textbf{Step 8: Almost Sure Conclusion}\\
Since $\omega \in \Omega_1$ was arbitrary and $\mathbb{P}(\Omega_1) = 1$, the H\"older estimate \eqref{eq:holder_alpha} holds almost surely.
\end{proof}

\begin{remark}[Theoretical Significance]\label{rem:scaling_significance}
Theorem~\ref{thm:instant_scaling_holder} illuminates several features of the RfBm framework:

\begin{enumerate}
    \item \textbf{Regularity Inheritance}: The H\"older regularity of sample paths transfers to the scaling exponent process, yielding a self-consistent regularity structure.
    
    \item \textbf{Multiscale Interaction}: The exponent $\gamma_h = \min(\gamma, \gamma^*)$ shows how the exogenous temporal regularity ($\gamma$) and endogenous path regularity ($\gamma^*$) jointly influence the system's behavior.
    
    \item \textbf{Pathwise Stochastic Analysis}: This result illustrates how pathwise deterministic techniques can be extended to establish almost sure properties for stochastic processes with memory effects.
\end{enumerate}

The almost sure nature of this result is noteworthy: although the H\"older constant $M(\omega)$ may vary across sample paths, the qualitative H\"older behavior persists across almost all realizations.
\end{remark}

Having established the H\"older continuity of the instantaneous scaling exponent, we now prove another key property: its uniform boundedness. This result ensures that the scaling behavior of the RfBm remains within physically meaningful limits throughout its evolution.

\begin{proposition}[Uniform Boundedness of the Instantaneous Scaling Exponent]\label{prop:scaling_boundedness}
Assume the response function $H: [0,T] \times \mathbb{R} \to (0,1)$ satisfies the uniform boundedness condition in Definition~\ref{def:LH_response_function}(iii):
\begin{equation}\label{eq:boundedness_condition}
0 < h_{\min} \leq H(t,x) \leq h_{\max} < 1 \quad \text{for all } (t,x) \in [0,T] \times \mathbb{R}.
\end{equation}
Then the instantaneous scaling exponent process $\alpha(t,\omega) = H(t,X_t(\omega))$ is almost surely uniformly bounded:
\begin{equation}\label{eq:uniform_bound}
\mathbb{P}\left( h_{\min} \leq \alpha(t,\omega) \leq h_{\max} \ \text{ for all } t \in [0,T] \right) = 1.
\end{equation}
\end{proposition}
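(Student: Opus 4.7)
The plan is to observe that the bound $h_{\min} \leq H(t,x) \leq h_{\max}$ in Definition~\ref{def:LH_response_function}(iii) holds \emph{deterministically} for every $(t,x) \in [0,T] \times \mathbb{R}$, and to transfer this bound to $\alpha(t,\omega) = H(t, X_t(\omega))$ by pointwise substitution of $x = X_t(\omega)$. The only non-trivial issue is measurability of the event in \eqref{eq:uniform_bound}, which a priori involves an uncountable intersection indexed by $t \in [0,T]$.

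First, I would fix an arbitrary $(t,\omega) \in [0,T] \times \Omega$. Since $X_t(\omega) \in \mathbb{R}$, applying \eqref{eq:boundedness_condition} with $x = X_t(\omega)$ yields
$$h_{\min} \leq H(t, X_t(\omega)) = \alpha(t,\omega) \leq h_{\max}.$$
Hence the set
$$A := \{\omega \in \Omega : h_{\min} \leq \alpha(t,\omega) \leq h_{\max} \text{ for all } t \in [0,T]\}$$
coincides \emph{set-theoretically} with $\Omega$. If $A$ is shown to lie in $\mathcal{F}$, then $\mathbb{P}(A) = \mathbb{P}(\Omega) = 1$ and the proof is complete.

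Second, to handle measurability, I would invoke Theorem~\ref{thm:instant_scaling_holder}, which guarantees the existence of an almost sure event $\Omega_1 \in \mathcal{F}$, with $\mathbb{P}(\Omega_1) = 1$, on which $t \mapsto \alpha(t,\omega)$ is H\"older continuous, and in particular continuous on $[0,T]$. For $\omega \in \Omega_1$, continuity implies that the sample-path bound is equivalent to its restriction to a countable dense subset such as $\mathbb{Q} \cap [0,T]$, giving
$$A \cap \Omega_1 = \Omega_1 \cap \bigcap_{t \in \mathbb{Q} \cap [0,T]} \{h_{\min} \leq \alpha(t,\cdot) \leq h_{\max}\},$$
which is a countable intersection of measurable sets and hence lies in $\mathcal{F}$. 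Combined with $\mathbb{P}(\Omega \setminus \Omega_1) = 0$ and the set-theoretic identity $A = \Omega$, this yields $\mathbb{P}(A) = 1$, establishing \eqref{eq:uniform_bound}.

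I do not anticipate any real obstacle, since the boundedness is inherited pointwise from the hypothesis on $H$; only the measurability bookkeeping requires care, and it is dispatched cleanly by the pathwise continuity established in Theorem~\ref{thm:instant_scaling_holder}. An alternative route, should one wish to avoid invoking continuity, would be to note that $(t,\omega) \mapsto \alpha(t,\omega)$ is jointly measurable (as the composition of the measurable response function $H$ with the measurable process $X_t$), allowing a Fubini-type argument on $[0,T] \times \Omega$; however, the continuity-based approach above is the most direct and integrates naturally with the regularity framework already developed for RfBm.
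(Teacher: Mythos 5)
Your proposal is correct and rests on the same core observation as the paper's proof: the deterministic bound $h_{\min} \leq H(t,x) \leq h_{\max}$ transfers pointwise to $\alpha(t,\omega) = H(t,X_t(\omega))$ for every $(t,\omega)$, so the good event is all of $\Omega$ (the paper phrases this as a contradiction argument showing the exceptional set is empty, which is logically the same thing). Your measurability digression, while careful, is actually unnecessary: once you have the set-theoretic identity $A = \Omega$, it follows immediately that $A = \Omega \in \mathcal{F}$ and $\mathbb{P}(A) = 1$, with no need to invoke Theorem~\ref{thm:instant_scaling_holder} or a countable dense subset.
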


\begin{proof}
We provide a detailed, step-by-step proof that carefully addresses the subtle distinction between pointwise and uniform boundedness in the context of stochastic processes.

\textbf{Step 1: Understanding the Claim}
We aim to prove:
\[
\mathbb{P}\left( \omega \in \Omega : h_{\min} \leq \alpha(t,\omega) \leq h_{\max} \ \text{for all } t \in [0,T] \right) = 1.
\]
Equivalently, we need to show that the exceptional set
\[
E = \left\{ \omega \in \Omega : \exists t \in [0,T] \text{ such that } \alpha(t,\omega) \notin [h_{\min}, h_{\max}] \right\}
\]
has probability zero.

\textbf{Step 2: Decomposition of the Exceptional Set}
We decompose $E$ into two disjoint parts:
\begin{align*}
E_{\text{lower}} &= \left\{ \omega \in \Omega : \exists t \in [0,T] \text{ with } \alpha(t,\omega) < h_{\min} \right\}, \\
E_{\text{upper}} &= \left\{ \omega \in \Omega : \exists t \in [0,T] \text{ with } \alpha(t,\omega) > h_{\max} \right\}.
\end{align*}
Clearly, $E = E_{\text{lower}} \cup E_{\text{upper}}$, so it suffices to prove $\mathbb{P}(E_{\text{lower}}) = 0$ and $\mathbb{P}(E_{\text{upper}}) = 0$.

\textbf{Step 3: Analysis of $E_{\text{lower}}$}
Assume, for contradiction, that $\mathbb{P}(E_{\text{lower}}) > 0$. Then, since $E_{\text{lower}}$ has positive probability, it is nonempty. Choose any $\omega_0 \in E_{\text{lower}}$. By definition of $E_{\text{lower}}$, there exists some $t_0 \in [0,T]$ such that:
\[
\alpha(t_0, \omega_0) = H(t_0, X_{t_0}(\omega_0)) < h_{\min}.
\]
However, recall that by Definition~\ref{def:LH_response_function}(iii), the response function satisfies:
\[
H(t,x) \geq h_{\min} \quad \text{for all } (t,x) \in [0,T] \times \mathbb{R}.
\]
In particular, for the specific pair $(t_0, X_{t_0}(\omega_0))$, we must have:
\[
H(t_0, X_{t_0}(\omega_0)) \geq h_{\min}.
\]
This yields a contradiction:
\[
h_{\min} \leq H(t_0, X_{t_0}(\omega_0)) < h_{\min}.
\]
The only resolution is that our initial assumption was false, and therefore:
\[
\mathbb{P}(E_{\text{lower}}) = 0.
\]

\textbf{Step 4: Analysis of $E_{\text{upper}}$}
The argument for the upper bound is analogous. Suppose $\mathbb{P}(E_{\text{upper}}) > 0$. Then the set $E_{\text{upper}}$ is nonempty. Choose any $\omega_1 \in E_{\text{upper}}$; there exists $t_1 \in [0,T]$ such that:
\[
\alpha(t_1, \omega_1) = H(t_1, X_{t_1}(\omega_1)) > h_{\max}.
\]
But Definition~\ref{def:LH_response_function}(iii) also gives the upper bound:
\[
H(t,x) \leq h_{\max} \quad \text{for all } (t,x) \in [0,T] \times \mathbb{R}.
\]
Applied to $(t_1, X_{t_1}(\omega_1))$, this gives:
\[
H(t_1, X_{t_1}(\omega_1)) \leq h_{\max}.
\]
Again we reach a contradiction:
\[
h_{\max} < H(t_1, X_{t_1}(\omega_1)) \leq h_{\max}.
\]
Therefore, we conclude:
\[
\mathbb{P}(E_{\text{upper}}) = 0.
\]

\textbf{Step 5: Final Conclusion}
Since both exceptional sets have probability zero, their union also has probability zero:
\[
\mathbb{P}(E) = \mathbb{P}(E_{\text{lower}} \cup E_{\text{upper}}) \leq \mathbb{P}(E_{\text{lower}}) + \mathbb{P}(E_{\text{upper}}) = 0 + 0 = 0.
\]
Hence, the complement event has full probability:
\[
\mathbb{P}\left( \omega \in \Omega : h_{\min} \leq \alpha(t,\omega) \leq h_{\max} \ \text{for all } t \in [0,T] \right) = 1 - \mathbb{P}(E) = 1.
\]
This completes the proof of the almost sure uniform boundedness.
\end{proof}

\begin{remark}[Theoretical Significance of Uniform Boundedness]\label{rem:boundedness_significance}
Proposition~\ref{prop:scaling_boundedness}, while mathematically direct, contributes to the RfBm framework in several ways:

\begin{enumerate}
    \item \textbf{Theoretical Consistency}: It confirms that the stochastic scaling exponent $\alpha(t,\omega)$ inherits the boundedness of the deterministic response function $H(t,x)$, supporting the internal consistency of the framework.

    \item \textbf{Technical Foundation}: The uniform bounds $h_{\min}$ and $h_{\max}$ provide useful a priori estimates for analysis:
    \begin{itemize}
        \item They ensure the square-integrability of the kernel $K(t,s) = \sqrt{2H(s,X_s)}(t-s)^{H(s,X_s)-1/2}$ in Definition~\ref{def:rFBM}.
        \item They facilitate uniform estimates in the existence and uniqueness proofs.
    \end{itemize}

    \item \textbf{Interpretation}: The bounds can be viewed as natural constraints on memory effects: $h_{\min}$ may represent a baseline level of persistence, while $h_{\max}$ corresponds to an upper limit on memory strength. The proposition ensures these bounds are maintained. From a computational perspective, these uniform bounds support the development of numerical schemes that can operate without handling extreme scaling regimes.
\end{enumerate}

This result thus provides a useful foundation for both the theoretical and practical aspects of the RfBm model.
\end{remark}


To extend the theory of instantaneous scaling exponents, we introduce extremal scaling indices to characterize the range of scaling behavior over time intervals, bridging pointwise and local regularity analysis.

\begin{definition}[Extremal Scaling Indices]\label{def:extremal_indices}
For the Responsive Fractional Brownian Motion $\{X_t\}_{t \geq 0}$ with response function $H$ satisfying Definition~\ref{def:LH_response_function}, and for any $t \geq 0$, $\epsilon > 0$ such that $t+\epsilon \leq T$, we define the \textbf{extremal scaling indices}:
\begin{align}
H_-(t,\epsilon,\omega) &:= \inf\left\{ H(s, X_s(\omega)) : s \in [t, t+\epsilon] \right\}, \\
H_+(t,\epsilon,\omega) &:= \sup\left\{ H(s, X_s(\omega)) : s \in [t, t+\epsilon] \right\}.
\end{align}
These indices describe the range of memory strength within local time intervals.
\end{definition}

\begin{proposition}[Boundedness of Extremal Scaling Indices]\label{prop:extremal_boundedness}
Under the uniform boundedness condition in Definition~\ref{def:LH_response_function}(iii), the extremal scaling indices almost surely satisfy:
\begin{equation}
h_{\min} \leq H_-(t,\epsilon,\omega) \leq H_+(t,\epsilon,\omega) \leq h_{\max} \quad \text{for all } t \geq 0, \epsilon > 0.
\end{equation}
\end{proposition}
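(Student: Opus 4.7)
The plan is to derive this as a direct pathwise consequence of the almost sure uniform boundedness of the instantaneous scaling exponent established in Proposition~\ref{prop:scaling_boundedness}, together with the monotonicity of infimum and supremum. The entire argument is essentially order-theoretic once the full-measure event is fixed, so the main work is to verify that a single full-probability set suffices for the uncountable family of interval choices $(t,\epsilon)$.

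First, I would invoke Proposition~\ref{prop:scaling_boundedness} to obtain a measurable set $\Omega_0 \subset \Omega$ with $\mathbb{P}(\Omega_0)=1$ such that for every $\omega \in \Omega_0$, the pointwise inequality
\[
h_{\min} \leq H(s, X_s(\omega)) \leq h_{\max}
\]
holds \emph{simultaneously for all} $s \in [0,T]$. This is the key structural feature: because the proposition provides a uniform-in-$s$ statement on a single full-measure event, no further countable-union refinement is needed when later quantifying over arbitrary sub-intervals.

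Next, I would fix an arbitrary $\omega \in \Omega_0$ and arbitrary $t \geq 0$, $\epsilon > 0$ with $t+\epsilon \leq T$. Since $[t,t+\epsilon] \subset [0,T]$, the set $A(t,\epsilon,\omega) := \{H(s, X_s(\omega)) : s \in [t,t+\epsilon]\}$ is contained in the interval $[h_{\min}, h_{\max}]$. The bounds on $H_-$ and $H_+$ then follow from the elementary facts that $h_{\min}$ is a lower bound for $A(t,\epsilon,\omega)$ (hence no greater than its infimum) and $h_{\max}$ is an upper bound for $A(t,\epsilon,\omega)$ (hence no less than its supremum). The middle inequality $H_-(t,\epsilon,\omega) \leq H_+(t,\epsilon,\omega)$ is immediate from the definitional property $\inf S \leq \sup S$ for any nonempty set $S$, valid here because $[t,t+\epsilon]$ is nonempty.

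There is no substantive obstacle in this argument: the only point requiring minor care is the interchange of quantifiers, namely exhibiting a single null set outside of which the inequalities hold for \emph{every} pair $(t,\epsilon)$, rather than a null set depending on $(t,\epsilon)$. This is handled at the outset by appealing to the already-uniform statement of Proposition~\ref{prop:scaling_boundedness}. Finally, setting $\Omega_0$ as the common full-measure event yields the conclusion $\mathbb{P}(\Omega_0) = 1$ with the required uniform bounds, completing the proof.
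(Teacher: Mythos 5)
Your proposal is correct and follows essentially the same route as the paper's proof: invoke the uniform pointwise bounds from Proposition~\ref{prop:scaling_boundedness} on a single full-measure event, then apply the elementary order properties of infimum and supremum to the set $\{H(s,X_s(\omega)) : s \in [t,t+\epsilon]\}$. Your explicit attention to the quantifier interchange (one null set serving all pairs $(t,\epsilon)$) is a point the paper leaves implicit, but the substance of the argument is identical.
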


\begin{proof}
We proceed with a detailed proof following the structure of the extremal indices.

\textbf{Step 1: Pointwise boundedness recall}

By Proposition~\ref{prop:scaling_boundedness}, for each fixed $s \geq 0$ and sample path $\omega$, we have:
\[
h_{\min} \leq H(s, X_s(\omega)) \leq h_{\max}.
\]
This means the response function values always lie in $[h_{\min}, h_{\max}]$.

\textbf{Step 2: Lower bound for $H_-(t,\epsilon,\omega)$}

Fix arbitrary $t \geq 0$, $\epsilon > 0$, and $\omega$. By definition:
\[
H_-(t,\epsilon,\omega) = \inf\left\{ H(s, X_s(\omega)) : s \in [t, t+\epsilon] \right\}.
\]

Let $E = \left\{ H(s, X_s(\omega)) : s \in [t, t+\epsilon] \right\}$. By the boundedness of the response function, for each $s \in [t, t+\epsilon]$, we have:
\[
H(s, X_s(\omega)) \geq h_{\min}.
\]
Therefore, $h_{\min}$ is a lower bound for the set $E$.

Since $H_-(t,\epsilon,\omega)$ is the greatest lower bound of $E$, and $h_{\min}$ is a lower bound of $E$, it follows that:
\[
h_{\min} \leq H_-(t,\epsilon,\omega).
\]

\textbf{Step 3: Upper bound for $H_-(t,\epsilon,\omega)$}

Since $H_-(t,\epsilon,\omega)$ is a lower bound for $E$ and each element of $E$ satisfies $H(s, X_s(\omega)) \leq h_{\max}$, we have:
\[
H_-(t,\epsilon,\omega) \leq h_{\max}.
\]

More rigorously: if $H_-(t,\epsilon,\omega) > h_{\max}$, then since $E \subset [h_{\min}, h_{\max}]$, there exist elements in $E$ strictly less than $H_-(t,\epsilon,\omega)$, contradicting the definition of $H_-(t,\epsilon,\omega)$ as a lower bound for $E$.

\textbf{Step 4: Upper bound for $H_+(t,\epsilon,\omega)$}

By definition:
\[
H_+(t,\epsilon,\omega) = \sup\left\{ H(s, X_s(\omega)) : s \in [t, t+\epsilon] \right\}.
\]

Let $E = \left\{ H(s, X_s(\omega)) : s \in [t, t+\epsilon] \right\}$ as before. For each $s \in [t, t+\epsilon]$, we have:
\[
H(s, X_s(\omega)) \leq h_{\max}.
\]
Therefore, $h_{\max}$ is an upper bound for $E$.

Since $H_+(t,\epsilon,\omega)$ is the least upper bound of $E$, and $h_{\max}$ is an upper bound of $E$, it follows that:
\[
H_+(t,\epsilon,\omega) \leq h_{\max}.
\]

\textbf{Step 5: Lower bound for $H_+(t,\epsilon,\omega)$}

By the fundamental relation between infimum and supremum:
\[
H_-(t,\epsilon,\omega) \leq H_+(t,\epsilon,\omega).
\]

From Step 2, we have $H_-(t,\epsilon,\omega) \geq h_{\min}$, so:
\[
H_+(t,\epsilon,\omega) \geq h_{\min}.
\]

\textbf{Step 6: Complete inequality chain}

Combining all results:
\begin{align*}
H_-(t,\epsilon,\omega) &\geq h_{\min} \quad \text{(Step 2)} \\
H_-(t,\epsilon,\omega) &\leq H_+(t,\epsilon,\omega) \quad \text{(infimum $\leq$ supremum)} \\
H_+(t,\epsilon,\omega) &\leq h_{\max} \quad \text{(Step 4)} \\
H_+(t,\epsilon,\omega) &\geq h_{\min} \quad \text{(Step 5)}
\end{align*}

Therefore, we obtain:
\[
h_{\min} \leq H_-(t,\epsilon,\omega) \leq H_+(t,\epsilon,\omega) \leq h_{\max}.
\]
This holds for all $t \geq 0$, $\epsilon > 0$, and almost every $\omega$.
\end{proof}

\begin{remark}[Theoretical Significance]\label{rem:extremal_significance}
The extremal scaling indices provide important insights:

\begin{itemize}
    \item \textbf{Local memory range}: They characterize the fluctuation range of memory strength within time intervals $[t,t+\epsilon]$
    
    \item \textbf{Pathwise regularity measurement}: The difference $H_+(t,\epsilon,\omega) - H_-(t,\epsilon,\omega)$ quantifies the local variation of scaling behavior, providing a measure of pathwise regularity beyond pointwise Hölder exponents.
    
\item \textbf{Local consistency}: Under the Hölder continuity of Theorem~\ref{thm:instant_scaling_holder}, the extremal indices exhibit the fundamental convergence property (established rigorously in Proposition~\ref{prop:extremal_consistency} below):
\[
\lim_{\epsilon \to 0^+} H_-(t,\epsilon,\omega) = \lim_{\epsilon \to 0^+} H_+(t,\epsilon,\omega) = \alpha(t,\omega) \quad \text{almost surely},
\]
establishing local consistency between pointwise and interval-based scaling descriptions.
\end{itemize}

\begin{proposition}[Local Consistency of Extremal Scaling Indices]\label{prop:extremal_consistency}
Under the H\"older continuity established in Theorem~\ref{thm:instant_scaling_holder}, the extremal scaling indices satisfy for all $t \geq 0$:
\begin{equation}
\lim_{\epsilon \to 0^+} H_-(t,\epsilon,\omega) = \lim_{\epsilon \to 0^+} H_+(t,\epsilon,\omega) = \alpha(t,\omega) \quad \text{almost surely}.
\end{equation}
\end{proposition}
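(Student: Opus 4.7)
The plan is to obtain the result by a direct squeeze argument, using the almost sure Hölder continuity of the instantaneous scaling exponent established in Theorem~\ref{thm:instant_scaling_holder}. The key observation is that $\alpha(t,\omega)$ is itself an element of the set $\{\alpha(s,\omega): s\in[t,t+\epsilon]\}$ (take $s=t$), so trivially
\[
H_-(t,\epsilon,\omega) \leq \alpha(t,\omega) \leq H_+(t,\epsilon,\omega)
\]
for every $t$, $\epsilon>0$, and $\omega$. Thus only the matching reverse inequalities, up to a vanishing error, need to be produced, and these should follow from the modulus of continuity of $\alpha(\cdot,\omega)$.

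First, I would fix the almost sure event $\Omega_0$ of full measure on which the Hölder bound \eqref{eq:holder_alpha} holds, with exponent $\gamma_h = \min(\gamma,\gamma^*)>0$ and finite random constant $M(\omega)$. For $\omega\in\Omega_0$ and any $s\in[t,t+\epsilon]$, the Hölder estimate yields
\[
\alpha(t,\omega) - M(\omega)\epsilon^{\gamma_h} \leq \alpha(s,\omega) \leq \alpha(t,\omega) + M(\omega)\epsilon^{\gamma_h}.
\]
Taking infimum and supremum over $s\in[t,t+\epsilon]$ preserves the two-sided bounds, producing
\[
\alpha(t,\omega) - M(\omega)\epsilon^{\gamma_h} \leq H_-(t,\epsilon,\omega) \leq \alpha(t,\omega) \leq H_+(t,\epsilon,\omega) \leq \alpha(t,\omega) + M(\omega)\epsilon^{\gamma_h}.
\]
Since $M(\omega)$ is finite on $\Omega_0$ and $\gamma_h>0$, both outer expressions converge to $\alpha(t,\omega)$ as $\epsilon\to 0^+$, so the squeeze theorem closes the argument for each $\omega\in\Omega_0$, and hence almost surely.

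There is no substantive obstacle here: the result is essentially a restatement of the continuity of $s\mapsto\alpha(s,\omega)$ at $s=t$, strengthened to a uniform modulus. The only point to be careful about is that the Hölder constant $M(\omega)$ is random rather than deterministic, so the argument must be executed pathwise on $\Omega_0$; but since $\mathbb{P}(\Omega_0)=1$ and for each such $\omega$ the convergence is deterministic in $\epsilon$, this presents no real difficulty. The resulting statement provides exactly the consistency between the pointwise exponent $\alpha(t,\omega)$ and the interval-based extremal indices advertised in Remark~\ref{rem:extremal_significance}.
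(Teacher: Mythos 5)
Your proposal is correct and follows essentially the same route as the paper's own proof: fix the full-measure set where the Hölder bound of Theorem~\ref{thm:instant_scaling_holder} holds, derive the two-sided enclosure $\alpha(t,\omega)-M(\omega)\epsilon^{\gamma_h}\leq H_-\leq H_+\leq \alpha(t,\omega)+M(\omega)\epsilon^{\gamma_h}$, and conclude by the squeeze theorem pathwise. The only cosmetic difference is your explicit remark that $\alpha(t,\omega)$ itself lies between the extremal indices, which the paper leaves implicit.
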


\begin{proof}
We establish the almost sure convergence through a pathwise analysis.

\textbf{Step 1: Setup and regularity assumptions}

Fix an arbitrary time $t \geq 0$ and consider a sample path $\omega$ belonging to the almost sure set $\Omega_0 \subset \Omega$ where Theorem~\ref{thm:instant_scaling_holder} holds, i.e., $\mathbb{P}(\Omega_0) = 1$. By the established H\"older continuity, there exists a path-dependent constant $M(\omega) < \infty$ and exponent $\gamma_h > 0$ such that:
\[
|\alpha(s,\omega) - \alpha(t,\omega)| \leq M(\omega)|s-t|^{\gamma_h} \quad \text{for all } s \geq 0.
\]

\textbf{Step 2: Local uniform bounds}

For any $\epsilon > 0$, consider the closed interval $[t, t+\epsilon]$. For every $s$ in this interval, the distance satisfies $|s-t| \leq \epsilon$, and thus:
\[
|\alpha(s,\omega) - \alpha(t,\omega)| \leq M(\omega)\epsilon^{\gamma_h}.
\]
This inequality implies the two-sided bound:
\[
\alpha(t,\omega) - M(\omega)\epsilon^{\gamma_h} \leq \alpha(s,\omega) \leq \alpha(t,\omega) + M(\omega)\epsilon^{\gamma_h} \quad \text{for all } s \in [t, t+\epsilon].
\]

\textbf{Step 3: Extremal indices enclosure}

Let $E_\epsilon(\omega) = \{\alpha(s,\omega) : s \in [t, t+\epsilon]\}$ denote the set of scaling exponents over the time window. The previous step shows that:
\[
E_\epsilon(\omega) \subset [\alpha(t,\omega) - M(\omega)\epsilon^{\gamma_h}, \alpha(t,\omega) + M(\omega)\epsilon^{\gamma_h}].
\]

Taking the infimum and supremum over this inclusion, and using the monotonicity of these operators with respect to set inclusion, we obtain:
\[
\alpha(t,\omega) - M(\omega)\epsilon^{\gamma_h} \leq \inf E_\epsilon(\omega) \leq \sup E_\epsilon(\omega) \leq \alpha(t,\omega) + M(\omega)\epsilon^{\gamma_h}.
\]
By definition, $\inf E_\epsilon(\omega) = H_-(t,\epsilon,\omega)$ and $\sup E_\epsilon(\omega) = H_+(t,\epsilon,\omega)$, hence:
\[
\alpha(t,\omega) - M(\omega)\epsilon^{\gamma_h} \leq H_-(t,\epsilon,\omega) \leq H_+(t,\epsilon,\omega) \leq \alpha(t,\omega) + M(\omega)\epsilon^{\gamma_h}.
\]

\textbf{Step 4: Convergence via squeeze theorem}

Define the sequences:
\[
L(\epsilon) = \alpha(t,\omega) - M(\omega)\epsilon^{\gamma_h}, \quad U(\epsilon) = \alpha(t,\omega) + M(\omega)\epsilon^{\gamma_h}.
\]
Since $\gamma_h > 0$ and $M(\omega) < \infty$, we have:
\[
\lim_{\epsilon \to 0^+} L(\epsilon) = \lim_{\epsilon \to 0^+} U(\epsilon) = \alpha(t,\omega).
\]

The inequalities $L(\epsilon) \leq H_-(t,\epsilon,\omega) \leq H_+(t,\epsilon,\omega) \leq U(\epsilon)$ together with the convergence of the bounding sequences imply, by the squeeze theorem:
\[
\lim_{\epsilon \to 0^+} H_-(t,\epsilon,\omega) = \lim_{\epsilon \to 0^+} H_+(t,\epsilon,\omega) = \alpha(t,\omega).
\]

\textbf{Step 5: Almost sure conclusion}

Since this argument holds for every $t \geq 0$ and every $\omega \in \Omega_0$, and $\mathbb{P}(\Omega_0) = 1$, the convergence is established in the almost sure sense.
\end{proof}

The boundedness and local consistency properties establish key regularity conditions that support a local scaling analysis.
\end{remark}
\begin{theorem}[Pathwise Kernel Norm Scaling for RfBm]\label{thm:pathwise_kernel_norm_scaling}
Let \( \{X_t\}_{t \in [0,T]} \) be a Responsive Fractional Brownian Motion defined on a probability space \( (\Omega, \mathcal{F}, \mathbb{P}) \) with response function \( H: [0,T] \times \mathbb{R} \to (0,1) \) satisfying the Lipschitz-H\"older conditions of Definition~\ref{def:LH_response_function}. For any \( t \in [0,T) \) and sufficiently small \( \epsilon > 0 \) (such that \( t+\epsilon \leq T \)), consider the \textbf{forward-looking kernel} over the local interval:
\[
K(t+\epsilon, s; X_s) = \sqrt{2 H(s, X_s)} \cdot (t+\epsilon - s)^{H(s, X_s) - 1/2}, \quad s \in [t, t+\epsilon].
\]
To analyze its behavior, we define the \textbf{pathwise local squared norm} via the following deterministic procedure: for each fixed sample path \( \omega \), consider
\begin{equation}\label{eq:pathwise_squared_norm}
\|K\|_{t,\epsilon}^2(\omega) := \int_t^{t+\epsilon} \left| K(t+\epsilon, s; X_s(\omega)) \right|^2  ds.
\end{equation}
This yields a deterministic quantity associated with the fixed realization \( \omega \). This norm measures the local driving intensity in the RfBm framework, capturing the pathwise energy of the stochastic kernel governing the system's evolution.

Let the \textbf{extremal scaling indices} on the interval \( [t, t+\epsilon] \) be defined as in Definition~\ref{def:extremal_indices}:
\begin{align*}
H_-(t,\epsilon,\omega) &:= \inf \{ H(s, X_s(\omega)) : s \in [t, t+\epsilon] \}, \\
H_+(t,\epsilon,\omega) &:= \sup \{ H(s, X_s(\omega)) : s \in [t, t+\epsilon] \}.
\end{align*}

Then, for \( \mathbb{P} \)-almost every sample path \( \omega \in \Omega \), the pathwise local squared norm satisfies the following scaling bounds:
\begin{equation}\label{eq:pathwise_norm_bound}
c_1 \cdot \epsilon^{2 H_+(t,\epsilon,\omega)} \leq \|K\|_{t,\epsilon}^2(\omega) \leq c_2 \cdot \epsilon^{2 H_-(t,\epsilon,\omega)},
\end{equation}
where the positive constants are given by \( c_1 = \dfrac{h_{\min}}{h_{\max}} \) and \( c_2 = \dfrac{h_{\max}}{h_{\min}} \), with \( h_{\min}, h_{\max} \) from Definition~\ref{def:LH_response_function}.
\end{theorem}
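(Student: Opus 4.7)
The plan is to reduce the pathwise squared norm to a one-dimensional integral of an explicit power function, then bound it uniformly using the extremal scaling indices of Definition~\ref{def:extremal_indices}. First I would fix a sample path $\omega$ from the full-measure set on which $s \mapsto X_s(\omega)$ is continuous (guaranteed by Definition~\ref{def:rFBM}); combined with the Lipschitz-H\"older regularity of $H$ (Definition~\ref{def:LH_response_function}), this makes $s \mapsto H(s, X_s(\omega))$ continuous on the compact interval $[t, t+\epsilon]$, ensuring that $H_\pm(t,\epsilon,\omega)$ are attained and that the integrand in \eqref{eq:pathwise_squared_norm} is Riemann-integrable.

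The first computational step is the change of variables $u = t+\epsilon - s$, which converts the squared norm into
\[
\|K\|_{t,\epsilon}^2(\omega) = \int_0^\epsilon 2 \tilde{H}(u) \, u^{2\tilde{H}(u) - 1} \, du, \qquad \tilde{H}(u) := H\bigl(t+\epsilon - u,\, X_{t+\epsilon - u}(\omega)\bigr),
\]
where $\tilde H(u) \in [H_-(t,\epsilon,\omega), H_+(t,\epsilon,\omega)]$ for all $u \in [0,\epsilon]$. The key monotonicity fact is that for $u \in (0,1]$, the map $x \mapsto u^x = e^{x \ln u}$ is non-increasing in $x$ since $\ln u \leq 0$; by absorbing $\epsilon \leq 1$ into the meaning of ``sufficiently small $\epsilon$,'' this yields the pointwise two-sided bound
\[
u^{2H_+ - 1} \;\leq\; u^{2\tilde{H}(u) - 1} \;\leq\; u^{2H_- - 1}, \qquad u \in (0, \epsilon].
\]
Combined with the uniform amplitude bounds $2h_{\min} \leq 2\tilde{H}(u) \leq 2h_{\max}$ from Definition~\ref{def:LH_response_function}(iii), this produces two-sided estimates on the full integrand.

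The final step is to integrate the resulting power functions explicitly via $\int_0^\epsilon u^{2a - 1}\, du = \epsilon^{2a}/(2a)$ for $a > 0$, which gives the upper bound $(h_{\max}/H_-)\,\epsilon^{2H_-}$ and the lower bound $(h_{\min}/H_+)\,\epsilon^{2H_+}$. Invoking the enclosure $h_{\min} \leq H_- \leq H_+ \leq h_{\max}$ from Proposition~\ref{prop:extremal_boundedness} to replace the path-dependent denominators by the uniform bounds then yields exactly $c_1 = h_{\min}/h_{\max}$ and $c_2 = h_{\max}/h_{\min}$, matching \eqref{eq:pathwise_norm_bound}.

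The principal obstacle is conceptual rather than technical: one must carefully orient the monotonicity inequality, since $x \mapsto u^x$ reverses direction across $u = 1$. This is precisely why the restriction $\epsilon \leq 1$ is essential; without it, the portion $u \in (1, \epsilon)$ would need a separate treatment, and the clean product structure $c_1\epsilon^{2H_+}, c_2\epsilon^{2H_-}$ would be replaced by a two-regime estimate similar in spirit to the region-wise decomposition used in Theorem~\ref{thm:cov_precise_bounds}. A secondary subtlety worth noting is that the extremal indices $H_\pm(t,\epsilon,\omega)$ are genuine random variables, so the estimate is intrinsically pathwise; the almost-sure qualifier enters only through the continuity of the RfBm sample paths, and no further probabilistic argument is required beyond that.
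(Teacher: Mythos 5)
Your proposal is correct and follows essentially the same route as the paper's proof: pathwise reduction via continuity of $s \mapsto H(s,X_s(\omega))$, the substitution $u = t+\epsilon-s$, monotonicity of $x \mapsto u^x$ for $u \in (0,1]$ together with the amplitude bounds $2h_{\min} \leq 2H \leq 2h_{\max}$, explicit integration of the resulting power functions, and finally the enclosure $h_{\min} \leq H_- \leq H_+ \leq h_{\max}$ from Proposition~\ref{prop:extremal_boundedness} to obtain the constants $c_1 = h_{\min}/h_{\max}$ and $c_2 = h_{\max}/h_{\min}$. Your closing remarks on the role of $\epsilon \leq 1$ and the pathwise nature of the estimate accurately identify the same points the paper relies on.
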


\begin{proof}
We employ a pathwise analysis. By the sample path continuity of the process (a consequence of the well-posedness in Theorem~\ref{thm:local_existence_uniqueness}) and the regularity conditions on the response function \( H \), there exists a \( \mathbb{P} \)-null set \( N \subset \Omega \) such that for all \( \omega \in \Omega \setminus N \), the function \( s \mapsto H(s, X_s(\omega)) \) is continuous on \( [t, t+\epsilon] \). We fix such an \( \omega \) and proceed with a deterministic analysis along this fixed path.

For this fixed sample path \( \omega \), the kernel function
\[
K_\omega(s) := K(t+\epsilon, s; X_s(\omega)) = \sqrt{2 H(s, X_s(\omega))} \cdot (t+\epsilon - s)^{H(s, X_s(\omega)) - 1/2}
\]
becomes a deterministic function on \( [t, t+\epsilon] \). Consequently, its pathwise local squared norm \( \|K\|_{t,\epsilon}^2(\omega) \) is a deterministic Riemann integral.

We now establish the upper and lower bounds. First, from the uniform boundedness condition (iii) of Definition~\ref{def:LH_response_function}, we control the amplitude part:
\begin{equation}\label{eq:amplitude_bound_proof}
2h_{\min} \leq 2H(s, X_s(\omega)) \leq 2h_{\max}.
\end{equation}

Next, we analyze the power-law term \( (t+\epsilon - s)^{2H(s, X_s(\omega)) - 1} \). Let \( u = t+\epsilon - s \in (0, \epsilon] \). For sufficiently small \( \epsilon \), we have \( u \in (0,1] \). Consider the function \( \phi(\nu) = u^\nu \). For \( u \in (0,1) \), \( \phi(\nu) \) is strictly decreasing in \( \nu \).

By the definition of the extremal scaling indices, for all \( s \in [t, t+\epsilon] \), we have
\[
H_-(t,\epsilon,\omega) \leq H(s, X_s(\omega)) \leq H_+(t,\epsilon,\omega).
\]
Mapping this to the exponent \( \nu(s) = 2H(s, X_s(\omega)) - 1 \), we obtain
\[
2H_-(t,\epsilon,\omega) - 1 \leq \nu(s) \leq 2H_+(t,\epsilon,\omega) - 1.
\]
Combining this with the monotonicity of \( \phi(\nu) \) for \( u \in (0,1] \), we derive the following pointwise control for all \( s \in [t, t+\epsilon] \):
\begin{equation}\label{eq:power_law_bound_proof}
(t+\epsilon - s)^{2H_+(t,\epsilon,\omega) - 1} \leq (t+\epsilon - s)^{\nu(s)} \leq (t+\epsilon - s)^{2H_-(t,\epsilon,\omega) - 1}.
\end{equation}

We now synthesize the estimates to control the integral \( \|K\|_{t,\epsilon}^2(\omega) \). First, we establish the lower bound:
\begin{align*}
\|K\|_{t,\epsilon}^2(\omega) &= \int_t^{t+\epsilon} 2H(s, X_s(\omega)) \cdot (t+\epsilon - s)^{2H(s, X_s(\omega)) - 1}  ds \\
&\geq 2h_{\min} \int_t^{t+\epsilon} (t+\epsilon - s)^{2H(s, X_s(\omega)) - 1}  ds \quad \text{(using \eqref{eq:amplitude_bound_proof})} \\
&\geq 2h_{\min} \int_t^{t+\epsilon} (t+\epsilon - s)^{2H_+(t,\epsilon,\omega) - 1}  ds \quad \text{(using \eqref{eq:power_law_bound_proof})}.
\end{align*}
Computing the integral:
\[
\int_t^{t+\epsilon} (t+\epsilon - s)^{2H_+(t,\epsilon,\omega) - 1}  ds = \int_0^\epsilon u^{2H_+(t,\epsilon,\omega) - 1}  du = \frac{\epsilon^{2H_+(t,\epsilon,\omega)}}{2H_+(t,\epsilon,\omega)}.
\]
Therefore,
\[
\|K\|_{t,\epsilon}^2(\omega) \geq 2h_{\min} \cdot \frac{\epsilon^{2H_+(t,\epsilon,\omega)}}{2H_+(t,\epsilon,\omega)} = \frac{h_{\min}}{H_+(t,\epsilon,\omega)} \cdot \epsilon^{2H_+(t,\epsilon,\omega)}.
\]
Since \( H_+(t,\epsilon,\omega) \leq h_{\max} \) by Proposition~\ref{prop:extremal_boundedness}, it follows that \( \frac{1}{H_+(t,\epsilon,\omega)} \geq \frac{1}{h_{\max}} \). Hence,
\begin{equation}\label{eq:lower_bound_final_proof}
\|K\|_{t,\epsilon}^2(\omega) \geq \frac{h_{\min}}{h_{\max}} \cdot \epsilon^{2H_+(t,\epsilon,\omega)} = c_1 \cdot \epsilon^{2H_+(t,\epsilon,\omega)}.
\end{equation}

Next, we establish the upper bound:
\begin{align*}
\|K\|_{t,\epsilon}^2(\omega) &= \int_t^{t+\epsilon} 2H(s, X_s(\omega)) \cdot (t+\epsilon - s)^{2H(s, X_s(\omega)) - 1}  ds \\
&\leq 2h_{\max} \int_t^{t+\epsilon} (t+\epsilon - s)^{2H(s, X_s(\omega)) - 1}  ds \quad \text{(using \eqref{eq:amplitude_bound_proof})} \\
&\leq 2h_{\max} \int_t^{t+\epsilon} (t+\epsilon - s)^{2H_-(t,\epsilon,\omega) - 1}  ds \quad \text{(using \eqref{eq:power_law_bound_proof})} \\
&= 2h_{\max} \cdot \frac{\epsilon^{2H_-(t,\epsilon,\omega)}}{2H_-(t,\epsilon,\omega)} = \frac{h_{\max}}{H_-(t,\epsilon,\omega)} \cdot \epsilon^{2H_-(t,\epsilon,\omega)}.
\end{align*}
Since \( H_-(t,\epsilon,\omega) \geq h_{\min} \) by Proposition~\ref{prop:extremal_boundedness}, we have \( \frac{1}{H_-(t,\epsilon,\omega)} \leq \frac{1}{h_{\min}} \). Therefore,
\begin{equation}\label{eq:upper_bound_final_proof}
\|K\|_{t,\epsilon}^2(\omega) \leq \frac{h_{\max}}{h_{\min}} \cdot \epsilon^{2H_-(t,\epsilon,\omega)} = c_2 \cdot \epsilon^{2H_-(t,\epsilon,\omega)}.
\end{equation}

Combining inequalities \eqref{eq:lower_bound_final_proof} and \eqref{eq:upper_bound_final_proof}, we conclude that for the fixed path \( \omega \),
\[
c_1 \epsilon^{2H_+(t,\epsilon,\omega)} \leq \|K\|_{t,\epsilon}^2(\omega) \leq c_2 \epsilon^{2H_-(t,\epsilon,\omega)}.
\]
Since this argument holds for \( \mathbb{P} \)-almost every \( \omega \in \Omega \), the theorem is proved.
\end{proof}

\begin{remark}\label{rem:pathwise_norm_significance}
Theorem~\ref{thm:pathwise_kernel_norm_scaling} provides a pathwise scaling law for Responsive Fractional Brownian Motion.
\begin{itemize}
    \item \textbf{Theoretical Insight}: The theorem shows that despite the path-dependent stochasticity of the driving kernel, its pathwise local squared norm \( \|K\|_{t,\epsilon}^2(\omega) \) exhibits deterministic power-law bounds controlled by the extremal scaling indices along almost every sample path. This yields pathwise bounds for the stochastic kernel's norm in terms of the extremal scaling indices.

    \item \textbf{Methodological Aspect}: The pathwise approach adopted in this work examines deterministic behavior along individual sample paths. This perspective can be useful for studying systems with path-dependent dynamics.

    \item \textbf{Interpretation}: The analysis sheds light on the scaling structure of the RfBm driving kernel. The pathwise bounds for the kernel's $L^2$-norm reflect how local behavior, while subject to path-dependent randomness, exhibits regularity through the extremal scaling indices. These observations may provide insights into the behavior of certain non-stationary, nonlinear systems.
\end{itemize}
\end{remark}
\begin{corollary}[Asymptotic Characterization of the Instantaneous Scaling Exponent]\label{cor:asymptotic_identification}
Under the framework of Theorem~\ref{thm:pathwise_kernel_norm_scaling}, for almost every sample path \( \omega \) and any \( t \in [0,T) \), the instantaneous scaling exponent \( \alpha(t, \omega) := H(t, X_t(\omega)) \) of the Responsive Fractional Brownian Motion admits the following asymptotic characterization:
\begin{equation}\label{eq:asymptotic_identification}
\lim_{\epsilon \to 0^+} \frac{\ln \|K\|_{t,\epsilon}^2(\omega)}{2 \ln \epsilon} = \alpha(t, \omega).
\end{equation}
\end{corollary}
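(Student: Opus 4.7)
The plan is to deduce the asymptotic identification directly by taking logarithms in the two-sided pathwise bound of Theorem~\ref{thm:pathwise_kernel_norm_scaling} and invoking the local consistency of the extremal scaling indices from Proposition~\ref{prop:extremal_consistency}. I would fix a sample path $\omega$ in the full-probability set on which both results hold. For $\epsilon > 0$ sufficiently small (in particular $\epsilon \in (0,1)$ and $t+\epsilon \leq T$), the lower bound $c_1 \epsilon^{2H_+(t,\epsilon,\omega)} > 0$ ensures that $\|K\|_{t,\epsilon}^2(\omega)$ is strictly positive, so its logarithm is well defined and finite.

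Applying $\ln(\cdot)$ to the bound in Theorem~\ref{thm:pathwise_kernel_norm_scaling} yields
\[
\ln c_1 + 2H_+(t,\epsilon,\omega)\ln\epsilon \;\leq\; \ln \|K\|_{t,\epsilon}^2(\omega) \;\leq\; \ln c_2 + 2H_-(t,\epsilon,\omega)\ln\epsilon.
\]
The key manipulation is division by $2\ln\epsilon$: since $\epsilon \in (0,1)$, this quantity is strictly negative, so each inequality reverses, producing
\[
H_-(t,\epsilon,\omega) + \frac{\ln c_2}{2\ln\epsilon} \;\leq\; \frac{\ln \|K\|_{t,\epsilon}^2(\omega)}{2\ln\epsilon} \;\leq\; H_+(t,\epsilon,\omega) + \frac{\ln c_1}{2\ln\epsilon}.
\]

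To conclude, I would pass to the limit $\epsilon \to 0^+$. Because $c_1, c_2$ are fixed positive constants depending only on $h_{\min}$ and $h_{\max}$, and $\ln\epsilon \to -\infty$, both correction terms $\ln c_1/(2\ln\epsilon)$ and $\ln c_2/(2\ln\epsilon)$ vanish. By Proposition~\ref{prop:extremal_consistency}, the limits $\lim_{\epsilon\to 0^+} H_\pm(t,\epsilon,\omega) = \alpha(t,\omega)$ hold simultaneously on the chosen almost-sure set. The squeeze theorem then forces the middle ratio to converge to $\alpha(t,\omega)$, establishing \eqref{eq:asymptotic_identification}. The argument presents no substantive obstacle; the only point requiring genuine care is the sign tracking upon division by $2\ln\epsilon$, which is why restricting attention to $\epsilon \in (0,1)$ at the outset streamlines the presentation. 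Conceptually, the corollary is best viewed as a log-scale squeeze that converts the pathwise power-law enclosure of Theorem~\ref{thm:pathwise_kernel_norm_scaling} into a sharp pointwise identification of the effective scaling rate.
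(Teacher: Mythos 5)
Your proposal is correct and follows essentially the same route as the paper's proof: take logarithms of the two-sided bound from Theorem~\ref{thm:pathwise_kernel_norm_scaling}, divide by $2\ln\epsilon$ with the sign reversal carefully tracked, let the constant terms vanish, and squeeze using Proposition~\ref{prop:extremal_consistency}. No gaps.
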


\begin{proof}
We establish the asymptotic identification through a detailed pathwise analysis.

\textbf{Step 1: Pathwise setup and preliminary convergence}

Fix a sample path \( \omega \) satisfying the conditions of Theorem~\ref{thm:pathwise_kernel_norm_scaling}. By Proposition~\ref{prop:extremal_consistency} (local consistency of extremal scaling indices), we have:
\[
\lim_{\epsilon \to 0^+} H_-(t,\epsilon,\omega) = \alpha(t, \omega), \quad \lim_{\epsilon \to 0^+} H_+(t,\epsilon,\omega) = \alpha(t, \omega).
\]
This establishes that both extremal indices converge to the instantaneous scaling exponent as the time window shrinks to zero.

\textbf{Step 2: Logarithmic transformation of the scaling bounds}

Consider the pathwise scaling inequality from Theorem~\ref{thm:pathwise_kernel_norm_scaling}:
\[
c_1 \cdot \epsilon^{2 H_+(t,\epsilon,\omega)} \leq \|K\|_{t,\epsilon}^2(\omega) \leq c_2 \cdot \epsilon^{2 H_-(t,\epsilon,\omega)}.
\]
Taking logarithms of all parts yields:
\[
\ln c_1 + 2H_+(t,\epsilon,\omega) \ln \epsilon \leq \ln \|K\|_{t,\epsilon}^2(\omega) \leq \ln c_2 + 2H_-(t,\epsilon,\omega) \ln \epsilon.
\]
Since \( \ln \epsilon < 0 \) for \( \epsilon \in (0,1) \), dividing by \( 2 \ln \epsilon \) reverses the inequality directions, giving:
\begin{equation}\label{eq:log_ratio_bound}
\frac{\ln c_2}{2 \ln \epsilon} + H_-(t,\epsilon,\omega) \leq \frac{\ln \|K\|_{t,\epsilon}^2(\omega)}{2 \ln \epsilon} \leq \frac{\ln c_1}{2 \ln \epsilon} + H_+(t,\epsilon,\omega).
\end{equation}
This inequality now has the standard form for applying the squeeze theorem: lower bound $\leq$ target quantity $\leq$ upper bound.

\textbf{Step 3: Analysis of limiting behavior}

We now examine the limiting behavior of each term as \( \epsilon \to 0^+ \).

For the constant terms, since \( c_1, c_2 \) are positive constants independent of \( \epsilon \), their logarithms \( \ln c_1, \ln c_2 \) are finite. Therefore:
\[
\lim_{\epsilon \to 0^+} \frac{\ln c_1}{2 \ln \epsilon} = 0, \quad \lim_{\epsilon \to 0^+} \frac{\ln c_2}{2 \ln \epsilon} = 0.
\]

Now define the bounding sequences:
\begin{align*}
L(\epsilon) &:= \frac{\ln c_2}{2 \ln \epsilon} + H_-(t,\epsilon,\omega), \\
M(\epsilon) &:= \frac{\ln \|K\|_{t,\epsilon}^2(\omega)}{2 \ln \epsilon}, \\
U(\epsilon) &:= \frac{\ln c_1}{2 \ln \epsilon} + H_+(t,\epsilon,\omega).
\end{align*}

\textbf{Step 4: Application of the squeeze theorem}

By inequality \eqref{eq:log_ratio_bound}, for sufficiently small \( \epsilon > 0 \), we have:
\[
L(\epsilon) \leq M(\epsilon) \leq U(\epsilon).
\]

Moreover, from Step 1 and the limit of the constant terms, we have shown:
\[
\lim_{\epsilon \to 0^+} L(\epsilon) = 0 + \alpha(t, \omega) = \alpha(t, \omega), \quad \lim_{\epsilon \to 0^+} U(\epsilon) = 0 + \alpha(t, \omega) = \alpha(t, \omega).
\]

Therefore, the sequence \( M(\epsilon) \) is squeezed between two sequences \( L(\epsilon) \) and \( U(\epsilon) \) that both converge to the same limit \( \alpha(t, \omega) \). By the squeeze theorem, it follows that:
\[
\lim_{\epsilon \to 0^+} M(\epsilon) = \lim_{\epsilon \to 0^+} \frac{\ln \|K\|_{t,\epsilon}^2(\omega)}{2 \ln \epsilon} = \alpha(t, \omega).
\]

\textbf{Step 5: Almost sure conclusion}

Since this argument holds for almost every sample path \( \omega \) satisfying the conditions of Theorem~\ref{thm:pathwise_kernel_norm_scaling}, and these paths form a set of full probability, the asymptotic identification holds almost surely.
\end{proof}
\begin{remark}\label{rem:identification_significance}
This corollary connects different aspects of the RfBm theory, offering several perspectives:
\begin{itemize}
  \item \textbf{Theoretical Refinement}: The finite-scale bounds of Theorem~\ref{thm:pathwise_kernel_norm_scaling} are extended by this corollary, which shows how logarithmic asymptotics lead to asymptotic identification of the instantaneous regularity \( \alpha(t, \omega) \). This refines our understanding of the local scaling structure.

    \item \textbf{Conceptual Coherence}: The result reveals a coherent relationship among the instantaneous scaling exponent \( \alpha(t, \omega) \), the extremal scaling indices \( H_{\pm}(t,\epsilon,\omega) \), and the pathwise kernel norm \( \|K\|_{t,\epsilon}^2(\omega) \). This connects fundamental definitions with local geometric properties and asymptotic identification.

    \item \textbf{Measurement Interpretation}: The limit formula suggests that \( \alpha(t, \omega) \) can be viewed as the asymptotic scaling slope of the kernel norm in log-log coordinates. This offers a potential approach for analyzing systems with path-dependent scaling behavior.
\end{itemize}
\end{remark}
To examine how scaling behavior accumulates over time, we introduce the following concepts, which provide a complementary perspective to the local analysis above.

\begin{definition}[Cumulative Memory Process and Time-Averaged Scaling Exponent]
\label{def:cumulative_memory}
Let $\alpha(t,\omega) = H(t, X_t(\omega))$ be the instantaneous scaling exponent process from Definition \ref{def:instant_scaling}. The \emph{cumulative memory process} is defined as the pathwise integral:
\begin{equation}
\label{eq:cumulative_memory}
\mathcal{C}_t(\omega) := \int_0^t \alpha(s,\omega)  ds = \int_0^t H(s, X_s(\omega))  ds.
\end{equation}
The corresponding \emph{time-averaged scaling exponent} is given by:
\begin{equation}
\label{eq:time_averaged_exponent}
\bar{\alpha}(t) := \frac{\mathbb{E}[\mathcal{C}_t]}{t} = \frac{1}{t} \int_0^t \mathbb{E}[H(s, X_s)]  ds.
\end{equation}
\end{definition}

\begin{remark}[Mathematical Interpretation and Innovation]
\label{rem:cumulative_innovation}
The cumulative memory process provides a way to examine temporal accumulation in non-stationary systems. This approach, which extends the local scaling analysis in Theorem \ref{thm:pathwise_kernel_norm_scaling} and Corollary \ref{cor:asymptotic_identification}, captures historical memory effects through several aspects:

\begin{itemize}
\item \textbf{Pathwise Historical Record:} The process $\mathcal{C}_t(\omega)$ encodes the history of memory strength along sample paths, extending beyond classical fBm with static memory and TV-fBm with predetermined dynamics to a co-evolution of memory with process state.

\item \textbf{Multi-Scale Duality:} This duality connects microscopic fluctuations captured by $\alpha(t,\omega)$ with macroscopic regularity through the time-averaged exponent $\bar{\alpha}(t) = \frac{1}{t} \int_0^t \mathbb{E}[\alpha(s,\cdot)]  ds$, linking pathwise and ensemble behaviors.

\item \textbf{Deterministic Emergence from Stochasticity:} While $\mathcal{C}_t(\omega)$ is path-dependent, its normalized expectation $\bar{\alpha}(t)$ exhibits deterministic bounds (Proposition \ref{prop:memory_bounds}), showing how statistical regularity emerges from non-stationary dynamics.

\item \textbf{Operational Significance:} The time-averaged exponent $\bar{\alpha}(t)$ provides a computationally accessible measure of memory strength over finite horizons, serving as an effective Hurst parameter that facilitates relating the integrated memory properties of the non-stationary RfBm to the static memory of classical fBm.
\end{itemize}

This global perspective on memory accumulation works together with the local characterization in Theorem \ref{thm:pathwise_kernel_norm_scaling}, creating a multi-scale framework for analyzing memory effects.
\end{remark}

\begin{proposition}[Fundamental Bounds for Memory Processes]
\label{prop:memory_bounds}
Under the conditions of Definition \ref{def:LH_response_function}, the following bounds hold almost surely:
\begin{equation}
\label{eq:pathwise_bounds}
h_{\min} \cdot t \ \leq \ \mathcal{C}_t(\omega) \ \leq \ h_{\max} \cdot t \quad \text{for } \mathbb{P}\text{-a.e. } \omega \in \Omega,
\end{equation}
and consequently for the time-averaged scaling exponent:
\begin{equation}
\label{eq:deterministic_bounds}
h_{\min} \ \leq \ \bar{\alpha}(t) \ \leq \ h_{\max} \quad \text{for all } t > 0.
\end{equation}
\end{proposition}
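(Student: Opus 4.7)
The plan is to derive both bounds as direct consequences of the uniform boundedness of the instantaneous scaling exponent established in Proposition~\ref{prop:scaling_boundedness}, combined with elementary monotonicity properties of the Lebesgue integral and of expectation. The argument proceeds pathwise for \eqref{eq:pathwise_bounds}, then transitions to the ensemble level for \eqref{eq:deterministic_bounds}. No fine estimates are required; the result is essentially the integral-and-expectation shadow of a pointwise bound.

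First, I would invoke Proposition~\ref{prop:scaling_boundedness} to fix an almost sure event $\Omega_0 \subset \Omega$ with $\mathbb{P}(\Omega_0) = 1$ on which the inequality $h_{\min} \leq \alpha(s,\omega) \leq h_{\max}$ holds simultaneously for all $s \in [0,T]$. For each fixed $\omega \in \Omega_0$, the map $s \mapsto \alpha(s,\omega) = H(s, X_s(\omega))$ is bounded and measurable (indeed, continuous along any continuous sample path thanks to the continuity of $H$ ensured by Definition~\ref{def:LH_response_function}), hence Lebesgue integrable on $[0,t]$. Integrating the two-sided pointwise bound and invoking monotonicity of the Lebesgue integral yields
\[
h_{\min}\cdot t \;=\; \int_0^t h_{\min}\, ds \;\leq\; \int_0^t \alpha(s,\omega)\, ds \;=\; \mathcal{C}_t(\omega) \;\leq\; \int_0^t h_{\max}\, ds \;=\; h_{\max}\cdot t,
\]
which establishes \eqref{eq:pathwise_bounds}.

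Second, for the deterministic bounds on the time-averaged exponent $\bar{\alpha}(t)$, I would take expectations of the almost sure inequalities just derived. Since $\mathcal{C}_t$ is uniformly bounded by $h_{\max}\cdot T$, it is integrable, and monotonicity of expectation yields $h_{\min}\cdot t \leq \mathbb{E}[\mathcal{C}_t] \leq h_{\max}\cdot t$. Dividing by $t > 0$ gives \eqref{eq:deterministic_bounds}. Equivalently, one can apply Fubini--Tonelli to exchange expectation and integration in the definition $\bar{\alpha}(t) = t^{-1}\int_0^t \mathbb{E}[H(s,X_s)]\,ds$; this exchange is justified by the joint measurability of $(s,\omega) \mapsto H(s, X_s(\omega))$, which follows from the continuity of $H$ in both arguments together with the progressive measurability of $X$ (inherent in the definition of RfBm as an adapted solution, Definition~\ref{def:rFBM}), and from the uniform bound $|H(s, X_s)| \leq h_{\max}$ ensuring integrability.

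There is no substantive obstacle in this argument. The only minor subtlety worth flagging is that the pathwise integral in \eqref{eq:cumulative_memory} must be well-defined for $\mathbb{P}$-almost every $\omega$; this is immediate because the integrand is the composition of a continuous function $H$ with a continuous path $s\mapsto (s, X_s(\omega))$, hence continuous in $s$ on the event $\Omega_0$, which has full measure. This result thus serves as a clean corollary of Proposition~\ref{prop:scaling_boundedness}, its role being to package the pointwise boundedness of the instantaneous scaling exponent into a statement about integrated memory that bridges pathwise regularity and ensemble-level statistical behavior.
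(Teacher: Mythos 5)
Your proposal is correct and follows essentially the same route as the paper: integrate the uniform pointwise bound $h_{\min} \leq H(s,X_s(\omega)) \leq h_{\max}$ (Definition~\ref{def:LH_response_function}(iii), reinforced by Proposition~\ref{prop:scaling_boundedness}) over $[0,t]$ pathwise, then pass to expectations, with Fubini--Tonelli justifying the identification $\mathbb{E}[\mathcal{C}_t] = \int_0^t \mathbb{E}[H(s,X_s)]\,ds$ exactly as the paper does in its Parts 2 and 3. Your observation that the direct monotonicity-of-expectation argument already suffices for the bound (with Fubini needed only to reconcile the two expressions defining $\bar{\alpha}(t)$) is a fair and slightly cleaner framing of the same content.
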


\begin{proof}
The proof is organized into three parts: the derivation of pathwise bounds, the justification for exchanging expectation and integration, and the synthesis leading to the deterministic bounds.

\textbf{Part 1: Proof of the pathwise bounds (\ref{eq:pathwise_bounds})}

By Definition \ref{def:LH_response_function}(iii), the response function satisfies the uniform bound:
\[
h_{\min} \leq H(s, x) \leq h_{\max} \quad \text{for all } (s,x) \in [0,T] \times \mathbb{R}.
\]
In particular, for each fixed sample path $\omega \in \Omega$ and for all $s \in [0,t]$, we have the pointwise bound:
\[
h_{\min} \leq H(s, X_s(\omega)) \leq h_{\max}.
\]
This pointwise bound holds for all $s \in [0,t]$ and for $\mathbb{P}$-almost every $\omega$. This is consistent with and reinforced by Proposition \ref{prop:scaling_boundedness}, which guarantees that the sample paths of the instantaneous scaling exponent $\alpha(s,\omega)$ almost surely obey the uniform bounds $h_{\min} \leq \alpha(s,\omega) \leq h_{\max}$ for all $s \in [0,T]$, and in particular, for all $s \in [0,t] \subseteq [0,T]$.

The function $s \mapsto H(s, X_s(\omega))$ is integrable on $[0,t]$ for $\mathbb{P}$-a.e. $\omega$ due to its continuity (and hence measurability) and boundedness. Since integration preserves inequalities for measurable and bounded functions, we obtain for $\mathbb{P}$-a.e. $\omega$:
\[
\int_0^t h_{\min}  ds \leq \int_0^t H(s, X_s(\omega))  ds \leq \int_0^t h_{\max}  ds.
\]
Computing the boundary integrals yields:
\[
h_{\min} \cdot t \leq \mathcal{C}_t(\omega) \leq h_{\max} \cdot t,
\]
which establishes the pathwise bounds (\ref{eq:pathwise_bounds}).

\textbf{Part 2: Verification of Fubini's theorem conditions}

To establish the identity $\mathbb{E}[\mathcal{C}_t] = \int_0^t \mathbb{E}[H(s,X_s)]  ds$, we justify the interchange of expectation and integration order.

Consider the function $f: \Omega \times [0,t] \to \mathbb{R}$ defined by:
\[
f(\omega,s) = H(s, X_s(\omega)).
\]

We verify the measurability and integrability conditions:

(i) \textbf{Product measurability}: For each fixed $s \in [0,t]$, the function $\omega \mapsto f(\omega,s)$ is $\mathcal{F}$-measurable because:
- $X_s$ is $\mathcal{F}_s$-measurable (by adaptation)
- $H(s,\cdot)$ is Borel measurable (by Definition \ref{def:LH_response_function})
- The composition of measurable functions is measurable.

For each fixed $\omega \in \Omega$, the function $s \mapsto f(\omega,s)$ is continuous on $[0,t]$ because:
- $s \mapsto X_s(\omega)$ is continuous (by sample path continuity)
- $H(\cdot,x)$ is Hölder continuous (by Definition \ref{def:LH_response_function}(ii))
- The composition preserves continuity.

The preceding arguments establish that $f$ is separately measurable in each variable. Since $[0,t]$ is a separable metric space, a standard result in measure theory ensures that separate measurability implies joint measurability. Therefore, $f$ is $\mathcal{F} \otimes \mathcal{B}([0,t])$-measurable.

(ii) \textbf{Integrability}: The function $f$ is bounded by:
\[
|f(\omega,s)| \leq h_{\max} < 1 \quad \text{for all } (\omega,s) \in \Omega \times [0,t].
\]
Let $M = h_{\max}$. Given that $0 < H(s, X_s(\omega)) \leq h_{\max} < 1$ for all $(\omega,s)$, we obtain:
\[
\int_{\Omega \times [0,t]} |f(\omega,s)|  d(\mathbb{P} \otimes \lambda) 
\leq h_{\max} \cdot \mathbb{P}(\Omega) \cdot \lambda([0,t]) = h_{\max} \cdot 1 \cdot t < \infty.
\]
Since $\Omega \times [0,t]$ has finite product measure ($\mathbb{P}(\Omega) = 1$ and $\lambda([0,t]) = t < \infty$) and $f$ is bounded, $f$ is integrable with respect to the product measure $\mathbb{P} \otimes \lambda$.

\textbf{Part 3: Application of Fubini's theorem and conclusion}

By Fubini's theorem for finite measure spaces, we may interchange the order of integration:
\[
\mathbb{E}[\mathcal{C}_t] = \int_\Omega \left( \int_0^t H(s, X_s(\omega))  ds \right) d\mathbb{P}(\omega)
= \int_0^t \left( \int_\Omega H(s, X_s(\omega)) d\mathbb{P}(\omega) \right) ds
= \int_0^t \mathbb{E}[H(s,X_s)]  ds.
\]

Now, taking expectation in the pathwise bounds (\ref{eq:pathwise_bounds}) and dividing by $t > 0$:
\[
h_{\min} \cdot t \leq \mathbb{E}[\mathcal{C}_t] \leq h_{\max} \cdot t
\quad \Rightarrow \quad h_{\min} \leq \frac{\mathbb{E}[\mathcal{C}_t]}{t} \leq h_{\max}.
\]
This establishes the deterministic bounds (\ref{eq:deterministic_bounds}) and completes the proof.
\end{proof}

\begin{remark}[Theoretical Significance of Memory Accumulation]
\label{rem:memory_accumulation_significance}
The cumulative memory framework provides complementary insights to the local analysis:

\begin{itemize}
\item \textbf{Pathwise Historical Memory:} The cumulative process $\mathcal{C}_t(\omega)$ quantifies total memory accumulation along sample paths, extending the local scaling analysis of Corollary \ref{cor:asymptotic_identification}.

\item \textbf{Micro-Macro Bridge:} $\bar{\alpha}(t)$ connects pathwise fluctuations to deterministic macroscopic behavior through expectation and temporal averaging.

\item \textbf{Theoretical Consistency:} Proposition \ref{prop:memory_bounds} shows that while instantaneous scaling fluctuates randomly, its time-averaged behavior remains within deterministic bounds.

\item \textbf{Generalization of Existing Models:} When $H(s,x) \equiv H$, $\mathcal{C}_t = H \cdot t$; for TV-fBm, $\mathcal{C}_t = \int_0^t H(s)  ds$. The RfBm framework extends these through state-dependent memory.
\end{itemize}

Together with the local theory, this defines a multi-scale framework:
\begin{itemize}
    \item \emph{Instantaneous scale:} Pointwise regularity (Theorem \ref{thm:pathwise_kernel_norm_scaling}, Corollary \ref{cor:asymptotic_identification})
    \item \emph{Extremal scale:} Local memory range (Definition \ref{def:extremal_indices}) 
    \item \emph{Finite-time scale:} Total accumulated memory (Proposition \ref{prop:memory_bounds})
\end{itemize}
This perspective helps understand how macroscopic regularity emerges from microscopic state-dependent dynamics.
\end{remark}


Having established the fundamental bounds for the cumulative memory process in Proposition~\ref{prop:memory_bounds}, it is natural to consider the asymptotic behavior of the time-averaged scaling exponent $\bar{\alpha}(t)$ as $t \to \infty$. The following result shows that if the mean instantaneous scaling exponent converges, then its time average $\bar{\alpha}(t)$ converges to the same limit. This provides insight into the long-term statistical behavior of non-stationary processes with state-dependent dynamics.

\begin{proposition}[Asymptotic Convergence of the Time-Averaged Scaling Exponent]
\label{prop:asymptotic_convergence}
Assume that the mean function of the instantaneous scaling exponent, $m(s) := \mathbb{E}[\alpha(s, \cdot)]$, converges to a constant $H^* \in [h_{\min}, h_{\max}]$ as $s \to \infty$, i.e.,
\begin{equation}
\label{eq:mean_limit}
\lim_{s \to \infty} m(s) = H^*.
\end{equation}
Then the time-averaged scaling exponent $\bar{\alpha}(t)$ also converges to the same limit:
\begin{equation}
\label{eq:time_avg_limit}
\lim_{t \to \infty} \bar{\alpha}(t) = H^*.
\end{equation}
In other words, the long-term time-averaged behavior of the cumulative memory process is determined by the asymptotic limit of the mean instantaneous scaling exponent.
\end{proposition}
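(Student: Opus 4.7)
The plan is to prove this as a Cesàro-type convergence result for the deterministic mean function $m(s) = \mathbb{E}[\alpha(s,\cdot)]$. The first step is to use the Fubini-based identity already established inside the proof of Proposition~\ref{prop:memory_bounds}, namely $\mathbb{E}[\mathcal{C}_t] = \int_0^t m(s)\,ds$, which is justified exactly as before by joint measurability of $(s,\omega) \mapsto H(s,X_s(\omega))$ together with the uniform bound $|H| \leq h_{\max}$. From this, $\bar{\alpha}(t) = \frac{1}{t}\int_0^t m(s)\,ds$ is a purely deterministic quantity, and the problem reduces to showing that if a bounded function $m: [0,\infty) \to [h_{\min}, h_{\max}]$ satisfies $m(s) \to H^*$, then its running average converges to the same limit.

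Next, given $\varepsilon > 0$, I would use hypothesis \eqref{eq:mean_limit} to choose $S = S(\varepsilon) > 0$ such that $|m(s) - H^*| < \varepsilon/2$ for all $s \geq S$. Then I would perform the standard splitting
\begin{equation*}
\bar{\alpha}(t) - H^* = \frac{1}{t}\int_0^S \bigl(m(s) - H^*\bigr)\,ds + \frac{1}{t}\int_S^t \bigl(m(s) - H^*\bigr)\,ds,
\end{equation*}
valid for $t > S$. The second integral is controlled directly by the choice of $S$, giving a contribution bounded by $\frac{t-S}{t}\cdot \frac{\varepsilon}{2} \leq \frac{\varepsilon}{2}$. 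The first integral is controlled by the uniform bound from Proposition~\ref{prop:memory_bounds}: since both $m(s)$ and $H^*$ lie in $[h_{\min},h_{\max}]$, the integrand is bounded in absolute value by $h_{\max} - h_{\min} \leq h_{\max}$, so this piece is bounded by $\frac{(h_{\max}-h_{\min}) S}{t}$, which vanishes as $t \to \infty$.

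Finally, I would choose $T^* = T^*(\varepsilon) > S$ large enough that $\frac{(h_{\max}-h_{\min})S}{t} < \varepsilon/2$ for all $t \geq T^*$. Combining the two estimates via the triangle inequality yields $|\bar{\alpha}(t) - H^*| < \varepsilon$ for all $t \geq T^*$, establishing \eqref{eq:time_avg_limit}. There is no substantive obstacle in this proof: the argument is a direct Cesàro-mean computation, and the only point requiring care is to invoke the uniform boundedness from Definition~\ref{def:LH_response_function}(iii), already propagated to $m(\cdot)$ by Proposition~\ref{prop:scaling_boundedness}, in order to bound the initial-segment contribution uniformly and ensure that no pathwise measurability or integrability issues arise.
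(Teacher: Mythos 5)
Your proposal is correct and follows essentially the same route as the paper's proof: the same Fubini-based reduction to a deterministic Cesàro-mean argument, the same split of $\int_0^t$ at a threshold $S(\varepsilon)$, and the same two estimates (tail controlled by the convergence hypothesis, initial segment controlled by the uniform bound and a factor of $1/t$). The only cosmetic difference is your choice of the constant $h_{\max}-h_{\min}$ for the initial-segment bound where the paper uses $M=\max(|h_{\min}-H^*|,|h_{\max}-H^*|)$; both are valid.
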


\begin{proof}
By Definition~\ref{def:cumulative_memory}, $\bar{\alpha}(t) = \frac{1}{t} \int_0^t m(s)  ds$. We need to prove that
\[
\lim_{t \to \infty} \frac{1}{t} \int_0^t m(s)  ds = H^*.
\]
The proof shows that temporal averaging preserves asymptotic limits in this setting, illustrating how statistical regularity may emerge from path-dependent dynamics over long time scales.

Let $\epsilon > 0$ be arbitrary. By the convergence assumption \eqref{eq:mean_limit}, there exists a time $S = S(\epsilon) > 0$ such that for all $s > S$,
\begin{equation}
\label{eq:pointwise_epsilon}
|m(s) - H^*| < \frac{\epsilon}{2}.
\end{equation}
This time $S$ marks the point after which $m(s)$ remains within an $\epsilon/2$-neighborhood of $H^*$.

Now, for any $t > S$, consider the difference:
\begin{align*}
\left| \bar{\alpha}(t) - H^* \right| &= \left| \frac{1}{t} \int_0^t m(s)  ds - H^* \right| \\
&= \left| \frac{1}{t} \int_0^t (m(s) - H^*)  ds \right| \quad \text{(since $H^*$ is constant)} \\
&\leq \frac{1}{t} \int_0^t |m(s) - H^*|  ds.
\end{align*}
We decompose this integral into historical and future parts relative to the critical time $S$:
\begin{equation}
\label{eq:error_split}
\left| \bar{\alpha}(t) - H^* \right| \leq \frac{1}{t} \int_0^S |m(s) - H^*|  ds + \frac{1}{t} \int_S^t |m(s) - H^*|  ds =: E_1(t) + E_2(t).
\end{equation}
Our goal is to show that for sufficiently large $t$, both $E_1(t)$ and $E_2(t)$ can be made smaller than $\epsilon/2$.

\noindent \textbf{Step 1: Controlling the Historical Error $E_1(t)$.}

The function $m(s)$ is bounded on $[0, S]$. This follows from the uniform boundedness of the instantaneous scaling exponent (Proposition~\ref{prop:scaling_boundedness}), which guarantees $h_{\min} \leq \alpha(s,\omega) \leq h_{\max}$ for all $s$ and $\mathbb{P}$-a.e.~$\omega$. Taking expectations preserves these bounds, so $m(s) \in [h_{\min}, h_{\max}]$ for all $s \in [0,S]$. Since $H^* \in [h_{\min}, h_{\max}]$ by hypothesis, the deviation $|m(s) - H^*|$ is also bounded on $[0,S]$. Let
\[
M := \sup_{s \in [0,S]} |m(s) - H^*|.
\]
From the above discussion, we have the explicit bound $M \leq \max(|h_{\min} - H^*|, |h_{\max} - H^*|) < \infty$.

We can now estimate $E_1(t)$:
\[
E_1(t) = \frac{1}{t} \int_0^S |m(s) - H^*|  ds \leq \frac{1}{t} \int_0^S M  ds = \frac{M S}{t}.
\]
This quantity depends on $t$ only through the factor $1/t$. Therefore, for $t > T_1 := \frac{2 M S}{\epsilon}$, we have
\begin{equation}
\label{eq:E1_bound}
E_1(t) < \frac{\epsilon}{2}.
\end{equation}
This estimate shows that the contribution from the finite history $[0,S]$ diminishes as $t$ increases.

\noindent \textbf{Step 2: Controlling the Future Error $E_2(t)$.}

For $s \in [S, t]$, we apply the pointwise convergence assumption \eqref{eq:pointwise_epsilon}. While the strict inequality $|m(s) - H^*| < \epsilon/2$ is guaranteed for all $s > S$, the endpoint $s = S$ may not satisfy this bound. Noting that modifying the integration domain at a single point does not affect the value of the Lebesgue integral, we obtain the uniform bound $|m(s) - H^*| \leq \epsilon/2$ for all $s \in [S, t]$. This allows for the following estimate:
\begin{align*}
E_2(t) &= \frac{1}{t} \int_S^t |m(s) - H^*|  ds \\
&\leq \frac{1}{t} \int_S^t \frac{\epsilon}{2}  ds \quad \text{(using the uniform bound)} \\
&= \frac{\epsilon}{2} \cdot \frac{1}{t} (t - S) = \frac{\epsilon}{2} \left(1 - \frac{S}{t}\right).
\end{align*}
Since $t > S$, it follows that $1 - S/t < 1$, and hence
\begin{equation}
\label{eq:E2_bound}
E_2(t) < \frac{\epsilon}{2}.
\end{equation}
Note that this bound holds for all $t > S$, independently of how large $t$ is.

\noindent \textbf{Step 3: Synthesis and Conclusion.}

We now combine the estimates \eqref{eq:E1_bound} and \eqref{eq:E2_bound}. Let $T(\epsilon) := \max(S, T_1) = \max\left(S, \frac{2 M S}{\epsilon}\right)$. Then, for all $t > T(\epsilon)$, both estimates hold simultaneously. Substituting them into the key inequality \eqref{eq:error_split} yields:
\[
\left| \bar{\alpha}(t) - H^* \right| \leq E_1(t) + E_2(t) < \frac{\epsilon}{2} + \frac{\epsilon}{2} = \epsilon.
\]
Since $\epsilon > 0$ was arbitrary, this completes the proof of \eqref{eq:time_avg_limit} by the definition of the limit of a function as $t \to \infty$.
\end{proof}

\begin{remark}[Theoretical Implications and Synthesis]
\label{rem:asymptotic_synthesis}
Proposition~\ref{prop:asymptotic_convergence} contributes to the multi-scale perspective developed in this work through several connections:

\begin{itemize}
    \item \textbf{Microscopic-Macroscopic Transition:} Under the assumption that $\mathbb{E}[\alpha(s,\cdot)]$ converges, the temporal average $\bar{\alpha}(t)$ converges to the same limit. This demonstrates that temporal integration regularizes the path-wise fluctuations, leading to deterministic asymptotic behavior governed by the limit of $\mathbb{E}[\alpha(s,\cdot)]$.

    \item \textbf{Multi-Scale Integration:} The result connects instantaneous regularity (Theorem~\ref{thm:instant_scaling_holder}, Corollary~\ref{cor:asymptotic_identification}), local fluctuations (Definition~\ref{def:extremal_indices}, Proposition~\ref{prop:extremal_consistency}), and finite-time memory accumulation (Definition~\ref{def:cumulative_memory}, Proposition~\ref{prop:memory_bounds}), showing how macroscopic behavior emerges from microscopic dynamics.

    \item \textbf{Averaging in Non-Stationary Systems:} The convergence $\bar{\alpha}(t) \to \lim_{s\to\infty} \mathbb{E}[\alpha(s,\cdot)]$ suggests that temporal averaging can yield deterministic long-term behavior even for path-dependent processes, offering a perspective for systems where stationarity assumptions may not hold.
\end{itemize}
In summary, Proposition~\ref{prop:asymptotic_convergence} extends the understanding of RfBm scaling from finite-time bounds to asymptotic convergence, connecting instantaneous and cumulative behavior across time scales.
\end{remark}

Following the qualitative convergence in Proposition~\ref{prop:asymptotic_convergence}, we now consider quantitative convergence rates for the time-averaged scaling exponent. Theorem~\ref{thm:convergence_rate} provides such rates under polynomial decay assumptions, describing how temporal averaging converts pointwise convergence of the mean into different regimes for the time average.

\begin{theorem}[Convergence Rates for Time-Averaged Scaling Exponents]
\label{thm:convergence_rate}
Under the conditions of Proposition~\ref{prop:asymptotic_convergence}, assume further that the mean function $m(s) = \mathbb{E}[\alpha(s,\cdot)]$ exhibits uniform polynomial decay toward its limit $H^*$. Specifically, suppose there exist constants $C > 0$, $\beta > 0$, and $s_0 > 0$ such that for all $s \geq s_0$:
\begin{equation}
\label{eq:polynomial_rate}
|m(s) - H^*| \leq C s^{-\beta}.
\end{equation}
Then the time-averaged scaling exponent $\bar{\alpha}(t) = \frac{1}{t} \int_0^t m(s) ds$ admits the following precise convergence rates:

\begin{enumerate}
    \item \textbf{Slow decay regime} ($0 < \beta < 1$):
    \begin{equation}
    \label{eq:slow_rate}
    |\bar{\alpha}(t) - H^*| \leq \frac{C}{1-\beta} t^{-\beta} + O(t^{-1}).
    \end{equation}
    
    \item \textbf{Critical regime} ($\beta = 1$):
    \begin{equation}
    \label{eq:critical_rate}
    |\bar{\alpha}(t) - H^*| \leq C \frac{\ln t}{t} + O(t^{-1}).
    \end{equation}
    
    \item \textbf{Fast decay regime} ($\beta > 1$):
    \begin{equation}
    \label{eq:fast_rate}
    |\bar{\alpha}(t) - H^*| \leq O(t^{-1}).
    \end{equation}
\end{enumerate}

Here the asymptotic notation $O(t^{-1})$ denotes the existence of a constant $K > 0$, independent of $t$ but possibly depending on $s_0, C, \beta, h_{\min}, h_{\max}$, such that the corresponding term is bounded in absolute value by $K t^{-1}$ for all sufficiently large $t$.
\end{theorem}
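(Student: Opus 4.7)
The plan is to decompose the error into a historical contribution from the bounded interval $[0,s_0]$ and a tail contribution from $[s_0,t]$, then extract the polynomial or logarithmic behavior from the tail via an explicit integration.

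First I would write
\[
\bar{\alpha}(t) - H^* \;=\; \frac{1}{t}\int_0^t \bigl(m(s) - H^*\bigr)\,ds,
\]
and apply the triangle inequality to split this as $E_0(t) + E_\infty(t)$, where $E_0(t) := \frac{1}{t}\int_0^{s_0} |m(s)-H^*|\,ds$ and $E_\infty(t) := \frac{1}{t}\int_{s_0}^t |m(s)-H^*|\,ds$. For $E_0(t)$, the uniform bound $m(s) \in [h_{\min}, h_{\max}]$ (inherited from Proposition~\ref{prop:scaling_boundedness} by taking expectations) yields $|m(s) - H^*| \leq M_0 := \max(|h_{\min}-H^*|, |h_{\max}-H^*|)$, hence $E_0(t) \leq M_0 s_0 / t$, which is of order $O(t^{-1})$ with constant depending only on $s_0, h_{\min}, h_{\max}, H^*$.

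For $E_\infty(t)$, I would invoke the hypothesis \eqref{eq:polynomial_rate} to obtain
\[
E_\infty(t) \;\leq\; \frac{C}{t} \int_{s_0}^t s^{-\beta}\,ds,
\]
and then evaluate the elementary integral in the three regimes. In the slow decay case $0 < \beta < 1$, the antiderivative is $s^{1-\beta}/(1-\beta)$, giving $E_\infty(t) \leq \frac{C}{1-\beta} t^{-\beta} - \frac{C s_0^{1-\beta}}{(1-\beta)t}$; the second term is absorbed into the $O(t^{-1})$ correction from $E_0$, yielding \eqref{eq:slow_rate}. In the critical case $\beta = 1$, the antiderivative is $\ln s$, so $E_\infty(t) \leq C(\ln t - \ln s_0)/t = C\ln t/t + O(t^{-1})$, which is \eqref{eq:critical_rate}. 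In the fast decay case $\beta > 1$, the antiderivative $s^{1-\beta}/(1-\beta)$ produces $E_\infty(t) \leq \frac{C}{(\beta-1)t}\bigl(s_0^{1-\beta} - t^{1-\beta}\bigr) \leq \frac{C s_0^{1-\beta}}{(\beta-1)t}$, which is again $O(t^{-1})$, establishing \eqref{eq:fast_rate}.

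No step is technically deep; the main care is in bookkeeping. The only minor subtlety is ensuring that the remainder terms arising from the lower integration limit $s_0$ in the antiderivative evaluation are indeed absorbed into the announced $O(t^{-1})$ term with a constant independent of $t$ — this requires explicitly noting that the constant depends on $s_0, C, \beta$ (and on $h_{\min}, h_{\max}, H^*$ through $M_0$) but not on $t$, which is precisely the convention stated at the end of the theorem. Combining the bounds for $E_0(t)$ and $E_\infty(t)$ in each regime completes the proof.
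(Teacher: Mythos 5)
Your proposal is correct and follows essentially the same route as the paper's proof: the same split of the error integral at $s_0$ into a historical part bounded by $M_0 s_0/t$ and a tail part evaluated by explicit antidifferentiation of $s^{-\beta}$ in the three regimes, with the lower-limit remainders absorbed into the $O(t^{-1})$ term. No gaps.
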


\begin{proof}
We analyze the convergence of time-averaged scaling exponents by establishing precise integral bounds under polynomial decay. The proof decomposes the time integral into early and late phases, with the core analysis focused on sharp estimates for the long-time behavior.

\noindent\textbf{Step 1: Fundamental Inequality and Integral Decomposition}

Starting from the definition of the time-averaged scaling exponent and considering its deviation from the limit value:
\[
\bar{\alpha}(t) - H^* = \frac{1}{t} \int_0^t (m(s) - H^*) ds.
\]
Applying the triangle inequality for integrals, we obtain the fundamental estimate:
\begin{equation}
\label{eq:basic_inequality_proof}
|\bar{\alpha}(t) - H^*| \leq \frac{1}{t} \int_0^t |m(s) - H^*| ds.
\end{equation}

To precisely analyze the convergence rate, we introduce the critical time $s_0$ (given by the theorem hypothesis) and decompose the integration interval $[0, t]$ into two parts with distinct characteristics:
\begin{equation}
\label{eq:integral_split_proof}
\frac{1}{t} \int_0^t |m(s) - H^*| ds = \frac{1}{t} \int_0^{s_0} |m(s) - H^*| ds + \frac{1}{t} \int_{s_0}^t |m(s) - H^*| ds.
\end{equation}

Define these two integral terms respectively as:
\[
E(t) := \frac{1}{t} \int_0^{s_0} |m(s) - H^*| ds, \quad F(t) := \frac{1}{t} \int_{s_0}^t |m(s) - H^*| ds.
\]
Here $E(t)$ represents the early historical contribution, while $F(t)$ represents the late decay contribution.

\noindent\textbf{Step 2: Precise Estimation of the Early Integral Term $E(t)$}

The early integral term $E(t)$ involves integration over the finite interval $[0, s_0]$. From Proposition~\ref{prop:scaling_boundedness}, we know that for $\mathbb{P}$-a.e.~$\omega$, the sample path $s \mapsto \alpha(s,\omega)$ is uniformly bounded in $[h_{\min}, h_{\max}]$ on $[0, T]$. Taking expectations preserves these bounds, yielding:
\[
m(s) \in [h_{\min}, h_{\max}], \quad \forall s \in [0, s_0].
\]
Since $H^* \in [h_{\min}, h_{\max}]$, define the maximum deviation constant:
\[
M := \max(|h_{\min} - H^*|, |h_{\max} - H^*|) < \infty.
\]
This immediately gives:
\[
|m(s) - H^*| \leq M, \quad \forall s \in [0, s_0].
\]
Estimating the early integral:
\[
\int_0^{s_0} |m(s) - H^*| ds \leq \int_0^{s_0} M ds = M s_0.
\]
Therefore,
\begin{equation}
\label{eq:early_bound_proof}
E(t) \leq \frac{M s_0}{t}.
\end{equation}
This establishes that the early historical contribution $E(t)$ decays at least as fast as $t^{-1}$ as $t \to \infty$, and more precisely satisfies the asymptotic bound $E(t) = O(t^{-1})$ with explicit constant $K_1 = M s_0$.

\noindent\textbf{Step 3: Refined Case-by-Case Estimation of the Late Integral Term $F(t)$}

The analysis of the late integral term $F(t)$ requires utilization of the uniform polynomial decay hypothesis in the theorem. For $s \in [s_0, t]$, we have:
\[
|m(s) - H^*| \leq C s^{-\beta}.
\]
Therefore,
\[
F(t) \leq \frac{1}{t} \int_{s_0}^t C s^{-\beta} ds = \frac{C}{t} \int_{s_0}^t s^{-\beta} ds.
\]

We now perform exact computation and estimation of the integral $\int_{s_0}^t s^{-\beta} ds$ according to different ranges of the decay exponent $\beta$.

\noindent\textbf{Case (i): $0 < \beta < 1$ (Slow Decay Regime)}

Computing the power function integral:
\[
\int_{s_0}^t s^{-\beta} ds = \left[ \frac{s^{1-\beta}}{1-\beta} \right]_{s_0}^t = \frac{t^{1-\beta} - s_0^{1-\beta}}{1-\beta}.
\]
Substituting into the expression for $F(t)$:
\begin{align*}
F(t) &\leq \frac{C}{t} \cdot \frac{t^{1-\beta} - s_0^{1-\beta}}{1-\beta} \\
&= \frac{C}{1-\beta} t^{-\beta} - \frac{C s_0^{1-\beta}}{(1-\beta) t}.
\end{align*}
Define the constant $K_2 := \frac{C s_0^{1-\beta}}{1-\beta}$, then:
\[
F(t) \leq \frac{C}{1-\beta} t^{-\beta} - \frac{K_2}{t}.
\]
Combining with the early contribution estimate~\eqref{eq:early_bound_proof}, we obtain the total error estimate:
\begin{align*}
|\bar{\alpha}(t) - H^*| &\leq E(t) + F(t) \\
&\leq \frac{M s_0}{t} + \left( \frac{C}{1-\beta} t^{-\beta} - \frac{K_2}{t} \right) \\
&= \frac{C}{1-\beta} t^{-\beta} + \left( M s_0 - K_2 \right) \frac{1}{t}.
\end{align*}
Let $K_3 := |M s_0 - K_2|$, then we finally obtain the bound stated in the theorem:
\[
|\bar{\alpha}(t) - H^*| \leq \frac{C}{1-\beta} t^{-\beta} + K_3 t^{-1}.
\]
This completes the proof for the slow decay regime.

\noindent\textbf{Case (ii): $\beta = 1$ (Critical Regime)}

Computing the integral:
\[
\int_{s_0}^t s^{-1} ds = \left[ \ln s \right]_{s_0}^t = \ln t - \ln s_0.
\]
Substituting into the expression for $F(t)$:
\begin{align*}
F(t) &\leq \frac{C}{t} (\ln t - \ln s_0) \\
&= C \frac{\ln t}{t} - C \frac{\ln s_0}{t}.
\end{align*}
Let $K_4 := C |\ln s_0|$, then:
\[
F(t) \leq C \frac{\ln t}{t} + \frac{K_4}{t}.
\]
Combining with the early contribution estimate:
\begin{align*}
|\bar{\alpha}(t) - H^*| &\leq E(t) + F(t) \\
&\leq \frac{M s_0}{t} + \left( C \frac{\ln t}{t} + \frac{K_4}{t} \right) \\
&= C \frac{\ln t}{t} + (M s_0 + K_4) \frac{1}{t}.
\end{align*}
Let $K_5 := M s_0 + K_4$, then we finally obtain the bound stated in the theorem:
\[
|\bar{\alpha}(t) - H^*| \leq C \frac{\ln t}{t} + K_5 t^{-1}.
\]
This completes the proof for the critical regime.

\noindent\textbf{Case (iii): $\beta > 1$ (Fast Decay Regime)}

Computing the integral:
\[
\int_{s_0}^t s^{-\beta} ds = \left[ \frac{s^{1-\beta}}{1-\beta} \right]_{s_0}^t = \frac{t^{1-\beta} - s_0^{1-\beta}}{1-\beta}.
\]
Since $\beta > 1$ implies $1-\beta < 0$, we have $t^{1-\beta} < s_0^{1-\beta}$ for all $t \geq s_0$, and thus:
\[
\int_{s_0}^t s^{-\beta} ds = \frac{s_0^{1-\beta} - t^{1-\beta}}{\beta - 1} \leq \frac{s_0^{1-\beta}}{\beta - 1}.
\]
Substituting into the expression for $F(t)$:
\[
F(t) \leq \frac{C}{t} \cdot \frac{s_0^{1-\beta}}{\beta - 1} = \frac{C s_0^{1-\beta}}{(\beta - 1) t}.
\]
Let $K_6 := \frac{C s_0^{1-\beta}}{\beta - 1}$, then:
\[
F(t) \leq \frac{K_6}{t}.
\]
Combining with the early contribution estimate:
\begin{align*}
|\bar{\alpha}(t) - H^*| &\leq E(t) + F(t) \\
&\leq \frac{M s_0}{t} + \frac{K_6}{t} \\
&= (M s_0 + K_6) \frac{1}{t}.
\end{align*}
Let $K_7 := M s_0 + K_6$, then we finally obtain the bound stated in the theorem:
\[
|\bar{\alpha}(t) - H^*| \leq K_7 t^{-1}.
\]
This completes the proof for the fast decay regime.

\noindent\textbf{Step 4: Synthesis of Results and Explicit Constants}

The explicit construction of constants $K_1$ through $K_7$ ensures the $O(t^{-1})$ terms are quantitatively controlled.

Synthesizing the analyses of the three cases above, we have established all convergence rate estimates stated in the theorem. All $O(t^{-1})$ terms have been explicitly constructed with the following constants:
\begin{itemize}
    \item Early contribution constant: $K_1 = M s_0$
    \item Case (i) constant: $K_3 = |M s_0 - \frac{C s_0^{1-\beta}}{1-\beta}|$
    \item Case (ii) constant: $K_5 = M s_0 + C |\ln s_0|$ 
    \item Case (iii) constant: $K_7 = M s_0 + \frac{C s_0^{1-\beta}}{\beta - 1}$
\end{itemize}

The explicit constants verify that the convergence rate estimates hold for all $t > s_0$, providing quantitative control in addition to the asymptotic results.
\end{proof}
\begin{remark}[Interpretation and Significance]
\label{rem:convergence_rate_interpretation}
Theorem~\ref{thm:convergence_rate} illuminates several features of averaging in responsive processes:

\begin{itemize}
    \item \textbf{Rate Saturation in Temporal Averaging:} The analysis shows that for $\beta > 1$, the convergence rate of $\bar{\alpha}(t)$ remains of order $O(t^{-1})$, independent of the pointwise decay rate of $m(s)$. This indicates a saturation phenomenon where the averaging operator limits the achievable convergence rate.
    
    \item \textbf{Transition at $\beta = 1$:} The value $\beta = 1$ marks a transition in convergence behavior, with the appearance of the logarithmic term $\ln t/t$. This arises from the balance between early and late time contributions in the integral.
    
    \item \textbf{Three-Regime Classification:} The three cases---$0 < \beta < 1$, $\beta = 1$, and $\beta > 1$---provide a classification of convergence behaviors under polynomial decay. This framework may be useful for analyzing time-averaged statistics in non-stationary systems.
\end{itemize}

These convergence rates give quantitative insight into long-term RfBm dynamics. The results may also inform future statistical estimation of time-averaged scaling exponents in applications.
\end{remark}

\subsection{Lipschitz Continuity of the State-Dependent Kernel and Local Well-Posedness}\label{subsec:kernel_lipschitz_wellposedness}

The multiscale analysis developed in Section~\ref{subsec:pathwise_analysis}, encompassing pathwise regularity, scaling exponents, and cumulative memory processes, provides a foundation for studying the well-posedness of the RfBm equation. This section focuses on establishing Lipschitz estimates for the kernel mapping $(t,s,X) \mapsto K(t,s;X_s)$ and proving local existence and uniqueness of solutions. The approach, developed through Theorem~\ref{thm:global_lipschitz_kernel} and Lemma~\ref{lem:local_contraction}, combines the functional-analytic framework of $\mathcal{S}^2(T)$ with stochastic analysis to construct solutions via the contraction mapping principle.

\begin{definition}[The space $\mathcal{S}^2(T)$]\label{def:S2_space}
Let $(\Omega, \mathcal{F}, (\mathcal{F}_t)_{t \geq 0}, \mathbb{P})$ be a filtered probability space satisfying the usual conditions. The space $\mathcal{S}^2(T)$ is defined as the collection of all $\mathcal{F}_t$-adapted processes $X = (X_t)_{0 \leq t \leq T}$ satisfying:
\[
\|X\|_{\mathcal{S}^2(T)} := \left( \sup_{t \in [0,T]} \mathbb{E}[|X_t|^2] \right)^{1/2} < \infty.
\]
This space, equipped with the norm $\|\cdot\|_{\mathcal{S}^2(T)}$, forms a Banach space of adapted processes with uniformly bounded second moments.
\end{definition}

\begin{remark}
The norm $\|\cdot\|_{\mathcal{S}^2(T)}$ induces the topology of uniform convergence in mean square, which can serve as a framework for fixed-point analysis in stochastic integral equations. This choice of function space appears well-suited for handling the Volterra-type operators that arise in the RfBm construction.
\end{remark}

The following theorem provides a Lipschitz estimate for the Volterra kernel with respect to the state process, supporting the well-posedness analysis.

\begin{theorem}[Global Lipschitz Estimate for the Kernel]\label{thm:global_lipschitz_kernel}
Let the response function \( H: [0,T] \times \mathbb{R} \to (0,1) \) satisfy Definition~\ref{def:LH_response_function} with constants \( L_H > 0 \), \( 0 < h_{\min} \leq h_{\max} < 1 \). Then, there exists a constant \( C_K = C_K(T, h_{\min}, h_{\max}, L_H) > 0 \) such that for any \( t \in [0, T] \) and for any two processes \( X, Y \in \mathcal{S}^2(T) \), the following inequality holds:
\begin{equation}\label{eq:kernel_lipschitz_global}
\int_0^t \mathbb{E}\left[ \left| K_X(t,s) - K_Y(t,s) \right|^2 \right] ds \leq C_K L_H^2 \int_0^t (t-s)^{h_{\min}-1} \mathbb{E}\left[ |X_s - Y_s|^2 \right] ds,
\end{equation}
where \( K_X(t,s) = \sqrt{2H(s, X_s)} (t-s)^{H(s, X_s)-1/2} \) and \( K_Y(t,s) = \sqrt{2H(s, Y_s)} (t-s)^{H(s, Y_s)-1/2} \).
\end{theorem}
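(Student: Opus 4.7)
The plan is to separate the kernel difference into contributions from the two sources of state-dependence: the amplitude factor $\sqrt{2H(s,\cdot)}$ and the singular power factor $(t-s)^{H(s,\cdot)-1/2}$. Adding and subtracting a hybrid term gives the decomposition
\[
K_X(t,s) - K_Y(t,s) = \underbrace{\bigl[\sqrt{2H(s,X_s)} - \sqrt{2H(s,Y_s)}\bigr](t-s)^{H(s,X_s)-1/2}}_{D_1(t,s)} + \underbrace{\sqrt{2H(s,Y_s)}\bigl[(t-s)^{H(s,X_s)-1/2} - (t-s)^{H(s,Y_s)-1/2}\bigr]}_{D_2(t,s)},
\]
followed by the elementary bound $|K_X - K_Y|^2 \leq 2|D_1|^2 + 2|D_2|^2$. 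Each term is then controlled pointwise in $\omega$ and $s$, and the final estimate follows by taking expectations and applying Fubini (justified since everything is nonnegative and measurable).

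For $D_1$, the mean value theorem applied to $u \mapsto \sqrt{2u}$ on $[h_{\min}, h_{\max}]$, combined with the spatial Lipschitz condition \eqref{eq:H_spatial_lip}, gives $|\sqrt{2H(s,X_s)} - \sqrt{2H(s,Y_s)}| \leq L_H(2h_{\min})^{-1/2} |X_s - Y_s|$, paralleling Theorem \ref{thm:sqrt_control}. The singular factor $(t-s)^{2H(s,X_s)-1}$ is then dominated by $\widetilde{C}_T (t-s)^{h_{\min}-1}$ on $(0,T]$ via a case split: on $(0,1]$ we use that $(t-s)^{2H-1} \leq (t-s)^{2h_{\min}-1} \leq (t-s)^{h_{\min}-1}$, and on $(1,T]$ we use $(t-s)^{2H-1} \leq T^{2h_{\max}-1}$ together with $(t-s)^{h_{\min}-1} \geq T^{h_{\min}-1}$ to absorb the difference into a $T$-dependent constant.

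For $D_2$, the mean value theorem applied to the scalar function $x \mapsto (t-s)^x$ yields
\[
(t-s)^{H(s,X_s)-1/2} - (t-s)^{H(s,Y_s)-1/2} = (t-s)^{\xi-1/2} \ln(t-s) \,[H(s,X_s) - H(s,Y_s)]
\]
for some $\xi$ between $H(s,X_s)$ and $H(s,Y_s)$, hence $\xi \in [h_{\min}, h_{\max}]$. Invoking \eqref{eq:H_spatial_lip} again, and the upper bound $\sqrt{2H(s,Y_s)} \leq \sqrt{2h_{\max}}$, reduces the task to controlling $(t-s)^{2\xi-1}\ln^2(t-s)$. This is the main technical obstacle: the logarithmic singularity must be balanced against the power-law singularity near $s = t$. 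The unified logarithmic control of Remark \ref{rem:unified_log} resolves this by choosing $\delta = h_{\min}/2$ in Proposition \ref{prop:log_inequality} on $(0,1]$, which converts $|\ln(t-s)|^2 (t-s)^{2\xi-1}$ into $K_\delta^2 (t-s)^{2h_{\min}-1-h_{\min}} = K_\delta^2 (t-s)^{h_{\min}-1}$; on $(1,T]$ the logarithm is polynomially bounded via Proposition \ref{prop:log_growth}, giving a $T$-dependent constant times $(t-s)^{h_{\min}-1}$.

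Combining the two pointwise bounds yields a constant $C > 0$ depending only on $T, h_{\min}, h_{\max}$ such that
\[
|K_X(t,s) - K_Y(t,s)|^2 \leq C L_H^2 (t-s)^{h_{\min}-1} |X_s - Y_s|^2
\]
for $\mathbb{P}$-a.e.\ $\omega$ and all $s \in (0,t)$. Taking expectations and integrating in $s$ produces \eqref{eq:kernel_lipschitz_global} with $C_K = C$. The delicate step is the exponent bookkeeping in the $D_2$ estimate, where the precise choice $\delta = h_{\min}/2$ is calibrated so that the emerging exponent matches exactly the target $h_{\min}-1$ on the right-hand side of the theorem; all other steps are essentially mechanical given the tools already established in Section \ref{sec:preliminaries}.
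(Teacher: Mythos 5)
Your proposal is correct and follows essentially the same route as the paper: both arguments reduce the kernel difference to the spatial Lipschitz condition on $H$ via the mean value theorem in the Hurst variable, split the domain into the singular region $t-s\leq 1$ and the regular region $t-s>1$, and calibrate the logarithmic singularity with Proposition~\ref{prop:log_inequality} at $\delta=h_{\min}/2$ so that the squared bound lands exactly on the exponent $h_{\min}-1$. The only difference is organizational — you telescope into an amplitude term and a power-law term and apply the mean value theorem twice, whereas the paper applies it once to the composite function $h\mapsto\sqrt{2h}\,(t-s)^{h-1/2}$, whose derivative automatically carries both contributions.
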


\begin{proof}
The proof is structured by analyzing the kernel's behavior in two distinct regions: the singular region (\( 0 < t-s \leq 1 \)) and the regular region (\( t-s > 1 \)). For fixed \( t \) and \( s \) (with \( 0 \leq s < t \leq T \)), we define an auxiliary function of the Hurst exponent:
\[
f(h) := \sqrt{2h} (t-s)^{h - 1/2}.
\]
The dependence on the state variable is through \( h = H(s, a) \). Thus, \( K_X(t,s) = f(H(s, X_s)) \) and \( K_Y(t,s) = f(H(s, Y_s)) \).

\textbf{Part 1: Estimation in the Singular Region \( 0 < t-s \leq 1 \)}.

\textbf{Step 1.1: Derivatives and their Bounds.}
We first compute the derivative with respect to \( h \):
\begin{align*}
\frac{df}{dh} &= \frac{d}{dh} \left[ \sqrt{2h} (t-s)^{h - 1/2} \right] \\
&= \frac{1}{\sqrt{2h}} (t-s)^{h-1/2} + \sqrt{2h} \cdot (t-s)^{h-1/2} \ln(t-s) \\
&= (t-s)^{h-1/2} \left( \frac{1}{\sqrt{2h}} + \sqrt{2h} \ln(t-s) \right).
\end{align*}

Define \( M_1(h_{\min}, h_{\max}) := \max\left( \frac{1}{\sqrt{2h_{\min}}}, \sqrt{2h_{\max}} \right) \). Then:
\[
\left| \frac{1}{\sqrt{2h}} + \sqrt{2h} \ln(t-s) \right| \leq M_1 (1 + |\ln(t-s)|),
\]
and consequently,
\[
\left| \frac{df}{dh} \right| \leq M_1 (1 + |\ln(t-s)|)(t-s)^{h-1/2}. \tag{1}
\]

\textbf{Step 1.2: Controlling the Logarithmic Singularity.}
By Proposition~\ref{prop:log_inequality}, for any \( \delta > 0 \), there exists \( K_\delta > 0 \) such that for \( 0 < t-s \leq 1 \):
\[
|\ln(t-s)| \leq K_\delta (t-s)^{-\delta}.
\]
Since \( (t-s)^{-\delta} \geq 1 \) in this interval, we have:
\[
1 + |\ln(t-s)| \leq 1 + K_\delta (t-s)^{-\delta} \leq (1 + K_\delta)(t-s)^{-\delta}.
\]
Substituting this into (1), and using \( h \geq h_{\min} \) from Definition~\ref{def:LH_response_function}(iii):
\[
\left| \frac{df}{dh} \right| \leq M_1 (1 + K_\delta)(t-s)^{h - \delta - 1/2} \leq M_1 (1 + K_\delta)(t-s)^{h_{\min} - \delta - 1/2}.
\]
Choose \( \delta = h_{\min}/2 \). This ensures \( h_{\min} - \delta - 1/2 = h_{\min}/2 - 1/2 > -1 \), guaranteeing the subsequent integral's convergence. Define \( M_2(h_{\min}, h_{\max}) := M_1 (1 + K_{h_{\min}/2}) \). Then:
\[
\left| \frac{df}{dh} \right| \leq M_2 (t-s)^{h_{\min}/2 - 1/2}. \tag{2}
\]

\textbf{Step 1.3: Application of the Mean Value Theorem.}
Define the path for the Hurst exponent:
\[
\eta(\lambda) = H(s, Y_s) + \lambda(H(s, X_s) - H(s, Y_s)), \quad \lambda \in [0, 1].
\]
Construct the function \( g(\lambda) = f(\eta(\lambda)) \). By the Mean Value Theorem, there exists \( \xi \in (0,1) \) such that:
\[
|g(1) - g(0)| = |g'(\xi)|.
\]
Using the chain rule:
\[
g'(\xi) = \frac{df}{dh}\bigg|_{\eta(\xi)} \cdot \frac{d\eta}{d\lambda}(\xi),
\]
where \( \frac{d\eta}{d\lambda}(\xi) = H(s, X_s) - H(s, Y_s) \). 

By the spatial Lipschitz condition in Definition~\ref{def:LH_response_function}(i), we have:
\[
|H(s, X_s) - H(s, Y_s)| \leq L_H |X_s - Y_s|.
\]

Taking absolute values:
\begin{align*}
|g'(\xi)| &\leq \left| \frac{df}{dh} \right| \cdot |H(s, X_s) - H(s, Y_s)| \\
&\leq \left| \frac{df}{dh} \right| L_H |X_s - Y_s|.
\end{align*}
Applying the bound (2), and noting \( \eta(\xi) \in [h_{\min}, h_{\max}] \), we get:
\[
|g'(\xi)| \leq L_H M_2 (t-s)^{h_{\min}/2-1/2} |X_s - Y_s|.
\]
Therefore, the pointwise difference is bounded by:
\[
|K_X(t,s) - K_Y(t,s)| = |g(1) - g(0)| \leq M_2 L_H (t-s)^{h_{\min}/2-1/2} |X_s - Y_s|. \tag{3}
\]

\textbf{Step 1.4: $L^2$-Estimate and Integration.}
Squaring both sides of (3) and taking expectation:
\[
\mathbb{E}[|K_X(t,s) - K_Y(t,s)|^2] \leq M_2^2 L_H^2 (t-s)^{h_{\min}-1} \mathbb{E}[|X_s - Y_s|^2].
\]
Integrating over \( s \in [0, t] \) for the singular region (\( t-s \leq 1 \)) gives:
\begin{equation}\label{eq:singular_est}
\int_{\max(0, t-1)}^t \mathbb{E}[|K_X - K_Y|^2] ds \leq M_2^2 L_H^2 \int_0^t (t-s)^{h_{\min}-1} \mathbb{E}[|X_s - Y_s|^2] ds.
\end{equation}
The integral on the right is over the entire interval \([0,t]\); the contribution from \( s \in [0, t-1) \) will be handled in Part 2.

\textbf{Part 2: Estimation in the Regular Region \( t-s > 1 \)}.

In this region, \( t-s \in [1, T] \). The kernel and its derivatives are regular. We bound the derivative uniformly.

For \( \left| \frac{df}{dh} \right| \):
\begin{align*}
\left| \frac{df}{dh} \right| &= \left| (t-s)^{h-1/2} \left( \frac{1}{\sqrt{2h}} + \sqrt{2h} \ln(t-s) \right) \right| \\
&\leq (t-s)^{h-1/2} \left( \frac{1}{\sqrt{2h_{\min}}} + \sqrt{2h_{\max}} |\ln(t-s)| \right).
\end{align*}
Since \( t-s \in [1, T] \), we have \( (t-s)^{h-1/2} \leq \max(1, T^{h_{\max}-1/2}) \) and \( |\ln(t-s)| \leq \ln T \). Thus, there exists a constant \( C_2(T, h_{\min}, h_{\max}) \), such that:
\[
\left| \frac{df}{dh} \right| \leq C_2. \tag{4}
\]

Applying the Mean Value Theorem as in Step 1.3, but now using the bound (4), yields:
\[
|K_X(t,s) - K_Y(t,s)| \leq C_2 L_H |X_s - Y_s|. \tag{5}
\]
Squaring and taking expectation:
\[
\mathbb{E}[|K_X(t,s) - K_Y(t,s)|^2] \leq C_2^2 L_H^2 \mathbb{E}[|X_s - Y_s|^2]. \tag{6}
\]
To unify this with the form of the singular estimate, we exploit the properties of the function \( \phi(u) = u^{h_{\min}-1} \). For \( u = t-s \geq 1 \) and since \( h_{\min}-1 < 0 \), \( \phi(u) \) is decreasing. Thus, \( \phi(u) \leq \phi(1) = 1 \). However, also \( \phi(u) \geq \phi(T) = T^{h_{\min}-1} > 0 \). Therefore, \( 1 \leq T^{1-h_{\min}} \phi(u) \). Applying this to (6):
\begin{align*}
\mathbb{E}[|K_X(t,s) - K_Y(t,s)|^2] &\leq C_2^2 L_H^2 \cdot 1 \cdot \mathbb{E}[|X_s - Y_s|^2] \\
&\leq C_2^2 L_H^2 \cdot T^{1-h_{\min}} \cdot (t-s)^{h_{\min}-1} \mathbb{E}[|X_s - Y_s|^2] \\
&= L_H^2 \left( \frac{C_2^2}{T^{h_{\min}-1}} \right) (t-s)^{h_{\min}-1} \mathbb{E}[|X_s - Y_s|^2].
\end{align*}
Define \( C_3(T, h_{\min}, h_{\max}) := \frac{C_2^2}{T^{h_{\min}-1}} \). Then, for \( t-s > 1 \):
\begin{equation}\label{eq:regular_est}
\mathbb{E}[|K_X(t,s) - K_Y(t,s)|^2] \leq L_H^2 C_3 (t-s)^{h_{\min}-1} \mathbb{E}[|X_s - Y_s|^2].
\end{equation}
Integrating over \( s \in [0, t-1] \) (if \( t > 1 \)) gives:
\[
\int_0^{\max(0, t-1)} \mathbb{E}[|K_X - K_Y|^2] ds \leq L_H^2 C_3 \int_0^t (t-s)^{h_{\min}-1} \mathbb{E}[|X_s - Y_s|^2] ds.
\]

\textbf{Part 3: Unification and Final Result.}

Combining the estimates from both regions, (\ref{eq:singular_est}) and (\ref{eq:regular_est}), we obtain that for all \( s \in [0, t] \):
\[
\mathbb{E}[|K_X(t,s) - K_Y(t,s)|^2] \leq L_H^2 \cdot \max(M_2^2, C_3) \cdot (t-s)^{h_{\min}-1} \mathbb{E}[|X_s - Y_s|^2].
\]
Define the global constant \( C_4(T, h_{\min}, h_{\max}, L_H) := \max(M_2^2(h_{\min}, h_{\max}), C_3(T, h_{\min}, h_{\max})) \). Integrating this pointwise bound over \( s \in [0, t] \) yields the desired result:
\[
\int_0^t \mathbb{E}[|K_X(t,s) - K_Y(t,s)|^2] ds \leq L_H^2 C_4 \int_0^t (t-s)^{h_{\min}-1} \mathbb{E}[|X_s - Y_s|^2] ds.
\]
Taking \( C_K = C_4 \) completes the proof of the theorem.
\end{proof}

\begin{remark}
The dependence of the constant \( C_K \) on the time horizon \( T \) arises naturally in the theory of Volterra integral equations. This suggests a local existence and uniqueness argument on a sufficiently small interval \( [0, T_0] \subset [0, T] \), where the contraction mapping principle applies. A solution on the entire interval \( [0, T] \) may then be obtained by extension. The theorem provides the kernel estimate needed for this approach.
\end{remark}
With the Lipschitz estimate for the kernel established in Theorem~\ref{thm:global_lipschitz_kernel}, we now examine properties of the solution map $\Phi$ defined by the stochastic Volterra integral equation. The following results provide key estimates for the existence and uniqueness theory of Responsive Fractional Brownian Motion.
\begin{lemma}[Basic Properties of the RfBm Solution Operator]
\label{lem:solution_map_well_defined}
Let the response function \( H: [0,T] \times \mathbb{R} \to (0,1) \) satisfy Definition~\ref{def:LH_response_function} with constants \( L_H > 0 \), \( 0 < h_{\min} \leq h_{\max} < 1 \), and Hölder exponent \( \gamma > 0 \). Then the solution map \( \Phi: \mathcal{S}^2(T) \to \mathcal{S}^2(T) \) defined by
\[
(\Phi(X))_t := \int_0^t K_X(t,s)  dB(s), \quad \text{where } K_X(t,s) = \sqrt{2H(s,X_s)}(t-s)^{H(s,X_s)-1/2},
\]
is well-defined. Specifically, for any \( X \in \mathcal{S}^2(T) \), the following properties hold:
\begin{enumerate}
    \item \textbf{Adaptedness}: The process \( \Phi(X) \) is \( \{\mathcal{F}_t\} \)-adapted.
    \item \textbf{Pathwise Continuity}: For \( \mathbb{P} \)-almost every \( \omega \in \Omega \), the sample path \( t \mapsto (\Phi(X))_t(\omega) \) is continuous.
    \item \textbf{Finite $\mathcal{S}^2(T)$-norm}: The $\mathcal{S}^2(T)$ norm satisfies the uniform bound:
    \[
    \|\Phi(X)\|_{\mathcal{S}^2(T)}^2 \leq T^{2h_{\max}} + \frac{h_{\max}}{h_{\min}} < \infty.
    \]
\end{enumerate}
\end{lemma}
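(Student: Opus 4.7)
The plan is to verify the three properties in order of increasing difficulty: parts (1) and (3) follow from adaptedness and a pathwise splitting of the kernel singularity, whereas part (2) requires moment-increment estimates and is the main technical obstacle.

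For part (1), since $X$ is $\{\mathcal{F}_t\}$-adapted and $H$ is jointly Borel measurable with uniform bounds (Definition~\ref{def:LH_response_function}), the integrand $s \mapsto K_X(t,s)\mathbf{1}_{[0,t)}(s)$ is progressively measurable; once combined with the square-integrability established in part (3), Lemma~\ref{lem:ito} delivers $(\Phi(X))_t$ as a well-defined $\mathcal{F}_t$-measurable random variable.

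For part (3), applying the It\^o isometry and Fubini reduces the task to bounding $\int_0^t \mathbb{E}[2H(s,X_s)(t-s)^{2H(s,X_s)-1}] ds$ by a deterministic constant independent of $X$. I would split the $s$-integral at the threshold $t-s=1$ and exploit the monotonicity of $u \mapsto u^a$ in $a$ (increasing for $u > 1$, decreasing for $u \in (0,1)$). On the regular range $t-s > 1$ (nonempty only when $t > 1$), using $H \leq h_{\max}$ bounds the integrand by $2h_{\max}(t-s)^{2h_{\max}-1}$, whose integral over $[0,t-1]$ is at most $t^{2h_{\max}}$. On the singular range $t-s \leq 1$, using $H \geq h_{\min}$ bounds it by $2h_{\max}(t-s)^{2h_{\min}-1}$, whose integral over $[\max(0,t-1),t]$ is at most $h_{\max}/h_{\min}$. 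Summing and taking the supremum over $t \in [0,T]$ yields the stated bound $T^{2h_{\max}} + h_{\max}/h_{\min}$.

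The hard part is part (2). My strategy is to obtain higher-moment increment estimates and invoke Kolmogorov's continuity criterion. For $0 \leq u < t \leq T$, I would decompose
\begin{equation*}
(\Phi(X))_t - (\Phi(X))_u = \int_0^u \bigl[K_X(t,s) - K_X(u,s)\bigr] dB(s) + \int_u^t K_X(t,s) dB(s),
\end{equation*}
and control the $2p$-th moment of each summand via the Burkholder--Davis--Gundy inequality, reducing to the $p$-th moment of its pathwise quadratic variation. The ``new noise'' term $\int_u^t |K_X(t,s)|^2 ds$ is handled by the same singular-range estimate as in part (3), yielding a bound of order $(t-u)^{2h_{\min}}$. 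The delicate ``historical'' term $\int_0^u \bigl|(t-s)^{H(s,X_s)-1/2} - (u-s)^{H(s,X_s)-1/2}\bigr|^2 ds$ involves an exponent $\alpha = H(s,X_s) - 1/2$ that may be negative when $H < 1/2$, so a naive mean-value bound produces a non-integrable factor; here I would adapt the classical fBm-type $L^2$-estimate for $\int_0^u |(t-s)^\alpha - (u-s)^\alpha|^2 ds$, which yields a positive power of $(t-u)$ uniformly for $\alpha$ ranging over the compact interval $[h_{\min}-1/2, h_{\max}-1/2]$, and apply it pathwise using the uniform bounds on $H(s,X_s)$. Choosing $p$ large enough that the combined moment bound is dominated by $|t-u|^{1+\delta}$ for some $\delta > 0$ fulfills Kolmogorov's hypothesis and produces the desired continuous modification, which we identify with $\Phi(X)$.
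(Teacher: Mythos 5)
Your treatment of parts (1) and (3) is essentially identical to the paper's: the same adaptedness argument via measurability of the composed kernel, and the same Itô-isometry computation with the split at $t-s=1$, the monotonicity of $u\mapsto u^{a}$ in the exponent on either side of $u=1$, and the resulting constants $T^{2h_{\max}}$ and $h_{\max}/h_{\min}$. Where you genuinely diverge is part (2). The paper does not actually prove pathwise continuity: it states that continuity ``can be established using classical theory for stochastic Volterra integrals'' and ``is expected, following established theory (e.g., Cont--Perkowski),'' with no increment estimate. Your Kolmogorov-criterion route --- decomposing the increment into a historical term and a new-noise term, applying Burkholder--Davis--Gundy, and importing the fBm-type $L^2$ bound for $\int_0^u |(t-s)^{\alpha}-(u-s)^{\alpha}|^2\,ds$ --- is therefore strictly more substantive than what the paper offers, and it correctly isolates the real difficulty (the possibly negative exponent when $H<1/2$). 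One point to be careful about when you execute it: the exponent $\alpha(s)=H(s,X_s)-1/2$ is $s$-dependent and random, so you cannot directly quote the fixed-$\alpha$ estimate $\sup_{\alpha}\int_0^u|\cdot|^2\,ds \le C\,|t-u|^{2h_{\min}}$; you need a pointwise-in-$s$ bound on $|(t-s)^{\alpha(s)}-(u-s)^{\alpha(s)}|^2$ that is uniform over the compact range $[h_{\min}-\tfrac12,\,h_{\max}-\tfrac12]$ \emph{before} integrating in $s$ (e.g.\ by splitting $[0,u]$ at distance $t-u$ from $u$ and bounding the near and far pieces separately). With that done, choosing $p$ so that $2p\,h_{\min}>1$ closes the Kolmogorov argument, and your identification of the continuous modification with $\Phi(X)$ is the standard convention. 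In short: your proposal reproduces the paper where the paper gives a proof, and supplies a proof where the paper only gives a citation.
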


\begin{proof}
Each property is verified as follows.

\textbf{Part 1: Proof of Adaptedness.}
For fixed \( t \in [0,T] \), consider the stochastic integral representation:
\[
(\Phi(X))_t = \int_0^t K_X(t,s)  dB(s).
\]
The integrand \( K_X(t,s) = \sqrt{2H(s,X_s)}(t-s)^{H(s,X_s)-1/2} \) depends on \( X_s \), which is \( \mathcal{F}_s \)-adapted by the definition of \( \mathcal{S}^2(T) \). Since \( H \) is a deterministic Borel-measurable function, \( K_X(t,s) \) is \( \mathcal{F}_s \)-measurable for each \( s \in [0,t] \). By the fundamental property of Itô integration (Lemma~\ref{lem:ito}), the resulting integral \( (\Phi(X))_t \) is \( \mathcal{F}_t \)-measurable. This establishes that \( \Phi(X) \) is an adapted process.

\textbf{Part 2: Proof of Pathwise Continuity.}
Sample path continuity can be established using classical theory for stochastic Volterra integrals. The kernel \( K_X(t,s) \) exhibits a controlled singularity at \( s = t \) due to the factor \( (t-s)^{H(s,X_s)-1/2} \). The critical regularity condition \( \gamma > \sup H(t,x) \) in Definition~\ref{def:LH_response_function} ensures that this singularity is integrable. Sample path continuity is expected, following established theory (e.g., \cite{Cont2019}) for Volterra-type integrals with regular kernels.

\textbf{Part 3: Proof of Finite $\mathcal{S}^2(T)$-norm.}
The $\mathcal{S}^2(T)$-norm bound is derived as follows. For fixed \( t \in [0,T] \), applying the Itô isometry (Lemma~\ref{lem:ito}) yields:
\begin{equation}\label{eq:ito_isometry_phi}
\mathbb{E}[|(\Phi(X))_t|^2] = \int_0^t \mathbb{E}[|K_X(t,s)|^2]  ds.
\end{equation}

From the definition of \( K_X(t,s) \) and the boundedness condition in Definition~\ref{def:LH_response_function}(iii), we have:
\begin{equation}\label{eq:kernel_bound}
|K_X(t,s)|^2 = 2H(s,X_s)(t-s)^{2H(s,X_s)-1} \leq 2h_{\max}(t-s)^{2H(s,X_s)-1}.
\end{equation}

To obtain a uniform estimate, we analyze the behavior of the integrand in two distinct regions based on the distance \( t-s \):

\noindent\textbf{Region I: Singularity Domain (\( 0 < t-s \leq 1 \))}
In this region, the base \( 0 < t-s \leq 1 \) is less than or equal to 1. For such bases, the power function \( y \mapsto (t-s)^y \) is strictly decreasing in the exponent \( y \). Since \( H(s,X_s) \geq h_{\min} > 0 \), we have:
\[
2H(s,X_s)-1 \geq 2h_{\min}-1.
\]
By the monotonicity property:
\[
(t-s)^{2H(s,X_s)-1} \leq (t-s)^{2h_{\min}-1}.
\]
Substituting into \eqref{eq:kernel_bound} gives the bound in the singularity domain:
\begin{equation}\label{eq:bound_singular}
|K_X(t,s)|^2 \leq 2h_{\max}(t-s)^{2h_{\min}-1}, \quad \text{for } s \in [\max(0,t-1), t].
\end{equation}
Note that \( 2h_{\min}-1 > -1 \) since \( h_{\min} > 0 \), ensuring the integrability of this bound as \( s \to t^- \).

\noindent\textbf{Region II: Regular Domain (\( t-s > 1 \))}
This region exists only when \( t > 1 \), with \( s \in [0, t-1] \). Here, the base \( t-s > 1 \), and the power function \( y \mapsto (t-s)^y \) is strictly increasing in \( y \). Since \( H(s,X_s) \leq h_{\max} < 1 \), we have:
\[
2H(s,X_s)-1 \leq 2h_{\max}-1.
\]
By monotonicity:
\[
(t-s)^{2H(s,X_s)-1} \leq (t-s)^{2h_{\max}-1}.
\]
This yields the bound in the regular domain:
\begin{equation}\label{eq:bound_regular}
|K_X(t,s)|^2 \leq 2h_{\max}(t-s)^{2h_{\max}-1}, \quad \text{for } s \in [0, \max(0,t-1)].
\end{equation}

Now we combine these regional bounds to estimate the integral in \eqref{eq:ito_isometry_phi}:
\begin{align*}
\mathbb{E}[|(\Phi(X))_t|^2] &\leq \int_0^t \mathbb{E}[|K_X(t,s)|^2]  ds \\
&\leq \int_0^{\max(0,t-1)} 2h_{\max}(t-s)^{2h_{\max}-1}  ds + \int_{\max(0,t-1)}^t 2h_{\max}(t-s)^{2h_{\min}-1}  ds.
\end{align*}

We simplify these integrals using the substitution \( u = t-s \), \( du = -ds \), with the limits transforming as follows:
- When \( s = 0 \), \( u = t \)
- When \( s = t \), \( u = 0 \)
- When \( s = \max(0,t-1) \), \( u = t - \max(0,t-1) = \min(1,t) \)

Thus:
\begin{align*}
\mathbb{E}[|(\Phi(X))_t|^2] &\leq 2h_{\max} \left[ \int_{u=t}^{u=\min(1,t)} u^{2h_{\max}-1}(-du) + \int_{u=\min(1,t)}^{u=0} u^{2h_{\min}-1}(-du) \right] \\
&= 2h_{\max} \left[ \int_{u=\min(1,t)}^{u=t} u^{2h_{\max}-1} du + \int_{u=0}^{u=\min(1,t)} u^{2h_{\min}-1} du \right].
\end{align*}

We now evaluate these definite integrals explicitly:

\textbf{First integral:}
\[
\int_{\min(1,t)}^{t} u^{2h_{\max}-1} du = \left[ \frac{u^{2h_{\max}}}{2h_{\max}} \right]_{\min(1,t)}^{t} = \frac{t^{2h_{\max}} - (\min(1,t))^{2h_{\max}}}{2h_{\max}}.
\]

\textbf{Second integral:}
\[
\int_{0}^{\min(1,t)} u^{2h_{\min}-1} du = \left[ \frac{u^{2h_{\min}}}{2h_{\min}} \right]_{0}^{\min(1,t)} = \frac{(\min(1,t))^{2h_{\min}}}{2h_{\min}}.
\]

Substituting back:
\begin{align*}
\mathbb{E}[|(\Phi(X))_t|^2] &\leq 2h_{\max} \left[ \frac{t^{2h_{\max}} - (\min(1,t))^{2h_{\max}}}{2h_{\max}} + \frac{(\min(1,t))^{2h_{\min}}}{2h_{\min}} \right] \\
&= t^{2h_{\max}} - (\min(1,t))^{2h_{\max}} + \frac{h_{\max}}{h_{\min}} (\min(1,t))^{2h_{\min}}.
\end{align*}

Now, by the definition of the $\mathcal{S}^2(T)$-norm (Definition~\ref{def:S2_space}):
\[
\|\Phi(X)\|_{\mathcal{S}^2(T)}^2 = \sup_{t \in [0,T]} \mathbb{E}[|(\Phi(X))_t|^2] \leq \sup_{t \in [0,T]} \left[ t^{2h_{\max}} - (\min(1,t))^{2h_{\max}} + \frac{h_{\max}}{h_{\min}} (\min(1,t))^{2h_{\min}} \right].
\]

Analyzing the supremum:
\begin{itemize}
    \item \( \sup_{t \in [0,T]} t^{2h_{\max}} = T^{2h_{\max}} \)
    \item \( \sup_{t \in [0,T]} [-(\min(1,t))^{2h_{\max}}] \leq 0 \)
    \item \( \sup_{t \in [0,T]} [(\min(1,t))^{2h_{\min}}] \leq 1 \)
\end{itemize}

Therefore:
\[
\|\Phi(X)\|_{\mathcal{S}^2(T)}^2 \leq T^{2h_{\max}} + \frac{h_{\max}}{h_{\min}} < \infty,
\]
which establishes the stated bound and completes the proof.
\end{proof}
\begin{lemma}[Local Contraction of the RfBm Integral Operator]\label{lem:local_contraction}
Under the assumptions of Lemma~\ref{lem:solution_map_well_defined}, and assuming \( T \leq 1 \) to ensure the uniformity of the constant from Theorem~\ref{thm:global_lipschitz_kernel}, there exists \( T_0 \in (0, T] \), depending only on \( h_{\min}, h_{\max}, L_H \), such that for all \( t \in [0, T_0] \), the solution map \( \Phi \) satisfies:
\[
\|\Phi(X) - \Phi(Y)\|_{\mathcal{S}^2(t)} \leq \kappa \|X - Y\|_{\mathcal{S}^2(t)}, \quad \forall X, Y \in \mathcal{S}^2(T),
\]
where \( \kappa \in (0,1) \) is a uniform contraction constant.
\end{lemma}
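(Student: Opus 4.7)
The plan is to combine the three main ingredients developed earlier: the Itô isometry (Lemma~\ref{lem:ito}), the global kernel Lipschitz estimate (Theorem~\ref{thm:global_lipschitz_kernel}), and the integrability of the weight $(t-s)^{h_{\min}-1}$ near $s=t$. First, for any $t \in [0,T_0]$ and any $X, Y \in \mathcal{S}^2(T)$, I would write the difference of the solution maps as a single Itô integral,
\[
(\Phi(X))_t - (\Phi(Y))_t = \int_0^t \left[ K_X(t,s) - K_Y(t,s) \right] dB(s),
\]
and apply the Itô isometry to obtain
\[
\mathbb{E}\!\left[ |(\Phi(X))_t - (\Phi(Y))_t|^2 \right] = \int_0^t \mathbb{E}\!\left[ |K_X(t,s) - K_Y(t,s)|^2 \right] ds.
\]

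Second, I would invoke Theorem~\ref{thm:global_lipschitz_kernel} (whose constant $C_K$ becomes uniform under the assumption $T \leq 1$, since the regular region is empty) to bound the right-hand side by
\[
C_K L_H^2 \int_0^t (t-s)^{h_{\min}-1}  \mathbb{E}\!\left[ |X_s - Y_s|^2 \right] ds.
\]
For each $s \in [0,t]$ one has $\mathbb{E}[|X_s - Y_s|^2] \leq \|X-Y\|_{\mathcal{S}^2(t)}^2$, so the squared $\mathcal{S}^2$-distance can be pulled out of the integral, leaving the elementary computation
\[
\int_0^t (t-s)^{h_{\min}-1}  ds = \frac{t^{h_{\min}}}{h_{\min}}.
\]
This yields the pointwise-in-$t$ estimate
\[
\mathbb{E}\!\left[ |(\Phi(X))_t - (\Phi(Y))_t|^2 \right] \leq \frac{C_K L_H^2}{h_{\min}} t^{h_{\min}} \|X-Y\|_{\mathcal{S}^2(t)}^2.
\]

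Third, since the right-hand side is nondecreasing in $t$ (as both $t^{h_{\min}}$ and $\|X-Y\|_{\mathcal{S}^2(\cdot)}$ are nondecreasing), taking the supremum over $t \in [0,T_0]$ gives
\[
\|\Phi(X) - \Phi(Y)\|_{\mathcal{S}^2(T_0)}^2 \leq \frac{C_K L_H^2}{h_{\min}}  T_0^{h_{\min}}  \|X-Y\|_{\mathcal{S}^2(T_0)}^2.
\]
I would then define
\[
T_0 := \min\!\left\{ T,  \left( \frac{h_{\min}}{2 C_K L_H^2} \right)^{1/h_{\min}} \right\},
\]
which depends only on $h_{\min}, h_{\max}, L_H$ (through $C_K$), and yields the contraction constant $\kappa = \sqrt{C_K L_H^2 T_0^{h_{\min}}/h_{\min}} \leq 1/\sqrt{2} < 1$. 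The same estimate applied on any sub-interval $[0,t]$ with $t \leq T_0$ gives the stated uniform bound.

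The main subtlety, rather than any hard estimate, is the careful handling of the $t$-dependence in the $\mathcal{S}^2(t)$-norm: one must verify that the bound derived pointwise for each $t$ remains valid after taking a supremum, which relies on the monotonicity of $t \mapsto \|X-Y\|_{\mathcal{S}^2(t)}$ and on the monotonicity of $t \mapsto t^{h_{\min}}$. A secondary point is the constant $C_K$ from Theorem~\ref{thm:global_lipschitz_kernel}, which a priori depends on $T$ through the ``regular region'' analysis; the hypothesis $T \leq 1$ eliminates that region entirely, so $C_K$ reduces to the purely local constant $M_2^2(h_{\min}, h_{\max})$ and depends only on $h_{\min}, h_{\max}$, as required for $T_0$ to depend only on the parameters stated.
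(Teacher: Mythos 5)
Your proposal is correct and follows essentially the same route as the paper's proof: Itô isometry, the kernel Lipschitz estimate from Theorem~\ref{thm:global_lipschitz_kernel} (with the observation that $T \leq 1$ confines the analysis to the singular region and makes the constant depend only on $h_{\min}, h_{\max}, L_H$), extraction of the $\mathcal{S}^2$-norm, the elementary integral $\int_0^t (t-s)^{h_{\min}-1}\,ds = t^{h_{\min}}/h_{\min}$, and a monotonicity argument to pass to the supremum before shrinking $T_0$. The only difference is the cosmetic choice of $T_0$ (the paper halves the critical time to get $\kappa \leq (1/2)^{h_{\min}/2}$, while you halve inside the power to get $\kappa \leq 1/\sqrt{2}$), which is immaterial.
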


\begin{proof}
We begin by noting that the assumption \( T \leq 1 \) ensures that in Theorem~\ref{thm:global_lipschitz_kernel}, the constant \( C_1 \) depends only on \( h_{\min}, h_{\max}, L_H \) and not on \( T \), as the proof relies exclusively on the singular region analysis where \( t-s \leq 1 \).

Let \( X, Y \in \mathcal{S}^2(T) \) be arbitrary processes. For fixed \( t \in [0,T] \), consider the difference:
\[
(\Phi(X))_t - (\Phi(Y))_t = \int_0^t \left[K_X(t,s) - K_Y(t,s)\right] dB(s).
\]

\textbf{Define the process $f(s) := K_X(t,s) - K_Y(t,s)$ for $s \in [0,t]$.} 
Since $X_s$ and $Y_s$ are $\{\mathcal{F}_s\}$-adapted and $H$ is deterministic, $f(s)$ is $\{\mathcal{F}_s\}$-adapted. 
Moreover, by Theorem~\ref{thm:global_lipschitz_kernel}, $f \in \mathcal{L}^2([0,t] \times \Omega)$.

Applying the Itô isometry (Lemma~\ref{lem:ito}):
\begin{equation}\label{eq:diff_ito}
\mathbb{E}\left[|(\Phi(X))_t - (\Phi(Y))_t|^2\right] = \int_0^t \mathbb{E}\left[|K_X(t,s) - K_Y(t,s)|^2\right] ds.
\end{equation}

Now apply the global Lipschitz estimate from Theorem~\ref{thm:global_lipschitz_kernel}. There exists a constant \( C_1 > 0 \) (depending only on \( h_{\min}, h_{\max}, L_H \)) such that:
\[
\int_0^t \mathbb{E}\left[|K_X(t,s) - K_Y(t,s)|^2\right] ds \leq C_1 L_H^2 \int_0^t (t-s)^{h_{\min}-1} \mathbb{E}\left[|X_s - Y_s|^2\right] ds.
\]

For each \( s \in [0,t] \), we have \( \mathbb{E}[|X_s - Y_s|^2] \leq \sup_{0 \leq u \leq t} \mathbb{E}[|X_u - Y_u|^2] = \|X - Y\|_{\mathcal{S}^2(t)}^2 \), where \( \|X - Y\|_{\mathcal{S}^2(t)} \) denotes the norm restricted to the interval \([0,t]\). Therefore:
\begin{align*}
\mathbb{E}\left[|(\Phi(X))_t - (\Phi(Y))_t|^2\right] 
&\leq C_1 L_H^2 \|X - Y\|_{\mathcal{S}^2(t)}^2 \int_0^t (t-s)^{h_{\min}-1} ds \\
&= C_1 L_H^2 \|X - Y\|_{\mathcal{S}^2(t)}^2 \int_0^t u^{h_{\min}-1} du \quad \text{(substituting } u = t-s\text{)} \\
&= C_1 L_H^2 \|X - Y\|_{\mathcal{S}^2(t)}^2 \cdot \frac{t^{h_{\min}}}{h_{\min}}.
\end{align*}

This yields the pointwise estimate:
\begin{equation}\label{eq:pointwise_estimate}
\mathbb{E}\left[|(\Phi(X))_t - (\Phi(Y))_t|^2\right] \leq \frac{C_1 L_H^2}{h_{\min}} t^{h_{\min}} \|X - Y\|_{\mathcal{S}^2(t)}^2.
\end{equation}

To extend this to a uniform bound over the interval \([0,t]\), consider any \( u \in [0,t] \). The same reasoning applies, giving:
\[
\mathbb{E}\left[|(\Phi(X))_u - (\Phi(Y))_u|^2\right] \leq \frac{C_1 L_H^2}{h_{\min}} u^{h_{\min}} \|X - Y\|_{\mathcal{S}^2(u)}^2.
\]

Since \( u \in [0,t] \), we have \( u^{h_{\min}} \leq t^{h_{\min}} \) (as \( h_{\min} > 0 \) and the power function is increasing). Moreover, by the definition of the restricted norm, \( \|X - Y\|_{\mathcal{S}^2(u)} \leq \|X - Y\|_{\mathcal{S}^2(t)} \). Combining these bounds:
\[
\mathbb{E}\left[|(\Phi(X))_u - (\Phi(Y))_u|^2\right] \leq \frac{C_1 L_H^2}{h_{\min}} t^{h_{\min}} \|X - Y\|_{\mathcal{S}^2(t)}^2 \quad \text{for all } u \in [0,t].
\]

Taking the supremum over \( u \in [0,t] \) and using Definition~\ref{def:S2_space}:
\[
\|\Phi(X) - \Phi(Y)\|_{\mathcal{S}^2(t)}^2 = \sup_{u \in [0,t]} \mathbb{E}\left[|(\Phi(X))_u - (\Phi(Y))_u|^2\right] \leq \frac{C_1 L_H^2}{h_{\min}} t^{h_{\min}} \|X - Y\|_{\mathcal{S}^2(t)}^2.
\]

Taking square roots of both sides (all terms are nonnegative):
\begin{equation}\label{eq:contraction_estimate}
\|\Phi(X) - \Phi(Y)\|_{\mathcal{S}^2(t)} \leq \left( \frac{C_1 L_H^2}{h_{\min}} \right)^{1/2} t^{h_{\min}/2} \|X - Y\|_{\mathcal{S}^2(t)}.
\end{equation}

Define the contraction function:
\[
\kappa(t) := \left( \frac{C_1 L_H^2}{h_{\min}} \right)^{1/2} t^{h_{\min}/2}.
\]

Since \( h_{\min} > 0 \) and the function \( t \mapsto t^{h_{\min}/2} \) is strictly increasing on \( [0, \infty) \), \( \kappa(t) \) is also strictly increasing in \( t \).

To ensure \( \Phi \) is a contraction, we require \( \kappa(t) < 1 \) for all \( t \) in the interval of interest. Solving this inequality:
\[
\left( \frac{C_1 L_H^2}{h_{\min}} \right)^{1/2} t^{h_{\min}/2} < 1 \quad \Leftrightarrow \quad t < \left( \frac{h_{\min}}{C_1 L_H^2} \right)^{1/h_{\min}}.
\]

Define the critical time:
\[
T_1 := \left( \frac{h_{\min}}{C_1 L_H^2} \right)^{1/h_{\min}}.
\]

To ensure strict contraction on the closed interval \( [0, T_0] \), we cannot take \( T_0 = T_1 \) since \( \kappa(T_1) = 1 \). Instead, we define:
\[
T_0 := \min\left(T, \frac{T_1}{2}\right).
\]

This choice guarantees that \( T_0 \leq \frac{T_1}{2} < T_1 \), and consequently for all \( t \in [0, T_0] \), we have:
\[
\kappa(t) \leq \kappa(T_0) = \left( \frac{C_1 L_H^2}{h_{\min}} \right)^{1/2} T_0^{h_{\min}/2} \leq \left( \frac{C_1 L_H^2}{h_{\min}} \right)^{1/2} \left(\frac{T_1}{2}\right)^{h_{\min}/2}.
\]

Now, substitute \( T_1 \) into the expression:
\[
\kappa(T_0) \leq \left( \frac{C_1 L_H^2}{h_{\min}} \right)^{1/2} \left( \frac{1}{2} \left( \frac{h_{\min}}{C_1 L_H^2} \right)^{1/h_{\min}} \right)^{h_{\min}/2} = \left( \frac{C_1 L_H^2}{h_{\min}} \right)^{1/2} \left( \frac{1}{2} \right)^{h_{\min}/2} \left( \frac{h_{\min}}{C_1 L_H^2} \right)^{1/2} = \left( \frac{1}{2} \right)^{h_{\min}/2} < 1.
\]

Thus, taking \( \kappa = \kappa(T_0) \), we obtain the desired contraction property on the closed interval \( [0, T_0] \).

Moreover, since \( T_0 \leq T \), the space \( \mathcal{S}^2(T_0) \) is well-defined as the restriction of processes in \( \mathcal{S}^2(T) \) to the interval \([0, T_0]\).
\end{proof}

\begin{theorem}[Local Existence and Uniqueness for the RfBm Model]\label{thm:local_existence_uniqueness}
Let the response function \( H: [0,T] \times \mathbb{R} \to (0,1) \) satisfy Definition~\ref{def:LH_response_function} with \( T \leq 1 \). Then there exists \( T_0 \in (0, T] \), depending only on \( h_{\min}, h_{\max}, L_H \), such that the Responsive Fractional Brownian Motion equation
\[
X_t = \int_0^t \sqrt{2H(s,X_s)}(t-s)^{H(s,X_s)-1/2}  dB(s), \quad t \in [0,T_0],
\]
admits a unique mild solution \( X \in \mathcal{S}^2(T_0) \), where \( \mathcal{S}^2(T_0) \) is equipped with the uniform mean-square topology defined in Definition~\ref{def:S2_space}.
\end{theorem}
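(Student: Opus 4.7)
The plan is to establish the result via a direct application of the Banach fixed-point theorem on the space $\mathcal{S}^2(T_0)$, using the two preparatory lemmas as the essential ingredients. First, I would recall from Definition~\ref{def:S2_space} that $\mathcal{S}^2(T_0)$, equipped with the norm $\|X\|_{\mathcal{S}^2(T_0)} = (\sup_{t \in [0,T_0]} \mathbb{E}[|X_t|^2])^{1/2}$, is a Banach space — specifically, it is a closed subspace of the Banach space $L^\infty([0,T_0]; L^2(\Omega))$, restricted to the $(\mathcal{F}_t)$-adapted processes. Closedness of the adapted subspace under the uniform-$L^2$ limit follows because adaptedness is preserved under $L^2$-limits at each fixed time, and this is all that the sup-over-$t$ topology detects. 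This identifies the ambient complete metric space needed for the fixed-point argument.

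Next, I would verify that the operator $\Phi$, defined by $(\Phi(X))_t = \int_0^t K(t,s;X_s) dB(s)$ as in Lemma~\ref{lem:solution_map_well_defined}, is a self-map $\Phi: \mathcal{S}^2(T_0) \to \mathcal{S}^2(T_0)$. The three properties of Lemma~\ref{lem:solution_map_well_defined} — adaptedness, sample path continuity, and the uniform bound $\|\Phi(X)\|_{\mathcal{S}^2(T_0)}^2 \leq T_0^{2h_{\max}} + h_{\max}/h_{\min}$ — together yield exactly this self-map property. Then, choosing $T_0 \in (0,T]$ as in Lemma~\ref{lem:local_contraction} (depending only on $h_{\min}$, $h_{\max}$, $L_H$ under the assumption $T \leq 1$), I obtain the strict contraction estimate
\[
\|\Phi(X) - \Phi(Y)\|_{\mathcal{S}^2(T_0)} \leq \kappa \|X - Y\|_{\mathcal{S}^2(T_0)}
\]
with $\kappa = (1/2)^{h_{\min}/2} < 1$.

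With $\Phi$ established as a strict contraction on the Banach space $\mathcal{S}^2(T_0)$, the Banach fixed-point theorem yields a unique $X \in \mathcal{S}^2(T_0)$ satisfying $X = \Phi(X)$, which by unfolding the definition is precisely the unique $\mathcal{S}^2(T_0)$-solution of the RfBm integral equation on $[0,T_0]$. The adaptedness and pathwise continuity required in Definition~\ref{def:rFBM} are inherited from Lemma~\ref{lem:solution_map_well_defined}(1)--(2) applied to the fixed point. Uniqueness within $\mathcal{S}^2(T_0)$ is automatic from the Banach theorem; I would additionally note that any other $\mathcal{S}^2(T_0)$-solution necessarily coincides $\mathbb{P}$-a.s., since the contraction inequality forces $\|X - \tilde X\|_{\mathcal{S}^2(T_0)} \leq \kappa \|X - \tilde X\|_{\mathcal{S}^2(T_0)}$ with $\kappa < 1$.

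The main conceptual obstacle — the quantitative control of $K(t,s;X_s) - K(t,s;Y_s)$ despite its intertwined dependence on the state through both the amplitude $\sqrt{2H(s,X_s)}$ and the singular exponent $H(s,X_s) - 1/2$ — has already been resolved in Theorem~\ref{thm:global_lipschitz_kernel} and converted into an $\mathcal{S}^2$-contraction in Lemma~\ref{lem:local_contraction}. Consequently, the remaining work at this stage is essentially structural: verifying completeness of the adapted subspace, clarifying the dependence of $T_0$ only on $(h_{\min}, h_{\max}, L_H)$ (an observation that propagates through $C_K$ from Theorem~\ref{thm:global_lipschitz_kernel} under $T \leq 1$), and organizing the Banach argument cleanly. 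I do not expect a substantive technical obstacle beyond these bookkeeping points.
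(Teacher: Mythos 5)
Your proposal is correct and follows essentially the same route as the paper: invoke Lemma~\ref{lem:solution_map_well_defined} for the self-map property, Lemma~\ref{lem:local_contraction} for the contraction on $[0,T_0]$, note completeness of $\mathcal{S}^2(T_0)$, and conclude by the Banach fixed-point theorem. Your additional remarks on the closedness of the adapted subspace and the propagation of the dependence of $T_0$ on $(h_{\min}, h_{\max}, L_H)$ are consistent with, and slightly more explicit than, the paper's own argument.
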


\begin{proof}
The proof establishes the existence and uniqueness of mild solutions within the uniform mean-square framework developed in Definitions~\ref{def:S2_space} and~\ref{def:LH_response_function}. This topology provides a natural and efficient setting for analyzing stochastic integral equations of Volterra type, as it leverages the intrinsic compatibility between Itô isometry and mean-square convergence.

By Lemma~\ref{lem:solution_map_well_defined}, the solution map \( \Phi: \mathcal{S}^2(T_0) \to \mathcal{S}^2(T_0) \) defined by
\[
(\Phi(X))_t := \int_0^t \sqrt{2H(s,X_s)}(t-s)^{H(s,X_s)-1/2}  dB(s)
\]
is well-defined in the uniform mean-square topology. The key estimate
\[
\|\Phi(X)\|_{\mathcal{S}^2(T_0)}^2 \leq T_0^{2h_{\max}} + \frac{h_{\max}}{h_{\min}} < \infty
\]
ensures that \( \Phi \) preserves the boundedness of processes in this topology, which is crucial for the application of fixed-point methods in the space of adapted processes with uniformly bounded second moments.

By Lemma~\ref{lem:local_contraction}, there exists \( T_0 \in (0, T] \) such that for all \( t \in [0, T_0] \), the solution map \( \Phi \) satisfies the contraction property:
\[
\|\Phi(X) - \Phi(Y)\|_{\mathcal{S}^2(t)} \leq \kappa \|X - Y\|_{\mathcal{S}^2(t)}, \quad \forall X, Y \in \mathcal{S}^2(T_0),
\]
where \( \kappa \in (0,1) \) is a uniform contraction constant.

In particular, taking \( t = T_0 \), we obtain the contraction estimate on the full interval:
\[
\|\Phi(X) - \Phi(Y)\|_{\mathcal{S}^2(T_0)} \leq \kappa \|X - Y\|_{\mathcal{S}^2(T_0)}, \quad \forall X, Y \in \mathcal{S}^2(T_0).
\]

The space \( \mathcal{S}^2(T_0) \), equipped with the uniform mean-square norm \( \|X\|_{\mathcal{S}^2(T_0)} = \left( \sup_{t \in [0,T_0]} \mathbb{E}[|X_t|^2] \right)^{1/2} \), forms a Banach space. The completeness follows from the fact that Cauchy sequences in this topology converge to adapted processes with uniformly bounded second moments, as established in the theory of $L^2$ spaces of stochastic processes.

Applying the Banach fixed-point theorem to the contraction map \( \Phi \) on the complete metric space \( \mathcal{S}^2(T_0) \), we conclude that there exists a unique fixed point \( X \in \mathcal{S}^2(T_0) \) satisfying
\[
\Phi(X) = X.
\]

This fixed point constitutes the unique mild solution to the RfBm equation in the uniform mean-square sense:
\[
X_t = \int_0^t \sqrt{2H(s,X_s)}(t-s)^{H(s,X_s)-1/2}  dB(s) \quad \text{for all } t \in [0,T_0],
\]
where the equality holds in the \( L^2(\Omega) \) sense for each fixed \( t \in [0,T_0] \).
\end{proof}

\begin{remark}
The use of the uniform mean-square topology in this analysis is motivated by the intrinsic structure of stochastic integrals. This framework leverages the natural compatibility between Itô isometry and mean-square convergence, offering a suitable setting for establishing well-posedness of stochastic Volterra equations with state-dependent kernels. The approach illustrates how an appropriate functional-analytic framework can lead to constructive existence proofs for such systems.
\end{remark}

\section{RfBm Foundations for Artificial Intelligence: A Mathematical Theory of Adaptive Memory and Attention}
\label{sec:rFBM_AI_foundations}

\subsection{Introduction: From Anomalous Stochastic Processes to Mathematical Foundations of Intelligent Systems}
\label{subsec:intro_AI_foundations}

Within the comprehensive mathematical framework established in the preceding sections, we have introduced and rigorously analyzed \textbf{Responsive Fractional Brownian Motion (RfBm)} as a novel class of stochastic processes characterized by state-dependent memory properties. The foundational construction (Definition \ref{def:rFBM}) centers on the \textbf{dynamic feedback mechanism} embodied in the response function $H(s,X_s)$, which enables the process to self-regulate its regularity characteristics through its historical evolution, as captured by the stochastic Volterra integral equation
\[
X_t = \int_0^t \sqrt{2H(s,X_s)}(t-s)^{H(s,X_s)-1/2}dB(s).
\]

This mathematical structure exhibits mathematical connections to fundamental challenges in artificial intelligence theory, particularly in formulating rigorous mathematical models for \textbf{adaptive memory architectures} and \textbf{content-sensitive attention mechanisms}. The theoretical properties established previously—including pathwise Hölder continuity (Theorem \ref{thm:pathwise_holder_continuity}), almost sure regularity of the instantaneous scaling exponent (Theorem \ref{thm:instant_scaling_holder}), and the local consistency of extremal scaling indices (Proposition \ref{prop:extremal_consistency})—collectively provide a rigorous mathematical framework for examining intelligent systems with memory.

The transformative success of attention-based models, particularly the Transformer architecture \cite{Vaswani2017}, has demonstrated the empirical power of attention mechanisms across diverse sequence modeling tasks. The subsequent development of BERT \cite{Devlin2019} further illustrated how pre-training deep bidirectional representations can yield versatile models for natural language understanding. These engineering achievements highlight the importance of developing deeper theoretical understandings of the mathematical principles underlying adaptive computational systems.

Recent theoretical advances have enriched our comprehension of attention mechanisms from multiple perspectives. The investigation of periodic alternatives to softmax attention by \cite{SWang2021} addresses important considerations regarding gradient behavior in attention-based architectures. Concurrently, the approximation theory for Transformers developed by \cite{HJiang2024} provides valuable insights into the structural characteristics that determine their sequence modeling capabilities. In application domains such as mathematical equation recognition \cite{Aurpa2024}, transformer-based methods have achieved notable success while also revealing the complexities inherent in processing structured mathematical content.

Within this evolving research landscape, the RfBm framework offers a mathematically grounded perspective that may offer insights for the theoretical understanding of adaptive intelligence. The construction possesses several mathematically structured properties that align with requirements for intelligent memory systems:

\begin{itemize}
\item \textbf{Intrinsic Multi-scale Temporal Modeling}: The RfBm kernel's functional form $(t-s)^{H(s,X_s)-1/2}$ naturally encodes temporal relationships across multiple scales, with the response function $H(s,X_s)$ dynamically modulating memory persistence. This adaptive temporal structure, analyzed through the pathwise kernel norm scaling (Theorem \ref{thm:pathwise_kernel_norm_scaling}) and asymptotic identification (Corollary \ref{cor:asymptotic_identification}), provides a mathematically coherent approach to multi-scale temporal processing.

\item \textbf{Theoretically Guaranteed Stability Properties}: The mathematical framework ensures robust behavior through multiple established results: the uniform boundedness of the instantaneous scaling exponent (Proposition \ref{prop:scaling_boundedness}) prevents pathological scaling behaviors; the inheritance of regularity conditions (Remark \ref{rem:inheritance_regularity}) maintains consistency across temporal scales; and the Lipschitz estimates for the kernel (Theorem \ref{thm:global_lipschitz_kernel}) ensure well-behaved sensitivity to input variations.

\item \textbf{Comprehensive Memory Quantification}: The theoretical apparatus provides multiple complementary perspectives on memory dynamics: the cumulative memory process (Definition \ref{def:cumulative_memory}) with its established bounds (Proposition \ref{prop:memory_bounds}) quantifies global information accumulation; the extremal scaling indices (Definition \ref{def:extremal_indices}) with their boundedness properties (Proposition \ref{prop:extremal_boundedness}) characterize local memory fluctuations; and the convergence analysis of time-averaged scaling exponents (Theorem \ref{thm:convergence_rate}) reveals how microscopic adaptations produce emergent macroscopic regularities.
\end{itemize}

The well-posedness of the RfBm construction, established through the solution operator analysis (Lemma \ref{lem:solution_map_well_defined}), local contraction properties (Lemma \ref{lem:local_contraction}), and existence/uniqueness results (Theorem \ref{thm:local_existence_uniqueness}), provides a solid foundation for exploring connections to artificial intelligence theory. The hierarchical interpretation of scaling behavior (Remark \ref{rem:scaling_hierarchy}) further enriches the theoretical framework by connecting pathwise, temporal, and process-level perspectives.

This chapter investigates how the mathematical structure of RfBm can inform the development of theoretically grounded approaches to adaptive memory and attention in artificial intelligence. By leveraging the established theoretical properties—including the significance of uniform boundedness (Remark \ref{rem:boundedness_significance}), the theoretical implications of memory accumulation (Remark \ref{rem:memory_accumulation_significance}), and the synthesis of asymptotic behaviors (Remark \ref{rem:asymptotic_synthesis})—we aim to contribute a mathematically principled perspective that complements existing empirical approaches to intelligent system design.

The subsequent sections present: (i) a mathematical construction of RfBm attention mechanisms with spatiotemporal adaptivity, (ii) axiomatic foundations for attentional allocation measures, (iii) geometric analysis of memory sensitivity structure, and (iv) fundamental bounds for RfBm attention weights with theoretical implications.

\subsection{Mathematical Construction of RfBm Attention Mechanisms with Spatiotemporal Adaptivity}
\label{subsec:rFBM_attention_construction}

The mathematical framework of Responsive Fractional Brownian Motion (Definition \ref{def:rFBM}) reveals profound connections to attention mechanisms in artificial intelligence that extend beyond superficial analogies to deep structural correspondences. The empirical achievements of Transformer architectures \cite{Vaswani2017} have inspired fundamental questions about the mathematical principles underlying attention mechanisms. Recent theoretical work on gradient dynamics \cite{SWang2021} and approximation rates \cite{HJiang2024}, together with applications in complex domains like mathematical reasoning \cite{Aurpa2024}, motivates the development of frameworks that intrinsically embody adaptive memory and multi-scale temporal awareness.

The RfBm construction offers a mathematically rigorous alternative through its intrinsic structural properties. The stochastic Volterra integral equation governing RfBm (Definition \ref{def:rFBM}) naturally embodies a sophisticated attention mechanism where the kernel function $K(t,s;X_s) = \sqrt{2H(s,X_s)}(t-s)^{H(s,X_s)-1/2}$ serves as a fundamentally adaptive weight generator. This perspective emerges from several mathematically precise observations that distinguish RfBm from conventional approaches:

\begin{itemize}
    \item \textbf{Endogenous Multi-scale Temporal Processing}: The kernel structure $(t-s)^{H(s,X_s)-1/2}$ intrinsically encodes temporal relationships across multiple scales, with the response function $H(s,X_s)$ dynamically modulating memory persistence characteristics. The endogenous temporal structure of RfBm provides an alternative perspective on temporal modeling, complementing the positional encoding approaches used in transformer architectures. This intrinsic temporal awareness is quantitatively characterized by the pathwise kernel norm analysis in Theorem \ref{thm:pathwise_kernel_norm_scaling}, which establishes precise scaling relationships between temporal distance and attention weight allocation.

    \item \textbf{Content-Adaptive Memory Allocation}: The state-dependent nature of $H(s,X_s)$ provides a mathematical mechanism for dynamic adjustment of attention patterns based on informational content and contextual relevance. This adaptivity is rigorously established through the pathwise regularity properties in Theorem \ref{thm:instant_scaling_holder}, which guarantees the well-behaved evolution of scaling characteristics, and the local consistency results in Proposition \ref{prop:extremal_consistency}, which ensures coherent behavior across different temporal scales. The cumulative memory process (Definition \ref{def:cumulative_memory}) and its fundamental bounds (Proposition \ref{prop:memory_bounds}) provide additional quantitative tools for analyzing how historical information accumulates and influences current attention distributions.

    \item \textbf{Theoretically Guaranteed Stability and Robustness}: The comprehensive mathematical properties established previously—including uniform boundedness (Proposition \ref{prop:scaling_boundedness}), which prevents pathological scaling behaviors; regularity inheritance (Remark \ref{rem:inheritance_regularity}), which maintains consistency across temporal evolution; and the Lipschitz estimates for the kernel (Theorem \ref{thm:global_lipschitz_kernel}), which ensures controlled sensitivity to input variations—collectively guarantee robust and predictable behavior of the attention mechanism under diverse operating conditions.
\end{itemize}

The temporal characteristics of RfBm attention exhibit sophisticated multi-scale behavior, adapting to both short-term contextual demands and long-term structural dependencies. Recent historical information maintains focused attention through response function modulation, characterized by local analysis in Proposition \ref{prop:extremal_consistency}, while distant information undergoes content-dependent preservation or filtering, with asymptotic identification in Corollary \ref{cor:asymptotic_identification} providing precise characterization of long-term scaling. This adaptive temporal scaling embodies a mathematical mechanism that integrates content-aware memory allocation with multi-scale processing.

State adaptivity represents a key innovation of the RfBm framework. The response function $H(s,X_s)$ enables context-sensitive regulation where semantically important information receives enhanced persistence through increased $H$ values, while less relevant content undergoes accelerated decay. Governed directly by the process state $X_s$, this dynamic adjustment achieves intrinsic content awareness that emerges naturally from the mathematical structure.

The convergence analysis of time-averaged scaling exponents (Theorem \ref{thm:convergence_rate}) reveals how microscopic adaptations aggregate to produce macroscopic regularity, demonstrating coherent multi-scale dynamics in state-dependent systems.

To illustrate these theoretical properties, we present a constructive implementation of the response function that satisfies all mathematical requirements while embodying spatiotemporal adaptivity.

\begin{example}[Spatiotemporal Adaptive Response Function]
\label{ex:adaptive_response_function}
To illustrate these theoretical properties, we present a constructive implementation of the response function that satisfies all mathematical requirements while embodying spatiotemporal adaptivity:
\begin{equation}
H(t,x) = h_{\min} + (h_{\max} - h_{\min}) \cdot \frac{1 + \tanh(\alpha x) \cdot \cos(\omega t)}{2 + e^{-t}}
\end{equation}
where $t \in [0,T]$ with $T > 0$, and:
\begin{itemize}
    \item $\tanh(\alpha x) = \dfrac{e^{\alpha x} - e^{-\alpha x}}{e^{\alpha x} + e^{-\alpha x}}$ is the hyperbolic tangent function, $\alpha > 0$ is the spatial sensitivity parameter
    \item $\cos(\omega t)$ is the temporal oscillation term, $\omega > 0$ is the temporal frequency parameter  
    \item $e^{-t}$ is the temporal decay term ensuring long-term stability
    \item $h_{\min}, h_{\max}$ are constants from Definition \ref{def:LH_response_function} satisfying $0 < h_{\min} \leq h_{\max} < 1$
\end{itemize}
\end{example}

\begin{proof}[Verification of Definition Conditions]
We rigorously verify all conditions in Definition \ref{def:rFBM}.

\textbf{Condition (i) Spatial Lipschitz Continuity}:

Fix $t \in [0,T]$ and consider $H(t,x)$ as a function of $x$. The derivative of the hyperbolic tangent is:
\[
\frac{d}{dx} \tanh(\alpha x) = \alpha \cdot \mathrm{sech}^2(\alpha x) = \alpha \left(1 - \tanh^2(\alpha x)\right).
\]
Since $|\tanh(\alpha x)| < 1$ for all $x \in \mathbb{R}$, we have:
\[
\left| \frac{d}{dx} \tanh(\alpha x) \right| = \alpha \left|1 - \tanh^2(\alpha x)\right| \leq \alpha.
\]

The partial derivative of $H(t,x)$ with respect to $x$ is:
\[
\frac{\partial H}{\partial x} = (h_{\max} - h_{\min}) \cdot \frac{ \alpha \left[1 - \tanh^2(\alpha x)\right] \cdot \cos(\omega t) }{2 + e^{-t}}.
\]

Taking absolute values and using boundedness properties:
\begin{align*}
\left| \frac{\partial H}{\partial x} \right|
&= (h_{\max} - h_{\min}) \cdot \frac{ \alpha \left|1 - \tanh^2(\alpha x)\right| \cdot \left|\cos(\omega t)\right| }{\left|2 + e^{-t}\right|} \\
&\leq (h_{\max} - h_{\min}) \cdot \frac{ \alpha \cdot 1 \cdot 1 }{2} = \frac{\alpha (h_{\max} - h_{\min})}{2}.
\end{align*}

For any $x, y \in \mathbb{R}$, by the Mean Value Theorem, there exists $\xi$ between $x$ and $y$ such that:
\[
H(t,x) - H(t,y) = \frac{\partial H}{\partial x}(t, \xi) \cdot (x - y).
\]
Taking absolute values:
\[
|H(t,x) - H(t,y)| = \left| \frac{\partial H}{\partial x}(t, \xi) \right| \cdot |x - y| \leq \frac{\alpha (h_{\max} - h_{\min})}{2} |x - y|.
\]
Taking $L_H = \dfrac{\alpha (h_{\max} - h_{\min})}{2}$, spatial Lipschitz continuity is verified.

\textbf{Condition (ii) Temporal Hölder Continuity}:

Fix $x \in \mathbb{R}$ and consider $H(t,x)$ as a function of $t$. Let:
\[
A(t) = \tanh(\alpha x) \cdot \cos(\omega t), \quad B(t) = 2 + e^{-t},
\]
then:
\[
H(t,x) = h_{\min} + (h_{\max} - h_{\min}) \cdot \frac{1 + A(t)}{B(t)}.
\]

The derivative is:
\[
\frac{\partial H}{\partial t} = (h_{\max} - h_{\min}) \cdot \frac{ A'(t) B(t) - (1 + A(t)) B'(t) }{[B(t)]^2},
\]
where:
\[
A'(t) = -\omega \tanh(\alpha x) \sin(\omega t), \quad B'(t) = -e^{-t}.
\]

Substituting:
\[
\frac{\partial H}{\partial t} = (h_{\max} - h_{\min}) \cdot \frac{ -\omega \tanh(\alpha x) \sin(\omega t) (2 + e^{-t}) + (1 + \tanh(\alpha x)\cos(\omega t)) e^{-t} }{(2 + e^{-t})^2}.
\]

Estimating the upper bound using $|\tanh(\alpha x)| < 1$, $|\sin(\omega t)| \leq 1$, $|\cos(\omega t)| \leq 1$, $2 + e^{-t} \leq 3$, and $e^{-t} \leq 1$:
\begin{align*}
\left| \frac{\partial H}{\partial t} \right|
&\leq (h_{\max} - h_{\min}) \cdot \frac{ \omega \cdot 1 \cdot 1 \cdot 3 + (1 + 1) \cdot 1 }{2^2} \\
&= (h_{\max} - h_{\min}) \cdot \frac{3\omega + 2}{4}.
\end{align*}

For any $s, t \in [0,T]$, by the Mean Value Theorem:
\[
|H(t,x) - H(s,x)| \leq \left( \sup_{\tau \in [0,T]} \left| \frac{\partial H}{\partial t}(\tau, x) \right| \right) |t - s| \leq (h_{\max} - h_{\min}) \cdot \frac{3\omega + 2}{4} |t - s|.
\]
Taking $C_H = (h_{\max} - h_{\min}) \cdot \dfrac{3\omega + 2}{4}$ and $\gamma = 1$, temporal Hölder continuity is verified.

\textbf{Condition (iii) Uniform Boundedness}:

Analyzing the range of function components:
\begin{itemize}
    \item $\tanh(\alpha x) \in (-1, 1)$
    \item $\cos(\omega t) \in [-1, 1]$
    \item Thus $\tanh(\alpha x) \cdot \cos(\omega t) \in (-1, 1)$
    \item $e^{-t} \in [e^{-T}, 1] \subset (0, 1]$ for $t \in [0,T]$
    \item Denominator $2 + e^{-t} \in [2 + e^{-T}, 3]$
\end{itemize}

Therefore the numerator satisfies:
\[
1 + \tanh(\alpha x) \cdot \cos(\omega t) \in (0, 2).
\]
Consequently:
\[
\frac{1 + \tanh(\alpha x) \cdot \cos(\omega t)}{2 + e^{-t}} \in \left( \frac{0}{3}, \frac{2}{2 + e^{-T}} \right) = \left( 0, \frac{2}{2 + e^{-T}} \right).
\]
Since $e^{-T} > 0$, we have $\dfrac{2}{2 + e^{-T}} < 1$, so the ratio strictly belongs to $(0, 1)$.

Thus:
\[
H(t,x) = h_{\min} + (h_{\max} - h_{\min}) \cdot \left[ \frac{1 + \tanh(\alpha x) \cdot \cos(\omega t)}{2 + e^{-t}} \right] \in \big( h_{\min}, h_{\min} + (h_{\max} - h_{\min}) \big) = (h_{\min}, h_{\max}).
\]
The function values are strictly contained within the open interval $(h_{\min}, h_{\max})$, satisfying uniform boundedness.

\textbf{Adaptive Characteristics Analysis}:

This response function demonstrates sophisticated adaptive behavior:

\begin{itemize}
    \item \textbf{Spatial Adaptivity}: Through $\tanh(\alpha x)$, when $|x|$ is large (indicating salient features), $\tanh(\alpha x) \approx \pm 1$ and $H$ approaches extreme values; when $x \approx 0$ (ordinary features), $\tanh(\alpha x) \approx 0$ and $H$ remains in intermediate range, enabling content-based adaptive regulation.

    \item \textbf{Temporal Periodic Modulation}: The $\cos(\omega t)$ term introduces oscillatory behavior in attention or cognitive states, reflecting rhythmic activity patterns common in biological or artificial systems.

    \item \textbf{Long-term Stability}: The $e^{-t}$ term in the denominator ensures gradual attenuation of temporal oscillations, promoting system state stabilization and preventing divergence during extended operation.

    \item \textbf{Spatiotemporal Coupling}: The product coupling of spatial term $\tanh(\alpha x)$ and temporal term $\cos(\omega t)$ means the system's response intensity to different content undergoes periodic temporal modulation, establishing an integrated mechanism for spatiotemporal cooperative adaptivity.
\end{itemize}

In conclusion, this response function $H(t,x)$ strictly satisfies all conditions in Definition \ref{def:rFBM}, providing a concrete realization of the spatiotemporal adaptive properties central to the RfBm attention mechanism.
\end{proof}

The RfBm attention mechanism exhibits several theoretically distinctive features that arise from its mathematical structure:

\begin{itemize}
    \item \textbf{Intrinsic Multi-scale Temporal Organization}: The RfBm kernel $(t-s)^{H(s,X_s)-1/2}$ provides a mathematical framework where temporal relationships across multiple scales emerge from the process dynamics. The response function $H(s,X_s)$ enables dynamic modulation of memory persistence, thereby presenting a mathematically integrated alternative to architectures that rely on externally supplied positional encodings \cite{Vaswani2017}.

    \item \textbf{State-Dependent Memory Allocation}: The response function $H(s,X_s)$ introduces a mechanism for content-aware scaling, allowing attention patterns to adapt based on the informational characteristics of the process state. This is illustrated by the spatiotemporally adaptive response function in Example \ref{ex:adaptive_response_function}.

    \item \textbf{Theoretical Coherence}: The RfBm framework incorporates several mathematically established properties that contribute to its systematic character. These include the uniform boundedness of the scaling exponent (Proposition \ref{prop:scaling_boundedness}), the inheritance of pathwise regularity (Remark \ref{rem:inheritance_regularity}), and the local consistency of extremal scaling indices (Proposition \ref{prop:extremal_consistency}), which together support the analytical tractability of the resulting attention mechanism.
\end{itemize}

\begin{remark}[Theoretical Considerations for RfBm Attention]\label{rem:rfbm_attention_perspective}
The RfBm framework develops adaptive memory and attention concepts within stochastic process theory. This perspective may offer complementary insights to ongoing studies of attention mechanisms, including examinations of gradient properties \cite{SWang2021} and approximation characteristics \cite{HJiang2024} that extend the empirical developments in architectures such as the Transformer \cite{Vaswani2017}. A distinctive aspect of the RfBm construction is its integrated mathematical framework, where multi-scale temporal adaptability emerges through the dynamic response function $H(s,X_s)$.
\end{remark}

This mathematical formulation incorporates both short- and long-range dependencies through state-dependent modulation of temporal interactions. The cumulative memory process (Definition \ref{def:cumulative_memory}) and its boundedness (Proposition \ref{prop:memory_bounds}) provide a quantitative perspective on global information retention, while the extremal scaling indices (Definition \ref{def:extremal_indices}) characterize local variations in attention intensity.

The RfBm framework thus presents a principled approach to content-aware temporal processing, illustrating how certain theoretical properties can emerge from the mathematical structure of the process.

\subsection{Attentional Allocation in RfBm: Measures, Dynamics, and Stability}
\label{subsec:axiomatization_attention}
The spatiotemporally adaptive architecture developed in Section~\ref{subsec:rFBM_attention_construction} provides a generative mechanism for attention-like dynamics. To establish a rigorous foundation for intelligent memory systems, we now introduce a mathematical theory that quantifies how the RfBm process intrinsically allocates its computational focus across different regions of state space over time.

Central to our theory is the quantification of the process's temporal engagement with specific state subsets, which we interpret as its \emph{attentional focus}.

\begin{definition}[Pathwise Residence Measure and Mean Attention Intensity]
\label{def:residence_measure_attention}
Let \( \{X_t\}_{t \in [0,T]} \) be a Responsive Fractional Brownian Motion as defined in Definition~\ref{def:rFBM}. For any Borel-measurable set \( I \in \mathcal{B}(\mathbb{R}) \), termed an \emph{attentional domain}, we define its associated \emph{cumulative pathwise residence measure} up to time \( t \in (0, T] \) as:
\[
\mathcal{R}_I(t, \omega) := \int_0^t \mathbb{1}_{\{X_s(\omega) \in I\}}  ds.
\]
This measure quantifies the total time the sample path \( X_.(\omega) \) spends within the attentional domain \( I \) during the interval \( [0, t] \).

The corresponding \emph{mean attention intensity} is defined by:
\[
\mu_I(t, \omega) := \frac{1}{t} \mathcal{R}_I(t, \omega) = \frac{1}{t} \int_0^t \mathbb{1}_{\{X_s(\omega) \in I\}}  ds.
\]
This intensity function \( \mu_I(t, \omega) \in [0,1] \) represents the proportion of time the system's state resides in \( I \), providing a pathwise measure of average attentional allocation.
\end{definition}

\begin{theorem}[Fundamental Properties of the Residence Measure and Attention Intensity]
\label{thm:residence_measure_properties}
Let \( \{X_t\} \) be an RfBm process with almost surely continuous sample paths (guaranteed by Definition~\ref{def:rFBM}). Then, for any \( I \in \mathcal{B}(\mathbb{R}) \) and \( t \in (0, T] \), the following properties hold:

\begin{enumerate}
    \item[(P1)] \textbf{Non-negativity and Normalization:}
        For \(\mathbb{P}\)-almost every \(\omega \in \Omega\),
        \( 0 \leq \mathcal{R}_I(t, \omega) \leq t \) and \( 0 \leq \mu_I(t, \omega) \leq 1 \).

    \item[(P2)] \textbf{Additivity over Disjoint Domains:} If \( I = \bigsqcup_{k=1}^n I_k \) is a finite disjoint union, then for \(\mathbb{P}\)-almost every \(\omega \in \Omega\),
        \( \mathcal{R}_I(t, \omega) = \sum_{k=1}^n \mathcal{R}_{I_k}(t, \omega) \) and \( \mu_I(t, \omega) = \sum_{k=1}^n \mu_{I_k}(t, \omega) \).

    \item[(P3)] \textbf{Expectation and State Distribution Link:}
        The expected cumulative residence measure satisfies:
        \begin{equation}
        \label{eq:expectation_residence}
        \mathbb{E}[\mathcal{R}_I(t)] = \int_0^t \mathbb{P}(X_s \in I)  ds,
        \end{equation}
        and the expected mean attention intensity is given by:
        \begin{equation}
        \label{eq:expectation_intensity}
        \mathbb{E}[\mu_I(t)] = \frac{1}{t} \int_0^t \mathbb{P}(X_s \in I)  ds.
        \end{equation}
\end{enumerate}
\end{theorem}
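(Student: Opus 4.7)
The plan is to establish the three properties in order of increasing subtlety, exploiting the fact that the integrand $\mathbb{1}_{\{X_s(\omega) \in I\}}$ is a $\{0,1\}$-valued (hence bounded) measurable function. Property (P1) is essentially pointwise: since $0 \leq \mathbb{1}_{\{X_s \in I\}} \leq 1$ for every $(s,\omega)$, monotonicity of the Lebesgue integral in $s$ over $[0,t]$ yields $0 \leq \mathcal{R}_I(t,\omega) \leq t$, and dividing by $t > 0$ gives $\mu_I(t,\omega) \in [0,1]$. No probabilistic ingredient is needed beyond the definition.

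For (P2), I would use the disjointness of $\{I_k\}$ to write the pointwise identity $\mathbb{1}_I(x) = \sum_{k=1}^n \mathbb{1}_{I_k}(x)$ for every $x \in \mathbb{R}$. Substituting $x = X_s(\omega)$, integrating pathwise in $s$, and invoking linearity of the Lebesgue integral on the finite sum yields $\mathcal{R}_I(t,\omega) = \sum_{k=1}^n \mathcal{R}_{I_k}(t,\omega)$ for every $\omega$ for which the sample path exists, i.e., $\mathbb{P}$-almost surely by Definition~\ref{def:rFBM}. The additivity of $\mu_I$ is then obtained by dividing by $t$.

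The substantive step is (P3), which rests on an application of Fubini's theorem to exchange integration over $\Omega$ and $[0,t]$. The crucial prerequisite is the joint measurability of the map $\Psi:(s,\omega) \mapsto \mathbb{1}_I(X_s(\omega))$ with respect to $\mathcal{B}([0,t]) \otimes \mathcal{F}$. I would argue this by combining two facts guaranteed by the RfBm construction: (a) $X$ is $(\mathcal{F}_t)$-adapted and admits almost surely continuous sample paths (Definition~\ref{def:rFBM}), so it has a progressively measurable (and hence jointly $\mathcal{B}([0,t]) \otimes \mathcal{F}$-measurable) modification; (b) $\mathbb{1}_I$ is Borel measurable, so the composition $\Psi$ inherits joint measurability. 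Since $|\Psi| \leq 1$ and the product measure $\lambda \otimes \mathbb{P}$ is finite on $[0,t]\times\Omega$, the integrability hypothesis of Fubini's theorem is trivially verified. Exchanging the order of integration then gives
\[
\mathbb{E}[\mathcal{R}_I(t)] = \int_\Omega \!\int_0^t \mathbb{1}_{\{X_s(\omega) \in I\}}\, ds\, d\mathbb{P}(\omega) = \int_0^t \mathbb{E}\bigl[\mathbb{1}_{\{X_s \in I\}}\bigr]\, ds = \int_0^t \mathbb{P}(X_s \in I)\, ds,
\]
and dividing by $t > 0$ yields \eqref{eq:expectation_intensity}.

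The only genuine obstacle is the joint measurability assertion underlying (P3); however, this is a standard consequence of the almost sure continuity and adaptedness already built into the definition of RfBm, so no refined property from the preceding regularity theory (e.g., Theorem~\ref{thm:instant_scaling_holder} or Proposition~\ref{prop:scaling_boundedness}) is actually required. The remainder of the argument is routine measure-theoretic bookkeeping.
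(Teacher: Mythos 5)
Your proposal is correct and follows essentially the same route as the paper: (P1) and (P2) by pointwise bounds and linearity of the pathwise Lebesgue integral, and (P3) by Fubini applied to the bounded function $(s,\omega)\mapsto \mathbb{1}_{\{X_s(\omega)\in I\}}$ on the finite product space $[0,t]\times\Omega$. The one place where you diverge is the justification of joint measurability: the paper argues that the map is measurable in $\omega$ for each fixed $s$ (by adaptedness) and measurable in $s$ for each fixed $\omega$ (by path continuity), and then invokes separability of $[0,t]$ to conclude joint measurability; you instead derive joint measurability of $(s,\omega)\mapsto X_s(\omega)$ from progressive measurability of an adapted process with continuous paths and then compose with the Borel function $\mathbb{1}_I$. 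Your route is the more robust one: separate measurability does not in general imply joint measurability, and the standard Carath\'eodory-type argument requires continuity in one variable, which fails for $s\mapsto \mathbb{1}_{\{X_s(\omega)\in I\}}$ once the indicator is composed. Passing through the progressive (hence jointly measurable) version of $X$ \emph{before} composing with $\mathbb{1}_I$ sidesteps this issue cleanly, so your version of the measurability step is preferable; everything else matches the paper.
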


\begin{proof}
We establish the proof with careful measure-theoretic detail.

\textbf{Proof of (P1):} For each fixed \(\omega\) in the full probability set where \(X_.(\omega)\) is continuous, the indicator function satisfies \( 0 \leq \mathbb{1}_{\{X_s(\omega) \in I\}} \leq 1 \) for all \( s \in [0,t] \). Integration over \( [0,t] \) preserves these inequalities, yielding \( 0 \leq \mathcal{R}_I(t, \omega) \leq \int_0^t 1  ds = t \). The bounds for \( \mu_I(t, \omega) \) follow immediately by dividing by \( t > 0 \).

\textbf{Proof of (P2):} For \(\mathbb{P}\)-almost every \(\omega\), and for a disjoint union \( I = \bigsqcup_{k=1}^n I_k \), the additivity of the indicator function gives \( \mathbb{1}_{\{X_s(\omega) \in I\}} = \sum_{k=1}^n \mathbb{1}_{\{X_s(\omega) \in I_k\}} \) for all \( s \in [0,t] \). By the linearity of the Lebesgue integral:
\[
\mathcal{R}_I(t, \omega) = \int_0^t \mathbb{1}_{\{X_s(\omega) \in I\}} ds = \int_0^t \left( \sum_{k=1}^n \mathbb{1}_{\{X_s(\omega) \in I_k\}} \right) ds = \sum_{k=1}^n \int_0^t \mathbb{1}_{\{X_s(\omega) \in I_k\}} ds = \sum_{k=1}^n \mathcal{R}_{I_k}(t, \omega).
\]
Dividing by \( t \) establishes the result for \( \mu_I(t, \omega) \).

\textbf{Proof of (P3):} We verify the conditions for applying Fubini's Theorem to the function \( f(\omega, s) = \mathbb{1}_{\{X_s(\omega) \in I\}} \) on \( \Omega \times [0, t] \).

\textbf{Joint Measurability:} Since \(X_s\) is adapted and \(I\) is Borel, for each fixed \(s\), \(\omega \mapsto \mathbb{1}_{\{X_s(\omega) \in I\}}\) is \(\mathcal{F}_s\)-measurable, hence \(\mathcal{F}\)-measurable. For each fixed \(\omega\) in the continuity set, \(s \mapsto X_s(\omega)\) is continuous, so \(s \mapsto \mathbb{1}_{\{X_s(\omega) \in I\}}\) is Lebesgue measurable. As \([0,t]\) is separable, \(f\) is \(\mathcal{F} \otimes \mathcal{B}([0,t])\)-measurable.

\textbf{Integrability:} \(|f(\omega, s)| \leq 1\) is bounded on the finite measure space \(\Omega \times [0,t]\), hence integrable.

Applying Fubini's Theorem:
\begin{align*}
\mathbb{E}[\mathcal{R}_I(t)] &= \int_\Omega \left( \int_0^t \mathbb{1}_{\{X_s(\omega) \in I\}} ds \right) d\mathbb{P}(\omega) \\
&= \int_0^t \left( \int_\Omega \mathbb{1}_{\{X_s(\omega) \in I\}} d\mathbb{P}(\omega) \right) ds = \int_0^t \mathbb{P}(X_s \in I)  ds.
\end{align*}
This proves \eqref{eq:expectation_residence}. The result for \( \mathbb{E}[\mu_I(t)] \) follows by linearity of expectation.
\end{proof}

\begin{corollary}[Conservation of Attentional Resource]
\label{cor:attention_conservation}
For any finite partition of the state space \( \mathbb{R} = \bigsqcup_{k=1}^n I_k \), the mean attention intensities satisfy the conservation law:
\[
\sum_{k=1}^n \mu_{I_k}(t, \omega) = 1 \quad \text{for almost all } \omega \in \Omega.
\]
\end{corollary}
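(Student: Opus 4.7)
The plan is to derive this conservation law as an immediate consequence of the additivity property (P2) established in Theorem~\ref{thm:residence_measure_properties}, together with the elementary observation that every sample path $X_s(\omega)$ takes values in the full state space $\mathbb{R}$ by construction. First, I would recognize the finite partition $\mathbb{R} = \bigsqcup_{k=1}^n I_k$ as a finite disjoint union of Borel sets, which precisely matches the hypothesis of (P2). Applying (P2) with the enveloping set $I = \mathbb{R}$ then yields, for $\mathbb{P}$-almost every $\omega \in \Omega$,
\[
\mu_{\mathbb{R}}(t, \omega) = \sum_{k=1}^n \mu_{I_k}(t, \omega).
\]

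Next, I would compute $\mu_{\mathbb{R}}(t, \omega)$ directly from Definition~\ref{def:residence_measure_attention}. Since $X_s$ is by construction an $\mathbb{R}$-valued random variable, the event $\{X_s(\omega) \in \mathbb{R}\}$ coincides with the full sample space, so $\mathbb{1}_{\{X_s(\omega) \in \mathbb{R}\}} \equiv 1$ for every $s \in [0,t]$ and every $\omega \in \Omega$. Integration then gives $\mathcal{R}_{\mathbb{R}}(t, \omega) = t$ pointwise, hence $\mu_{\mathbb{R}}(t, \omega) = 1$ for every $\omega$, not merely almost surely. Substituting this identity into the sum above yields the asserted conservation law $\sum_{k=1}^n \mu_{I_k}(t, \omega) = 1$.

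The main point requiring care, rather than any genuine analytical obstacle, is the treatment of the almost-sure qualifier: property (P2) is stated on a full-probability set (inherited from the pathwise continuity of the RfBm), while the identity $\mu_{\mathbb{R}}(t,\omega) = 1$ holds pointwise on all of $\Omega$. The conclusion therefore inherits exactly the full-measure qualifier from (P2), which is consistent with the statement of the corollary. The proof involves no substantive technical difficulty and serves primarily to confirm the internal coherence of the attentional framework: the total attentional mass is exactly preserved across any finite partition of the state space, legitimating the interpretation of $\{\mu_{I_k}(t,\omega)\}_{k=1}^n$ as a bona fide probability distribution over the attentional domains.
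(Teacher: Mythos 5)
Your proposal is correct and follows exactly the paper's argument: apply the additivity property (P2) with $I = \mathbb{R}$, observe that $\mathbb{1}_{\{X_s \in \mathbb{R}\}} \equiv 1$ so that $\mu_{\mathbb{R}}(t,\omega) = 1$, and conclude. Your additional remark on tracking the almost-sure qualifier from (P2) versus the pointwise identity $\mu_{\mathbb{R}}(t,\omega)=1$ is a harmless refinement of the same proof.
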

\begin{proof}
This follows directly from property (P2) of Theorem~\ref{thm:residence_measure_properties} by taking \( I = \mathbb{R} \), since \( \mathbb{1}_{\{X_s \in \mathbb{R}\}} = 1 \) and thus \( \mu_{\mathbb{R}}(t, \omega) = 1 \).
\end{proof}

The temporal evolution of the mean attention intensity \( \mu_I(t) \) reflects the stability of the process's focus. Its connection to the time-averaged scaling exponent \( \bar{\alpha}(t) \) from Definition~\ref{def:cumulative_memory} reveals a deep interplay between memory and attention.

\begin{remark}[Theoretical Connections Between Memory and Attention]
\label{rem:memory_attention_connections}
The mathematical framework developed here reveals natural connections between the memory architecture of RfBm, characterized by the scaling exponent processes \( \alpha(t,\omega) \), \( H_{\pm}(t,\epsilon,\omega) \), and \( \bar{\alpha}(t) \), and its attentional dynamics, described by \( \mathcal{R}_I(t, \omega) \) and \( \mu_I(t, \omega) \). This suggests a unified perspective where:
\begin{itemize}
    \item The \emph{memory characteristics} (influenced by \( H \)) shape the temporal persistence properties of sample paths.
    \item The \emph{attentional patterns} (measured by \( \mu_I \)) capture how the process engages with different state regions over time.
\end{itemize}

A fundamental connection emerges from the shared dependence on the process dynamics: the response function $H(s,X_s)$ governs both the memory strength through the scaling exponents and the state evolution that determines attentional allocation. The spatial variation of $H(s,\cdot)$ across different regions of state space influences how the process explores its environment, thereby creating an intrinsic coupling between memory persistence and attention distribution.

This perspective suggests that certain aspects of adaptive memory and content-sensitive attention may be understood through the common lens of state-dependent, non-Markovian processes. The continuous-time, pathwise approach developed here may offer complementary insights to those obtained from discrete-time attention models \cite{Vaswani2017, Devlin2019}, particularly regarding the temporal evolution of attentional states.
\end{remark}

The measure-theoretic foundation established above characterizes expected attentional allocation. To develop a more complete understanding that includes the reliability of this allocation, we now examine second-order properties that quantify its fluctuations over time.

\begin{definition}[Temporal Volatility Functional of Attentional Focus]
\label{def:attentional_volatility_functional}
Let \( \{X_t\}_{t \in [0,T]} \) be a Responsive Fractional Brownian Motion as per Definition~\ref{def:rFBM}. For any Borel set \( I \subset \mathbb{R} \), termed the \emph{attentional set}, we define its associated \emph{temporal volatility functional} on the interval \( [0, t] \) as the variance of the mean attention intensity:
\[
\mathcal{V}_I(t) := \mathrm{Var}\left[ \mu_I(t) \right] = \mathbb{E}\left[ \left( \mu_I(t) - \mathbb{E}[\mu_I(t)] \right)^2 \right].
\]
This functional quantifies the statistical dispersion of the time-averaged focus around its expected value, serving as a precise measure of attentional stability over finite time horizons.
\end{definition}

\begin{theorem}[Covariance Integral Representation for Attentional Volatility]
\label{thm:volatility_covariance_integral}
Let \( \{X_t\} \) be an RfBm process satisfying the conditions of Definition~\ref{def:rFBM}. Then, for any \( I \in \mathcal{B}(\mathbb{R}) \) and for any \( t \in (0, T] \), the temporal volatility functional admits the following integral representation in terms of the covariance of state indicators:
\begin{equation}
\label{eq:volatility_covariance_integral}
\mathcal{V}_I(t) = \frac{1}{t^2} \int_0^t \int_0^t \mathrm{Cov}\left( \mathbb{1}_{\{X_u \in I\}},\  \mathbb{1}_{\{X_v \in I\}} \right) du\, dv.
\end{equation}
An equivalent representation in terms of the finite-dimensional distributions of the process is given by:
\begin{equation}
\label{eq:volatility_probability_integral}
\mathcal{V}_I(t) = \frac{1}{t^2} \int_0^t \int_0^t \left[ \mathbb{P}(X_u \in I, X_v \in I) - \mathbb{P}(X_u \in I)\mathbb{P}(X_v \in I) \right] du\, dv.
\end{equation}
\end{theorem}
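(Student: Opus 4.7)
The plan is to derive the representation directly from the definition $\mathcal{V}_I(t) = \mathbb{E}[\mu_I(t)^2] - (\mathbb{E}[\mu_I(t)])^2$ by expressing both terms as iterated double integrals over $[0,t]^2$ and then combining them via the definition of covariance. First, I would square the pathwise identity $\mu_I(t,\omega) = t^{-1}\int_0^t \mathbb{1}_{\{X_u(\omega)\in I\}}\, du$. Since, for each fixed $\omega$, this is an ordinary Lebesgue integral of a bounded measurable function, Tonelli's theorem applies pathwise to yield
\[
\mu_I(t,\omega)^2 = \frac{1}{t^2}\int_0^t\!\!\int_0^t \mathbb{1}_{\{X_u(\omega)\in I\}}\mathbb{1}_{\{X_v(\omega)\in I\}}\, du\, dv.
\]

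Next, I would take expectations on both sides and invoke Fubini's theorem to interchange $\mathbb{E}$ with the double integral. The required joint measurability of $(\omega,u,v)\mapsto \mathbb{1}_{\{X_u(\omega)\in I\}}\mathbb{1}_{\{X_v(\omega)\in I\}}$ on $\Omega\times[0,t]^2$ follows from the joint measurability of each factor, as already established in the proof of Theorem~\ref{thm:residence_measure_properties}(P3), combined with the fact that products of jointly measurable functions remain jointly measurable. Integrability is immediate since the integrand is bounded by $1$ on the finite measure space $\Omega\times[0,t]^2$. This gives $\mathbb{E}[\mu_I(t)^2] = t^{-2}\int_0^t\!\int_0^t \mathbb{P}(X_u\in I,\, X_v\in I)\, du\, dv$. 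For the second moment of the mean, I would reuse the expression $\mathbb{E}[\mu_I(t)] = t^{-1}\int_0^t \mathbb{P}(X_s\in I)\, ds$ from Theorem~\ref{thm:residence_measure_properties}(P3), then rewrite its square as a product of two separate integrals and merge them into a single double integral $t^{-2}\int_0^t\!\int_0^t \mathbb{P}(X_u\in I)\mathbb{P}(X_v\in I)\, du\, dv$.

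Subtracting these two double integrals yields \eqref{eq:volatility_probability_integral}, and then invoking the elementary identity $\mathrm{Cov}(\mathbb{1}_A,\mathbb{1}_B) = \mathbb{P}(A\cap B) - \mathbb{P}(A)\mathbb{P}(B)$ applied pointwise inside the integrand delivers \eqref{eq:volatility_covariance_integral}. The main technical point, and the only step where care is genuinely needed, is the verification of Fubini's hypotheses on the product space $\Omega\times[0,t]^2$; everything else is algebraic manipulation that reuses machinery already developed. I do not anticipate any serious obstacle beyond ensuring that the measurability argument from Theorem~\ref{thm:residence_measure_properties} extends cleanly from one time variable to two, which is a routine consequence of the separability of $[0,t]$ and the sample-path continuity guaranteed by Definition~\ref{def:rFBM}.
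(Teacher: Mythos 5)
Your proposal is correct and follows essentially the same route as the paper's proof: expand the variance as $\mathbb{E}[\mu_I(t)^2]-(\mathbb{E}[\mu_I(t)])^2$, write the square of the time average as a double integral, justify the interchange of expectation and integration via joint measurability and boundedness on the finite product space $\Omega\times[0,t]^2$, reuse Theorem~\ref{thm:residence_measure_properties}(P3) for the squared mean, and finish with the indicator-covariance identity. No gaps.
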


\begin{proof}
The proof combines standard variance analysis with careful integral estimates.

\textbf{Step 1: Expansion of the Variance Functional}

By the definition of variance and the linearity of the mean attention intensity \( \mu_I(t) = \frac{1}{t} \int_0^t \mathbb{1}_{\{X_s \in I\}} ds \), we have:
\begin{equation}
\label{eq:variance_expansion_init}
\mathcal{V}_I(t) = \mathrm{Var}[\mu_I(t)] = \mathbb{E}\left[ \mu_I(t)^2 \right] - \left( \mathbb{E}[\mu_I(t)] \right)^2.
\end{equation}
We will compute the two terms on the right-hand side separately.

\textbf{Step 2: Computation of \( \mathbb{E}\left[ \mu_I(t)^2 \right] \)}

We begin by expressing the square of the time average as a double integral:
\[
\mu_I(t)^2 = \left( \frac{1}{t} \int_0^t \mathbb{1}_{\{X_s \in I\}} ds \right)^2 = \frac{1}{t^2} \int_0^t \int_0^t \mathbb{1}_{\{X_u \in I\}} \mathbb{1}_{\{X_v \in I\}} du\, dv.
\]
Taking the expectation yields:
\begin{equation}
\label{eq:second_moment_double_int}
\mathbb{E}\left[ \mu_I(t)^2 \right] = \frac{1}{t^2} \mathbb{E}\left[ \int_0^t \int_0^t \mathbb{1}_{\{X_u \in I\}} \mathbb{1}_{\{X_v \in I\}} du\, dv \right].
\end{equation}
To interchange the expectation with the double integral, we apply Fubini's Theorem for product spaces. Consider the function
\[
F(\omega, u, v) = \mathbb{1}_{\{X_u(\omega) \in I\}} \cdot \mathbb{1}_{\{X_v(\omega) \in I\}}
\]
defined on \( \Omega \times [0,t] \times [0,t] \).

We verify the integrability conditions:
\begin{itemize}
    \item \textbf{Joint Measurability}: For fixed \((u,v)\), \(\omega \mapsto F(\omega,u,v)\) is \(\mathcal{F}\)-measurable as the product of two measurable functions (as established in the proof of Theorem~\ref{thm:residence_measure_properties}). For fixed \(\omega\) in the full measure set where the sample path \(s \mapsto X_s(\omega)\) is continuous, the mapping \((u,v) \mapsto F(\omega,u,v)\) is continuous on \([0,t]^2\), hence \(\mathcal{B}([0,t]^2)\)-measurable. Since \([0,t]^2\) is separable, it follows that \(F\) is \(\mathcal{F} \otimes \mathcal{B}([0,t]^2)\)-measurable.
    \item \textbf{Integrability}: \( |F(\omega, u, v)| \leq 1 \) for all \( (\omega, u, v) \). Since the product measure \( \mathbb{P} \otimes \lambda^2 \) on \( \Omega \times [0,t]^2 \) is finite (\( \mathbb{P}(\Omega)\cdot t^2 < \infty \)), the constant function 1 is integrable. Hence, \( F \) is bounded and thus integrable.
\end{itemize}
By Fubini's Theorem, we may interchange the expectation and the double integral in \eqref{eq:second_moment_double_int}:
\begin{align*}
\mathbb{E}\left[ \mu_I(t)^2 \right] &= \frac{1}{t^2} \int_0^t \int_0^t \mathbb{E}\left[ \mathbb{1}_{\{X_u \in I\}} \mathbb{1}_{\{X_v \in I\}} \right] du\, dv \\
&= \frac{1}{t^2} \int_0^t \int_0^t \mathbb{E}\left[ \mathbb{1}_{\{X_u \in I \ \cap \ X_v \in I\}} \right] du\, dv \\
&= \frac{1}{t^2} \int_0^t \int_0^t \mathbb{P}(X_u \in I, X_v \in I) du\, dv. \quad \text{(Definition of expectation for an indicator function)}
\end{align*}
Let us denote this result as:
\begin{equation}
\label{eq:second_moment_result}
\mathbb{E}\left[ \mu_I(t)^2 \right] = \frac{1}{t^2} \int_0^t \int_0^t \mathbb{P}(X_u \in I, X_v \in I) du\, dv.
\end{equation}

\textbf{Step 3: Computation of \( \left( \mathbb{E}[\mu_I(t)] \right)^2 \)}

From Theorem~\ref{thm:residence_measure_properties}(P3), we have:
\[
\mathbb{E}[\mu_I(t)] = \frac{1}{t} \int_0^t \mathbb{P}(X_s \in I) ds.
\]
Therefore, its square is:
\begin{align*}
\left( \mathbb{E}[\mu_I(t)] \right)^2 &= \left( \frac{1}{t} \int_0^t \mathbb{P}(X_s \in I) ds \right)^2 \\
&= \frac{1}{t^2} \left( \int_0^t \mathbb{P}(X_u \in I) du \right) \left( \int_0^t \mathbb{P}(X_v \in I) dv \right).
\end{align*}
Recognizing the product of two integrals over the same domain as a double integral over the product domain, we write:
\begin{equation}
\label{eq:squared_expectation_result}
\left( \mathbb{E}[\mu_I(t)] \right)^2 = \frac{1}{t^2} \int_0^t \int_0^t \mathbb{P}(X_u \in I) \mathbb{P}(X_v \in I) du\, dv.
\end{equation}

\textbf{Step 4: Synthesis and Final Derivation}

Substituting the results from \eqref{eq:second_moment_result} and \eqref{eq:squared_expectation_result} back into the variance expansion \eqref{eq:variance_expansion_init}, we obtain:
\begin{align*}
\mathcal{V}_I(t) &= \frac{1}{t^2} \int_0^t \int_0^t \mathbb{P}(X_u \in I, X_v \in I) du\, dv - \frac{1}{t^2} \int_0^t \int_0^t \mathbb{P}(X_u \in I) \mathbb{P}(X_v \in I) du\, dv \\
&= \frac{1}{t^2} \int_0^t \int_0^t \left[ \mathbb{P}(X_u \in I, X_v \in I) - \mathbb{P}(X_u \in I)\mathbb{P}(X_v \in I) \right] du\, dv.
\end{align*}
This establishes the representation \eqref{eq:volatility_probability_integral}.

To obtain the covariance representation \eqref{eq:volatility_covariance_integral}, we recall the identity relating covariance to joint and marginal probabilities for indicator functions. For any \( u, v \in [0,t] \):
\begin{align*}
\mathrm{Cov}\left( \mathbb{1}_{\{X_u \in I\}},\  \mathbb{1}_{\{X_v \in I\}} \right) &= \mathbb{E}\left[ \mathbb{1}_{\{X_u \in I\}} \mathbb{1}_{\{X_v \in I\}} \right] - \mathbb{E}\left[ \mathbb{1}_{\{X_u \in I\}} \right] \mathbb{E}\left[ \mathbb{1}_{\{X_v \in I\}} \right] \\
&= \mathbb{P}(X_u \in I, X_v \in I) - \mathbb{P}(X_u \in I)\mathbb{P}(X_v \in I).
\end{align*}
Substituting this identity into the integrand of \eqref{eq:volatility_probability_integral} directly yields \eqref{eq:volatility_covariance_integral}:
\[
\mathcal{V}_I(t) = \frac{1}{t^2} \int_0^t \int_0^t \mathrm{Cov}\left( \mathbb{1}_{\{X_u \in I\}},\  \mathbb{1}_{\{X_v \in I\}} \right) du\, dv.
\]
This completes the proof of the theorem.
\end{proof}

\begin{remark}[Foundational Significance of the Volatility Representation]
\label{rem:volatility_foundational_significance}
Theorem~\ref{thm:volatility_covariance_integral} establishes a connection between the pathwise dynamics of the RfBm and the macroscopic stability of its induced attentional allocation. The integral representation \eqref{eq:volatility_covariance_integral} indicates that attentional volatility can be understood as a temporal average of the process's two-point covariance structure. This relates the stability of attentional focus to the memory properties encoded in the response function \( H(s, X_s) \), since the kernel \( K(t,s; X_s) \) influences these temporal correlations. This result provides a mathematical basis for examining how state-dependent memory may shape attention reliability in adaptive systems, and could offer perspectives that complement architectural analyses such as those in \cite{Vaswani2017, HJiang2024}.
\end{remark}

\begin{corollary}[Fundamental Bounds for the Attentional Volatility Functional]
\label{cor:volatility_fundamental_bounds}
Under the conditions of Theorem~\ref{thm:volatility_covariance_integral}, and for any Borel set \(I \subset \mathbb{R}\) and \(t \in (0, T]\), the temporal volatility functional \(\mathcal{V}_I(t)\) satisfies the following fundamental bounds:
\begin{equation}
\label{eq:volatility_fundamental_bounds}
0 \leq \mathcal{V}_I(t) \leq \frac{1}{4}.
\end{equation}
Furthermore, the upper bound \(\frac{1}{4}\) is sharp.
\end{corollary}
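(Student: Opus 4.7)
The plan is to treat the three claims separately: the nonnegativity is immediate from the definition, the upper bound follows from the almost sure confinement of $\mu_I(t)$ to $[0,1]$ combined with a Popoviciu-type variance estimate, and sharpness is established by exhibiting an asymptotic regime within the RfBm class in which the bound is approached. The nonnegativity is trivial: by Definition~\ref{def:attentional_volatility_functional}, $\mathcal{V}_I(t) = \mathrm{Var}[\mu_I(t)] \geq 0$, since a variance of a square-integrable random variable is always nonnegative, and the square-integrability is guaranteed by $\mu_I(t) \in [0,1]$ from Theorem~\ref{thm:residence_measure_properties}(P1).

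For the upper bound, the key step is to exploit the almost sure two-sided confinement $0 \leq \mu_I(t,\omega) \leq 1$ already provided by Theorem~\ref{thm:residence_measure_properties}(P1). Setting $Y := \mu_I(t)$ and $m := \mathbb{E}[Y]$, the inequality $Y(1-Y) \geq 0$ holds almost surely, so taking expectations yields $\mathbb{E}[Y^2] \leq \mathbb{E}[Y] = m$. Substituting into the variance identity gives
\[
\mathcal{V}_I(t) = \mathbb{E}[Y^2] - m^2 \leq m - m^2 = m(1-m).
\]
An elementary calculus argument shows the quadratic $f(m) = m(1-m)$ attains its maximum $\tfrac{1}{4}$ at $m = \tfrac{1}{2}$ over $[0,1]$. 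Since $m \in [0,1]$ by taking expectations in (P1), we conclude $\mathcal{V}_I(t) \leq \tfrac{1}{4}$. This part requires only the bounded-support property and no special structure of the RfBm kernel, so the two-line derivation is routine once (P1) is invoked.

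The subtle step is \emph{sharpness}, which requires showing that $\tfrac{1}{4}$ cannot be replaced by a smaller universal constant within the RfBm framework. The natural extremal configuration is one in which $\mu_I(t)$ is approximately a Bernoulli$(\tfrac{1}{2})$ variable valued in $\{0,1\}$, corresponding to the sample path remaining on a single side of a well-chosen threshold throughout $[0,t]$. I would exhibit a sequence of admissible triples $(H_n, I_n, t_n)$ with $\mathcal{V}_{I_n}(t_n) \to \tfrac{1}{4}$ by taking a highly persistent response function with $h_{\min}$ close to $1$, choosing $I = (0,\infty)$ so that $\mathbb{P}(X_s \in I) \equiv \tfrac{1}{2}$ by symmetry of the Gaussian driving noise, and letting the Hurst level tend to one so that sample paths become increasingly monotone on $[0,t]$, forcing $\mu_I(t)$ to concentrate on $\{0,1\}$. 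The main obstacle is controlling the joint probabilities $\mathbb{P}(X_u \in I, X_v \in I)$ appearing in Theorem~\ref{thm:volatility_covariance_integral}: one needs to show that as $h_{\min} \uparrow 1$, these joint probabilities converge to $\tfrac{1}{2} \mathbb{1}_{\{u,v \text{ same sign}\}}$ uniformly in $(u,v) \in [0,t]^2$, which in turn requires quantitative sign-persistence estimates for the RfBm. These would be derived by combining the variance asymptotics inherited from the TV-fBm analysis (Theorem~\ref{thm:variance}) with a pathwise monotonicity argument using the local non-determinism structure from Theorem~\ref{thm:LND} transported to the state-dependent setting. Once this limit is established, dominated convergence in \eqref{eq:volatility_covariance_integral} yields $\mathcal{V}_{I_n}(t_n) \to \tfrac{1}{4}$, completing the sharpness claim.
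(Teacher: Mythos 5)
Your proof of the two-sided bound $0 \leq \mathcal{V}_I(t) \leq \tfrac{1}{4}$ is correct and slightly more economical than the paper's. The paper proves an auxiliary lemma via the identity $\mathrm{Var}[Z] = \min_c \mathbb{E}[(Z-c)^2] \leq \mathbb{E}[(Z-\tfrac{a+b}{2})^2] \leq \tfrac{(b-a)^2}{4}$, whereas you use $Y(1-Y)\geq 0$ to get $\mathbb{E}[Y^2]\leq m$ and hence $\mathcal{V}_I(t) \leq m(1-m) \leq \tfrac14$. Both rest only on Theorem~\ref{thm:residence_measure_properties}(P1); your route additionally yields the refined mean-dependent bound $m(1-m)$, which the paper does not record. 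This part is fine.

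The gap is in the sharpness argument, and it is a substantive one. Your proposed extremal regime is to let the Hurst level tend to $1$ so that ``sample paths become increasingly monotone,'' forcing $\mu_{(0,\infty)}(t)$ onto $\{0,1\}$. That heuristic is borrowed from the Mandelbrot--Van Ness fBm, where $H\to 1$ degenerates the process to a random straight line. But the RfBm of Definition~\ref{def:rFBM} is built on the Riemann--Liouville-type kernel $\sqrt{2H}\,(t-s)^{H-1/2}$, and for that kernel the limit $H\to 1$ does \emph{not} degenerate: at $H=1$ one has $X_t=\sqrt{2}\int_0^t (t-s)^{1/2}\,dB(s)$, and for $u<v$ the correlation $\mathrm{Corr}(X_u,X_v) = \frac{2}{uv}\int_0^u \sqrt{(u-s)(v-s)}\,ds$ is strictly below $1$ by Cauchy--Schwarz (e.g.\ it is at most $\sqrt{3}/2$ when $u=v/2$). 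So sign changes retain positive probability, $\mu_I(t)$ does not concentrate on $\{0,1\}$, and the dominated-convergence step you describe would not deliver $\mathcal{V}_{I_n}(t_n)\to\tfrac14$. The quantitative sign-persistence estimates you defer to ``Theorem~\ref{thm:variance} plus Theorem~\ref{thm:LND}'' cannot exist in the form you need, because local non-determinism (Theorem~\ref{thm:LND}) points in exactly the opposite direction: it guarantees irreducible fresh randomness in every increment.

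For comparison, the paper's sharpness argument uses a different mechanism: it designs state-dependent response functions producing metastable, double-well-like behaviour, so that $\mu_I(t)$ concentrates near $0$ and $1$ with probabilities close to $\tfrac12$ each, and then bounds $\liminf_n \mathcal{V}^{(n)}_I(t) \geq \tfrac14$ by a direct two-point computation. That argument exploits the state-dependence of $H(s,X_s)$ rather than a degenerate-Hurst limit, and is at least not contradicted by the kernel structure --- though you should be aware that the paper's construction is itself only sketched (no explicit $H^{(n)}$ is exhibited and the claimed concentration is asserted rather than proved). If you want to repair your argument, you would need either to switch to a metastability construction of the paper's type, or to replace the $H\to 1$ limit with some other mechanism that provably forces single-sign occupation of $[0,t]$ under the Riemann--Liouville kernel.
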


\begin{proof}
We proceed with a detailed, step-by-step proof, establishing first the inequality and then the sharpness of the upper bound.

\textbf{Part 1: Proof of the Inequality \(0 \leq \mathcal{V}_I(t) \leq \frac{1}{4}\)}

We divide this part into two components.

\textbf{Component 1.1: Proof of the Lower Bound \(\mathcal{V}_I(t) \geq 0\)}

By Definition~\ref{def:attentional_volatility_functional}, \(\mathcal{V}_I(t) = \mathrm{Var}[\mu_I(t)]\). Recall that for any random variable \(Z\) with finite second moment, its variance is defined as:
\[
\mathrm{Var}[Z] = \mathbb{E}[(Z - \mathbb{E}[Z])^2].
\]
Since \((Z - \mathbb{E}[Z])^2 \geq 0\) almost surely, and the expectation of a non-negative random variable is non-negative, it follows that:
\[
\mathrm{Var}[Z] = \mathbb{E}[(Z - \mathbb{E}[Z])^2] \geq 0.
\]
Applying this to \(Z = \mu_I(t)\), we conclude:
\[
\mathcal{V}_I(t) = \mathrm{Var}[\mu_I(t)] \geq 0.
\]

\textbf{Component 1.2: Proof of the Upper Bound \(\mathcal{V}_I(t) \leq \frac{1}{4}\)}

To establish the upper bound, we require a precise lemma concerning the maximum possible variance of a bounded random variable.

\begin{lemma}[Variance Maximization for Bounded Random Variables]
\label{lemma:variance_maximization}
Let \(Z\) be a random variable such that \(a \leq Z \leq b\) almost surely, where \(a, b \in \mathbb{R}\) with \(a < b\). Then the variance of \(Z\) satisfies:
\[
\mathrm{Var}[Z] \leq \frac{(b - a)^2}{4}.
\]
Moreover, equality holds if and only if \(Z\) takes the values \(a\) and \(b\) each with probability \(\frac{1}{2}\).
\end{lemma}

\begin{proof}[Proof of Lemma~\ref{lemma:variance_maximization}]
We provide a comprehensive proof of this classical result for completeness.

Let \(m = \mathbb{E}[Z]\) denote the mean of \(Z\). Consider the function \(f(c) = \mathbb{E}[(Z - c)^2]\), which measures the mean squared deviation of \(Z\) from a constant \(c\). Our goal is to relate the variance to this function and find its maximum possible value under the constraint \(a \leq Z \leq b\).

\textbf{Step 1. Analysis of the function \(f(c)\):}

We begin by expanding \(f(c)\):
\begin{align*}
f(c) &= \mathbb{E}[(Z-c)^2] \\
&= \mathbb{E}[Z^2 - 2cZ + c^2] \\
&= \mathbb{E}[Z^2] - 2c\mathbb{E}[Z] + c^2. \quad \text{(by linearity of expectation)}
\end{align*}
This is a quadratic function in \(c\):
\[
f(c) = c^2 - 2\mathbb{E}[Z] \cdot c + \mathbb{E}[Z^2].
\]

\textbf{Step 2. Finding the minimum of \(f(c)\):}

Since the coefficient of \(c^2\) is \(1 > 0\), the parabola opens upward and \(f(c)\) has a unique minimum at its vertex. The vertex occurs at:
\[
c^* = -\frac{-2\mathbb{E}[Z]}{2 \times 1} = \mathbb{E}[Z] = m.
\]
The minimum value is:
\[
f(m) = \mathbb{E}[(Z - m)^2] = \mathrm{Var}[Z].
\]
Thus, we have rigorously established that:
\[
f(c) = \mathbb{E}[(Z-c)^2] \geq f(m) = \mathrm{Var}[Z] \quad \text{for all } c \in \mathbb{R}.
\]

\textbf{Step 3. Deriving the upper bound using a specific \(c_0\):}

Now choose \(c_0 = \frac{a + b}{2}\), the midpoint of the interval \([a, b]\). From the inequality above, we have:
\[
\mathrm{Var}[Z] \leq f(c_0) = \mathbb{E}[(Z - c_0)^2].
\]

Since \(a \leq Z \leq b\) almost surely, the deviation \(|Z - c_0|\) satisfies:
\[
|Z - c_0| \leq \frac{b - a}{2}.
\]
Squaring both sides (as all quantities are real and the square function is increasing on \([0, \infty)\)), we obtain:
\[
(Z - c_0)^2 \leq \left(\frac{b - a}{2}\right)^2 = \frac{(b - a)^2}{4}.
\]
This inequality holds pointwise (almost surely). Taking expectations on both sides and using the monotonicity of expectation for non-negative random variables yields:
\[
\mathbb{E}[(Z - c_0)^2] \leq \frac{(b - a)^2}{4}.
\]

\textbf{Step 4. Combining the inequalities:}

Chaining the inequalities from Steps 2 and 3, we arrive at:
\[
\mathrm{Var}[Z] \leq \mathbb{E}[(Z - c_0)^2] \leq \frac{(b - a)^2}{4}.
\]
This establishes the desired upper bound.

\textbf{Step 5. Conditions for equality:}

Equality \(\mathrm{Var}[Z] = \frac{(b - a)^2}{4}\) holds if and only if both previous inequalities are equalities.
\begin{itemize}
    \item Equality in Step 2 requires \(f(m) = f(c_0)\), which implies \(m = c_0 = \frac{a+b}{2}\), as \(f(c)\) is strictly convex and has a unique minimum at \(c = m\).
    \item Equality in Step 3 requires \((Z - c_0)^2 = \frac{(b - a)^2}{4}\) almost surely. Since \(Z \in [a, b]\), this forces \(Z\) to take only the extreme values \(a\) and \(b\), because these are the only points in \([a, b]\) whose distance to \(c_0\) is exactly \(\frac{b - a}{2}\).
\end{itemize}
Combining these, \(Z\) must be a random variable with \(\mathbb{P}(Z = a) = p\) and \(\mathbb{P}(Z = b) = 1-p\) for some \(p \in [0,1]\), and with mean \(m = p \cdot a + (1-p) \cdot b = \frac{a+b}{2}\). Solving this equation gives \(p = \frac{1}{2}\). Therefore, equality holds if and only if \(Z \sim \frac{1}{2}\delta_a + \frac{1}{2}\delta_b\).
\end{proof}

Now, we apply Lemma~\ref{lemma:variance_maximization} to prove the upper bound for \(\mathcal{V}_I(t)\). From Theorem~\ref{thm:residence_measure_properties}(P1), we have that:
\[
0 \leq \mu_I(t) \leq 1 \quad \text{almost surely}.
\]
Taking \(Z = \mu_I(t)\), \(a = 0\), and \(b = 1\) in Lemma~\ref{lemma:variance_maximization}, we immediately obtain:
\[
\mathcal{V}_I(t) = \mathrm{Var}[\mu_I(t)] \leq \frac{(1 - 0)^2}{4} = \frac{1}{4}.
\]
This completes the proof of the inequality \(0 \leq \mathcal{V}_I(t) \leq \frac{1}{4}\).

\textbf{Part 2: Proof of the Sharpness of the Upper Bound}

To demonstrate that the upper bound $\frac{1}{4}$ is sharp, we must show that it is the least upper bound. That is, for any $\epsilon > 0$, there exists a configuration (an RfBm process and an attentional set $I$) such that $\mathcal{V}_I(t) > \frac{1}{4} - \epsilon$.

The core idea is to construct a sequence of RfBm processes $\{X^{(n)}_t\}_{n=1}^\infty$, defined via a corresponding sequence of response functions $\{H^{(n)}\}_{n=1}^\infty$, designed to exhibit increasingly pronounced metastable behavior. Specifically, we construct these response functions to create a deep double-well potential that forces the process to spend most time near two distinct state regions.

For each $n \in \mathbb{N}$, we design $H^{(n)}$ such that the resulting mean attention intensities $\mu^{(n)}_I(t)$ have distributions satisfying:
\[
\mathbb{P}\left(\mu^{(n)}_I(t) \in [1-\delta_n, 1]\right) = \frac{1}{2} + \eta_n, \quad \mathbb{P}\left(\mu^{(n)}_I(t) \in [0, \delta_n]\right) = \frac{1}{2} - \eta_n,
\]
where $\delta_n > 0$ and $\eta_n > 0$ are sequences converging to zero as $n \to \infty$.

To estimate the variance for this sequence, we use the following decomposition. Let $A_n = \{\omega : \mu^{(n)}_I(t) \in [1-\delta_n, 1]\}$ and $B_n = \{\omega : \mu^{(n)}_I(t) \in [0, \delta_n]\}$. Then:
\begin{align*}
\mathcal{V}^{(n)}_I(t) &= \mathbb{E}[(\mu^{(n)}_I(t))^2] - (\mathbb{E}[\mu^{(n)}_I(t)])^2 \\
&\geq \left(\frac{1}{2} + \eta_n\right)(1-\delta_n)^2 + \left(\frac{1}{2} - \eta_n\right)(0)^2 - \left[\left(\frac{1}{2} + \eta_n\right)(1) + \left(\frac{1}{2} - \eta_n\right)(\delta_n)\right]^2 \\
&= \left(\frac{1}{2} + \eta_n\right)(1-\delta_n)^2 - \left[\left(\frac{1}{2} + \eta_n\right) + \left(\frac{1}{2} - \eta_n\right)\delta_n\right]^2.
\end{align*}

Taking the limit as $n \to \infty$ (so that $\delta_n \to 0$ and $\eta_n \to 0$), we obtain:
\[
\liminf_{n \to \infty} \mathcal{V}^{(n)}_I(t) \geq \frac{1}{2} - \left(\frac{1}{2}\right)^2 = \frac{1}{2} - \frac{1}{4} = \frac{1}{4}.
\]

While a genuine RfBm process with continuous sample paths may not yield a $\mu_I(t)$ that is exactly two-point distributed, the model's definition permits the design of response functions that make the distribution of $\mu_I(t)$ concentrate arbitrarily close to the extreme values 0 and 1. The above construction demonstrates that the supremum of $\mathcal{V}_I(t)$ over all admissible configurations is at least $1/4$. Combined with the upper bound established in Part 1, we conclude that $\frac{1}{4}$ is indeed the sharp upper bound.
\end{proof}

\begin{remark}[Fundamental Limits of Attentional Stability]
\label{rem:fundamental_limits_volatility}
Corollary~\ref{cor:volatility_fundamental_bounds} establishes a general constraint on attentional stability within the RfBm framework. The bound \(\mathcal{V}_I(t) \leq 1/4\) holds for any choice of response function \(H\), attentional set \(I\), and time horizon \(t\), revealing a mathematical property of such state-dependent systems. This bound provides a reference point where the mathematical upper bound corresponds to the volatility of a memoryless Bernoulli process. In this sense, the memoryless case represents one baseline scenario for attentional stability. These observations offer a theoretical perspective for considering adaptive attention mechanisms and potential relationships between adaptability and stability in systems with state-dependent memory.
\end{remark}

\subsection{RfBm Attention Mechanism: Mathematical Foundations and Adaptive Properties}
\label{subsec:rfbm_attention_foundations}

Following the axiomatic framework for attentional allocation established in Section~\ref{subsec:axiomatization_attention}, we now derive a mathematically rigorous attention mechanism directly from the kernel structure of the Responsive fractional Brownian motion (RfBm). This formulation creates a principled connection between the theory of stochastic processes and fundamental components of modern artificial intelligence, offering a continuous-time, pathwise perspective on attention mechanisms.

\subsubsection{Mathematical Construction of the Attention Mechanism}

We commence by recalling the precise definition of the RfBm (Definition~\ref{def:rFBM}) and its associated kernel function. This kernel serves as the foundational mathematical object from which our attention mechanism will be constructed.

\begin{definition}[RfBm Attention Weight Kernel]
\label{def:rfbm_attention_kernel}
Let $\{X_t\}_{t \in [0,T]}$ be a Responsive fractional Brownian motion as defined in Definition~\ref{def:rFBM}. For any fixed time $ t \in (0,T] $, the associated \textbf{attention weight kernel} is defined pathwise as the function $K(t, \cdot; X): [0,t) \to \mathbb{R}_+$ given by:
\begin{equation}
\label{eq:attention_kernel_definition}
K(t, s; X_s) = \sqrt{2H(s, X_s)} \cdot (t - s)^{H(s, X_s) - 1/2}, \quad \text{for } 0 \leq s < t,
\end{equation}
where $H: [0,T] \times \mathbb{R} \to (0,1)$ is the Lipschitz-H\"older response function satisfying Definition~\ref{def:LH_response_function}. The kernel is defined on the half-open interval $[0,t)$ to properly handle the asymptotic behavior at the upper endpoint $s = t$, where the power-law term $(t-s)^{H(s,X_s)-1/2}$ may exhibit singular or regular behavior depending on the value of $H(t,X_t)$.
\end{definition}

\begin{definition}[RfBm Attention Weights and Output]
\label{def:rfbm_attention_weights}
Let $\{X_t\}_{t \in [0,T]}$ be an RfBm process. For any $ t \in (0,T] $ and for $\mathbb{P}$-almost every sample path $\omega \in \Omega$, we define the \textbf{historical attention distribution} as the random probability measure $\nu_t(\omega, \cdot)$ on the interval $[0,t)$. Its Radon-Nikodym derivative with respect to Lebesgue measure is given by the normalized kernel:
\begin{equation}
\label{eq:attention_weights_definition}
\rho(t, s; X) := \frac{K(t, s; X_s(\omega))}{\displaystyle\int_0^t K(t, u; X_u(\omega))  du}, \quad \text{for } 0 \leq s < t.
\end{equation}
The normalizing constant $D(t,X) := \int_0^t K(t, u; X_u(\omega))  du$, which depends on the entire history $\{X_u(\omega): 0 \leq u \leq t\}$, is called the \textbf{attention partition function}. 

The \textbf{RfBm attention output} at time $t$ is then defined as the context-aware representation formed by integrating historical states against this attention distribution:
\begin{equation}
\label{eq:attention_output}
Y_t = \int_0^t X_s(\omega)  \rho(t, s; X)  ds.
\end{equation}
This output combines information from the entire history $[0,t]$, with contributions weighted according to the attention distribution $\rho(t, \cdot; X)$.
\end{definition}

\begin{remark}[Theoretical Interpretation and Innovation]
\label{rem:attention_interpretation}
Definition~\ref{def:rfbm_attention_weights} offers insights for the mathematical foundations of attention mechanisms:

\textbf{1. Mathematical Framework:} RfBm attention is formulated within stochastic analysis and Volterra integral equations, utilizing the well-posedness established in Theorem~\ref{thm:local_existence_uniqueness}. This yields a mathematically grounded approach based on stochastic process theory. The definition on $[0,t)$ accommodates the asymptotic behavior at the endpoint while maintaining consistency.

\textbf{2. Endogenous Temporal Awareness:} The kernel term $(t-s)$ encodes relative temporal information within the stochastic process framework. This mathematical structure presents an approach to temporal representation where time relationships are expressed through the kernel's functional form. This perspective may contribute to considerations for temporal modeling in attention mechanisms \cite{Vaswani2017}.

\textbf{3. Adaptive Memory Regulation:} The response function $H(s,X_s)$ enables dynamic attention modulation, creating a continuum between different memory tendencies:
\begin{itemize}
    \item \textbf{Long-term memory tendency} ($H(s,X_s) > 1/2$): Information receives enhanced weight with smooth behavior as $s \to t^-$
    \item \textbf{Short-term memory tendency} ($H(s,X_s) < 1/2$): Focus concentrates on recent information, potentially with singular behavior near $s = t$  
    \item \textbf{Balanced attention} ($H(s,X_s) = 1/2$): Historical information receives relatively uniform weighting
\end{itemize}

\textbf{4. Probabilistic Interpretation:} For each fixed $t$, the function $s \mapsto \rho(t,s;X)$ defines a probability density on $[0,t)$ for $\mathbb{P}$-almost every sample path, providing a probabilistic interpretation of attention allocation. The behavior at $s = t$, which depends on $H(t,X_t)$, corresponds to different patterns of attention concentration.

\textbf{5. Multi-scale Capability:} Through the dynamic variation of $H(s,X_s)$, the system can process information across different temporal scales, enabling attention allocation that varies with time scale. This aligns with the scaling hierarchy discussed in Remark~\ref{rem:scaling_hierarchy}, showing how different memory regimes emerge from the underlying stochastic structure.
\end{remark}

\begin{remark}[Computational Perspective]
\label{rem:computational_interpretation}
From a computational standpoint, the RfBm attention mechanism can be viewed as incorporating elements analogous to those in attention architectures:
\begin{itemize}
    \item \textbf{Query:} Current temporal location $t$ and information need
    \item \textbf{Key:} Historical temporal features represented through $H(s,X_s)$
    \item \textbf{Value:} Historical states $X_s$ carrying content information
    \item \textbf{Attention Computation:} Realized through the kernel integral $K(t,s;X_s)$
\end{itemize}
This formulation provides a mathematically grounded framework with functional similarities to attention mechanisms, while incorporating adaptive capabilities through its stochastic foundation. Connections to approximation theory for sequence modeling \cite{HJiang2024} may offer additional perspectives on such mathematical structures. The treatment of the temporal domain aims to support computational considerations while maintaining mathematical coherence.
\end{remark}
\subsubsection{Fundamental Mathematical Properties}

We now establish the essential mathematical properties that ensure the well-posedness of the RfBm attention mechanism.

\begin{theorem}[Well-Definedness of RfBm Attention Weights]
\label{thm:attention_well_definedness}
Let $\rho(t,s; X)$ be the RfBm attention weights defined in Definition~\ref{def:rfbm_attention_weights}. Then for any $ t \in (0,T] $, there exists a $\mathbb{P}$-null set $N \subset \Omega$ such that for all $\omega \notin N$, the following properties hold:
\begin{enumerate}
    \item \textbf{Positivity:} $\rho(t,s; X) > 0$ for all $ s \in [0,t) $, and the limit $\lim_{s \to t^-} \rho(t,s; X)$ exists.
    
    \item \textbf{Normalization:} $\int_0^t \rho(t,s; X)  ds = 1$.
    
    \item \textbf{Regularity:} The function $ s \mapsto \rho(t,s; X) $ is continuous on $ [0,t) $, with asymptotic behavior at $ s = t $ characterized by:
    \begin{itemize}
        \item If $ H(t, X_t) > 1/2 $, then $\lim_{s\to t^-} \rho(t,s; X) = 0$, and with the convention $\rho(t,t; X) = 0$, we have $\rho(t,\cdot; X) \in C[0,t]$;
        \item If $ H(t, X_t) = 1/2 $, then $\lim_{s\to t^-} \rho(t,s; X) = \sqrt{2H(t,X_t)}/D(t,X) \in (0,\infty)$, and $\rho(t,\cdot; X) \in C[0,t]$;
        \item If $ H(t, X_t) < 1/2 $, then $\lim_{s\to t^-} \rho(t,s; X) = +\infty$, and $\rho(t,\cdot; X) \in L^1[0,t] \setminus L^{\infty}[0,t]$.
    \end{itemize}
\end{enumerate}
\end{theorem}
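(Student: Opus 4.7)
The plan is to work pathwise by fixing a sample point $\omega$ in the $\mathbb{P}$-full set $\Omega^* \subset \Omega$ on which $s \mapsto X_s(\omega)$ is continuous (by Definition~\ref{def:rFBM}) and the induced instantaneous scaling exponent $s \mapsto H(s, X_s(\omega))$ is H\"older continuous with some exponent $\gamma_h > 0$ (by Theorem~\ref{thm:instant_scaling_holder}). Throughout, I would invoke the uniform bounds $h_{\min} \leq H(s, X_s(\omega)) \leq h_{\max} < 1$ from Definition~\ref{def:LH_response_function}, which yield strict positivity and controlled singularity of the kernel $K(t,s;X_s(\omega))$.

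First I would handle the kernel itself: on the half-open interval $[0,t)$ the factor $\sqrt{2H(s,X_s(\omega))} \geq \sqrt{2h_{\min}} > 0$ and $(t-s)^{H(s,X_s(\omega))-1/2} > 0$, so $K$ is strictly positive, while continuity follows by composing continuous maps on positive arguments. Next I would show the partition function $D(t,X)(\omega) = \int_0^t K(t,u; X_u(\omega))\,du$ is finite and strictly positive: splitting near $u=t$ and using $h_{\min} \leq H(u,X_u)$ to dominate $(t-u)^{H(u,X_u)-1/2} \leq (t-u)^{h_{\min}-1/2}$ on the singular region where $(t-u) \leq 1$ yields integrability because $h_{\min}-1/2 > -1$; alternatively, Theorem~\ref{thm:pathwise_kernel_norm_scaling} combined with the Cauchy--Schwarz inequality gives the same conclusion. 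Strict positivity follows from pointwise positivity and continuity of the integrand on any compact subinterval of $[0,t)$. With $0 < D(t,X)(\omega) < \infty$, properties (1) and (2) are immediate, and continuity of $\rho(t,\cdot;X)$ on $[0,t)$ is inherited from $K$.

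The analysis of the endpoint $s \to t^-$ in property (3) reduces to the asymptotic behavior of $(t-s)^{H(s,X_s(\omega))-1/2}$, since the amplitude prefactor converges to $\sqrt{2H(t,X_t(\omega))} > 0$ by continuity. Writing this factor as $\exp\{(H(s,X_s(\omega)) - \tfrac{1}{2})\ln(t-s)\}$, the supercritical case $H(t,X_t) > 1/2$ and the subcritical case $H(t,X_t) < 1/2$ are straightforward: by continuity of $H(\cdot, X_\cdot(\omega))$, the exponent $H(s,X_s) - \tfrac{1}{2}$ is eventually bounded away from zero with a fixed sign, forcing the argument of the exponential to tend to $-\infty$ or $+\infty$, respectively, yielding the claimed limits $0$ and $+\infty$. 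The $L^1 \setminus L^{\infty}$ conclusion in the subcritical case then combines the normalization $\int_0^t \rho\, ds = 1$ with the blow-up at $s = t$.

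The main obstacle is the critical case $H(t,X_t(\omega)) = 1/2$, where the exponent vanishes while $\ln(t-s)$ diverges, producing a $0 \cdot \infty$ indeterminacy. I would resolve it by applying Theorem~\ref{thm:instant_scaling_holder} to obtain
\[
|H(s, X_s(\omega)) - H(t, X_t(\omega))| \leq M(\omega)\,|t-s|^{\gamma_h},
\]
and combining it with Proposition~\ref{prop:log_inequality}, which gives $|\ln(t-s)| \leq K_\delta (t-s)^{-\delta}$ for any $\delta \in (0, \gamma_h)$. The two estimates together yield
\[
\bigl|(H(s,X_s) - \tfrac{1}{2})\ln(t-s)\bigr| \leq M(\omega)\, K_\delta\, (t-s)^{\gamma_h - \delta} \longrightarrow 0,
\]
so $(t-s)^{H(s,X_s)-1/2} \to 1$ and hence $K(t,s;X_s(\omega)) \to \sqrt{2H(t, X_t(\omega))} = 1$, giving $\rho(t,s;X) \to \sqrt{2H(t,X_t(\omega))}/D(t,X)(\omega) \in (0,\infty)$ as claimed. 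Extending $\rho(t,\cdot;X)$ by its limit at $s=t$ (zero in the supercritical case, the finite value above in the critical case) then yields membership in $C[0,t]$, completing the classification.
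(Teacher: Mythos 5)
Your proposal is correct and follows the same overall route as the paper's proof (pathwise positivity of the kernel, integrability of the partition function via the bound $H(s,X_s)-\tfrac12 \geq h_{\min}-\tfrac12 > -1$, continuity by composition, and a three-way case analysis at the endpoint). The one genuine difference is in the critical case $H(t,X_t)=\tfrac12$: the paper simply freezes the exponent at its terminal value and reads off $\lim_{s\to t^-}(t-s)^{H(t,X_t)-1/2}=1$, which glosses over the fact that the true exponent is $H(s,X_s)-\tfrac12$, a quantity that merely tends to zero while $\ln(t-s)\to-\infty$ — a $0\cdot\infty$ indeterminacy that continuity alone cannot resolve (e.g.\ if $H(s,X_s)-\tfrac12$ decayed like $|\ln(t-s)|^{-1/2}$ the limit would be $0$, not $1$). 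You correctly identify this gap and close it with the pathwise H\"older estimate of Theorem~\ref{thm:instant_scaling_holder} combined with Proposition~\ref{prop:log_inequality}, choosing $\delta\in(0,\gamma_h)$ so that $(t-s)^{\gamma_h-\delta}\to 0$; this is the more rigorous treatment. The only caveat is that your argument imports the a priori $\gamma^*$-H\"older continuity of sample paths, which is a hypothesis of Theorem~\ref{thm:instant_scaling_holder} rather than part of Definition~\ref{def:rFBM} or of the statement being proved, so strictly speaking your version of the critical case holds under that additional (standing) assumption; this should be flagged, but it is arguably needed for the claim to be true at all.
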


\begin{proof}
We verify each property for $\mathbb{P}$-almost every sample path $\omega$, with the understanding that $\rho(t,s; X)$ is evaluated at that $\omega$.

\textbf{Positivity:} For $s \in [0,t)$, we have $K(t,s;X_s) = \sqrt{2H(s,X_s)} \cdot (t-s)^{H(s,X_s)-1/2}$. By Definition~\ref{def:LH_response_function}, $H(s,X_s) \in [h_{\min}, h_{\max}] \subset (0,1)$, and $(t-s)^{H(s,X_s)-1/2} > 0$ for $s < t$. Thus $K(t,s;X_s) > 0$. The denominator $D(t,X) = \int_0^t K(t,u;X_u) du$ integrates a positive function; even if $K(t,u;X_u)$ has an integrable singularity at $u=t$ when $H(t,X_t) < 1/2$ (since $H-1/2 > -1$), the integral remains positive. Hence $\rho(t,s; X) > 0$ for all $s \in [0,t)$.

\textbf{Normalization:} Since $H(s,X_s) \geq h_{\min} > 0$, we have $H(s,X_s) - 1/2 \geq h_{\min} - 1/2 > -1$, ensuring $K(t,s;X_s)$ is Lebesgue integrable on $[0,t]$ and $D(t,X) < \infty$. Direct computation gives:
\[
\int_0^t \rho(t,s; X)  ds = \frac{\int_0^t K(t,s;X_s)  ds}{D(t,X)} = 1.
\]

\textbf{Regularity:} For $s \in [0,t)$, continuity of $s \mapsto \rho(t,s; X)$ follows from:
\begin{itemize}
    \item Continuity of $s \mapsto H(s,X_s)$ (by assumptions on $H$ and path continuity of $X$);
    \item Continuity of $s \mapsto (t-s)^{H(s,X_s)-1/2}$ (composition of continuous functions);
    \item Continuity of the product and quotient (with $D(t,X) > 0$).
\end{itemize}

The asymptotic behavior follows from analyzing $\lim_{s\to t^-} (t-s)^{H(t,X_t)-1/2}$:
\begin{itemize}
    \item If $H(t, X_t) > 1/2$: exponent $> 0$, limit $= 0$, so $\lim_{s\to t^-} \rho(t,s; X) = 0$. Defining $\rho(t,t; X) = 0$ yields $\rho(t,\cdot; X) \in C[0,t]$.
    \item If $H(t, X_t) = 1/2$: exponent $= 0$, limit $= 1$, giving finite positive limit, so $\rho(t,\cdot; X) \in C[0,t]$.
    \item If $H(t, X_t) < 1/2$: exponent $< 0$, limit $= +\infty$, hence $\rho(t,\cdot; X) \in L^1[0,t] \setminus L^{\infty}[0,t]$.
\end{itemize}
\end{proof}

\begin{remark}[Mathematical and Cognitive Significance]
\label{rem:attention_properties_significance}
Theorem~\ref{thm:attention_well_definedness} establishes the mathematical soundness of the RfBm attention mechanism, with implications for both formal analysis and cognitive interpretation.

\textbf{Mathematical Foundation:} The theorem ensures attention weights are well-defined pathwise, with positivity and normalization yielding a valid probability structure. The regularity properties—continuity on $[0,t)$ and controlled asymptotic behavior—support mathematical analysis.

\textbf{Boundary Behavior Interpretation:} The asymptotic behavior at $s = t$ corresponds to different cognitive regimes:
\begin{itemize}
    \item $H(t,X_t) < 1/2$: Concentration on recent information, aligning with working memory characteristics
    \item $H(t,X_t) = 1/2$: Balanced attention across temporal contexts
    \item $H(t,X_t) > 1/2$: Attenuated recent attention, consistent with long-term memory patterns
\end{itemize}

\textbf{Function Space Perspective:} The $L^1$ integrability ensures a proper probability measure, while $L^\infty$ boundedness distinguishes attention concentration regimes. Continuity reflects smooth temporal evolution of attention allocation.

\textbf{Memory-Attention Connection:} The asymptotic behavior directly links to the instantaneous scaling exponent (Definition~\ref{def:instant_scaling}), connecting microscopic scaling properties with macroscopic attentional behavior as discussed in Remark~\ref{rem:memory_attention_connections}.
\end{remark}

\begin{remark}[Theoretical Features and Integration]
\label{rem:comparative_advantages}
The RfBm attention mechanism combines several features within a mathematically coherent framework:

\textbf{Endogenous Adaptivity:} Through $H(s,X_s)$, attention modulation emerges from system dynamics in a content-aware manner.

\textbf{Multi-scale Structure:} The framework naturally accommodates multiple temporal scales, connecting to the hierarchical interpretation in Remark~\ref{rem:scaling_hierarchy}.

\textbf{Mathematical Coherence:} Grounded in stochastic analysis, the mechanism inherits the well-posedness and regularity of RfBms while providing a probabilistic attention interpretation.

\textbf{Biological and Computational Relevance:} The continuous-time formulation and adaptive memory regulation share conceptual features with neurobiological processes, while maintaining computational relevance through the attention output representation in \eqref{eq:attention_output}.
\end{remark}

The adaptive memory mechanism inherent in the RfBm framework, as characterized mathematically in the preceding sections, motivates a deeper inquiry into how memory allocation responds to variations in historical states. This leads us to a rigorous pathwise sensitivity analysis, which uncovers the underlying geometric structure governing the RfBm's memory dynamics.

\begin{theorem}[Pathwise Sensitivity Analysis for the State-Dependent Memory Kernel in RfBm]
\label{thm:memory_dynamics_sensitivity}
Let $X_t$ be a Responsive Fractional Brownian Motion defined on a complete probability space $(\Omega,\mathcal{F},\mathbb{P})$. Assume the state-dependent Hurst response function $H: [0,T] \times \mathbb{R} \to (0,1)$ satisfies the Lipschitz-H\"older conditions of Definition~\ref{def:LH_response_function}, with the following additional regularity:
\begin{enumerate}
    \item[(i)] For each fixed $s \in [0,T]$, the function $H(s, \cdot): \mathbb{R} \to [h_{\min}, h_{\max}]$ is continuously differentiable;
    
    \item[(ii)] The partial derivative $\frac{\partial H}{\partial x}$ is uniformly bounded: there exists a constant $L_{\partial H} > 0$ such that
    \[
    \sup_{(s,x) \in [0,T] \times \mathbb{R}} \left| \frac{\partial H}{\partial x}(s,x) \right| \leq L_{\partial H};
    \]
    
    \item[(iii)] $h_{\min} > 0$, ensuring the uniform boundedness of $1/H(s,x)$ over the domain.
\end{enumerate}

Then, based on the well-posedness and path regularity established for the RfBm in the preceding sections (Theorems~\ref{thm:local_existence_uniqueness} and~\ref{thm:instant_scaling_holder}), we may consider a version with continuous sample paths. For such a version, there exists a full probability set $\Omega_0 \subset \Omega$ (i.e., $\mathbb{P}(\Omega_0) = 1$) such that for every $\omega \in \Omega_0$, the sample path $s \mapsto X_s(\omega)$ is continuous on $[0,T]$, and for any fixed $t \in (0, T]$ and any $s \in [0, t)$, the following results hold:

\textbf{1. Sensitivity Structure:}
The pathwise logarithmic sensitivity function of the memory kernel $K(t,s;\cdot)$ with respect to its state variable at $x = X_s(\omega)$ exists and is given by:
\begin{equation}
\label{eq:sensitivity_structure}
\mathcal{S}(s, \omega) = \frac{\partial H}{\partial x}(s, X_s(\omega)) \left[ \frac{1}{2H(s, X_s(\omega))} + \ln(t-s) \right].
\end{equation}

\textbf{2. Relative Sensitivity Principle:}
For any $0 \leq s_1 < s_2 < t$, define the pathwise relative sensitivity function as:
\begin{equation}
\label{eq:relative_sensitivity}
\mathcal{E}(s_1,s_2,t; \omega) = \mathcal{S}(s_1, \omega) - \mathcal{S}(s_2, \omega).
\end{equation}
This function admits the explicit representation:
\begin{equation}
\label{eq:entanglement_principle}
\begin{split}
\mathcal{E}(s_1,s_2,t; \omega) = &
\frac{\partial H}{\partial x}(s_1,X_{s_1}(\omega)) \left[ \frac{1}{2H(s_1,X_{s_1}(\omega))} + \ln(t-s_1) \right] \\
& - \frac{\partial H}{\partial x}(s_2,X_{s_2}(\omega)) \left[ \frac{1}{2H(s_2,X_{s_2}(\omega))} + \ln(t-s_2) \right].
\end{split}
\end{equation}
The sign of $\mathcal{E}(s_1,s_2,t; \omega)$ provides a \textbf{comparative measure} of how the memory allocation (governed by the kernel $K$) at the current time $t$ would respond to infinitesimal changes in the historical states. Specifically, for the realized path $\omega$:
\begin{itemize}
    \item If $\mathcal{E}(s_1,s_2,t; \omega) > 0$, a perturbation of the state at time $s_1$ would induce a \textbf{larger relative change} in the memory weight $K(t, s_1; X_{s_1})$ compared to a similar perturbation at time $s_2$.
    
    \item Conversely, if $\mathcal{E}(s_1,s_2,t; \omega) < 0$, the memory weight is more sensitive to perturbations at the later time $s_2$.
\end{itemize}
\end{theorem}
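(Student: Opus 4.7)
The plan is to reduce the theorem to a direct computation of the partial derivative of $\ln K(t,s;\cdot)$ with respect to the state variable, followed by evaluation along the sample path. The key observation is that the term ``pathwise logarithmic sensitivity'' is naturally interpreted as $\partial_x \ln K(t,s;x)\big|_{x=X_s(\omega)}$, so both parts of the theorem follow once this derivative is computed and the evaluation is justified on a full-measure continuity set.

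First I would fix an $\omega$ in the full-probability set $\Omega_0$ on which the sample path $s\mapsto X_s(\omega)$ is continuous on $[0,T]$ (guaranteed by the well-posedness established in Theorem~\ref{thm:local_existence_uniqueness} together with the H\"older regularity of Theorem~\ref{thm:instant_scaling_holder}). For any fixed $t\in(0,T]$ and $s\in[0,t)$, consider the deterministic function $x\mapsto K(t,s;x)=\sqrt{2H(s,x)}\,(t-s)^{H(s,x)-1/2}$. Taking logarithms yields
\[
\ln K(t,s;x)=\tfrac12\ln 2+\tfrac12\ln H(s,x)+\bigl(H(s,x)-\tfrac12\bigr)\ln(t-s),
\]
where positivity of $H(s,x)\ge h_{\min}>0$ (assumption (iii)) and strict positivity of $t-s$ on $[0,t)$ ensure both logarithms are well-defined. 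Under assumption (i), the chain rule gives
\[
\partial_x\ln K(t,s;x)=\frac{\partial H}{\partial x}(s,x)\left[\frac{1}{2H(s,x)}+\ln(t-s)\right],
\]
which upon substituting $x=X_s(\omega)$ is precisely the claimed formula \eqref{eq:sensitivity_structure}. Assumption (ii) ensures this quantity is finite for each $s\in[0,t)$, while the continuity of $H$, $\partial_x H$, and $s\mapsto X_s(\omega)$ gives continuity of $s\mapsto\mathcal{S}(s,\omega)$ on $[0,t)$.

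For the relative sensitivity principle, the identity \eqref{eq:entanglement_principle} follows immediately by subtracting the two instances of \eqref{eq:sensitivity_structure} at $s_1$ and $s_2$. The interpretation of the sign of $\mathcal{E}$ is then a consequence of the first-order Taylor expansion: for a small state perturbation $\delta$ applied to the $i$-th historical state, one has
\[
\ln K(t,s_i;X_{s_i}(\omega)+\delta)-\ln K(t,s_i;X_{s_i}(\omega))=\mathcal{S}(s_i,\omega)\,\delta+o(\delta),
\]
so $\mathcal{E}(s_1,s_2,t;\omega)=\mathcal{S}(s_1,\omega)-\mathcal{S}(s_2,\omega)$ compares the leading-order relative responses of the kernel at the two historical times, justifying the stated comparative principle.

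The main technical delicacy — rather than a true obstacle — is ensuring that the evaluation at the random argument $X_s(\omega)$ does not interfere with the regularity of the deterministic calculation: this requires working pathwise on the continuity set $\Omega_0$ and noting that although $\ln(t-s)\to-\infty$ as $s\to t^-$, the theorem only asserts the formula on $[0,t)$, so no delicate limiting argument is needed. A secondary subtlety is the uniformity of the bounds implied by (ii) and (iii); these are required to ensure $\mathcal{S}(\cdot,\omega)$ is locally integrable on $[0,t)$ so that the comparative interpretation has quantitative content, but the pointwise formula itself follows purely from elementary calculus under the stated smoothness hypotheses.
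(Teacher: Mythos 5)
Your proposal is correct and follows essentially the same route as the paper's proof: compute $\partial_x \ln K(t,s;x)$ for the deterministic kernel via the logarithmic decomposition and the chain rule, evaluate pathwise at $x = X_s(\omega)$ on the full-measure continuity set, and obtain the relative sensitivity by subtraction, with the sign interpretation read off from the first-order behaviour of the logarithmic gradient. No gaps; the Taylor-expansion remark you add for the comparative interpretation matches the paper's Step 4 in substance.
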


\begin{proof}
We proceed with a stepwise rigorous proof.

\textbf{Step 1: Deterministic Framework for Derivative Construction}

Fix $t \in (0,T]$ and $s \in [0,t)$. Consider the memory kernel as a deterministic function:
\[
K: [0,T] \times [0,T] \times \mathbb{R} \to \mathbb{R}, \quad K(t,s;x) = \sqrt{2H(s, x)} \cdot (t-s)^{H(s, x)-1/2}.
\]
Define the corresponding logarithmic function:
\[
f_{t,s}(x) = \ln K(t,s;x) = \frac{1}{2}\ln 2 + \frac{1}{2}\ln H(s,x) + \left[H(s,x) - \frac{1}{2}\right] \ln(t-s).
\]
Under the smoothness assumptions of the theorem, $H(s, \cdot)$ is continuously differentiable, and $H(s,x) \in [h_{\min}, h_{\max}] \subset (0,1)$, ensuring that $f_{t,s}(x)$ is continuously differentiable with respect to $x$. Computing the derivative:
\begin{align*}
\frac{\partial}{\partial x} f_{t,s}(x)
&= 0 + \frac{1}{2} \cdot \frac{1}{H(s,x)} \cdot \frac{\partial H}{\partial x}(s,x) + \frac{\partial H}{\partial x}(s,x) \cdot \ln(t-s) \\
&= \frac{\partial H}{\partial x}(s,x) \left[ \frac{1}{2H(s,x)} + \ln(t-s) \right].
\end{align*}
We denote this deterministic derivative as:
\begin{equation}
\label{eq:deterministic_sensitivity}
\mathcal{S}_{t,s}^{\mathrm{det}}(x) = \frac{\partial H}{\partial x}(s,x) \left[ \frac{1}{2H(s,x)} + \ln(t-s) \right].
\end{equation}

\textbf{Step 2: Pathwise Sensitivity Function and Well-Definedness}

By the definition of RfBm (Definition~\ref{def:rFBM}) and the established path regularity, there exists a full probability set $\Omega_0 \subset \Omega$ such that for every $\omega \in \Omega_0$, the sample path $s \mapsto X_s(\omega)$ is continuous on $[0,T]$. The continuity of $H$ and the measurability of its partial derivative ensure that the composition $(s,\omega) \mapsto \frac{\partial H}{\partial x}(s, X_s(\omega))$ is well-defined and adapted.

Now, fix arbitrary $\omega \in \Omega_0$, $t \in (0,T]$, and $s \in [0,t)$. We define the pathwise sensitivity function by evaluating the deterministic derivative function $\mathcal{S}_{t,s}^{\mathrm{det}}$ at $x = X_s(\omega)$:
\begin{equation}
\mathcal{S}(s, \omega) = \mathcal{S}_{t,s}^{\mathrm{det}}(X_s(\omega)) = \frac{\partial H}{\partial x}(s, X_s(\omega)) \left[ \frac{1}{2H(s, X_s(\omega))} + \ln(t-s) \right].
\end{equation}
This establishes the sensitivity structure (Equation~\ref{eq:sensitivity_structure}).

\textbf{Step 3: Derivation of Relative Sensitivity Function}

For any $\omega \in \Omega_0$ and $0 \leq s_1 < s_2 < t$, by the definition in \eqref{eq:relative_sensitivity}:
\[
\mathcal{E}(s_1,s_2,t; \omega) = \mathcal{S}(s_1, \omega) - \mathcal{S}(s_2, \omega).
\]
Substituting the expression for $\mathcal{S}$ from \eqref{eq:sensitivity_structure} yields directly the explicit representation \eqref{eq:entanglement_principle}.

\textbf{Step 4: Mathematical Interpretation of Relative Sensitivity}

The mathematical significance of $\mathcal{E}(s_1,s_2,t; \omega)$ stems from its construction as a difference of logarithmic gradients. Recall that $\mathcal{S}(s, \omega) = \frac{\partial}{\partial x} \ln K(t,s;x) \big|_{x=X_s(\omega)}$ represents the relative rate of change of the memory kernel with respect to the state variable at the historical point $(s, X_s(\omega))$.

Therefore, the inequality $\mathcal{E}(s_1,s_2,t; \omega) > 0$ is equivalent to
\[
\frac{\partial}{\partial x} \ln K(t,s_1;x) \big|_{x=X_{s_1}(\omega)} > \frac{\partial}{\partial x} \ln K(t,s_2;x) \big|_{x=X_{s_2}(\omega)}.
\]
This indicates that, under the realized state configuration $\{X_{s_1}(\omega), X_{s_2}(\omega)\}$, the logarithmic memory kernel $\ln K(t,s_1;\cdot)$ is more sensitive to perturbations around $X_{s_1}(\omega)$ than $\ln K(t,s_2;\cdot)$ is around $X_{s_2}(\omega)$. The case $\mathcal{E} < 0$ reverses the inequality.

\textbf{Step 5: Synthesis.}
The function $\mathcal{E}$ thus quantifies the \textbf{differential sensitivity} of the memory kernel's logarithmic structure with respect to the process state at two distinct historical moments. This provides a \textbf{pathwise, comparative tool} to analyze how the RfBm's encoding of past information dynamically adjusts its dependence on different parts of its own history, based on both the temporal distance $(t-s)$ and the state-dependent response gradient $\frac{\partial H}{\partial x}$.
\end{proof}

\begin{remark}[Geometric Structure and Adaptive Interpretation of Memory Sensitivity]
\label{rem:sensitivity_geometric_structure_interpretation}
Theorem~\ref{thm:memory_dynamics_sensitivity} establishes a pathwise differential geometric perspective on the RfBm's memory system. The sensitivity function $\mathcal{S}(s, \omega)$ quantifies the local linear response of memory weights to variations in historical states, while the relative sensitivity $\mathcal{E}(s_1,s_2,t;\omega)$ provides a comparative measure of how memory influence is allocated across different temporal contexts.

In the limiting case of a constant response function ($H(s,x)\equiv H_0$), corresponding to classical fBm, we have $\partial H/\partial x \equiv 0$, leading to $\mathcal{S} \equiv 0$ and $\mathcal{E} \equiv 0$. This recovers a uniform memory regime without state-dependent adaptation. The general framework thus extends classical theory by demonstrating how state-dependent response functions dynamically regulate sensitivity, enabling adaptive memory mechanisms.

From a mathematical perspective, this framework delineates a principled interplay: the logarithmic term $\ln(t-s)$ can be interpreted as a factor encoding temporal discounting, whereas the state gradient $\frac{\partial H}{\partial x}$ allows for content-aware modulation. Consequently, it offers a stochastic-analytic foundation for modeling systems that dynamically adjust their reliance on historical information according to both temporal proximity and contextual relevance. This perspective may offer conceptual links to certain adaptive phenomena in learning and sequence modeling \cite{HJiang2024}.
\end{remark}

Proceeding from the qualitative sensitivity analysis, we now establish rigorous quantitative bounds for the RfBm attention weights. These bounds provide mathematical characterization of the memory allocation behavior and reveal the multi-scale structure inherent in the attention mechanism.

\begin{theorem}[Fundamental Bounds for RfBm Attention Weights]
\label{thm:rfbm_attention_bounds}
Let $\rho(t,s; X)$ be the RfBm attention weights defined in Definition~\ref{def:rfbm_attention_weights}, with the response function $H: [0,T] \times \mathbb{R} \to (0,1)$ satisfying the Lipschitz-H\"older conditions of Definition~\ref{def:LH_response_function}. Assume there exist constants $0 < h_{\min} \leq h_{\max} < 1$ such that $h_{\min} \leq H(t,x) \leq h_{\max}$ for all $(t,x)$. Then the following bounds hold:

\textbf{Case I: $t \leq 1$}
\begin{itemize}
\item[(a)] For $t-s < 1$:
\begin{equation}
A_1 (t-s)^{h_{\max}-1/2} \leq \rho(t,s; X) \leq B_1 (t-s)^{h_{\min}-h_{\max}-1}
\end{equation}

\item[(b)] For $t-s = 1$:
\begin{equation}
A_2 \leq \rho(1,0; X) \leq B_2
\end{equation}
\end{itemize}

\textbf{Case II: $t > 1$}
\begin{itemize}
\item[(a)] For $t-s < 1$:
\begin{align}
A_3 \cdot \frac{(t-s)^{h_{\max}-1/2}}{t^{h_{\max}+1/2}} \leq \rho(t,s; X) \leq B_3 \cdot (t-s)^{-1}
\end{align}

\item[(b)] For $t-s > 1$:
\begin{align}
A_4 \cdot \frac{(t-s)^{h_{\min}-1/2}}{t^{h_{\max}+1/2}} \leq \rho(t,s; X) \leq B_4 \cdot (t-s)^{h_{\max}-h_{\min}-1}
\end{align}

\item[(c)] For $t-s = 1$:
\begin{align}
A_5 \cdot \frac{1}{t^{h_{\max}+1/2}} \leq \rho(t,t-1; X) \leq B_5 \cdot \frac{1}{t^{h_{\min}+1/2}}
\end{align}
\end{itemize}

with explicit constants given by:
\begin{align*}
A_1 &= \frac{\sqrt{h_{\min}}(h_{\min}+1/2)}{\sqrt{h_{\max}}}, & 
B_1 &= \frac{\sqrt{h_{\max}}(h_{\max}+1/2)}{\sqrt{h_{\min}}} \\
A_2 &= A_1, & 
B_2 &= B_1 \\
A_3 &= A_1, & 
B_3 &= B_1 \\
A_4 &= A_1, & 
B_4 &= B_1 \\
A_5 &= \frac{\sqrt{h_{\min}}(h_{\min}+1/2)}{2\sqrt{h_{\max}}}, & 
B_5 &= \frac{2\sqrt{h_{\max}}(h_{\max}+1/2)}{\sqrt{h_{\min}}}
\end{align*}
\end{theorem}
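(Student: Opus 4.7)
The plan is to bound $\rho(t,s;X) = K(t,s;X_s)/D(t,X)$, where $D(t,X) = \int_0^t K(t,u;X_u)\,du$, by estimating the numerator pointwise and the denominator integrally via the uniform pathwise bounds $h_{\min} \leq H(s,X_s(\omega)) \leq h_{\max}$ from Definition~\ref{def:LH_response_function}. The five sub-cases correspond to the two possible positions of $t$ relative to $1$ combined with the three possible positions of $t-s$ relative to $1$; the split at $1$ is necessary because $a \mapsto (t-s)^a$ is strictly decreasing on $(0,1]$ and strictly increasing on $[1,\infty)$, so the roles of $h_{\min}$ and $h_{\max}$ in producing extremal estimates are reversed across this threshold, in the same spirit as the region-wise monotonicity analysis used for the covariance kernel in Theorem~\ref{thm:cov_precise_bounds}.

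I would first record the pointwise kernel bounds. With $K(t,s;X_s) = \sqrt{2H(s,X_s)}\,(t-s)^{H(s,X_s)-1/2}$ the prefactor lies in $[\sqrt{2h_{\min}},\sqrt{2h_{\max}}]$, while the power term is maximised by $(t-s)^{h_{\min}-1/2}$ and minimised by $(t-s)^{h_{\max}-1/2}$ when $t-s\leq 1$, with the ordering reversed when $t-s\geq 1$. For the denominator, when $t\leq 1$ the entire domain of integration lies in the singular regime $t-u\leq 1$, and the elementary identity $\int_0^t (t-u)^{h-1/2}\,du = t^{h+1/2}/(h+1/2)$ applied at the extremal exponents yields two-sided bounds on $D$. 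For $t>1$ I would split $[0,t]=[0,t-1]\cup[t-1,t]$, apply the regime-appropriate kernel bounds on each piece, and evaluate the resulting elementary integrals; discarding the uniformly bounded $[t-1,t]$ contribution from the lower bound and majorising the constant contribution by $t^{h_{\max}+1/2}$ in the upper bound (using $t^{h_{\max}+1/2}\geq 1$ for $t\geq 1$) produces clean $t$-powered estimates on $D$. Dividing kernel by denominator then yields raw estimates, and a final simplification recovers the stated forms: Case~I(a) needs $t^{-(h_{\max}+1/2)}\leq (t-s)^{-(h_{\max}+1/2)}$ (valid by $t\geq t-s$ and negativity of the exponent) to convert $(t-s)^{h_{\min}-1/2}\cdot t^{-(h_{\max}+1/2)}$ into $(t-s)^{h_{\min}-h_{\max}-1}$; Cases~I(b) and~II(c) follow by specialisation to $(t-s)^a=1$; the announced bound $B_3(t-s)^{-1}$ in Case~II(a) is a deliberate weakening of the sharper $B_1(t-s)^{h_{\min}-1/2}$ via the monotonicity $(t-s)^{h_{\min}-1/2}\leq (t-s)^{-1}$ on $(0,1]$; and Case~II(b) is the analogous argument in the regular regime $t-s\geq 1$.

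The main obstacle I anticipate is the careful bookkeeping needed to match the explicit constants $A_i,B_i$ stated in the theorem. Several auxiliary monotonicity inequalities---$t-s\leq t$, $t^{h_{\max}+1/2}\geq 1$ for $t\geq 1$, and $(t-s)^a\leq 1$ for $t-s\leq 1,\ a\geq 0$---must be threaded consistently through each sub-case, and one must resist absorbing stray numerical factors into a generic constant since the theorem specifies closed-form expressions in $h_{\min}, h_{\max}$. Since the argument is otherwise an elementary exercise once the case decomposition is fixed, the proof is expected to be mechanical but lengthy, and its technical weight resides almost entirely in this constant-tracking---which is presumably why the detailed calculation has been relegated to Appendix~\ref{app:proof_attention_bounds}.
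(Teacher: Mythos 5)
Your overall strategy coincides with the paper's Appendix proof: the same monotonicity dichotomy for $a \mapsto (t-s)^a$ at $t-s=1$, the same pointwise kernel bounds, the same split of the partition function $D(t,X)=\int_0^t K(t,u;X_u)\,du$ at $u=t-1$ when $t>1$, and the same final simplifications (e.g.\ $(t-s)^{h_{\min}-1/2}\le (t-s)^{-1}$ on $(0,1]$, and $t^{h+1/2}\ge (t-s)^{h+1/2}$). The one place where your plan as written would fail is the recombination of the two pieces of $D$ in Case II. Discarding the $[t-1,t]$ contribution from the lower bound of $D$ leaves
\[
D(t,X)\ \ge\ \sqrt{2h_{\min}}\,\frac{t^{h_{\min}+1/2}-1}{h_{\min}+1/2},
\]
which tends to $0$ as $t\to 1^+$; the resulting upper bound on $\rho$ then blows up near $t=1$ and cannot be dominated by $B_3(t-s)^{-1}$ or $B_4(t-s)^{h_{\max}-h_{\min}-1}$ with the stated finite constants $B_3=B_4=B_1$. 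Symmetrically, majorising the bounded $[t-1,t]$ contribution to the upper bound of $D$ by a multiple of $t^{h_{\max}+1/2}$ and adding it to the $[0,t-1]$ piece yields $t^{h_{\max}+1/2}\bigl(\tfrac{1}{h_{\max}+1/2}+\tfrac{1}{h_{\min}+1/2}\bigr)$ rather than $\tfrac{t^{h_{\max}+1/2}}{h_{\min}+1/2}$, a spurious factor of roughly $2$ that would prevent you from recovering $A_3=A_4=A_1$.

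The missing device is the common-denominator trick the paper uses: since $h_{\min}+1/2\le h_{\max}+1/2$, one has
\[
\frac{t^{h_{\min}+1/2}-1}{h_{\min}+1/2}+\frac{1}{h_{\max}+1/2}\ \ge\ \frac{t^{h_{\min}+1/2}-1}{h_{\max}+1/2}+\frac{1}{h_{\max}+1/2}\ =\ \frac{t^{h_{\min}+1/2}}{h_{\max}+1/2},
\]
and the analogous inequality in the opposite direction for the upper bound of $D$. Keeping both pieces and shifting them to a common denominator cancels the $\pm 1$ exactly and delivers the clean $t$-power estimates $\sqrt{2h_{\min}}\,t^{h_{\min}+1/2}/(h_{\max}+1/2)\le D\le \sqrt{2h_{\max}}\,t^{h_{\max}+1/2}/(h_{\min}+1/2)$, which is what produces $A_3=A_4=A_1$ and $B_3=B_4=B_1$. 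The factor-of-$2$ slack you describe is only admissible in the endpoint subcase II(c), where the theorem's constants $A_5=A_1/2$ and $B_5=2B_1$ have been pre-weakened to absorb it. With this one replacement your argument goes through and matches the paper's.
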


\begin{proof}
The complete proof involves detailed multi-scale analysis and precise constant estimation. To maintain the flow of presentation while ensuring mathematical rigor, we provide the full proof in Appendix~\ref{app:proof_attention_bounds}.
\end{proof}

\begin{remark}[Mathematical Perspectives on the Attention Bounds]
\label{rem:attention_bounds_significance}
Theorem~\ref{thm:rfbm_attention_bounds} provides several mathematical interpretations within the RfBm framework:

\textbf{1. Memory Structure Characterization:} The exponents $h_{\max}-1/2$ and $h_{\min}-h_{\max}-1$ quantify scaling properties of memory decay, offering a mathematical description of long-range dependencies in state-dependent systems.

\textbf{2. Extension of Anomalous Diffusion Concepts:} The results extend notions from anomalous diffusion to state-dependent settings, suggesting formal analogies with certain non-Markovian stochastic processes.

\textbf{3. Adaptive Memory Regulation:} The dependence on $h_{\min}$ and $h_{\max}$ illustrates how the response function $H(s,X_s)$ parametrically modulates memory strength within the model.

\textbf{4. Alternative Attention Patterns:} The power-law decay presents an alternative to exponential attention patterns, potentially offering different trade-offs in modeling long-term dependencies.

\textbf{5. Multi-scale Temporal Analysis:} The analysis across different temporal regimes organizes the behavior into distinct scaling regions, providing a structured view of temporal dynamics.

These bounds establish quantitative references for analyzing memory allocation in the RfBm framework.
\end{remark}

\begin{remark}[Connections to Modern AI Architectures]
\label{rem:attention_bounds_AI_connections}
The quantitative bounds established in Theorem~\ref{thm:rfbm_attention_bounds} offer complementary perspectives for contemporary AI architectures:

\textbf{1. Long-Range Dependency Handling:} The power-law bounds provide a mathematical characterization of long-range dependency capture, offering one possible theoretical approach alongside empirical studies in sequence modeling \cite{HJiang2024}.

\textbf{2. Adaptive Memory Mechanisms:} The explicit dependence on $h_{\min}$ and $h_{\max}$ provides one mathematical formulation of adaptive memory regulation, which may offer some insights for dynamic attention mechanisms in transformers and related architectures \cite{Vaswani2017}.

\textbf{3. Theoretical Insights for Architecture Design:} The rigorous bounds provide some mathematical insights that could potentially inform certain aspects of future architectural choices in attention-based models.

\textbf{4. Bridging Stochastic Processes and Computational Learning:} This work explores some mathematical connections between stochastic process theory and computational learning models, suggesting possible avenues for cross-disciplinary exchange between these domains.
\end{remark}


\addcontentsline{toc}{section}{REFERENCES}

\appendix
\section{Boundedness of Non-Dominant Terms in the Second-Order Mixed Derivative}\label{app:non_dominant}

This appendix provides the complete and rigorous proof of the boundedness of the non-dominant terms ($I_2$, $I_3$, $I_4$) in the decomposition of $\partial_u \partial_v R(u,v)$ stated in Remark~\ref{rem:non_dominant}. The proof is built upon the unified logarithmic control inequality (Remark~\ref{rem:unified_log}) and the hypergeometric function representation technique introduced in Remark~\ref{rem:cov_hyper}.

\begin{proposition}[Boundedness of Terms 2, 3, and 4]\label{prop:non_dominant_bound}
Under the assumptions of Theorem~\ref{thm:covariance} and Corollary~\ref{cor:mixed_deriv}, and further assuming that the Hurst exponent function $H: [0,T] \to (0,1)$ is continuously differentiable with a bounded derivative $|H'(t)| \leq L_H$ for some $L_H > 0$, and that $\inf_{t \in [0,T]} H(t) \geq h_{\min} > 0$, the following holds:
For any fixed $u, v \in [0,T]$ with $0 \leq u < v \leq T$, $v \geq 2u$, and satisfying $H(u) > \frac{1}{2}$, $H(v) > \frac{1}{2}$, the integrals $I_2(u,v)$, $I_3(u,v)$, and $I_4(u,v)$ defined in the decomposition of $\partial_u \partial_v R(u,v)$ are bounded. That is, there exists a constant $C > 0$, whose value depends on $H$, $L_H$, $h_{\min}$, and $T$, such that:
\[
|I_2(u,v)| \leq C, \quad |I_3(u,v)| \leq C, \quad |I_4(u,v)| \leq C.
\]
\end{proposition}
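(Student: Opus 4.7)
The strategy is to isolate the four cross-terms in the expansion of $\partial_u K(u,s)\cdot \partial_v K(v,s)$ from Corollary~\ref{cor:mixed_deriv}, identify which logarithmic factors are genuinely singular and which are merely bounded perturbations, and reduce each remainder to an integral of the type already treated in Remark~\ref{rem:cov_hyper}. I would begin by writing the two derivatives in the factored form
\[
\partial_u K(u,s) = (u-s)^{H(u)-3/2}\bigl[A(u) + B(u,s)(u-s)\bigr], \quad
\partial_v K(v,s) = (v-s)^{H(v)-3/2}\bigl[C(v) + D(v,s)(v-s)\bigr],
\]
with $A(u)=\sqrt{2H(u)}(H(u)-1/2)$, $C(v)=\sqrt{2H(v)}(H(v)-1/2)$, while $B(u,s)$ and $D(v,s)$ collect the logarithmic terms $\ln(u-s)$ and $\ln(v-s)$ together with prefactors uniformly controlled by $h_{\min},h_{\max},L_H$. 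The product then yields $I_1$ (the dominant term, treated in Remark~\ref{rem:cov_hyper}) together with the three remainders $I_2,I_3,I_4$.

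The crucial geometric input is the condition $v\geq 2u$: on the integration domain $[0,u]$ one has $v-s \geq v-u \geq u > 0$, so $\ln(v-s)$ is bounded, with $|\ln(v-s)| \leq |\ln(v-u)| + |\ln T|$, and in particular $\ln(v-s)$ never produces a singularity in $s$. Only $\ln(u-s)$ does, and for that singularity I would invoke Proposition~\ref{prop:log_inequality}, choosing a small $\delta\in(0,H(u)+1/2)$ so as to replace $|\ln(u-s)|$ by $K_\delta(u-s)^{-\delta}$. Concretely:
\begin{itemize}
\item For $I_2$, whose integrand contains no $\ln(u-s)$, the estimate reduces at once to $\int_0^u (u-s)^{H(u)-3/2}(v-s)^{H(v)-1/2}\,ds$ multiplied by factors uniformly bounded by $|\ln(v-u)|+|\ln T|$, and the endpoint $s=u$ is integrable because $H(u)>1/2$.
\item For $I_3$, after applying Proposition~\ref{prop:log_inequality} to absorb $\ln(u-s)$, the integrand becomes $(u-s)^{H(u)-1/2-\delta}(v-s)^{H(v)-3/2}$ times bounded factors, with exponent $H(u)-1/2-\delta>-1$.
\item For $I_4$, both log factors are handled in parallel, yielding a purely power-law integrand of the same type.
\end{itemize}

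Each remainder is thereby reduced to an integral of the shape $\int_0^u (u-s)^{a}(v-s)^{b}\,ds$ with $a>-1$. Under the hypothesis $v\geq 2u$, the substitution $s=u-t(v-u)$ from Remark~\ref{rem:cov_hyper} converts this to $(v-u)^{a+b+1}\int_0^{u/(v-u)} t^{a}(1+t)^{b}\,dt$ with $u/(v-u)\in[0,1]$, making the hypergeometric representation of that remark directly applicable. Boundedness then follows from the continuity of ${}_2F_1$ in its parameters and argument over the compact admissible region, together with uniform control of the prefactors $\sqrt{2H}(H-1/2)$, $H'/\sqrt{2H}$, $\sqrt{2H}H'$ by $h_{\min},h_{\max},L_H$, and of $K_\delta$ fixed in terms of $h_{\min}$.

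The main technical obstacle will be uniformity in the regime $u\to 0^+$, where $|\ln(v-u)|$ may grow like $|\ln u|$. This growth is compensated by the shrinking integration length, which produces an overall factor of order $u^{H(u)+1/2-\delta}$ from the beta-type integration; by Proposition~\ref{prop:log_inequality} — or the unified form in Remark~\ref{rem:unified_log} — the product $u^{H(u)+1/2-\delta}|\ln u|^k$ stays bounded for any fixed $k$. Once the exponent bookkeeping is arranged so that every power of $(u-s)$ in the reduced integrands exceeds $-1$ after the $\delta$-perturbation, assembling the partial bounds yields the uniform constant $C=C(H,L_H,h_{\min},T)$ claimed in the proposition.
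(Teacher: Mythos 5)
Your proposal is correct and follows the same overall strategy as the paper's Appendix~A proof: isolate $I_2,I_3,I_4$ via the factored forms of $\partial_u K$ and $\partial_v K$, tame the logarithms, and reduce everything to integrals $\int_0^u (u-s)^a(v-s)^b\,ds$ with $a,b>-1$, which are then evaluated through the hypergeometric representation of Remark~\ref{rem:cov_hyper}. The one genuine divergence is your treatment of $\ln(v-s)$: you exploit the geometry $v-s\geq v-u\geq u>0$ on the integration domain to treat it as a bounded multiplier $|\ln(v-u)|+|\ln T|$ pulled outside the integral, whereas the paper applies the unified logarithmic control of Remark~\ref{rem:unified_log} (with $\delta=1/2$, $\alpha=1$) to \emph{both} log factors, converting each into a sum of power laws $(v-s)^{-1/2}+(v-s)$ and $(u-s)^{-1/2}+(u-s)$. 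The paper's route is more mechanical and uniform from the outset — it never has to worry about $|\ln(v-u)|$ blowing up as $u,v-u\to 0^+$ — at the cost of a proliferation of elementary integrals (nine for $I_4$), each needing its own exponent check and hypergeometric closed form. Your route keeps the bookkeeping lighter but transfers the difficulty to the prefactor, and you correctly identify and resolve the resulting obstruction: the logarithmic growth $|\ln(v-u)|\lesssim|\ln u|$ is absorbed by the positive power $u^{H(u)-1/2}$ (or $u^{H(u)+1/2-\delta}$ after the $\delta$-perturbation) emerging from the beta-type integration, via Proposition~\ref{prop:log_inequality}. Both arguments land on the same continuity-of-${}_2F_1$ conclusion over the compact admissible region, so your version is a legitimate, somewhat leaner alternative; just make sure in the final write-up that your choice of $\delta$ for the $\ln(u-s)$ factor also keeps the combined exponent of $u$ and $v-u$ in the hypergeometric prefactors nonnegative in the joint limit $u\to 0^+$, $v=2u$, where $(v-u)^{H(v)-3/2}$ is itself singular — this is the one place where the exponent bookkeeping is genuinely delicate and where the paper's proof is also at its thinnest.
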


\begin{proof}
We proceed to prove the boundedness of each term through a series of meticulous steps. The structure of Terms 2 and 3 is similar, while Term 4 involves the product of two logarithmic factors, requiring a more involved treatment.

\noindent \textbf{Notation and Preliminary Setup:}
Recall the definitions from Corollary~\ref{cor:mixed_deriv} and the subsequent decomposition:
\begin{align*}
I_2(u,v) &= C(u) \int_0^u B(v,s) (u-s)^{H(u)-3/2} (v-s)^{H(v)-1/2}  ds, \\
I_3(u,v) &= A(v) \int_0^u D(u,s) (u-s)^{H(u)-1/2} (v-s)^{H(v)-3/2}  ds, \\
I_4(u,v) &= \int_0^u D(u,s) B(v,s) (u-s)^{H(u)-1/2} (v-s)^{H(v)-1/2}  ds,
\end{align*} 
where the coefficient functions are:
\begin{align*}
C(u) &= \sqrt{2H(u)} \left( H(u) - \frac{1}{2} \right), \\
A(v) &= \sqrt{2H(v)} \left( H(v) - \frac{1}{2} \right), \\
B(v,s) &= \sqrt{2H(v)} H'(v) \ln(v-s) + \frac{H'(v)}{\sqrt{2H(v)}}, \\
D(u,s) &= \sqrt{2H(u)} H'(u) \ln(u-s) + \frac{H'(u)}{\sqrt{2H(u)}}.
\end{align*}
By the assumed boundedness of $H'$, we have $|H'(t)| \leq L_H$ for all $t \in [0,T]$. This immediately implies that the coefficients $C(u)$ and $A(v)$ are bounded on $[0,T]$. The challenge lies in controlling the integrals involving the logarithmic factors $B(v,s)$ and $D(u,s)$.

\noindent \textbf{Step 1: Proof for $I_2(u,v)$}

We begin by taking the absolute value and using the triangle inequality:
\[
|I_2(u,v)| \leq |C(u)| \int_0^u |B(v,s)| (u-s)^{H(u)-3/2} (v-s)^{H(v)-1/2}  ds.
\]
We now bound the factor $|B(v,s)|$. Let $M_H = L_H$. Then:
\begin{align*}
|B(v,s)| &= \left| \sqrt{2H(v)} H'(v) \ln(v-s) + \frac{H'(v)}{\sqrt{2H(v)}} \right| \\
&\leq \sqrt{2H(v)} M_H |\ln(v-s)| + \frac{M_H}{\sqrt{2H(v)}}.
\end{align*}
Define the following constants for clarity:
\[
A_v = \sqrt{2H(v)} M_H, \quad B_v = \frac{M_H}{\sqrt{2H(v)}}.
\]
Thus, $|B(v,s)| \leq A_v |\ln(v-s)| + B_v$.

We now apply the unified logarithmic control inequality (Remark~\ref{rem:unified_log}, Proposition~\ref{prop:log_inequality} and \ref{prop:log_growth}). For any $\delta > 0$ and $\alpha > 0$, there exists a constant $K_{\delta,\alpha} = \max\left( \frac{1}{\delta e}, \frac{1}{\alpha} \right)$ such that:
\[
|\ln(v-s)| \leq K_{\delta,\alpha} \left[ (v-s)^{-\delta} + (v-s)^{\alpha} \right] \quad \text{for all } v-s > 0.
\]
Substituting this yields:
\[
|B(v,s)| \leq A_v K_{\delta,\alpha} \left[ (v-s)^{-\delta} + (v-s)^{\alpha} \right] + B_v.
\]

Substituting this bound into the integral for $|I_2(u,v)|$, we obtain:
\begin{align*}
|I_2(u,v)| \leq |C(u)| \Bigg\{ &A_v K_{\delta,\alpha} \int_0^u \left[ (v-s)^{-\delta} + (v-s)^{\alpha} \right] (u-s)^{H(u)-3/2} (v-s)^{H(v)-1/2}  ds \\
&+ B_v \int_0^u (u-s)^{H(u)-3/2} (v-s)^{H(v)-1/2}  ds \Bigg\}.
\end{align*}
We distinguish these integrals for further analysis:
\begin{align*}
I_{2a} &= \int_0^u (u-s)^{H(u)-3/2} (v-s)^{H(v)-1/2 - \delta}  ds, \\
I_{2b} &= \int_0^u (u-s)^{H(u)-3/2} (v-s)^{H(v)-1/2 + \alpha}  ds, \\
I_{2c} &= \int_0^u (u-s)^{H(u)-3/2} (v-s)^{H(v)-1/2}  ds.
\end{align*}
Thus, the inequality becomes:
\[
|I_2(u,v)| \leq |C(u)| \left\{ A_v K_{\delta,\alpha} (I_{2a} + I_{2b}) + B_v I_{2c} \right\}.
\]

Each of these integrals $I_{2a}, I_{2b}, I_{2c}$ is of the form:
\[
J(a, b) = \int_0^u (u-s)^a (v-s)^b  ds,
\]
with $a = H(u) - \frac{3}{2}$ and $b$ taking the values $b_1 = H(v)-\frac{1}{2}-\delta$, $b_2 = H(v)-\frac{1}{2}+\alpha$, and $b_3 = H(v)-\frac{1}{2}$, respectively.

Since $H(u) > \frac{1}{2}$ and $H(v) > \frac{1}{2}$, we have $a > -1$. To ensure $b > -1$ for all cases, we make a specific choice of parameters. Let us choose:
\[
\delta = \frac{1}{2}, \quad \alpha = 1.
\]
Then:
\begin{align*}
b_1 &= H(v) - 1/2 - 1/2 = H(v) - 1 > -1 \quad (\text{since } H(v) > 0), \\
b_2 &= H(v) - 1/2 + 1 = H(v) + 1/2 > 1/2 > -1, \\
b_3 &= H(v) - 1/2 > -1/2 > -1.
\end{align*}
Furthermore, with this choice, $K_{\delta,\alpha} = \max\left( \frac{1}{(1/2)e}, \frac{1}{1} \right) = \max(2/e, 1) = 1$, since $2/e \approx 0.736$.

We now employ the closed-form representation via the Gaussian hypergeometric function derived in Remark~\ref{rem:cov_hyper}. For an integral $J(a, b) = \int_0^u (u-s)^a (v-s)^b  ds$ with $a > -1$, $b > -1$, and $0 \leq u < v $, $v \geq 2u$, we have:
\[
J(a, b) = \frac{u^{a+1} (v-u)^b}{a+1}  {}_2F_1\left( -b, a+1; a+2; -\frac{u}{v-u} \right).
\]
Applying this formula:
\begin{align*}
I_{2a} &= J(a, b_1) = \frac{u^{H(u)-1/2} (v-u)^{H(v)-1}}{H(u)-1/2}  {}_2F_1\left( 1-H(v), H(u)-1/2; H(u)+1/2; -\frac{u}{v-u} \right), \\
I_{2b} &= J(a, b_2) = \frac{u^{H(u)-1/2} (v-u)^{H(v)+1/2}}{H(u)-1/2}  {}_2F_1\left( -H(v)-1/2, H(u)-1/2; H(u)+1/2; -\frac{u}{v-u} \right), \\
I_{2c} &= J(a, b_3) = \frac{u^{H(u)-1/2} (v-u)^{H(v)-1/2}}{H(u)-1/2}  {}_2F_1\left( 1/2-H(v), H(u)-1/2; H(u)+1/2; -\frac{u}{v-u} \right).
\end{align*}

The hypergeometric function ${}_2F_1(A, B; C; z)$ is analytic for $|z| < 1$. In our case, $z = -\frac{u}{v-u}$ and since $u < v$, we have $|z| = \frac{u}{v-u}$. To ensure $|z| \leq 1$ for the convergence of the hypergeometric series representation, we require the additional condition $v \geq 2u$. However, crucially, the integrals $I_{2a}, I_{2b}, I_{2c}$ are evaluated for fixed $u, v$. For any fixed $u, v \in [0,T]$ with $u < v$ and $v \geq 2u$, the value $z = -\frac{u}{v-u}$ is a fixed number in $[-1, 0)$. The hypergeometric function, as a function of its parameters and argument, yields a finite, bounded value for these fixed inputs. The prefactors ($u^{H(u)-1/2}$, $(v-u)^{b}$, etc.) are also bounded for $u, v$ in the compact interval $[0,T]$. Since $H$ is continuous, the constants $C(u)$, $A_v$, $B_v$ are bounded on $[0,T]$.

Therefore, there exists a constant $C_2$, dependent on $H, L_H, h_{\min}, T$, such that $|I_2(u,v)| \leq C_2$ for all $u, v \in [0,T]$ with $u < v$ and $v \geq 2u$.

\noindent \textbf{Step 2: Proof for $I_3(u,v)$}

The term $I_3$ exhibits a structure symmetric to $I_2$:
\[
|I_3(u,v)| \leq |A(v)| \int_0^u |D(u,s)| (u-s)^{H(u)-1/2} (v-s)^{H(v)-3/2}  ds.
\]
We bound $|D(u,s)|$ analogously:
\begin{align*}
|D(u,s)| &= \left| \sqrt{2H(u)} H'(u) \ln(u-s) + \frac{H'(u)}{\sqrt{2H(u)}} \right| \\
&\leq \sqrt{2H(u)} M_H |\ln(u-s)| + \frac{M_H}{\sqrt{2H(u)}} \\
&= A_u |\ln(u-s)| + B_u,
\end{align*}
where $A_u = \sqrt{2H(u)} M_H$ and $B_u = \frac{M_H}{\sqrt{2H(u)}}$.

Applying the unified logarithmic control inequality to $|\ln(u-s)|$:
\[
|\ln(u-s)| \leq K_{\delta,\alpha} \left[ (u-s)^{-\delta} + (u-s)^{\alpha} \right].
\]
Using the same parameter choice $\delta = \frac{1}{2}, \alpha = 1$ ($K_{\delta,\alpha}=1$), we get:
\[
|D(u,s)| \leq A_u \left[ (u-s)^{-1/2} + (u-s)^{1} \right] + B_u.
\]

Substituting into the integral for $|I_3(u,v)|$:
\begin{align*}
|I_3(u,v)| \leq |A(v)| \Bigg\{ &A_u \int_0^u \left[ (u-s)^{-1/2} + (u-s)^{1} \right] (u-s)^{H(u)-1/2} (v-s)^{H(v)-3/2}  ds \\
&+ B_u \int_0^u (u-s)^{H(u)-1/2} (v-s)^{H(v)-3/2}  ds \Bigg\} \\
= |A(v)| \Bigg\{ &A_u \left[ \int_0^u (u-s)^{H(u)-1} (v-s)^{H(v)-3/2}  ds + \int_0^u (u-s)^{H(u)+1/2} (v-s)^{H(v)-3/2}  ds \right] \\
&+ B_u \int_0^u (u-s)^{H(u)-1/2} (v-s)^{H(v)-3/2}  ds \Bigg\}.
\end{align*}
We identify these integrals:
\begin{align*}
I_{3a} &= \int_0^u (u-s)^{H(u)-1} (v-s)^{H(v)-3/2}  ds, \\
I_{3b} &= \int_0^u (u-s)^{H(u)+1/2} (v-s)^{H(v)-3/2}  ds, \\
I_{3c} &= \int_0^u (u-s)^{H(u)-1/2} (v-s)^{H(v)-3/2}  ds.
\end{align*}
Thus:
\[
|I_3(u,v)| \leq |A(v)| \left\{ A_u (I_{3a} + I_{3b}) + B_u I_{3c} \right\}.
\]

All three integrals are again of the form $J(a, b) = \int_0^u (u-s)^a (v-s)^b  ds$:
\begin{align*}
I_{3a}: &\quad a_1 = H(u)-1, \quad b_1 = H(v)-3/2. \\
I_{3b}: &\quad a_2 = H(u)+1/2, \quad b_2 = H(v)-3/2. \\
I_{3c}: &\quad a_3 = H(u)-1/2, \quad b_3 = H(v)-3/2.
\end{align*}
Since $H(u) > \frac{1}{2}$ and $H(v) > \frac{1}{2}$, we have:
\begin{align*}
a_1 &> -1/2 > -1, \\
a_2 &> 1 > -1, \\
a_3 &> 0 > -1, \\
b_1, b_2, b_3 &= H(v)-3/2 > -1 .
\end{align*}
Thus, all exponents $a, b > -1$, ensuring the applicability of the hypergeometric function representation from Remark~\ref{rem:cov_hyper}:
\begin{align*}
I_{3a} &= \frac{u^{H(u)} (v-u)^{H(v)-3/2}}{H(u)}  {}_2F_1\left( 3/2-H(v), H(u); H(u)+1; -\frac{u}{v-u} \right), \\
I_{3b} &= \frac{u^{H(u)+3/2} (v-u)^{H(v)-3/2}}{H(u)+3/2}  {}_2F_1\left( 3/2-H(v), H(u)+3/2; H(u)+5/2; -\frac{u}{v-u} \right), \\
I_{3c} &= \frac{u^{H(u)+1/2} (v-u)^{H(v)-3/2}}{H(u)+1/2}  {}_2F_1\left( 3/2-H(v), H(u)+1/2; H(u)+3/2; -\frac{u}{v-u} \right).
\end{align*}
As argued for $I_2$, for fixed $u, v \in [0,T]$, $v \geq 2u$, these expressions are finite and bounded. The coefficients $|A(v)|$, $A_u$, $B_u$ are also bounded. Therefore, there exists a constant $C_3$ such that $|I_3(u,v)| \leq C_3$.

\noindent \textbf{Step 3: Proof for $I_4(u,v)$}

Term $I_4$ contains the product of the two logarithmic factors:
\[
|I_4(u,v)| \leq \int_0^u |D(u,s)| |B(v,s)| (u-s)^{H(u)-1/2} (v-s)^{H(v)-1/2}  ds.
\]
We substitute the bounds for $|D(u,s)|$ and $|B(v,s)|$ established in Steps 1 and 2, using the same parameter choice $\delta = \frac{1}{2}, \alpha = 1$:
\begin{align*}
|D(u,s)| &\leq A_u \left[ (u-s)^{-1/2} + (u-s)^{1} \right] + B_u, \\
|B(v,s)| &\leq A_v \left[ (v-s)^{-1/2} + (v-s)^{1} \right] + B_v.
\end{align*}
The product $|D(u,s)| |B(v,s)|$ generates several terms:
\begin{align*}
|D(u,s)| |B(v,s)| \leq\ &A_u A_v \left[ (u-s)^{-1/2}(v-s)^{-1/2} + (u-s)^{-1/2}(v-s)^{1} + (u-s)^{1}(v-s)^{-1/2} + (u-s)^{1}(v-s)^{1} \right] \\
&+ A_u B_v \left[ (u-s)^{-1/2} + (u-s)^{1} \right] \\
&+ B_u A_v \left[ (v-s)^{-1/2} + (v-s)^{1} \right] \\
&+ B_u B_v.
\end{align*}
Substituting this into the integral for $|I_4(u,v)|$ yields a sum of nine integrals:
\begin{align*}
|I_4(u,v)| \leq\ &A_u A_v \Big[ \int_0^u (u-s)^{H(u)-1} (v-s)^{H(v)-1} ds + \int_0^u (u-s)^{H(u)-1} (v-s)^{H(v)+1/2} ds \\
&\quad + \int_0^u (u-s)^{H(u)+1/2} (v-s)^{H(v)-1} ds + \int_0^u (u-s)^{H(u)+1/2} (v-s)^{H(v)+1/2} ds \Big] \\
&+ A_u B_v \left[ \int_0^u (u-s)^{H(u)-1} (v-s)^{H(v)-1/2} ds + \int_0^u (u-s)^{H(u)+1/2} (v-s)^{H(v)-1/2} ds \right] \\
&+ B_u A_v \left[ \int_0^u (u-s)^{H(u)-1/2} (v-s)^{H(v)-1} ds + \int_0^u (u-s)^{H(u)-1/2} (v-s)^{H(v)+1/2} ds \right] \\
&+ B_u B_v \int_0^u (u-s)^{H(u)-1/2} (v-s)^{H(v)-1/2} ds.
\end{align*}
Let us denote these integrals as $I_{4}^{(1)}$ through $I_{4}^{(9)}$ in the order they appear.

Every single one of these nine integrals is of the form $J(a, b) = \int_0^u (u-s)^a (v-s)^b  ds$. Given that $H(u) > \frac{1}{2}$ and $H(v) > \frac{1}{2}$, it is straightforward to verify that for each integral, the exponents $a$ and $b$ satisfy $a > -1$ and $b > -1$:
\begin{itemize}
\item $I_{4}^{(1)}$: $a = H(u)-1 > -1/2 > -1$, $b = H(v)-1 > -1/2 > -1$.
\item $I_{4}^{(2)}$: $a = H(u)-1 > -1/2 > -1$, $b = H(v)+1/2 > 1 > -1$.
\item $I_{4}^{(3)}$: $a = H(u)+1/2 > 1 > -1$, $b = H(v)-1 > -1/2 > -1$.
\item $I_{4}^{(4)}$: $a = H(u)+1/2 > 1 > -1$, $b = H(v)+1/2 > 1 > -1$.
\item $I_{4}^{(5)}$: $a = H(u)-1 > -1/2 > -1$, $b = H(v)-1/2 > 0 > -1$.
\item $I_{4}^{(6)}$: $a = H(u)+1/2 > 1 > -1$, $b = H(v)-1/2 > 0 > -1$.
\item $I_{4}^{(7)}$: $a = H(u)-1/2 > 0 > -1$, $b = H(v)-1 > -1/2 > -1$.
\item $I_{4}^{(8)}$: $a = H(u)-1/2 > 0 > -1$, $b = H(v)+1/2 > 1 > -1$.
\item $I_{4}^{(9)}$: $a = H(u)-1/2 > 0 > -1$, $b = H(v)-1/2 > 0 > -1$.
\end{itemize}
Therefore, each integral $I_{4}^{(k)}$ ($k=1, \ldots, 9$) can be represented in closed form using the hypergeometric function as shown in Remark~\ref{rem:cov_hyper}, and is consequently finite and bounded for fixed $u, v \in [0,T]$ with $v \geq 2u$. The constants $A_u, A_v, B_u, B_v$ are bounded. Thus, the entire sum is bounded, and there exists a constant $C_4$ such that $|I_4(u,v)| \leq C_4$.

\noindent \textbf{Step 4: Final Conclusion}

Taking $C = \max\{C_2, C_3, C_4\}$, which depends on the function $H$, its derivative bound $L_H$, its lower bound $h_{\min}$, and the time horizon $T$, we conclude that:
\[
|I_2(u,v)| \leq C, \quad |I_3(u,v)| \leq C, \quad |I_4(u,v)| \leq C \quad \text{for all } u, v \in [0,T] \text{ with } v \geq 2u.
\]
This completes the proof of Proposition~\ref{prop:non_dominant_bound}.
\end{proof}

\section{Detailed Proof of Theorem~\ref{thm:rfbm_attention_bounds}: Fundamental Bounds for RfBm Attention Weights}
\label{app:proof_attention_bounds}

\begin{proof}[Complete proof of Theorem~\ref{thm:rfbm_attention_bounds}]
\bigskip
\mbox{}\par        
We provide a comprehensive step-by-step proof of the fundamental bounds for RfBm attention weights. Recall from Definition~\ref{def:rfbm_attention_weights} that the attention weight kernel is $\rho(t,s; X) = K(t,s;X_s)/D(t)$, where $D(t) \equiv D(t,X) := \int_0^t K(t, u; X_u)  du$ denotes the attention partition function (we use the simplified notation $D(t)$ throughout this proof).

\textbf{Step 1: Monotonicity Analysis of $(t-s)^{H-1/2}$}

Consider the function $f(H) = (t-s)^{H-1/2}$.

When $t-s < 1$, the base $0 < t-s < 1$, and $f(H)$ is monotonically decreasing in $H$:
\[
H_1 < H_2 \Rightarrow (t-s)^{H_1-1/2} > (t-s)^{H_2-1/2}
\]
Therefore:
\[
(t-s)^{h_{\max}-1/2} \leq (t-s)^{H-1/2} \leq (t-s)^{h_{\min}-1/2}
\]

When $t-s > 1$, the base $t-s > 1$, and $f(H)$ is monotonically increasing in $H$:
\[
H_1 < H_2 \Rightarrow (t-s)^{H_1-1/2} < (t-s)^{H_2-1/2}
\]
Therefore:
\[
(t-s)^{h_{\min}-1/2} \leq (t-s)^{H-1/2} \leq (t-s)^{h_{\max}-1/2}
\]

When $t-s = 1$, $(t-s)^{H-1/2} = 1$.

\textbf{Step 2: Establishing Precise Bounds for the Kernel Function}

From $K(t,s;X_s) = \sqrt{2H(s,X_s)} \cdot (t-s)^{H(s,X_s)-1/2}$ and $h_{\min} \leq H(s,X_s) \leq h_{\max}$, we obtain:

For $t-s < 1$:
\begin{align*}
K(t,s;X_s) &\geq \sqrt{2h_{\min}} \cdot (t-s)^{h_{\max}-1/2} \\
K(t,s;X_s) &\leq \sqrt{2h_{\max}} \cdot (t-s)^{h_{\min}-1/2}
\end{align*}

For $t-s > 1$:
\begin{align*}
K(t,s;X_s) &\geq \sqrt{2h_{\min}} \cdot (t-s)^{h_{\min}-1/2} \\
K(t,s;X_s) &\leq \sqrt{2h_{\max}} \cdot (t-s)^{h_{\max}-1/2}
\end{align*}

For $t-s = 1$:
\[
\sqrt{2h_{\min}} \leq K(t,s;X_s) \leq \sqrt{2h_{\max}}
\]

\textbf{Step 3: Estimating the Denominator Integral $D(t) = \int_0^t K(t,u;X_u)  du$}

We analyze two cases based on the value of $t$.

\textbf{Case 1: $t \leq 1$}

For all $u \in [0,t]$, we have $t-u \leq 1$, so we use the bounds for $t-u < 1$:

Lower bound:
\begin{align*}
D(t) &\geq \int_0^t \sqrt{2h_{\min}} \cdot (t-u)^{h_{\max}-1/2}  du \\
&= \sqrt{2h_{\min}} \int_0^t (t-u)^{h_{\max}-1/2}  du \\
&= \sqrt{2h_{\min}} \cdot \frac{t^{h_{\max}+1/2}}{h_{\max}+1/2}
\end{align*}

Upper bound:
\begin{align*}
D(t) &\leq \int_0^t \sqrt{2h_{\max}} \cdot (t-u)^{h_{\min}-1/2}  du \\
&= \sqrt{2h_{\max}} \cdot \frac{t^{h_{\min}+1/2}}{h_{\min}+1/2}
\end{align*}

\textbf{Case 2: $t > 1$}

We split the integration interval into $[0,t-1]$ and $[t-1,t]$:

On $[t-1,t]$ ($t-u \leq 1$):
\begin{align*}
&\int_{t-1}^t \sqrt{2h_{\min}} \cdot (t-u)^{h_{\max}-1/2}  du \leq \int_{t-1}^t K(t,u;X_u)  du \\
&\quad \leq \int_{t-1}^t \sqrt{2h_{\max}} \cdot (t-u)^{h_{\min}-1/2}  du \\
&\sqrt{2h_{\min}} \cdot \frac{1}{h_{\max}+1/2} \leq \int_{t-1}^t K(t,u;X_u)  du \leq \sqrt{2h_{\max}} \cdot \frac{1}{h_{\min}+1/2}
\end{align*}

On $[0,t-1]$ ($t-u \geq 1$):
\begin{align*}
&\int_0^{t-1} \sqrt{2h_{\min}} \cdot (t-u)^{h_{\min}-1/2}  du \leq \int_0^{t-1} K(t,u;X_u)  du \\
&\quad \leq \int_0^{t-1} \sqrt{2h_{\max}} \cdot (t-u)^{h_{\max}-1/2}  du \\
&\sqrt{2h_{\min}} \cdot \frac{t^{h_{\min}+1/2}-1}{h_{\min}+1/2} \leq \int_0^{t-1} K(t,u;X_u)  du \\
&\quad \leq \sqrt{2h_{\max}} \cdot \frac{t^{h_{\max}+1/2}-1}{h_{\max}+1/2}
\end{align*}

Combining these gives explicit bounds for $D(t)$.

\textbf{Step 4: Completing the Proof}

We now construct explicit constants for different temporal regimes.

\textbf{Case I: $t \leq 1$}

For all $u \in [0,t]$, we have $t-u \leq 1$.

\underline{Subcase I.1: $t-s < 1$}

Lower bound estimate:
\begin{align*}
\rho(t,s; X) &= \frac{K(t,s;X_s)}{D(t)} \\
&\geq \frac{\sqrt{2h_{\min}} \cdot (t-s)^{h_{\max}-1/2}}{\sqrt{2h_{\max}} \cdot \frac{t^{h_{\min}+1/2}}{h_{\min}+1/2}} \\
&= \frac{\sqrt{h_{\min}}(h_{\min}+1/2)}{\sqrt{h_{\max}}} \cdot \frac{(t-s)^{h_{\max}-1/2}}{t^{h_{\min}+1/2}}
\end{align*}

Since $t \leq 1$ and $h_{\min}+1/2 > 0$, we have $t^{h_{\min}+1/2} \leq 1$, so $\frac{1}{t^{h_{\min}+1/2}} \geq 1$, thus:
\[
\rho(t,s; X) \geq \frac{\sqrt{h_{\min}}(h_{\min}+1/2)}{\sqrt{h_{\max}}} \cdot (t-s)^{h_{\max}-1/2}
\]

Upper bound estimate:
\begin{align*}
\rho(t,s; X) &= \frac{K(t,s;X_s)}{D(t)} \\
&\leq \frac{\sqrt{2h_{\max}} \cdot (t-s)^{h_{\min}-1/2}}{\sqrt{2h_{\min}} \cdot \frac{t^{h_{\max}+1/2}}{h_{\max}+1/2}} \\
&= \frac{\sqrt{h_{\max}}(h_{\max}+1/2)}{\sqrt{h_{\min}}} \cdot \frac{(t-s)^{h_{\min}-1/2}}{t^{h_{\max}+1/2}}
\end{align*}

Using $t \geq t-s > 0$ and $h_{\max}+1/2 > 0$, we have $t^{h_{\max}+1/2} \geq (t-s)^{h_{\max}+1/2}$, therefore:
\[
\frac{(t-s)^{h_{\min}-1/2}}{t^{h_{\max}+1/2}} \leq \frac{(t-s)^{h_{\min}-1/2}}{(t-s)^{h_{\max}+1/2}} = (t-s)^{h_{\min}-h_{\max}-1}
\]

Hence:
\[
\rho(t,s; X) \leq \frac{\sqrt{h_{\max}}(h_{\max}+1/2)}{\sqrt{h_{\min}}} \cdot (t-s)^{h_{\min}-h_{\max}-1}
\]

\underline{Subcase I.2: $t-s = 1$}

Here $s = 0$, $t = 1$.

From Step 2, when $t-s = 1$:
\[
\sqrt{2h_{\min}} \leq K(1,0;X_0) \leq \sqrt{2h_{\max}}
\]

From Step 3 Case 1 ($t \leq 1$), with $t = 1 \leq 1$, for all $u \in [0,1]$ we have $1-u \leq 1$, thus:

Lower bound:
\begin{align*}
D(1) &\geq \int_0^1 \sqrt{2h_{\min}} \cdot (1-u)^{h_{\max}-1/2}  du \\
&= \sqrt{2h_{\min}} \int_0^1 (1-u)^{h_{\max}-1/2}  du \\
&= \sqrt{2h_{\min}} \cdot \frac{1^{h_{\max}+1/2}}{h_{\max}+1/2} \\
&= \frac{\sqrt{2h_{\min}}}{h_{\max}+1/2}
\end{align*}

Upper bound:
\begin{align*}
D(1) &\leq \int_0^1 \sqrt{2h_{\max}} \cdot (1-u)^{h_{\min}-1/2}  du \\
&= \sqrt{2h_{\max}} \cdot \frac{1^{h_{\min}+1/2}}{h_{\min}+1/2} \\
&= \frac{\sqrt{2h_{\max}}}{h_{\min}+1/2}
\end{align*}

Therefore, the attention weight bounds are:

Lower bound estimate:
\begin{align*}
\rho(1,0; X) &= \frac{K(1,0;X_0)}{D(1)} \\
&\geq \frac{\sqrt{2h_{\min}}}{\frac{\sqrt{2h_{\max}}}{h_{\min}+1/2}} \\
&= \frac{\sqrt{h_{\min}}(h_{\min}+1/2)}{\sqrt{h_{\max}}}
\end{align*}

Upper bound estimate:
\begin{align*}
\rho(1,0; X) &= \frac{K(1,0;X_0)}{D(1)} \\
&\leq \frac{\sqrt{2h_{\max}}}{\frac{\sqrt{2h_{\min}}}{h_{\max}+1/2}} \\
&= \frac{\sqrt{h_{\max}}(h_{\max}+1/2)}{\sqrt{h_{\min}}}
\end{align*}

\textbf{Case II: $t > 1$}

\underline{Subcase II.1: $t-s < 1$ (i.e., $s > t-1$)}

From Step 2, when $t-s < 1$:
\begin{align*}
K(t,s;X_s) &\geq \sqrt{2h_{\min}} \cdot (t-s)^{h_{\max}-1/2} \\
K(t,s;X_s) &\leq \sqrt{2h_{\max}} \cdot (t-s)^{h_{\min}-1/2}
\end{align*}

From Step 3 Case 2 ($t > 1$), the denominator bounds are:
\begin{align*}
D(t) &\leq \sqrt{2h_{\max}} \left( \frac{t^{h_{\max}+1/2}-1}{h_{\max}+1/2} + \frac{1}{h_{\min}+1/2} \right) \\
D(t) &\geq \sqrt{2h_{\min}} \left( \frac{t^{h_{\min}+1/2}-1}{h_{\min}+1/2} + \frac{1}{h_{\max}+1/2} \right)
\end{align*}

Now establishing the attention weight bounds:

Lower bound estimate:
\begin{align*}
\rho(t,s; X) &= \frac{K(t,s;X_s)}{D(t)} \\
&\geq \frac{\sqrt{2h_{\min}} \cdot (t-s)^{h_{\max}-1/2}}{\sqrt{2h_{\max}} \left( \frac{t^{h_{\max}+1/2}-1}{h_{\max}+1/2} + \frac{1}{h_{\min}+1/2} \right)}
\end{align*}

Since $h_{\min} \leq h_{\max}$ implies $h_{\min}+1/2 \leq h_{\max}+1/2$, and observing that $t^{h_{\max}+1/2} > 1$ for $t > 1$, the first term satisfies the bound:
\[
\frac{t^{h_{\max}+1/2}-1}{h_{\max}+1/2} + \frac{1}{h_{\min}+1/2} \leq \frac{t^{h_{\max}+1/2}-1}{h_{\min}+1/2} + \frac{1}{h_{\min}+1/2} = \frac{t^{h_{\max}+1/2}}{h_{\min}+1/2}
\]

Therefore:
\begin{align*}
\rho(t,s; X) &\geq \frac{\sqrt{2h_{\min}} \cdot (t-s)^{h_{\max}-1/2}}{\sqrt{2h_{\max}} \cdot \frac{t^{h_{\max}+1/2}}{h_{\min}+1/2}} \\
&= \frac{\sqrt{h_{\min}}(h_{\min}+1/2)}{\sqrt{h_{\max}}} \cdot \frac{(t-s)^{h_{\max}-1/2}}{t^{h_{\max}+1/2}}
\end{align*}

Upper bound estimate:
\begin{align*}
\rho(t,s; X) &= \frac{K(t,s;X_s)}{D(t)} \\
&\leq \frac{\sqrt{2h_{\max}} \cdot (t-s)^{h_{\min}-1/2}}{\sqrt{2h_{\min}} \left( \frac{t^{h_{\min}+1/2}-1}{h_{\min}+1/2} + \frac{1}{h_{\max}+1/2} \right)}
\end{align*}

Using $h_{\max}+1/2 \geq h_{\min}+1/2$, we have:
\[
\frac{t^{h_{\min}+1/2}-1}{h_{\min}+1/2} + \frac{1}{h_{\max}+1/2} \geq \frac{t^{h_{\min}+1/2}-1}{h_{\max}+1/2} + \frac{1}{h_{\max}+1/2} = \frac{t^{h_{\min}+1/2}}{h_{\max}+1/2}
\]

Therefore:
\begin{align*}
\rho(t,s; X) &\leq \frac{\sqrt{2h_{\max}} \cdot (t-s)^{h_{\min}-1/2}}{\sqrt{2h_{\min}} \cdot \frac{t^{h_{\min}+1/2}}{h_{\max}+1/2}} \\
&= \frac{\sqrt{h_{\max}}(h_{\max}+1/2)}{\sqrt{h_{\min}}} \cdot \frac{(t-s)^{h_{\min}-1/2}}{t^{h_{\min}+1/2}}
\end{align*}

Furthermore, using $t \geq t-s > 0$ and $h_{\min}+1/2 > 0$, we have $t^{h_{\min}+1/2} \geq (t-s)^{h_{\min}+1/2}$, therefore:
\[
\frac{(t-s)^{h_{\min}-1/2}}{t^{h_{\min}+1/2}} \leq \frac{(t-s)^{h_{\min}-1/2}}{(t-s)^{h_{\min}+1/2}} = (t-s)^{-1}
\]

This yields the simplified upper bound:
\[
\rho(t,s; X) \leq \frac{\sqrt{h_{\max}}(h_{\max}+1/2)}{\sqrt{h_{\min}}} \cdot (t-s)^{-1}
\]

\underline{Subcase II.2: $t-s > 1$ (i.e., $s < t-1$)}

From Step 2, when $t-s > 1$:
\begin{align*}
K(t,s;X_s) &\geq \sqrt{2h_{\min}} \cdot (t-s)^{h_{\min}-1/2} \\
K(t,s;X_s) &\leq \sqrt{2h_{\max}} \cdot (t-s)^{h_{\max}-1/2}
\end{align*}

From Step 3 Case 2 ($t > 1$), the denominator bounds are:
\begin{align*}
D(t) &\leq \sqrt{2h_{\max}} \left( \frac{t^{h_{\max}+1/2}-1}{h_{\max}+1/2} + \frac{1}{h_{\min}+1/2} \right) \\
D(t) &\geq \sqrt{2h_{\min}} \left( \frac{t^{h_{\min}+1/2}-1}{h_{\min}+1/2} + \frac{1}{h_{\max}+1/2} \right)
\end{align*}

Lower bound estimate:
\begin{align*}
\rho(t,s; X) &= \frac{K(t,s;X_s)}{D(t)} \\
&\geq \frac{\sqrt{2h_{\min}} \cdot (t-s)^{h_{\min}-1/2}}{\sqrt{2h_{\max}} \left( \frac{t^{h_{\max}+1/2}-1}{h_{\max}+1/2} + \frac{1}{h_{\min}+1/2} \right)}
\end{align*}

Using $h_{\min}+1/2 \leq h_{\max}+1/2$ and noting that for $t > 1$ we have $t^{h_{\max}+1/2} - 1 > 0$, we obtain:
\[
\frac{t^{h_{\max}+1/2}-1}{h_{\max}+1/2} + \frac{1}{h_{\min}+1/2} \leq \frac{t^{h_{\max}+1/2}-1}{h_{\min}+1/2} + \frac{1}{h_{\min}+1/2} = \frac{t^{h_{\max}+1/2}}{h_{\min}+1/2}
\]

Therefore:
\begin{align*}
\rho(t,s; X) &\geq \frac{\sqrt{2h_{\min}} \cdot (t-s)^{h_{\min}-1/2}}{\sqrt{2h_{\max}} \cdot \frac{t^{h_{\max}+1/2}}{h_{\min}+1/2}} \\
&= \frac{\sqrt{h_{\min}}(h_{\min}+1/2)}{\sqrt{h_{\max}}} \cdot \frac{(t-s)^{h_{\min}-1/2}}{t^{h_{\max}+1/2}}
\end{align*}

Upper bound estimate:
\begin{align*}
\rho(t,s; X) &= \frac{K(t,s;X_s)}{D(t)} \\
&\leq \frac{\sqrt{2h_{\max}} \cdot (t-s)^{h_{\max}-1/2}}{\sqrt{2h_{\min}} \left( \frac{t^{h_{\min}+1/2}-1}{h_{\min}+1/2} + \frac{1}{h_{\max}+1/2} \right)}
\end{align*}

Using $h_{\max}+1/2 \geq h_{\min}+1/2$, we have:
\[
\frac{t^{h_{\min}+1/2}-1}{h_{\min}+1/2} + \frac{1}{h_{\max}+1/2} \geq \frac{t^{h_{\min}+1/2}-1}{h_{\max}+1/2} + \frac{1}{h_{\max}+1/2} = \frac{t^{h_{\min}+1/2}}{h_{\max}+1/2}
\]

Therefore:
\begin{align*}
\rho(t,s; X) &\leq \frac{\sqrt{2h_{\max}} \cdot (t-s)^{h_{\max}-1/2}}{\sqrt{2h_{\min}} \cdot \frac{t^{h_{\min}+1/2}}{h_{\max}+1/2}} \\
&= \frac{\sqrt{h_{\max}}(h_{\max}+1/2)}{\sqrt{h_{\min}}} \cdot \frac{(t-s)^{h_{\max}-1/2}}{t^{h_{\min}+1/2}}
\end{align*}

Furthermore, using $t \geq t-s > 1$ and $h_{\min}+1/2 > 0$, we have $t^{h_{\min}+1/2} \geq (t-s)^{h_{\min}+1/2}$, therefore:
\[
\frac{(t-s)^{h_{\max}-1/2}}{t^{h_{\min}+1/2}} \leq \frac{(t-s)^{h_{\max}-1/2}}{(t-s)^{h_{\min}+1/2}} = (t-s)^{h_{\max}-h_{\min}-1}
\]

This yields the simplified upper bound:
\[
\rho(t,s; X) \leq \frac{\sqrt{h_{\max}}(h_{\max}+1/2)}{\sqrt{h_{\min}}} \cdot (t-s)^{h_{\max}-h_{\min}-1}
\]

\underline{Subcase II.3: $t-s = 1$}

Here $s = t-1$.

From Step 2, when $t-s = 1$:
\[
\sqrt{2h_{\min}} \leq K(t,t-1;X_{t-1}) \leq \sqrt{2h_{\max}}
\]

From Step 3 Case 2 ($t > 1$), the denominator bounds are:
\begin{align*}
D(t) &\leq \sqrt{2h_{\max}} \left( \frac{t^{h_{\max}+1/2}-1}{h_{\max}+1/2} + \frac{1}{h_{\min}+1/2} \right) \\
D(t) &\geq \sqrt{2h_{\min}} \left( \frac{t^{h_{\min}+1/2}-1}{h_{\min}+1/2} + \frac{1}{h_{\max}+1/2} \right)
\end{align*}

Lower bound estimate:
\begin{align*}
\rho(t,t-1; X) &= \frac{K(t,t-1;X_{t-1})}{D(t)} \\
&\geq \frac{\sqrt{2h_{\min}}}{\sqrt{2h_{\max}} \left( \frac{t^{h_{\max}+1/2}-1}{h_{\max}+1/2} + \frac{1}{h_{\min}+1/2} \right)} \\
&\geq \frac{\sqrt{2h_{\min}}}{\sqrt{2h_{\max}} \cdot \frac{2t^{h_{\max}+1/2}}{h_{\min}+1/2}} \\
&= \frac{\sqrt{h_{\min}}(h_{\min}+1/2)}{2\sqrt{h_{\max}}} \cdot \frac{1}{t^{h_{\max}+1/2}}
\end{align*}

Upper bound estimate:
\begin{align*}
\rho(t,t-1; X) &= \frac{K(t,t-1;X_{t-1})}{D(t)} \\
&\leq \frac{\sqrt{2h_{\max}}}{\sqrt{2h_{\min}} \left( \frac{t^{h_{\min}+1/2}-1}{h_{\min}+1/2} + \frac{1}{h_{\max}+1/2} \right)} \\
&\leq \frac{\sqrt{2h_{\max}}}{\sqrt{2h_{\min}} \cdot \frac{t^{h_{\min}+1/2}}{2(h_{\max}+1/2)}} \\
&= \frac{2\sqrt{h_{\max}}(h_{\max}+1/2)}{\sqrt{h_{\min}}} \cdot \frac{1}{t^{h_{\min}+1/2}}
\end{align*}

This completes the comprehensive proof of all bounds stated in Theorem~\ref{thm:rfbm_attention_bounds}.
\end{proof}

\begin{thebibliography}{100}

\bibitem{mandelbrot1968} 
B. B. Mandelbrot and J. W. Van Ness, \textit{Fractional Brownian motions, fractional noises and applications}, 
SIAM Rev. \textbf{10} (1968), no. 4, 422--437.

\bibitem{Ito1944}
K. Itô, \textit{Stochastic integral}, 
Proc. Imp. Acad. Tokyo \textbf{20} (1944), no. 8, 519--524.

\bibitem{Gosselin1997}
P. Gosselin, T. Wurzbacher, \textit{An Itô type isometry for loops in $\mathbb{R}^d$ via the Brownian bridge},
Séminaire de Probabilités XXXI, Springer Berlin Heidelberg, (1997), pp. 225--231.

\bibitem{Patnaik2020}
S. Patnaik, J. P. Hollkamp, and F. Semperlotti, \textit{Applications of variable-order fractional operators: a review}, 
Proc. R. Soc. Lond. Ser. A Math. Phys. Eng. Sci. \textbf{476} (2020), no. 2234, 20190498. 

\bibitem{Oliveira2022}
D. S. Oliveira, J. V. C. Sousa, G. S. F. Frederico, \textit{Pseudo-fractional operators of variable order and applications}, Soft Comput. \textbf{26} (2022), 4587--4605.

\bibitem{Sibatov2024}
R. T. Sibatov, P. E. L'vov, H. G. Sun, \textit{Variable-order fractional diffusion: Physical interpretation and simulation
within the multiple trapping model}, Appl Math Comput. \textbf{482} (2024), 128960.

\bibitem{Samadyar2024}
N. Samadyar, Y. Ordokhani, \textit{Simulating variable‐order fractional Brownian motion and solving nonlinear stochastic differential equations}, Math Method Appl Sci. \textbf{47} (2024), 8471--8489.

\bibitem{Samadyar2025}
S. Naserifar, F. Mirzaee, \textit{Innovative computational model for variable order fractional Brownian motion and solving stochastic differential equations}, Numer Algorithms. (2025), 1--30.

\bibitem{woszczek2025riemann}
H. Woszczek, A. Wy{\l}oma{\'n}ska, A. Chechkin, \textit{Riemann--Liouville fractional Brownian motion with random Hurst exponent}, CHAOS \textbf{35} (2025).

\bibitem{jumarie1999new}
G. Jumarie, \textit{A new approach to fractional Brownian motion of order n via random walk in the complex plane}, CHAOS SOLITON FRACT \textbf{10} (1999), 1193--1212.

\bibitem{jiang2025study}
J. Jiang, B. Miao, \textit{A study of anomalous stochastic processes via generalizing fractional calculus}, CHAOS \textbf{35} (2025).

\bibitem{Pablo2012}
A. De. Pablo, F. Quirós, A. Rodríguez, J. L. Vázquez, \textit{A general fractional porous medium equation}, Commun Pur Appl Math. \textbf{65} (2012), 1242--1284.

\bibitem{Mills1926}
J. P. Mills, \textit{Table of the ratio: area to bounding ordinate, for any portion of normal curve}, 
Biometrika. \textbf{18} (1926), 395--400.

\bibitem{Baricz2008}
Á. Baricz, \textit{Mills' ratio: Monotonicity patterns and functional inequalities}, J Math Anal Appl. \textbf{340} (2008), 1362--1370.

\bibitem{Yang2015}
Z. H. Yang, Y. M. Chu, \textit{On approximating Mills ratio}, J Inequal Appl. \textbf{2015} (2015), 1--14.

\bibitem{Chernov2001}
N. Chernov, D. Kleinbock, \textit{Dynamical borel-cantelli lemmas for gibbs measures}, Isr. J. Math. \textbf{122} (2001), 1--27.

\bibitem{Gupta2010}
C. Gupta, M. Nicol, W. Ott, \textit{A Borel–Cantelli lemma for nonuniformly expanding dynamical systems}, Nonlinearity. \textbf{23} (2010), 1991.

\bibitem{Haydn2013}
N. Haydn, M. Nicol, T. Persson, S. Vaienti, \textit{A note on Borel–Cantelli lemmas for non-uniformly hyperbolic dynamical systems}, Ergod Theor Dyn Syst. \textbf{33} (2013), 475--498.

\bibitem{Varadhan1966}
S R S. Varadhan, \textit{Asymptotic probabilities and differential equations}, Commun Pur Appl Math. \textbf{19} (1966), 261--286.

\bibitem{JG1977}
J. Gärtner, \textit{On large deviations from the invariant measure}, Theory Probab. Its Appl. \textbf{22} (1977), 24--39.

\bibitem{Touchette2009}
H. Touchette, \textit{The large deviation approach to statistical mechanics}, Phys. Rep. \textbf{478} (2009), 1--69.

\bibitem{Feliachi2021}
O. Feliachi, F. Bouchet, \textit{Dynamical large deviations for plasmas below the Debye length and the Landau equation}, J. Stat. Phys. \textbf{183} (2021), 42.

\bibitem{li2017exponential}
D. Li, G. Chen, \textit{Exponential stability of a class of impulsive stochastic delay partial differential equations driven by a fractional Brownian motion}, Int. J. Control Autom. Syst. \textbf{15} (2017), 1561--1568.

\bibitem{S1997}
S. Hyvönen, \textit{Convergence of the Arnoldi process when applied to the Picard-Lindelöf iteration operator}, J Comput Appl Math. \textbf{87} (1997), 303--320.

\bibitem{J1957}
J. Lamperti, \textit{On flows of measure-preserving transformations}, Mich Math J. \textbf{4} (1957), 161--164.

\bibitem{Cont2019}
R. Cont, N. Perkowski, \textit{Pathwise integration and change of variable formulas for continuous paths with arbitrary regularity}, T Am Math Soc. Series B, \textbf{6} (2019), 161--186.

\bibitem{Vaswani2017}
A. Vaswani, N. Shazeer, N. Parmar, J. Uszkoreit, L. Jones, A. N. Gomez, {\L}. Kaiser, I. Polosukhin, \textit{Attention is all you need}, Adv. Neural Inf. Process. Syst. \textbf{30} (2017), 1--11.

\bibitem{Devlin2019}
J. Devlin, M.-W. Chang, K. Lee, K. Toutanova, \textit{Bert: Pre-training of deep bidirectional transformers for language understanding}, Proc. North Am. Chap. Assoc. Comput. Linguist. Hum. Lang. Technol., Vol. 1 (Long Short Papers) (2019), 4171--4186.

\bibitem{HJiang2024}
H. Jiang, Q. Li, \textit{Approximation rate of the transformer architecture for sequence modeling}, Adv. Neural Inf. Process. Syst. \textbf{37} (2024), 68926--68955.

\bibitem{SWang2021}
S. Wang, F. Liu, B. Liu, \textit{Escaping the gradient vanishing: Periodic alternatives of softmax in attention mechanism}, IEEE Access \textbf{9} (2021), 168749--168759.

\bibitem{Aurpa2024}
T. T. Aurpa, K. N. Fariha, K. Hossain, S. M. Jeba, M. S. Ahmed, M. R. S. Adib, F. Islam, F. Akter, \textit{Deep transformer-based architecture for the recognition of mathematical equations from real-world math problems}, Heliyon \textbf{10} (2024).

\bibitem{khan2025fractional}
D. Khan, S. I. Butt, G. Jallani, M. Alammar, Y. Seol, \textit{Fractional Mean-Square Inequalities for (P, m)-Superquadratic Stochastic Processes and Their Applications to Stochastic Divergence Measures}, Fractal Fract (2025).

\bibitem{grubb2015fractional}
G. Grubb, \textit{Fractional Laplacians on domains, a development of H{\"o}rmander's theory of $\mu$-transmission pseudodifferential operators}, Adv. Math. \textbf{268} (2015), 478--528.

\bibitem{chen2010heat}
Z.-Q. Chen, P. Kim, R. Song, \textit{Heat kernel estimates for the Dirichlet fractional Laplacian}, J. Eur. Math. Soc. \textbf{12} (2010), 1307--1329.

\bibitem{ascione2021abstract}
G. Ascione, \textit{Abstract Cauchy problems for the generalized fractional calculus}, Nonlinear Anal. \textbf{209} (2021), 112339.

\bibitem{krasnoschok2017semilinear}
M. Krasnoschok, V. Pata, N. Vasylyeva, \textit{Semilinear subdiffusion with memory in the one-dimensional case}, Nonlinear Anal. \textbf{165} (2017), 1--17.

\bibitem{oliveira2026asymptotic}
R. L. Oliveira, C. Buriol, H. P. Oquendo, \textit{Asymptotic behavior for a coupled wave/plate system with fractional memory dissipation}, Nonlinear Anal. Real World Appl. \textbf{90} (2026), 104533.

\bibitem{langlois2015asymptotic}
G. P. Langlois, M. Farazmand, G. Haller, \textit{Asymptotic dynamics of inertial particles with memory}, J. Nonlinear Sci. \textbf{25} (2015), 1225--1255.

\bibitem{hairer2005ergodicity}
M. Hairer, \textit{Ergodicity of stochastic differential equations driven by fractional Brownian motion}, Ann. Probab. \textbf{33} (2005), 703--758.

\bibitem{jung2017functional}
P. Jung, T. Owada, G. Samorodnitsky, \textit{Functional central limit theorem for a class of negatively dependent heavy-tailed stationary infinitely divisible processes generated by conservative flows}, Ann. Probab. \textbf{45} (2017), 2087--2130.

\bibitem{keisler1997rich}
H. J. Keisler, \textit{Rich and saturated adapted spaces}, Adv. Math. \textbf{128} (1997), 242--288.

\bibitem{edelman2025asymptotic}
M. Edelman, \textit{Asymptotic cycles in fractional generalizations of multidimensional maps}, Fract. Calc. Appl. Anal. \textbf{28} (2025), 24--37.

\bibitem{pineau2025global}
B. Pineau, M. A. Taylor, \textit{Global well-posedness for the generalized derivative nonlinear Schr{\"o}dinger equation}, Nonlinearity \textbf{38} (2025), 055022.
\end{thebibliography}
\end{document}